\documentclass[12pt]{amsart}
\usepackage{amsthm,amsmath,amssymb,amscd,graphics,enumerate, stmaryrd,xspace,verbatim, epic, eepic,color,url}

\usepackage[active]{srcltx}
\usepackage[all]{xypic}
\SelectTips{cm}{}
%\usepackage{pdfsync}
%\usepackage{amssymb}
%------------------------------------------------------------
%\input danmacro.tex

%

%\newcommand{\marg}[1]{\null}

\newcommand{\marg}[1]{\normalsize{{\color{red}\footnote{{\color{blue}#1}}}{\marginpar[{\color{red}\hfill\tiny\thefootnote$\rightarrow$}]{{\color{red}$\leftarrow$\tiny\thefootnote}}}}}

\newcommand{\Barbara}[1]{\marg{(Barbara) #1}}

\newcommand{\ChDan}[1]{{\color{magenta} #1}}

%\newcommand{\marg}[1]{\footnote{#1}\marginpar{\tiny\thefootnote}}
%\newcommand{\marg}[1]
% {{$^*$\marginpar{\rotatebox{270}{\parbox{1.5in}{{\tiny
%  \setlength{\baselineskip}{-3pt}  * {#1}}}}}}}  

%------------------------------------------------------------
%\setlength{\oddsidemargin}{.1cm}        \setlength{\evensidemargin}{.1cm}
%\setlength{\marginparsep}{2mm}          \setlength{\marginparwidth}{.2cm}
%\setlength{\textwidth}{330pt}            
%\setlength{\topmargin}{-1.3cm}
%\setlength{\textheight}{530pt}           
%\setlength{\headheight}{.1in}
%\setlength{\headsep}{.3in}              %\setlength{\baselineskip}{3pt}
\setlength{\parskip}{.5mm}
%\newlength{\standardunitlength}
%\setlength{\standardunitlength}{0.0125in}
%------------------------------------------------------------
{
   \newtheorem{theorem}[subsubsection]{Theorem}
      \newtheorem*{theorem*}{Theorem}
   \newtheorem{proposition}[subsubsection]{Proposition} 
   \newtheorem{prop}[subsubsection]{Proposition}     
   \newtheorem{lemma}[subsubsection]{Lemma}

   \newtheorem{corollary}[subsubsection]{Corollary}
   
   \newtheorem*{conjecture*}{Conjecture}
   
}
{\theoremstyle{definition}
          \newtheorem*{exercise*}{Exercise}

   \newtheorem*{example*}{Example}
%}
%{\theoremstyle{remark}
%   \newtheorem*{definition}{Definition}
%   \newtheorem*{remark}{Remark}}
   \newtheorem{definition}[subsubsection]{Definition}
   \newtheorem{defn}[subsubsection]{Definition}
   \newtheorem*{definition*}{Definition}
   
   \newtheorem{remark}[subsubsection]{Remark}
   \newtheorem{conv}[subsubsection]{Convention}
   \newtheorem{notation}[subsubsection]{Notation}
}
%\newcommand{\mysection}[1]{\section{#1}\setcounter{theorem}{0}}
%\newcommand{\mysection}[1]{\section{#1}}
%\newcommand{\mysubsubsection}[1]{\subsubsection{#1}}
%\newcommand{\mysubsubsection}[1]%
%{{\noindent\addtocounter{theorem}{1}% 
%\thetheorem.    \em #1. }}
%
\newcommand{\RR}{{\mathbb{R}}}

\newcommand{\FF}{{\mathbb{F}}}
\newcommand{\CC}{{\mathbb{C}}}
\newcommand{\QQ}{{\mathbb{Q}}}
\newcommand{\NN}{{\mathbb{N}}}

\newcommand{\PP}{{\mathbb{P}}}
\newcommand{\ZZ}{{\mathbb{Z}}}
\newcommand{\EE}{{\mathbb{E}}}
\newcommand{\GG}{{\mathbb{G}}}
\newcommand{\LL}{{\mathbb{L}}}
\newcommand{\bbA}{{\mathbb{A}}}
\renewcommand{\AA}{{\mathbb{A}}}

\newcommand{\bL}{{\mathbf{L}}}

\newcommand{\bR}{{\mathbf{R}}}

\newcommand{\bfe}{{\mathbf{e}}}
\newcommand{\bff}{{\mathbf{f}}}
\newcommand{\bfc}{{\mathbf{c}}}
\newcommand{\bfd}{{\mathbf{d}}}

\newcommand{\bDelta}{{\boldsymbol{\Delta}}}

\newcommand{\bmu}{{\boldsymbol{\mu}}}

\newcommand{\bc}{{\mathbf{c}}}

\newcommand{\bone}{{\boldsymbol{1}}}

\newcommand{\fT}{{\mathfrak{T}}}
\newcommand{\fW}{{\mathfrak{W}}}
\newcommand{\fX}{{\mathfrak{X}}}

%\newcommand{\fD}{{\mathfrak{D}}}
% \fD appears to be already defined

\newcommand{\fr}{{\mathfrak{r}}}

\newcommand{\cA}{{\mathcal A}}

\newcommand{\cC}{{\mathcal C}}
\renewcommand{\cD}{{\mathcal D}}
\newcommand{\cE}{{\mathcal E}}

\newcommand{\cG}{{\mathcal G}}

\newcommand{\cI}{{\mathcal I}}
\newcommand{\cK}{{\mathcal K}}

\newcommand{\cO}{{\mathcal O}}

\newcommand{\cT}{{\mathcal T}}

\newcommand{\cW}{{\mathcal W}}
\newcommand{\cX}{{\mathcal X}}
\newcommand{\cY}{{\mathcal Y}}
\newcommand{\cZ}{{\mathcal Z}}

\newcommand{\fC}{{\mathfrak C}}

\newcommand{\fM}{{\mathfrak M}}

\newcommand{\fQ}{{\mathfrak Q}}
\newcommand{\tw}{{\operatorname{tw}}}

\newcommand{\vir}{{\text{vir}}}
\def\<{\langle}
\def\>{\rangle}

\newcommand{\Spec}{\operatorname{Spec}}

\newcommand{\Sing}{{\operatorname{Sing}}}

\newcommand{\Hom}{{\operatorname{Hom}}}

\newcommand{\Ext}{{\operatorname{Ext}}}
\newcommand{\bExt}{{\operatorname{Ext}}}
\newcommand{\cExt}{{{\cE}xt}}

\newcommand{\Mor}{{\operatorname{Mor}}}

\newcommand{\Aut}{{\operatorname{Aut}}}

\newcommand{\Def}{{\operatorname{Def}}}
\newcommand{\sh}{{\operatorname{sh}}}

\newcommand{\lrar}{\longrightarrow}

\newcommand{\ocI}{\overline{{\mathcal I}}}

\newcommand{\lcm}{{\operatorname{lcm}}}

\newcommand{\double}{\genfrac..{0pt}1
{\raise -1pt\hbox{$\scriptstyle\longrightarrow$}}{\raise 3pt\hbox
{$\scriptstyle\longrightarrow$}}} 

\renewcommand{\setminus}{\smallsetminus}

\def\spl{{\rm spl}}

%\def\mini{{\mbox{\tiny $\bullet$}}}

%def\bmu{{\protect\boldsymbol{\mu}}}

\def\tototi{\mathbin{\mathop{\otimes}\limits^{\raise-1pt\hbox
{$\scriptscriptstyle {\rm L}$}}}}

\def\indlim{\mathop{\vrule width0pt height7pt depth
4pt\smash{\lim\limits_{\raise 1pt\hbox to 14.5pt
{\rightarrowfill}}}}}
\def\projlim{\mathop{\vrule width0pt height7pt depth
4pt\smash{\lim\limits_{\raise 1pt\hbox to 14.5pt
{\leftarrowfill}}}}}

\newcommand\displaceamount{3pt}

\newcommand{\doubledown}{\ar@<\displaceamount>[d]\ar@<-\displaceamount>[d]}

\newcommand{\doubleup}{\ar@<\displaceamount>[u]\ar@<-\displaceamount>[u]}

\newcommand{\doubleright}{\ar@<\displaceamount>[r]\ar@<-\displaceamount>[r]}

\newcommand{\thickslash}{\mathbin{\!\!\pmb{\fatslash}}}

\newcommand{\eps}{\varepsilon}

%\makeatletter
%\@addtoreset{subsection}{}
%\makeatother

\def\barM{\overline{M}}
\def\ev{\text{ev}}
\def\bev{{\mathbf{ev}}}

\newcommand{\fK}{\mathfrak K}
\newcommand{\tfK}{{\widetilde{\mathfrak K}}}
\def\id{\text{id}}
\def\bcI{\overline{\mathcal I}}
\newcommand{\sm}{{\operatorname{sm}}}

\def\Csi{\Xi}

\def\GGm{{\mathbb G}_m}

\def\st{\text{st}}

%------------------------------------------------------------
\begin{document}

\title[Orbifold techniques in degeneration formulas]{Orbifold techniques\\ in degeneration formulas}

\author[D. Abramovich]{Dan Abramovich}
\thanks{Research of D.A. partially supported by NSF grant DMS-0335501 and DMS-0603284}  
\address{Department of Mathematics, Box 1917, Brown University,
Providence, RI, 02912, U.S.A} 
\email{abrmovic@math.brown.edu}
\author[B. Fantechi ({\tiny\today})]{Barbara Fantechi}
\thanks{Research activities of B.F. partially supported by Italian research grant PRIN 2006 ``Geometria delle variet\'a proiettive", ESF Research Network MISGAM, Marie Curie RTN ENIGMA, and GNSAGA}  
\address{SISSA,
Via Bonomea 265,
34136 Trieste, Italy} 
\email{fantechi@sissa.it}

\date{\today, \jobname}
%\large

%\begin{abstract}
%\end{abstract}

 \begin{abstract}  We give a new approach for relative and degenerate Gromov--Witten invariants, inspired by that of Jun Li but replacing predeformable maps by transversal maps to a twisted target. The main advantage is a significant simplification in the definition of the obstruction theory. We reprove in our language the degeneration formula, extending it to the orbifold case. 
\end{abstract}
\maketitle
\setcounter{tocdepth}{1}

\tableofcontents

 \section*{Introduction} 

\subsection{Gromov--Witten invariants in the smooth case}
Gromov--Witten invariants were originally defined for a compact symplectic manifold, and in the algebraic language for a smooth projective complex variety.  For an extensive bibliography see \cite{Cox-Katz}. From an algebraic viewpoint the construction proceeds via the following steps.

\begin{itemize}
\item[Step 1] Definition of a proper moduli stack $$\barM:=\barM_{g,n}(X,\beta)$$ of stable maps to $X$ with fixed discrete invariants $\beta\in H_2(X,\ZZ)$ and $g,n\in \NN$, together with an evaluation map $\ev:\barM\to X^n$. 
\item[Step 2] Construction on $\barM$ of a {\em $1$-perfect obstruction theory}, giving rise to a {\em virtual fundamental class} $[\barM]^\vir\in A_d(\barM)$, where $d$ is the {\em expected dimension} of $\barM$. 
\item[Step 3] Definition of the invariants  by integrating cohomology classes on $X^n$ against  $\ev_*((\prod \psi_i^{m_i})[\barM]^\vir)$.
\end{itemize}

This construction has subsequently been extended to the case of orbifolds, namely smooth Deligne--Mumford stacks, see \cite{Chen-Ruan, AGV}. For Step 1 above this case required care in the definition of stable maps, which were replaced by twisted stable maps in order to preserve properness of the moduli, see \cite{Abramovich-Vistoli}; on the other hand, the obstruction theory in Step 2 stayed essentially the same. The main difference in the formalism of Step 3 is that the evaluation map takes values in $\ocI(X)^n$, where $\ocI(X)$ is the so called  rigidified cyclotomic inertia stack of $X$. 

\subsection{Invariants of pairs and degenerate varieties}
If $X$ is singular, the moduli stack of stable maps is still proper; however, the natural obstruction theory is not perfect even for very mild singularities, and the construction has to be modified to stand a chance to work.

The issue was addressed  for singular varieties $W_0 = X_1 \sqcup_D X_2$, with $X_1,X_2$ and $D$ smooth appearing as fibers in a one-parameter family with smooth total space, by A.M. Li and Y. Ruan \cite{Li-Ruan}. It was also studied at about the same time by E. Ionel and T. Parker \cite{Ionel-Parker1,Ionel-Parker2}, and subsequently worked out in the algebraic language by Jun Li \cite{Li1,Li2}.
Here the moduli of stable maps was changed in such a way as to have a perfect obstruction theory while keeping properness.  With similar techniques, relative Gromov--Witten invariants were defined for a pair $(X,D)$ with $X$ a smooth projective variety and $D$ a smooth divisor in $X$. The main tool introduced  here  is that of {\em expanded degenerations and expanded pairs}. 

The {\em degeneration formula} is a way to express the Gromov--Witten invariants of $W_0=X_1\sqcup_DX_2$ in terms of the relative invariants of the pairs $(X_i,D)$. It is now a key tool in Gromov--Witten theory.

There is also work preceding the cited papers where the ideas involved appear in different guises. The idea of expanded degenerations and its use in enumerative geometry was introduced by Z. Ran \cite{Ran}. Even earlier Harris and Mumford introduced the related idea of admissible covers  \cite{Harris-Mumford}, revisited using logarithmic geometry by  Mochizuki \cite{Mochizuki}. Related ideas with a different view can be found in \cite{Hirschowitz}, \cite{Alexander-Hirschowitz}, \cite{Caporaso-Harris}, \cite{Bernstein}, \cite{Vakil}. A simple approach in special but important cases was developed by Gathmann \cite{Gathmann}.

\subsection{The twisting method}
In this paper we will give an alternative algebraic definition of Gromov--Witten invariants for singular varieties as above, and of relative Gromov--Witten invariants, which extends naturally also to Deligne--Mumford stacks. 
Our treament follows closely that of Jun Li. However, introducing appropriate auxiliary  orbifold  structures along the nodes of both source curves and target varieties allows us to give a shorter definition of the  the obtruction theory in Step 2 above, and a streamlined proof of the degeneration formula. At the same time we obtain a somewhat more general result, which applies to the orbifold case, see Theorem \ref{maintheorem} below.
\ChDan{We note that a symplectic geometry approach to the orbifold case was developed by B. Chen, A.-M. Li, S. Sun and G. Zhao in \cite{CLSZ}.}

In a nutshell, the most difficult point in Jun Li's approach is to define an obstruction theory on predeformable maps. A predeformable map $C \to W_0$ from a nodal curve $C$ to a variety $W_0$ with codimension-1 nodal singularities locally looks like
\[\begin{array}{c}
 \xymatrix{ 
%C\ar[r]\ar@{=}[d]& W\ar@{=}[d] \\
C=\Spec \frac{\CC[u,v]}{(uv)}\ar[r] & \Spec \frac{\CC[x,y,z_i]}{(xy)} = W_0} 
\\
 \xymatrix{ 
u^c&& x\ar@{|->}[ll]} 
\\
 \xymatrix{ 
v^c&& y,\ar@{|->}[ll]
}\end{array}
\]
and $z_i \mapsto f_i(u,v)$ arbitrary.
As soon as $c$, the {\em contact order}, is $>1$, this predeformability condition is not open on maps but rather locally closed. This means that deformations and obstructions as predeformable maps cannot coincide with deformations and obstructions as maps, so an obstruction theory must be constructed by other means. Jun Li does this by a delicate explicit construction.

Our approach to this is the following: we replace $W_0$ by the orbifold $\cW_0 = [\Spec\frac{\CC[\xi,\eta,z_i]}{(\xi\eta)} / \bmu_c]$ having $W_0$ as its coarse moduli space. Here $\bmu_c$ acts via $(\xi,\eta) \mapsto (\zeta_c\xi,\zeta_c^{-1}\eta)$, and $x=\xi^c, y=\eta^c$.  Then the map $C \to W_0$ locally lifts to  
\[\begin{array}{c}
 \xymatrix{ 
%C\ar[r]\ar@{=}[d]& W\ar@{=}[d] \\
C\ar[r] & \cW_0} 
\\
 \xymatrix{ u& \xi\ar@{|->}[l]}
\\
 \xymatrix{ v& \eta.\ar@{|->}[l]
}
\end{array}
\]
Transversal maps of this type form an open substack of all maps, so an obstruction theory is immediately given by the natural obstruction theory of maps. An identical twisting construction applies in the case of pairs.

This in itself works well when we look at one node of $C$ mapping to a singular locus of $W_0$. When several nodes $p_j$ map to the same singular locus, they may have different contact orders $c_j$. If we pick an integer $r$ divisible by all $c_j$, take $\cW_0 = [\Spec\frac{\CC[\xi,\eta,z_i]}{(\xi\eta)} / \bmu_r]$, and at each $p_j$ put a similar orbifold structure on $C$ with index $r/c_j$, we still obtain a transversal map and therefore a good obstruction theory. In order to keep the moduli stacks separated, we must sellect a way to choose the integer $r$. We do this using the notion of a {\em twisting choice} - a rule that assigns to a collection $\bc=\{c_j\}$ of contact orders a positive integer $r=\fr(\bc)$ divisible by all the contact orders $c_j$, see Definition \ref{D:twistingchoice}. 

With this at hand we can define Gromov--Witten invariants. Theorem \ref{gw-ind-twisting} shows that our invariants are independent of the twisting choice. Theorem \ref{T:definv} shows they are defomation invariants.

\subsection{The degeneration formula}

\begin{theorem}\label{maintheorem}
\begin{align*}
 \lefteqn{\left\langle{ \prod_{i=1}^n\tau_{m_i}(\gamma_i)}\right\rangle_{\beta,g}^{W_0} } \\
   &=&
 \sum_{\eta\in \Omega} \frac{\prod_{j\in M} d_j}{|M|!} \sum_{\substack{\delta_j\in F\\ \text{ for }j\in M}} (-1)^{\epsilon}&
 \left\langle \prod_{i\in N_1} \tau_{m_i}(\gamma_{i})\bigg\vert\prod_{j\in M}\delta_j
  \right\rangle^{(X_1,D)}_{\Csi_1}\\&&&\cdot
 \left\langle 
 \prod_{i\in N_2} \tau_{m_i}(\gamma_{i})\bigg\vert\prod_{j\in M}\tilde\delta^\vee_j
 \right\rangle^{(X_2,D)}_{\Csi_2}.
%  \left\langle \tau_{m_{N_1}}(\gamma_{N_1})\cdot\delta^_M\right\rangle^{(X_1,D)}_{\Gamma_1}\\&&&\cdot
% \left\langle \tau_{m_{N_2}}(\gamma_{N_2})\cdot\tilde\delta^\vee_M\right\rangle^{(X_2,D)}_{\Gamma_2}.
% \left\langle\prod_{i\in N_1} \tau_{m_i}(\gamma_{i})\cdot\prod_{i\in M}\delta_i\right\rangle^{(X_1,D)}_{\Gamma_1}\cdot
% \left\langle\prod_{i\in N_2} \tau_{m_i}(\gamma_{i})\cdot\prod_{i\in M}\tilde\delta^\vee_i\right\rangle^{(X_2,D)}_{\Gamma_2}.
\end{align*}
\end{theorem}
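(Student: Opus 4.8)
The plan is to stratify the moduli stack of transversal twisted stable maps to expanded degenerations of $W_0$ by the combinatorial type $\eta$ of the map, identify each stratum with a fibre product of two relative moduli stacks over a power of the rigidified cyclotomic inertia stack of the twisted relative divisor, compare the virtual classes, and then expand the resulting refined intersection product by a K\"unneth decomposition of the diagonal. Recall first that the left-hand side $\langle\prod_{i=1}^n\tau_{m_i}(\gamma_i)\rangle^{W_0}_{\beta,g}$ is defined by integration against the virtual fundamental class of the stack $\fK(\cW_0)$ of transversal twisted stable maps into the twisted expanded degenerations of $W_0$. For such a map the preimage of the singular locus $\cD\subset\cW_0$ is a disjoint union of nodes of the source curve; cutting the source there and recording which components map into $\cX_1$ and which into $\cX_2$, the partition $N=N_1\sqcup N_2$ of the marked points, the genera and curve classes $\Csi_1,\Csi_2$ on the two sides, the contact orders $d_j$ at the distributed nodes $j\in M$, and the cyclotomic structure along $\cD$, assembles into a discrete invariant $\eta$. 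The locus $\fK_\eta$ with fixed $\eta$ is open and closed in $\fK(\cW_0)$, so $[\fK(\cW_0)]^\vir=\sum_{\eta\in\Omega}[\fK_\eta]^\vir$, and it suffices to analyse each $\eta$.

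The geometric heart is that, because we only allow transversal maps, each distributed node behaves on either side precisely as a relative marked point of the corresponding contact order, so that one obtains
\[
\fK_\eta\;\cong\;\bigl(\fK(X_1,D)_{\Csi_1}\,\times_{\,\ocI(\cD)^{|M|}}\,\fK(X_2,D)_{\Csi_2}\bigr)\big/ S_{|M|}
\]
up to a finite gerbe, where the two morphisms to $\ocI(\cD)^{|M|}$ are the evaluations at the $|M|$ relative points -- the orbifold evaluation landing in the inertia stack being exactly what records the monodromy of the twisted node -- and the symmetric group $S_{|M|}$ acts by permuting the distributed nodes, accounting for the factor $1/|M|!$. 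Next comes the comparison of perfect obstruction theories: the transversal-maps obstruction theory of $\fK_\eta$ must be matched, through this description, with the one built from the two relative obstruction theories and the diagonal $\Delta$ of $\ocI(\cD)^{|M|}$. Here the twisting method earns its keep: since transversality is an open condition, every obstruction theory in sight is the tautological obstruction theory of maps to a smooth Deligne--Mumford stack, so the comparison reduces to a standard normal-bundle computation for the node-smoothing deformations, in place of Jun Li's delicate hand construction. The outcome is
\[
[\fK_\eta]^\vir\;=\;\Bigl(\prod_{j\in M}d_j\Bigr)\;\Delta^!\Bigl([\fK(X_1,D)_{\Csi_1}]^\vir\times[\fK(X_2,D)_{\Csi_2}]^\vir\Bigr),
\]
the factor $\prod_{j\in M}d_j$ appearing because matching the two relative structures at a distributed node of contact order $d_j$ introduces a $\bmu_{d_j}$, so that the relevant evaluation/gluing morphism is finite of degree $d_j$.

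Finally I would push $[\fK_\eta]^\vir$ forward along the remaining evaluation maps to $\ocI(X_1)^{N_1}\times\ocI(X_2)^{N_2}$, apply the projection formula, and insert the K\"unneth decomposition of the class of the diagonal in $\ocI(\cD)^{|M|}\times\ocI(\cD)^{|M|}$: writing $[\Delta]=\sum_{\delta\in F}\delta\otimes\tilde\delta^\vee$ with respect to bases of $H^*(\ocI(\cD))$ dual for the orbifold Poincar\'e pairing -- whose definition uses the involution $\iota\colon\ocI(\cD)\to\ocI(\cD)$ exchanging a twisted sector with its opposite, which is why the dual carries the tilde -- the refined pullback $\Delta^!$ turns the fibre-product integral into a product of two one-sided relative integrals, one against $\prod_{j\in M}\delta_j$ and the other against $\prod_{j\in M}\tilde\delta^\vee_j$, summed over the choices $\delta_j\in F$. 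The sign $(-1)^\epsilon$ is the discrepancy between this pairing and the naive one on the individual summands, i.e. it bookkeeps the age shifts occurring when the two opposite twisted sectors at the nodes are matched. Summing over $\eta\in\Omega$ produces the stated formula.

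The step I expect to be the main obstacle is precisely this combination of the fibre-product identification with the obstruction-theory comparison in the orbifold setting: pinning down $\fK_\eta$ as the quotient fibre product \emph{on the nose}, correctly handling the gerbe and automorphism contributions that yield $\prod_{j\in M}d_j$ and $1/|M|!$, and matching the transversal obstruction theory with the diagonal pullback of the product -- all the while tracking ages through the rigidification of the cyclotomic inertia stack so that the sign $(-1)^\epsilon$ and the duality $\delta_j\leftrightarrow\tilde\delta^\vee_j$ come out exactly as in the statement.
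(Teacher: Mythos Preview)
Your overall architecture matches the paper's---stratify by splitting type, identify each piece as a fibre product over the diagonal of inertia, compare obstruction theories, and K\"unneth-decompose---but there is a genuine gap in the first step and a resulting miscount of the multiplicative factors.

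The claim that ``the locus $\fK_\eta$ with fixed $\eta$ is open and closed in $\fK(\cW_0)$, so $[\fK(\cW_0)]^\vir=\sum_\eta[\fK_\eta]^\vir$'' is false. An $\fr$-twisted stable map to an expansion of length $k$ has $k$ singular divisors, hence $k$ possible choices of splitting, each giving a (possibly different) $\eta$; such a map lies in the closure of several strata simultaneously. What the paper does instead is pass to the stack $\fT_0^{u,\spl}$ of expansions \emph{with a chosen splitting divisor}, which is the normalization of the normal-crossings stack $\fT_0^u$, and pull everything back. Costello's pushforward theorem then gives $[\cK]^\vir$ as a pushforward from this cover; but because the twisted stack $\fT_0^\tw$ is nonreduced along the divisor of twisting index $r$, the comparison of $[\fT_0^{r,\spl}]$ with the pullback introduces a factor $r=r(\eta)$ (Lemma~\ref{L:tw-1/r}). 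The correct decomposition is thus $[\cK]^\vir=\sum_\eta\frac{r(\eta)}{|M|!}\,(s\circ t_\eta)_*[\cK_\eta]^\vir$, not your unweighted sum. This extra $r(\eta)$ is not a nuisance: it is exactly cancelled on the other side, because gluing the two relative \emph{targets} into a single twisted expansion is a $\bmu_{r(\eta)}$-gerbe $\cK_{1,2}\to\cK_{\Csi_1}\times\cK_{\Csi_2}$ (Lemma~\ref{L:gl-1/r}), so $[\cK_{\Csi_1}\times\cK_{\Csi_2}]^\vir=r(\eta)\,(u_\eta)_*[\cK_{1,2}]^\vir$. You bypass both occurrences of $r(\eta)$, so your answer happens to come out right for the wrong reason---but only if the two errors cancel, which you have not shown.

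Two smaller corrections. First, the gerbe at a distributed node is banded by $\bmu_{c_j}$, where $c_j$ is the \emph{contact order}, not by $\bmu_{d_j}$ (Lemma~\ref{L:eval-lift-c}); the paper first proves the formula with $\prod c_j$ (Theorem~\ref{T:degeneration}) and then converts to $\prod d_j$ by absorbing the orbifold indices $f_j=c_j/d_j$ into the Chen--Ruan dual $\tilde\delta^\vee=f_j\cdot\iota^*\delta^\vee$. Second, the sign $(-1)^\epsilon$ has nothing to do with age shifts: it is the Koszul sign arising from reordering the odd-degree classes $\gamma_i$ and $\delta_j$, as defined explicitly in the statement.
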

%\numberwithin{paragraph}{subsubsection}
\newcounter{myparagraph}[subsubsection]%\addtocounter{myparagraph}{1}
\subsubsection{User's guide - left hand side}
\paragraph{\arabic{myparagraph}}\addtocounter{myparagraph}{1}
%\paragraph{\thesubsubsection.\arabic{myparagraph}}\addtocounter{myparagraph}{1}
$W_0$ is a proper Deligne--Mumford stack having projective coarse moduli space $\bar W_0$. The rigidified inertia stack of $W_0$  is denoted $\ocI(W_0)$.
\paragraph{\arabic{myparagraph}}\addtocounter{myparagraph}{1}
%\paragraph{\thesubsubsection.\arabic{myparagraph}}\addtocounter{myparagraph}{1}
 $W_0 = X_1\sqcup_DX_2$ has first-order smoothable singular locus $D$ separating it in two smooth stacks $X_1,X_2$, see Sections \ref{onpairs} and \ref{logsmpair}.
\paragraph{\arabic{myparagraph}}\addtocounter{myparagraph}{1}
%\paragraph{\thesubsubsection.\arabic{myparagraph}}\addtocounter{myparagraph}{1}
$g\geq 0$ is an integer and $\beta$ is a curve class on $W_0$ (see \ref{S:Stablemaps-conventions}).
\paragraph{\arabic{myparagraph}}\addtocounter{myparagraph}{1}
%\paragraph{\thesubsubsection.\arabic{myparagraph}}\addtocounter{myparagraph}{1}
$\gamma_1,\ldots,\gamma_n\in H^*_{orb}(W_0,\QQ) := H^*(\ocI(W_0),\QQ)$ are  classes having homogeneous parity, see opening of Section \ref{S:Degeneration}. In particular only classes on sectors transversal to $D$ are relevant.
\paragraph{\arabic{myparagraph}}\addtocounter{myparagraph}{1}
%\paragraph{\thesubsubsection.\arabic{myparagraph}}\addtocounter{myparagraph}{1}
$m_1,\ldots,m_n\geq 0$ are integers. 
\paragraph{\arabic{myparagraph}}\addtocounter{myparagraph}{1}
%\paragraph{\thesubsubsection.\arabic{myparagraph}}\addtocounter{myparagraph}{1}
Consider a twisting choice $\fr$ (see \ref{D:twistingchoice}) and the moduli stack $\cK :=\cK^\fr_\Gamma(W_0)$ of $\fr$-twisted stable maps (see \ref{D:rtsm}). 
\paragraph{\arabic{myparagraph}}\addtocounter{myparagraph}{1}
%\paragraph{\thesubsubsection.\arabic{myparagraph}}\addtocounter{myparagraph}{1}
$\cK$ carries several universal maps, the coarsest of which is a stable map $\underline C \to \bar W_0$ from the coarse contracted curve $\underline C$ to the coarse target $\bar W_0$. We have $n$ sections $s_i: \cK \to \underline C$. We denote $\psi_i = s_i^*c_1(\omega_{\underline C/\cK})$.
\paragraph{\arabic{myparagraph}}\addtocounter{myparagraph}{1}
%\paragraph{\thesubsubsection.\arabic{myparagraph}}\addtocounter{myparagraph}{1}
We have $n$ evaluation maps $\ev_i: \cK \to \ocI(W_0)$.
\paragraph{\arabic{myparagraph}}\addtocounter{myparagraph}{1}
%\paragraph{\thesubsubsection.\arabic{myparagraph}}\addtocounter{myparagraph}{1}
Finally we define%\Dan{Should we change to $\bev_{*}(\prod_{i=1}^n \psi_i^{m_i}\cap{[\cK]^\vir}) \cdots$?}
$$\left\langle{ \prod_{i=1}^n\tau_{m_i}(\gamma_i)}\right\rangle_{\beta,g}^{W_0}\ \   = \ \   \deg\left(\left(\prod_{i=1}^n \psi_i^{m_i}\cdot \ev_i^*\gamma_i\right)\cap{[\cK]^\vir}\right).$$
%\int_{[\cK]^\vir}\prod_{i=1}^n \psi_i^{m_i}\cdot \ev_i^*\gamma_i.$$
\subsubsection{Right hand side}
\paragraph{\arabic{myparagraph}}\addtocounter{myparagraph}{1}
%\paragraph{\thesubsubsection.\arabic{myparagraph}}\addtocounter{myparagraph}{1}
$F$ is a homogeneous basis for $H^*(\ocI(D),\QQ)$.
\paragraph{\arabic{myparagraph}}\addtocounter{myparagraph}{1}
%\paragraph{\thesubsubsection.\arabic{myparagraph}}\addtocounter{myparagraph}{1}
$\tilde\delta^\vee$ is the dual of  $\delta\in F$ with respect  to the Chen--Ruan pairing, i.e. $\int_{\ocI(D)}\frac{1}{r}\iota^*\tilde\delta_j^\vee\cdot\delta_i  = \int_{\cI(D)}\iota^*\tilde\delta_j^\vee \cdot \delta_i = \delta_{i,j},$ see \ref{S:pf-mainth}. 
\paragraph{\arabic{myparagraph}}\addtocounter{myparagraph}{1}
%\paragraph{\thesubsubsection.\arabic{myparagraph}}\addtocounter{myparagraph}{1}
$\Omega$ is the set of splittings of the data $g,n,\beta$, see \ref{D:enh-spl} for all details. An element $\eta=(\Csi_1,\Csi_2)\in \Omega$ includes in particular the data below:
\paragraph{\arabic{myparagraph}}\addtocounter{myparagraph}{1}
%\paragraph{\thesubsubsection.\arabic{myparagraph}}\addtocounter{myparagraph}{1}
$N_1,N_2$ is a decomposition of $\{1,\ldots,n\}$ in two subsets.
\paragraph{\arabic{myparagraph}}\addtocounter{myparagraph}{1}
%\paragraph{\thesubsubsection.\arabic{myparagraph}}\addtocounter{myparagraph}{1}
$\Csi_1,\Csi_2$ is a possibly disconnected splitting of the data $\beta,g$ in two modular graphs
having roots labelled by $M=\{n+1,\ldots,n+|M|\}$, see \ref{D:modular-graph}.
\paragraph{\arabic{myparagraph}}\addtocounter{myparagraph}{1}
%\paragraph{\thesubsubsection.\arabic{myparagraph}}\addtocounter{myparagraph}{1}
$d_i$ for $i\in M$ are assigned intersection multiplicities satisfying condition B of \ref{D:enh-spl}
\paragraph{\arabic{myparagraph}}\addtocounter{myparagraph}{1}
%\paragraph{\thesubsubsection.\arabic{myparagraph}}\addtocounter{myparagraph}{1}
We take $\cK_1 :=\cK^\fr_{\Csi_1}(X_1,D)$ and $\cK_2:=\cK^\fr_{\Csi_2}(X_2,D)$.
\paragraph{\arabic{myparagraph}}\addtocounter{myparagraph}{1}
%\paragraph{\thesubsubsection.\arabic{myparagraph}}\addtocounter{myparagraph}{1}
For $j\in M$ the new evaluation maps are $\ev_j:\cK_1\to \ocI(D)$, and similarly for $\cK_2$.
\paragraph{\arabic{myparagraph}}\addtocounter{myparagraph}{1}
%\paragraph{\thesubsubsection.\arabic{myparagraph}}\addtocounter{myparagraph}{1}
$(-1)^\epsilon$ is the sign determined formally by the equality
$$\prod_{i=1}^n \gamma_i\prod_{j\in M} \delta_j\tilde\delta_j^\vee = (-1)^\epsilon\prod_{i\in N_1} \gamma_i\prod_{j\in M} \delta_j \prod_{i\in N_2} \gamma_i\prod_{j\in M}   \tilde\delta_j^\vee.$$
\paragraph{\arabic{myparagraph}}\addtocounter{myparagraph}{1}
%\paragraph{\thesubsubsection.\arabic{myparagraph}}\addtocounter{myparagraph}{1}
Finally we define
\begin{align*} 
\lefteqn{\left\langle \prod_{i\in N_1} \tau_{m_i}(\gamma_{i})\bigg\vert  \prod_{j\in M}\delta_j
  \right\rangle^{(X_1,D)}_{\Csi_1} } \\ & \ \  :=  \ \ 
\deg\left(\left(\prod_{i\in N_1} \psi_i^{m_i}\cdot \ev_i^*\gamma_i\right)\left(\prod_{j\in M}\ev_j^*\delta_j\right)\cap {[\cK_1]^\vir}\right) \\ 
\intertext{and similarly}
\lefteqn{\left\langle \prod_{j\in N_2} \tau_{m_j}(\gamma_{j})\bigg\vert\prod_{j\in M}\tilde\delta^\vee_j
  \right\rangle^{(X_2,D)}_{\Csi_1} } \\ & \ \  := \ \ 
\deg\left(\left(\prod_{i\in N_2} \psi_i^{m_i}\cdot \ev_i^*\gamma_i\right)\left(\prod_{j\in M}\ev_j^*\tilde\delta_j^\vee\right)\cap{[\cK_2]^\vir}\right).
\end{align*}

Perhaps the most mysterious part of the formula is the factor $\prod_{j\in M} d_j$. In previous works this arises as a result of delicate deformation theory of admissible or predeformable maps. In this paper it arises as a natural, but still delicate, outcome of the geometry of orbifold maps, see Lemma \ref{L:eval-lift-c} and Proposition \ref{P:diag-gysin}.

\subsection{The symplectic approach} \ChDan{A degeneration formula for symplectic orbifolds was worked out in  \cite{CLSZ}, which relies on orbifold good maps in the sense of Chen and Ruan and on analytic techniques. We have compared our formula with that of  \cite{CLSZ} and have been convinced that the results coincide, although the formalisms are sufficiently different that a direct comparison would be very technical.} 

\subsection{The logarithmic approach}
\ChDan{Jun Li's study of predeformable maps and their obstruction theory was inspired by logarithmic structures. Recently a more direct use of logarithmic structures has become possible, relying on
 the work of Olsson \cite{Olsson-log-cotangent}. B. Kim \cite{Kim} replaced J. Li's obstruction theory by one induced by the natural log structures, with a degeneration formula worked out  by Q. Chen in \cite{Chen-deg} and virtual localization worked out by Molcho and Routis \cite{Molcho-Routis}.  M. Gross and B. Siebert \cite{GS}, Chen \cite{Chen} and Abramovich and Chen \cite{AC} have developed a logarithmic theory without expansions. 

These approaches have been compared in \cite{AMW}, where it was shown that Li's invariants, the invariants introduced here, Kim's invariants, and the logarithmic invariants defined without expansions all coincide.}

\subsection{Outline of the paper}

In section \ref{S:Roots} we review twisted curves and root constructions, as their fine structure is key to our methods.

In Section \ref{S:Expansions} we briefly review expanded pairs and expanded degenerations and their twisted versions, and describe their boundary. In addition we introduce a weighted version of the stacks of twisted expanded pairs and expanded degenerations. Finally we treat stable configurations of points on  expanded pairs and expanded degenerations.

Section \ref{S:Stablemaps} leads to the construction of stacks of $\fr$-twisted stable maps and a proof of their properness. 

In Section \ref{S:Invariants} we define Gromov--Witten invariants using $\fr$-twisted stable maps. We prove their deformation invariance and independence of twisting choice.

The degeneration formula is restated and proven in Section \ref{S:Degeneration}. 

Results in sections \ref{S:Invariants} and \ref{S:Degeneration} rely on compatibility results for virtual fundamental classes. We find it useful to systematically use Costello's result  \cite[Theorem 5.0.1]{Costello} and its generalization in \cite[Proposition 2, Section 4.3]{Manolache} where a smoothness assumption is removed.

In addition we have three appendices, with necessary material which the knowledgeable reader may only wish to peruse when needed. In Appendix \ref{A:pairs}  we review material concerning pairs $(X,D)$, nodal singularities, and transversality. Appendix \ref{A:stacks} is devoted to a number of basic construction with stacks. In Appendix \ref{obs_th} we review the algebraicity of stacks of maps and construction and properties of their obstruction theories.

%\newpage \section{Preliminaries}\label{S:Preliminaries}

%We collect here
% some definitions and results that we will use in the sequel.\Dan{1.1 and 1.2 move to end of intro in appendix version} 
\subsection{Conventions}

The following conventions will be in force throughout the paper.

We work over an algebraically closed base field of characteristic $0$, denoted by $\CC$. We note that the assumption that the field be algebraically closed is mostly for convenience, whereas the characteristic assumption is significantly harder, and for some purposes impossible, to avoid.

\ChDan{Further, as soon as we study Gromov--Witten invariants, we require the field $\CC$ to be the field of complex numbers. This allows us to access singular cohomology, in which a K\"unneth decomposition of the diagonal of $D$ is available. One can avoid this by stating a cycle-theoretic degeneration formula as in \cite{Li2}, but we do not pursue this here.}

All stacks and morphisms are assumed to be locally of finite type over $\CC$, unless otherwise specified.

A point in a scheme or algebraic stack (sometimes denoted by $p\in X$) is a $\CC$-valued point, unless otherwise specified. 

Whenever we say locally we always mean \'etale locally, unless otherwise specified.

If $X$ is an algebraic stack, by $D(X)$ we denote the derived category of sheaves of $\cO_X$-modules with coherent cohomology. An object $\FF\in D(X)$ is called {\em perfect of perfect amplitude contained in} $[a,b]$, or just {\em perfect in} $[a,b]$ for brevity, if it is locally isomorphic to a complex of locally free sheaves in degrees $a,a+1,\ldots,b$.

An element in a skew commutative graded ring is of {\em homogeneous parity} if it is a sum of only even-degree or only odd-degree terms, in which case its parity is even or odd, respectively.

%\newpage 
\subsection{Notation}\ %\Dan{revise in final version}
\hfill\\

\noindent
\begin{tabular}{ll}
$\CC$ & fixed algebraically closed field of characteristic 0.\\
$\cA$ & The stack $[\bbA^1/\GG_m]$.\\
$\LL_{X/Y}$, $\LL_f$ & cotangent complex of a morphism $f:X \to Y$.\\
$k$ & number of components in an  \ChDan{expansion} \\
$\ell$ & generic splitting divisor\\
$r$ & twisting index along a divisor\\
${\bf r}$, $\fr$& twisting sequence on an  \ChDan{expansion}, twisting choice\\
$n_X$, $e_i$ & number of legs to general point, twisting tuple\\
$n_D$, $f_j$ & number of legs to boundary divisor, twisting tuple\\
$c_j$, $d_j$ & contact order and intersection multiplicity at such a point\\
$d$ & $\beta\cdot D$\\
$m$ & index of $\tau$ in descendant notation $\tau_m(\gamma)$\\
$h$, $j$ & number of components of a disconnected graph, their index 
\end{tabular}

\subsection*{Acknowledgements} This  work was initiated during a joint visit at the University of Nice.  The program was further developed in countless visits at the MFO, Oberwolfach, and a program at the de Giorgi Center, Pisa. It was finalized at a program in MSRI, which provided time and immediate interaction with numerous colleagues. We heartily thank Angelo Vistoli for numerous comments as the project evolved, and  Martin Olsson who jump-started our understanding of logarithmic structures.   Our indebtendness to Jun Li is evident throughout the paper. We also thank Tom Bridgeland, Charles Cadman, \ChDan{Bohui Chen}, Brian Conrad, Bumsig Kim,  Joseph Lipman, Amnon Ne'eman,  Ravi Vakil, Jonathan Wise and Aleksey Zinger for helpful discussions.

%\newpage
\section{Roots and twists}\label{S:Roots}

The aim of this section is to briefly outline the definition and properties of several stacks and universal families parametrizing twisted versions of the moduli spaces used in Jun Li's original proof. The starting point comes from the stack of twisted pre stable curves, which was introduced in \cite{Abramovich-Vistoli,AGV} to define GW invariants of orbifolds: the only difference with the usual pre stable curves is that it is allowed to add a twisted, or stacky, structure at the marked points and nodes. In each case the automorphism group is finite cyclic, an in the case of nodes it is also required to be balanced, a necessary and sufficient condition for the twisted curves to be smoothable. 

We apply the same principle to Jun Li's moduli stacks of expanded pairs and expanded degenerations, where we twist both the boundary divisors and the singular locus. We suggest that the reader skim through the definitions, going back to them and to the properties as needed in the course of reading the paper.

\subsection{Root stacks}\label{SS:root}
We review here the theory developed in \cite[Appendix B]{AGV}, \cite[Section 2]{Cadman1}, \cite[Section 4]{Matsuki-Olsson}.

\subsubsection{Root stack of a line bundle}
Let $X$ be an algebraic stack, $L$ a line bundle on $X$, and $r>0$ an integer. We define a stack $\sqrt[r]{L/X}$ by requiring that it parametrizes $r$-th roots of the pullback of $L$: objects of $\sqrt[r]{L/X}$ over a base scheme $S$ are triples $(f,M,\phi)$ where
\begin{enumerate}
\item $f:S \to X$ is a morphism, i.e. an object of $X(S)$,
\item $M$ is an invertible sheaf on $S$, and
\item $\phi:M^r \to f^*L$ is an isomorphism.
\end{enumerate}
An isomorphism between $(f,M,\phi)$ and $(\bar f, \bar M,\bar\phi)$ over the identity of $S$ is an isomorphism $\alpha:M\to \bar M$ of line bundles such that $\phi=\bar\phi\circ \alpha^{\otimes r}$.
Note that there is a natural isomorphism $\sqrt[r]{L/X}\to \sqrt[r]{L^\vee/X}$ defined by mapping $(f,M,\phi:M^{\otimes r}\to f^*L)$ to $(f, M^\vee, (\phi^\vee)^{-1})$.

\begin{prop}\label{rootlb} The stack $\sqrt[r]{L/X}$ is algebraic, and it is a gerbe banded by $\mu_r$ over $X$. In particular, the structure map $\sqrt[r]{L/X}\to X$ is \'etale and proper.
\end{prop}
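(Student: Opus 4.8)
The plan is to verify the gerbe condition étale-locally on $X$, which reduces everything to a completely explicit toy case. First I would observe that the statement is étale-local on $X$ in the following sense: if $U \to X$ is an étale cover (or even an fppf cover), then $\sqrt[r]{L/X} \times_X U \cong \sqrt[r]{(L|_U)/U}$, since the objects and isomorphisms in the definition are given by data that pulls back compatibly. Moreover, the property of being a $\mu_r$-gerbe can be checked fppf-locally on the base, as can algebraicity (by Artin's criterion, or by descent of algebraic stacks). So I may assume $L \cong \cO_X$ is trivial, having chosen a trivializing section.

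Next, with $L = \cO_X$, I claim there is a canonical isomorphism $\sqrt[r]{\cO_X/X} \cong X \times B\mu_r$, where $B\mu_r = [\Spec\CC / \mu_r]$. Indeed, a triple $(f, M, \phi: M^{\otimes r} \to \cO_S)$ is exactly the data of a line bundle $M$ on $S$ together with a trivialization of $M^{\otimes r}$, and this is the standard presentation of the stack $B\mu_r$: such pairs $(M,\phi)$ are classified by $\mu_r$-torsors via $M = $ (associated line bundle), because $\mu_r = \Ker(\gm \xrightarrow{(-)^r} \gm)$ and a $\mu_r$-torsor is precisely a $\gm$-torsor with a chosen $r$-th-power trivialization. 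Under this identification the morphism $\sqrt[r]{\cO_X/X} \to X$ becomes the projection $X \times B\mu_r \to X$, which is the base change of $B\mu_r \to \Spec\CC$. I would then record that $B\mu_r \to \Spec\CC$ is an algebraic stack, is a $\mu_r$-gerbe over the point (by definition/triviality), and is finite étale — $\mu_r$ is a finite étale group scheme over $\CC$ since $\chara\CC = 0$, so $\Spec\CC \to B\mu_r$ is a finite étale atlas and the structure map is representable finite étale, hence proper. Base-changing along $X \to \Spec\CC$, all of these properties are preserved: $\sqrt[r]{\cO_X/X} \cong X\times B\mu_r \to X$ is algebraic, a $\mu_r$-gerbe, étale and proper.

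Finally I would descend back to general $L$. Choosing an étale cover $\{U_i \to X\}$ trivializing $L$, the above shows $\sqrt[r]{L/X}|_{U_i}$ is algebraic, a $\mu_r$-gerbe over $U_i$, and étale and proper over $U_i$; since algebraicity, the $\mu_r$-gerbe property, étaleness, and properness are all étale-local on the target, they hold for $\sqrt[r]{L/X} \to X$ itself. (Alternatively, and perhaps cleaner for the write-up: exhibit $\sqrt[r]{L/X}$ directly as the quotient of the $\gm$-torsor $\underline{\operatorname{Spec}}_X(\bigoplus_{n} L^{\otimes n})$ minus its zero section by the $r$-th-power action, which manifestly gives a $\mu_r$-gerbe structure without passing to a cover — but the local computation is the conceptually transparent route.)

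The only genuinely delicate point is the identification $(M, \phi: M^{\otimes r}\xrightarrow{\sim}\cO_S) \leftrightarrow \mu_r\text{-torsor}$ together with the claim that it is compatible with isomorphisms — i.e. that an isomorphism $\alpha: M \to \bar M$ with $\phi = \bar\phi\circ\alpha^{\otimes r}$ corresponds exactly to a morphism of $\mu_r$-torsors. This is where one must be careful that the groupoid of such pairs is \emph{equivalent} to, not merely in bijection with, $(B\mu_r)(S)$; everything else is then formal base change and étale-local descent. I do not expect any obstacle beyond bookkeeping here.
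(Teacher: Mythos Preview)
The paper does not supply its own proof of this proposition: it is stated as part of the review of root-stack theory, with proofs deferred to \cite[Appendix B]{AGV}, \cite[Section 2]{Cadman1}, and \cite[Section 4]{Matsuki-Olsson}. Your argument is correct and is essentially the standard one given in those references: reduce \'etale-locally to trivial $L$, identify $\sqrt[r]{\cO_X/X}$ with $X \times B\mu_r$ via the Kummer description of $\mu_r$-torsors, and descend. The remark the paper makes immediately after the proposition --- that trivializing the gerbe amounts to choosing an $r$-th root of $L$ --- is precisely the mechanism underlying your local computation (locally $\cO_X$ is such a root), so your write-up is fully in line with what the paper has in mind.
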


Note that  trivialising the gerbe, i.e. giving an isomorphism $\sqrt[r]{L/X}\to X\times B\mu_r$, is equivalent to choosing a line bundle $M$ on $X$ together with an isomorphism $M^{\otimes r}\to L$.

\subsubsection{Root of a divisor}
Let $X$ be an algebraic stack and $D$ an effective  Cartier divisor on $X$; it defines a line bundle $\cO_X(D) = \cI_D^\vee$ with a canonical section $\bone_D$. We denote by  $X(\sqrt[r]{D})$ the stack parametrizing simultaneous $r$-th roots of the pullback of $\cI_D^\vee$ and of the section: objects over a base scheme $S$ are tuples $(f,M,\phi,s)$ where
\begin{enumerate}
\item $f:S \to X$ is a morphism,
\item $M$ is an invertible sheaf on $S$, 
\item $\phi:M^r \to f^*L$ an isomorphism, and
\item $s\in H^0(M)$ a section,
\end{enumerate}
such that $\phi(s^r) = \bone_D$.
Again arrows are given using pullbacks.

	Notice that in case $f:S\to X$ is a morphism \ChDan{ such that $f^*D$ is still a Cartier divisor,} then the groupoid $X(\sqrt[r]{D})(S)$ is rigid, and each isomorphism class defines a Cartier divisor $\cD:=Z(s)$ on $S$ such that $m\cD=f^*D$.

\begin{prop}\label{rootdiv} The stack $X(\sqrt[r]{D})(S)$ is algebraic, and the structure morphism $f:X(\sqrt[r]{D})(S)\to X$ is flat of relative dimension zero, and an isomorphism away from $D$. As a stack  the divisor $\cD$ is isomorphic to $\sqrt[r]{N_{D\subset X}/D}$.  \end{prop}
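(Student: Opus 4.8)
The plan is to analyze the stack $X(\sqrt[r]{D})$ first étale-locally on $X$ where $D$ is cut out by a single equation, and then glue. First I would reduce to the local model: choose an étale cover on which $\cO_X(D)$ is trivialized, so $D = Z(t)$ for a global function $t$ on an affine $U = \Spec A$. Then giving the data $(f,M,\phi,s)$ over $S \to U$ amounts (after the trivialization of $M$ via $\phi$, using the rigidity observed just before the statement) to giving a function $u$ on $S$ together with an equality $u^r = f^\sharp t$. Hence the local model of $X(\sqrt[r]{D})$ is $\Spec A[u]/(u^r - t) \to \Spec A$, i.e. $U(\sqrt[r]{D})$ is the scheme obtained by adjoining an $r$-th root of the equation of $D$. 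From this presentation algebraicity is immediate, and flatness of relative dimension zero follows because $A[u]/(u^r-t)$ is a free $A$-module of rank $r$; the morphism is an isomorphism away from $D$ because there $t$ is invertible, so $u = t^{1/r}$ is determined and $A[u]/(u^r-t) \cong A$ after inverting $t$ (more precisely, $u \mapsto$ the unique unit with that property, using that we are in characteristic $0$ and can extract roots on the open locus only after an étale extension — but the key point is that the divisor $\cD = Z(u)$ is empty over the complement of $D$, so the map is an isomorphism there).

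The substantive part is the last assertion, that $\cD \cong \sqrt[r]{N_{D\subset X}/D}$. I would proceed as follows. The divisor $\cD$ on $X(\sqrt[r]{D})$ is $Z(s)$, which in the local model is $Z(u) = \Spec A[u]/(u^r - t, u) = \Spec A/(t) = D$ as a topological space, but with a nontrivial scheme/stack structure: $\cD = \Spec \big( (A[u]/(u^r-t))/(u) \big)$, and one computes that the conormal data of $\cD$ inside $X(\sqrt[r]{D})$ is generated by $u$ with the relation $u^r = t$, where $t$ is (the pullback of) a local generator of $\cI_D$, i.e. of $N^\vee_{D\subset X}$. Thus $\cD$ carries tautologically a line bundle $M|_\cD$ whose $r$-th power is the pullback of $\cI_D^\vee = N_{D\subset X}$, together with the isomorphism $\phi|_\cD$; this is exactly an object of $\sqrt[r]{N_{D\subset X}/D}$, giving a morphism $\cD \to \sqrt[r]{N_{D\subset X}/D}$ over $D$. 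To see it is an isomorphism one checks it étale-locally: over the trivializing cover $\sqrt[r]{N_{D\subset X}/D} \cong D \times B\mu_r$, and the local computation above shows $\cD$ has the same local presentation, with the $\mu_r$ acting by scaling $u$. Functoriality of all the constructions then shows the local isomorphisms glue.

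The main obstacle I anticipate is bookkeeping the stack structure on $\cD$ correctly — in particular verifying that the natural line bundle on $\cD$ really is the pullback of $N_{D\subset X}$ (and not, say, its $r$-th tensor power or dual) and that the $\mu_r$-gerbe/banding matches. This is where one has to be careful with the section $s$ and the constraint $\phi(s^r) = \bone_D$: the point is that $s$ vanishes to order exactly one along $\cD$, so $M|_\cD$ is genuinely the $r$-th root of the normal bundle and not of $\cO_\cD$. Everything else — algebraicity, flatness of relative dimension zero, the isomorphism away from $D$ — is formal once the local model $\Spec A[u]/(u^r-t)$ is in hand.
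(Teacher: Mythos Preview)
Your local model is wrong, and the error is exactly where you felt uneasy. You invoke the rigidity statement ``the groupoid $X(\sqrt[r]{D})(S)$ is rigid'' to conclude that over $U=\Spec A$ with $D=Z(t)$ one has $X(\sqrt[r]{D})\times_X U \simeq \Spec A[u]/(u^r-t)$. But that rigidity holds only when $f^*D$ is still a Cartier divisor, i.e.\ when $s$ is a nonzerodivisor. Over $D$ the section $s$ vanishes and the object $(M,\phi,s=0)$ has automorphism group $\bmu_r$ (scaling $M$ by an $r$-th root of unity). The correct local model is the quotient stack
\[
X(\sqrt[r]{D})\times_X U \;\simeq\; \bigl[\Spec A[u]/(u^r-t)\,\big/\,\bmu_r\bigr],
\]
with $\bmu_r$ acting by $u\mapsto \zeta u$; this is precisely the description the paper gives immediately after the proposition (the cyclic cover $Y\to X$ modulo $\bmu_r$).

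This is not a cosmetic point: your scheme $\Spec A[u]/(u^r-t)\to \Spec A$ is finite flat of degree $r$, hence an \'etale $\bmu_r$-cover away from $D$, \emph{not} an isomorphism there --- which is why you found yourself reaching for ``extract roots only after an \'etale extension.'' Once you pass to the $\bmu_r$-quotient, the action is free over $U\setminus D$ (since $u$ is a unit), so the quotient map is an isomorphism there, and over $D$ the stabilizer is all of $\bmu_r$, so $\cD\simeq [D/\bmu_r]$ with the residual action on the normal direction $u$ recording exactly the $r$-th root of $N_{D\subset X}$. With the corrected local model your argument for the last assertion goes through essentially as written.
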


If we choose a line bundle $M$ on $X$ together with an isomorphism $M^\otimes r\to \cO_X(D)$, we can associate to it a simple cyclic cover $Y\to X$ branched over $D$; in this case, $X(\sqrt[r]{D})$ is isomorphic to the stack quotient $[Y/\mu_r]$. From this it follows that if
$(X,D)$ is a locally smooth pair in the sense of \ref{onpairs} (i.e., $D$ is smooth and $X$ is smooth near $D$) then so is $(X(\sqrt[r]D),\cD)$. In this case we sometimes say that $(X(\sqrt[r]D),\cD)$ is obtained from $(X,D)$ by twisting $D$ to order $r$.

\subsubsection{Roots with several divisors} Given finitely many  effective Cartier divisors  $D_1,\ldots,D_k$ on $X$, and given positive integers $r_1,\ldots ,r_k$, we use the following notation:

$$X(\sqrt[r_1]{D_1},\ldots,\sqrt[r_k]{D_k})\ \  :=\ \  X(\sqrt[r_1]{D_1})\times_X\cdots\times_X X(\sqrt[r_k]{D_k}).$$

We will mostly use this notation when $(X,D_i)$ is a locally smooth pair for every $i$ and the divisors $D_i$ meet transversally: in this case, the same is true for $X(\sqrt[r_1]{D_1},\ldots,\sqrt[r_k]{D_k})$ and the $\cD_i$.
 % The theory of root stacks was developed independently by C. Cadman  and A. Vistoli; see \cite{Cadman1,AGV} for details.

\subsubsection{Comparison of roots}\label{SSS:compare-roots} Note that if $r = r'\cdot r''$ then $\nu_r = \nu_{r'} \circ \nu_{r''}$. In particular we have canonical morphisms
$\sqrt[r]{L/X} \to \sqrt[r']{L/X}$ and $X(\sqrt[r]D)\to X(\sqrt[r']D)$. In fact this gives a canonical isomporphism %\Dan{do we need this section}

$$\xymatrix{\sqrt[r]{L/X}\ar[r]& \sqrt[r'']{M \big/ \left(\sqrt[r']{L/X}\right)}
}$$ 
and similarly an isomorphism
$$\xymatrix{X(\sqrt[r]D)\ar[r] & \left(X(\sqrt[r']D)\right)\left(\sqrt[r'']{\cD}\right).}$$ 

\subsubsection{Twisted curves as root stacks: the markings} Suppose now $\cC$ is a twisted curve with markings $\Sigma_i$ with indices $e_i$. Then $\cC$ is canonically a root stack as follows. Consider the curve $\cC^u$ obtained by gluing the coarse moduli space of the smooth locus $\cC^\sm$ with $\cC \setminus (\cup \Sigma_i)$. We have a ``partial coarse moduli space" morphism $\pi':\cC \to \cC^u$. Denote by $\Sigma_i^u$ the markings on $\cC^u$. Then ${\pi'}^* \Sigma_i^u = r_p \Sigma_i$. This gives a canonical morphism $$\cC \to \cC^u(\sqrt[e_1]{\Sigma_1},\ldots,\sqrt[e_n]\Sigma_n)$$ which is easily seen to be an isomorphism \cite[Theorem 4.2.1]{AGV}. 

Generalizing the structure of twisted curves at nodes is a bit more subtle, see \ref{balanced_node}.

\subsubsection{Triviality of relative automorphisms} %Assume again $(X,D)$ is a locally smooth pair. 
We return to the general setup, and consider 
Since $X(\sqrt[r]D) \to X$ is representable over the dense open $X\setminus D$, the groupoid $\Aut_X(X(\sqrt[r]D))$ is equivalent to a group, see \ref{2stacks_as_stacks}, and we regard it as a group. But since for  dominant $f:S \to X$ an object $(f,M,\phi,s)$ is determined by $f$, the group $\Aut_X(X(\sqrt[r]D))$ is trivial.

%We may similarly consider deformations of the pair $(X(\sqrt[r]D),\cD)$ over any given deformation of the pair $(X,D)$. But again, since $(X(\sqrt[r]D),\cD)$ is determined uniquely up to unique isomorphism by $(X,D)$, such deformations are trivial. 

\subsubsection{Inertia of root stacks when $X$ is a scheme} Inertia stacks are reviewed in \ref{SS:inertia}. Since we are working over $\CC$ we will identify inertia stacks and cyclotomic inertia stacks.
It will be useful for us to describe the  cyclotomic inertia stack  of $X(\sqrt[r]{D})$ and its rigidified version, and similarly for the substack $\cD$. The picture is clear when $X$ is a scheme or an algebraic space: since $\cD$ is a gerbe we have $\cI(\cD) = \sqcup_{i=0}^{r-1}\cD$ and  $\ocI(\cD) = \sqcup_{i=o}^{r-1}\cD_i$, where $\cD_i\simeq \sqrt[g]{N_{D\subset X}/D}$ and $g=\gcd(r,i)$. We similarly have
$$\cI(X(\sqrt[r]{D})) = X(\sqrt[r]{D}) \sqcup \coprod_{i=1}^{r-1}\cD,$$ and 
$$\ocI(X(\sqrt[r]{D})) = X(\sqrt[r]{D}) \sqcup \coprod_{i=1}^{r-1}\cD_i.$$
 The latter follows from the decompositions into an open substack  and closed complement 
\begin{align*}\cI(X(\sqrt[r]{D})) &\ \ \ =\ \ \  (X\setminus D)\ \sqcup\ \cI(\cD)\\ \intertext{and} \ocI(X(\sqrt[r]{D}))&\ \ \ =\ \ \  (X\setminus D)\ \sqcup\ \ocI(\cD).
\end{align*}

%$\cI(X(\sqrt[r]{D}))= (X\setminus D) \sqcup \cI(\cD)$ and $\ocI(X(\sqrt[r]{D}))= (X\setminus D) \sqcup \ocI(\cD)$. 

\subsubsection{Inertia of root stacks when $X$ is an orbifold} In case $X$ itself is an orbifold the picture is almost identical, using the inertia stacks of $X$ and $D$: we still have decompositions into an open substack and closed complement precisely as above. The coarse moduli space of the stack $\cI(\cD)$ consists of  $r$ copies of the coarse moduli space of $\cI(D)$. However the stack structure of the components  of $\cI(\cD)$ and $\ocI(\cD)$ becomes slightly more involved than in the case when $X$ is representable.

We note that $\cI(B\GG_m)\simeq\GG_m \times B\GG_m$, and the morphism $\cI(B\GG_m)\to \cI(B\GG_m)$ is simply $\nu_r \times \nu_r$. In particular this morphism is a $\bmu_r$-gerbe over a $\bmu_r$ torsor, coresponding to the gerbe factor $B\GG_m \to B\GG_m$ and the torsor factor $\GG_m\to \GG_m$.  As discussed in  \ref{SS:inertia}, forming the inertia is compatible with fiber products. We obtain that $\cI(\cD) = \cI(D) \times_{\cI(B\GG_m)}\cI(B\GG_m) \to  \cI(D)$ is canonically a $\bmu_r$-gerbe over a $\bmu_r$ torsor. 

Since $D$ is assumed Deligne--Mumford, the image of $\cI(D) \to \GG_m$ is discrete, so the torsor is trivial (though as a group scheme it is a possibly nontrivial extension). In particular every component of $\cI(\cD)$ is a  $\bmu_r$-gerbe over the image component of $\cI(D)$. By definition this is the gerbe associated to the normal bundle of $D$, namely the pullback of $\cD$. We obtain the following formula:
$$\cI(\cD) \ = \ \bmu_r \times \cI(D)\times_D\cD.$$

Of course the group scheme structure of $\cI(\cD) \to \cD$ is not a product but the extension of the group shcheme $\cI(D)\times_D\cD \to \cD$ by $\bmu_r$ corresponding to the normal bundle of $D$. Explicitly, one can look at local models on $X$ of the form $[V/G]$, where $V$ is smooth and $G$ is the stabilizer of a geometric point. We may assume that $D$ is defined by an eigenfunction $x$. Denote the character  of the action of $G$ on $x$ by $\chi:G \to\GGm$.    Then a local model of $X(\sqrt[r]{D})$ is given by $[\tilde V /\tilde G]$, where  $\tilde V = \Spec_V\cO_V[u]/(u^r-x)$, and $\tilde G = G\mathop\times\limits_{\chi, \GGm,\nu_r} \GG_m$ is the natural extension of $G$ by $\bmu_r$.

A similar description follows for rigidified inertia stack. What we will need is the following: 
\begin{lemma}\label{L:rig-in-gerbe-c}
Let $(\tilde x,\tilde g)$ be an object of a component $\cZ\subset \cI(\cD)$. Let $(x,g)$ be the image object of the corresponding component $Z\subset \cI(D)$. Denote by $(\tilde x,\tilde g)\thickslash \langle\tilde g\rangle$ and $(x,g)\thickslash \langle g\rangle$ the corresponding objects of components $\overline\cZ \subset \ocI(\cD)$ and  $\overline Z \subset \ocI(D)$. Write $|\langle\tilde g\rangle| = \frac{r}{c}|\langle g\rangle|$. Then the morphism $\overline\cZ \to \overline Z$ is a gerbe banded by $\bmu_c$.
\end{lemma}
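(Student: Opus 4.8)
The plan is to reduce the statement to the explicit local model for $X(\sqrt[r]{D})$ given just above, namely $[\tilde V/\tilde G]$ with $\tilde V = \Spec_V \cO_V[u]/(u^r - x)$ and $\tilde G = G \times_{\chi,\GGm,\nu_r}\GG_m$, and then to compute the relevant inertia components and their rigidifications directly in this model. First I would recall that by the formula $\cI(\cD) = \bmu_r \times \cI(D)\times_D \cD$ (as stacks, with the extension structure on the group scheme), a component $\cZ \subset \cI(\cD)$ corresponds to a component $Z \subset \cI(D)$ together with the choice of a lift of the group element: an object $(x,g)$ of $Z$ has $g$ acting on the fiber with some stabilizer $\langle g\rangle$, and the preimage in $\tilde G$ of $g$ consists of elements $\tilde g$ whose order is governed by how $g$ acts on the coordinate $u$ (equivalently, by the character $\chi$ evaluated at $g$ together with the $\bmu_r$-factor). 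The integer $c$ in the statement, defined by $|\langle \tilde g\rangle| = \frac{r}{c}|\langle g\rangle|$, measures exactly the discrepancy between the order of the chosen lift $\tilde g$ and $\frac{r}{1}$ times $|\langle g\rangle|$ — i.e. how much of the $\bmu_r$ "extra" automorphism survives after one remembers $\tilde g$.

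Next I would carry out the rigidification. Rigidifying $\cZ$ along $\langle \tilde g\rangle$ means quotienting out the central cyclic subgroup generated by $\tilde g$; rigidifying $Z$ along $\langle g\rangle$ means quotienting out $\langle g\rangle$. The map $\overline{\cZ}\to \overline{Z}$ is then, locally, the map of quotient stacks induced by $\tilde G/\langle\tilde g\rangle \to G/\langle g\rangle$ (after identifying $\cZ$, $Z$ with the appropriate fixed-locus quotient stacks). The kernel of $\tilde G \to G$ is $\bmu_r$ by construction of $\tilde G$; I would then show that the kernel of the induced map $\tilde G/\langle\tilde g\rangle \to G/\langle g\rangle$ is the image of $\bmu_r$ in $\tilde G/\langle\tilde g\rangle$, and that this image is cyclic of order exactly $c$. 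The order count is the arithmetic heart: the composite $\bmu_r \hookrightarrow \tilde G \twoheadrightarrow \tilde G/\langle \tilde g\rangle$ has kernel $\bmu_r \cap \langle\tilde g\rangle$, and a short order computation using $|\langle\tilde g\rangle| = \frac{r}{c}|\langle g\rangle|$, $|\langle g\rangle| = |\langle\tilde g\rangle|/|\bmu_r\cap\langle\tilde g\rangle| \cdot (\text{something})$ — more precisely using that $\langle\tilde g\rangle \cap \bmu_r$ has index $|\langle g\rangle|$ in $\langle\tilde g\rangle$ since $\langle\tilde g\rangle/(\langle\tilde g\rangle\cap\bmu_r)\hookrightarrow \langle g\rangle$ is surjective — gives $|\langle\tilde g\rangle \cap \bmu_r| = |\langle\tilde g\rangle|/|\langle g\rangle| = r/c$, hence the image of $\bmu_r$ in $\tilde G/\langle\tilde g\rangle$ has order $r/(r/c) = c$. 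This identifies the kernel as $\bmu_c$ (it is canonically cyclic, being a subquotient of $\bmu_r$).

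Having computed the kernel, I would then argue that $\overline{\cZ}\to\overline Z$ is not merely a map with cyclic $\bmu_c$ "in the fibers" but genuinely a $\bmu_c$-gerbe: one must check it is an epimorphism that is a $\bmu_c$-torsor on the groupoid of automorphisms, i.e. that $\overline{\cZ}\to \overline Z\times B\bmu_c$ is an isomorphism \'etale-locally on $\overline Z$. This follows because $\bmu_c$ sits centrally in $\tilde G/\langle\tilde g\rangle$ (it is the image of the central $\bmu_r$, and $\langle\tilde g\rangle$ was central), so $\tilde G/\langle\tilde g\rangle$ is a central extension of $G/\langle g\rangle$ by $\bmu_c$, and a quotient stack by a central extension is exactly a gerbe banded by that central kernel over the quotient by the quotient group — this is the standard fact, and the banding group is automatically $\bmu_c$ since the extension is by $\bmu_c$. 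Assembling: the local models glue (the construction of $\tilde V, \tilde G$ is \'etale-local and compatible with the gerbe structure of $\cD$ over $D$), giving the result globally.

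The main obstacle I expect is bookkeeping with the extension $\tilde G$ and the two nested cyclic subgroups $\langle g\rangle \subset G$ and $\langle \tilde g\rangle \subset \tilde G$: one must be careful that $\langle\tilde g\rangle$ need not contain the full $\bmu_r$ nor map onto $\langle g\rangle$ with kernel all of $\bmu_r$ — the interaction is precisely what the integer $c$ records, and getting the order computation $|\langle\tilde g\rangle\cap\bmu_r| = r/c$ correctly pinned down (rather than off by a gcd factor) is where care is needed. A secondary, more cosmetic, obstacle is phrasing everything in terms of the rigidified \emph{cyclotomic} inertia $\ocI$ consistently with the conventions fixed in \ref{SS:inertia}, so that "gerbe banded by $\bmu_c$" is the right statement and not "banded by a $\bmu_c$-torsor"; since $D$ is Deligne--Mumford the torsor is trivial (as already observed in the orbifold-inertia discussion above), so this reduces to the gerbe statement cleanly.
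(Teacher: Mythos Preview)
Your proposal is correct and arrives at the same conclusion, but by a genuinely different route from the paper's own argument.

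The paper's proof is a three-line abstract diagram chase: it records that $\cZ \to \overline{\cZ}$ is a gerbe banded by $\langle\tilde g\rangle$, that $Z \to \overline Z$ is a gerbe banded by $\langle g\rangle$, and that $\cZ \to Z$ is a gerbe banded by $\bmu_r$. From the commutative square of these four maps one reads off that $\overline{\cZ}\to\overline Z$ is a gerbe banded by a cyclic group, and its order is forced by multiplicativity of degrees to be $|\langle g\rangle|\cdot r/|\langle\tilde g\rangle| = c$. No local model is ever invoked.

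You instead descend to the explicit presentation $[\tilde V/\tilde G]$ with $\tilde G = G\times_{\chi,\GGm,\nu_r}\GGm$ and compute the kernel of $\tilde G/\langle\tilde g\rangle \to G/\langle g\rangle$ directly as the image of $\bmu_r$, then determine its order via $|\langle\tilde g\rangle\cap\bmu_r| = |\langle\tilde g\rangle|/|\langle g\rangle| = r/c$. This is more work but also more transparent about \emph{where} the factor $c$ comes from group-theoretically; it may be pedagogically preferable. One small point you glossed over: the groups appearing in the inertia components are centralizers $C_{\tilde G}(\tilde g)$ and $C_G(g)$ rather than $\tilde G$ and $G$ themselves. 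In this particular extension the commutator in $\tilde G$ is determined entirely by the commutator in $G$ (since the $\GGm$ factor is abelian), so $C_{\tilde G}(\tilde g)$ is exactly the preimage of $C_G(g)$ and your kernel computation goes through unchanged --- but it would be worth saying so explicitly.
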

\begin{proof} 
We have that  $\cZ \to \overline\cZ$ is a gerbe banded by $\langle\tilde g\rangle$ and $Z \to \overline Z$ is a gerbe banded by $\langle g\rangle$. Also $\cZ \to Z$ is banded by $\bmu_r$. Chasing the diagram shows that $\overline\cZ \to \overline Z$ is indeed a gerbe banded by a cyclic group, and its order is clearly the ratio $|\langle g\rangle| \cdot r / \, |\langle \tilde g\rangle| = c$. 
\end{proof}

%To show this one can look at local models on $X$ of the form $[V/G]$, where $V$ is smooth and $G$ is the stabilizer of a geometric point. We may assume that $D$ is defined by an eigenfunction $x$. Denote the character  of the action of $G$ on $x$ by $\chi:G \to\GGm$.    Then a local model of $X(\sqrt[r]{D})$ is given by $[\tilde V /\tilde G]$, where  $\tilde V = \Spec_V\cO_V[u]/(u^r-x)$, and $\tilde G = G\mathop\times\limits_{\chi, \GGm,\nu_r} \GG_m$ is the natural extension of $G$ by $\bmu_r$.\Dan{Should we use the fiber product description of inertia?}

\subsection{Twisted curves}\label{SS:tw-curves}

A {\em prestable twisted curve} with $n$ markings  is a one dimensional separated connected Deligne--Mumford stack $\cC$, with at most nodal singularities, together with  a collection of disjoint closed substacks $\Sigma_1,\ldots,\Sigma_n$ of the smooth locus of $\cC$  such that: 
\begin{enumerate}
\item the open locus $\cC^\sm\setminus \bigcup \Sigma_i$ in $\cC$ is a scheme;
%\item each $\Sigma_i$ is isomorphic to $B\bmu_{d_i}$;
\item each node is a balanced node. 
\end{enumerate}
The latter condition means that locally $\cC$ looks like the {\em model balanced node} of index $r_p$
$$N_{r_p} := \left[\,\Spec \left(\CC[u,v]/(uv)\right)\ \big/\ \bmu_{r_p}\right]$$
where the action of $\bmu_{r_p}$ is {\em balanced}, namely $(u,v) \mapsto (\zeta_{r_p} u,\zeta_{r_p}^{-1} v)$. 

Similarly, locally near each $\Sigma_i$ the twisted curve $\cC$ looks like
$$\left[\Spec \CC[u]\ /\ \bmu_{e_i}\right]$$
where $\bmu_{e_i}$ acts via $u \mapsto \zeta_{e_i} u$. The integers  $e_1,\ldots,e_n$ are the {\em orbifold indices} of the markings $\Sigma_i$.

%A prestable twisted curve is {\em stable} if its automorphism group is finite.

A family of twisted prestable curves with orbifold indices $e_1,\ldots,e_n$ is a flat morphism $\cC\to S$ together with closed substacks $(\Sigma_1,\ldots,\Sigma_n)$ of $\cC$ such that:
\begin{enumerate}
\item $\Sigma_i$ is a gerbe banded by $\bmu_{e_i}$ over $S$;
\item each fiber $(\cC_s,\Sigma_{1,s},\ldots,\Sigma_{n,s})$ is a twisted prestable curve with orbifold indices $e_i$.
\end{enumerate}

\subsubsection{Automorphism groups of twisted curves}\label{SSS:aut-tw-curve}
By \cite[Lemma 4.2.3]{Abramovich-Vistoli},  see  \ref{2stacks_as_stacks},  the 2-groupoid of twisted curves is equivalent to a stack, so we can speak of automorphisms of twisted curves.

Let $\pi:\cC \to C$ be the morphism from a twisted curve $\cC$ to its coarse moduli space $C$. Since the formation of $C$ is functorial, the automorphism group of $\cC$ acts on $C$. Consider the group $\Aut_C\cC$ of automorphism of $\cC$ acting trivially on $C$. As shown in \cite[Proposition 7.1.1]{ACV} there is a canonical isomorphism
$$\Aut_C\cC \simeq \prod_{p\in \Sing(\cC)} \mu_{r_p}.$$ Notice that nodes contribute but markings do not. 
These automorphisms are known as {\em ghost automorphisms}, as they become ``invisible" when looking only at  $C$. The action of $\mu_{r_p}$ is induced on local coordinates by
$(\xi,\eta) \mapsto (\zeta_{r_p} \xi,\eta)$, equivalently $(\xi,\eta) \mapsto (\xi,\zeta_{r_p} \eta)$. We further discuss these through gluing data in \ref{SSS:bal-node-aut} below.

\subsubsection{Deformations of  twisted curves}\label{SSS:def-tw-curve}
We now consider {\em proper} twisted curves. Just like the untwisted case, deformations of twisted curves are unobstructed \cite[Proposition 2.1.1]{AJ}, \cite[3.0.3]{ACV}. The infinitesimal theory is identical to the untwisted case: first-order infinitesimal automorphisms are in the group $Hom(\Omega^1_\cC(\sum \Sigma_i), \cO_\cC)$, first-order deformations in $Ext^1$ and obstructions vanish since $Ext^2=0$.

%A slightly more subtle point is the comparison between deformation spaces of $(\cC,\Sigma_i)$ and those of the coarse curve.

Again as in the untwisted case, the sheaf $\cExt^1(\Omega^1_\cC(\sum \Sigma_i), \cO_\cC)$ is a sum of one-dimensional skyscraper sheaves supported at the nodes: $\cExt^1(\Omega^1_\cC(\sum \Sigma_i), \cO_\cC) = \oplus_{p\in \Sing(\cC)}\cExt^1(\Omega^1_\cC(\sum \Sigma_i), \cO_\cC)_p$. The balanced action condition guarantees that the action of the stabilizers on these skyscraper sheaves is trivial, therefore they are generated by sections. The local-to-global spectral sequence for $Ext$ gives epimorphisms
$$\Ext^1(\Omega^1_\cC(\sum \Sigma_i), \cO_\cC) \to H^0\left(\cExt^1(\Omega^1_\cC(\sum \Sigma_i), \cO_\cC)_p\right)$$
for all $p\in \Sing(\cC)$, corresponding to a divisor $\Delta_p$ in the versal deformation space - the locus where the node $p$ persists.

\subsubsection{Comparing deformations and automorphisms of $\cC$ and $C$}\label{SSS:def-tw-untw}
%The comparison between the versal deformation spaces, $\Def_{\cC,\Sigma_i}$ of a twisted curve and  $\Def_{C,\bar\Sigma_i}$ of its coarse moduli space,  is a bit more subtle.

Since $\Aut_C\cC$ is discrete, it does not affect first-order infinitesimal automorphisms, so the homomorphism $$Hom(\Omega^1_\cC(\sum \Sigma_i), \cO_\cC)\to Hom(\Omega^1_C(\sum \bar\Sigma_i), \cO_C)$$ is an isomorphism. 

On the other hand the action of $\Aut_C\cC$ on $H^0\left(\cExt^1(\Omega^1_\cC(\sum \Sigma_i), \cO_\cC)\right)$ is easily seen to be effective. The deformation spaces of $\cC$ and $C$ are smooth and have the same dimension. It follows that the deformation space $\Def_{\cC,\Sigma_i}$ of the twisted curve is a branched cover of the deformation space $\Def_{C,\bar\Sigma_i}$ of the coarse moduli space, with index $r_p$ along $\Delta_p$. See \cite[3.5]{Cime}, \cite[1.10]{Olsson-twisted}. If we denote by $\Delta_{\bar p}$ the corresponding divisor in $\Def_{C,\bar\Sigma_i}$, then the pullback of the divisor $\Delta_{\bar p}$ in $\Def_{\cC,\Sigma_i}$ is the divisor $r_p \Delta_p$. It follows that over the smooth locus of $\Delta_p$, the scheme-theoretic inverse image of  $\Delta_{\bar p}$ is locally of the form $\Delta_p\times \Spec \CC[\epsilon]/(\epsilon^{r_p})$.

\subsubsection{Moduli of twisted curves}\label{SSS:mod-tw-curve}

Families of twisted prestable curves of genus $g$ with $n$ markings form an algebraic stack, denoted  $\fM_{g,n}^\tw$; its connected components are labeled by the indices $e_i$ of the markings, and they are all isomorphic to each other (\cite[Theorem 4.2.1]{AGV},  \cite[Theorem 1.8]{Olsson-twisted}). We have an evident embedding $\fM_{g,n}\hookrightarrow \fM_{g,n}^\tw$, since a curve is in particular a twisted curve. Taking coarse moduli spaces gives a left inverse morphisms $\fM_{g,n}^\tw\to \fM_{g,n}$. Even when fixing $e_i$, the latter morphism  is not of finite type and not separated - over the nodal locus of $\fM_{g,n}$ there are infinitely many boundary components of $\fM_{g,n}^\tw$ corresponding to different indices at the nodes. Along a boundary component $\Delta\subset \fM_{g,n}^\tw$ corresponding to a node with index $r_p$, this morphism is branched with index $r_p$ as described above. A similar result holds on universal families \cite[Section 3.3]{Cime}, given by the usual identification of the universal families in terms of moduli of curves with an additional untwisted marking \cite[Corollary 9.1.3]{Abramovich-Vistoli}.

\subsection{Maps and lifts} We now follow the key observation of Cadman's work \cite{Cadman1}, especially Theorem 3.3.6, relating tangency with orbifold maps. Suppose now $(C, \Sigma)$ a twisted curve, $(X,D)$ a locally smooth pair and $g: C \to X$ a morphism such that $g^* D  = c\Sigma$. We call $c$ the {\em contact order} of $C$ and $D$ at $p$. (Note that if $\Sigma$ is twisted with index $e$, then $\deg\Sigma =1/e$ and  the intersection multiplicity of $C$ and $D$ at $P$ is $d = c/e$.) 

\begin{lemma}\label{L:lift-pd-pair}
 Let $r,r_\Sigma\geq 1$ be  integers, let $X' = X(\sqrt[r]{D})$ with divisor $\cD$, and similarly $C' = C(\sqrt[r_\Sigma]{\Sigma})$ with divisor $\tilde\Sigma$. We assume $c$ divides $r$. Then
\begin{enumerate}
\item $g:C\to X$ lifts to $\tilde g:C'\to X'$ if and only if $r| c\cdot r_\Sigma$,
\item when $r| c\cdot r_\Sigma$ such lift is unique up to unique isomorphism, 
\item the lift is representable if and only if $r=c\cdot r_\Sigma$, and
\item the lift is transversal if and only if $r=c\cdot r_\Sigma$.
\end{enumerate}
\end{lemma}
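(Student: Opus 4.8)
The plan is to work \'etale-locally at the point $p$ where $C$ meets $D$, reducing to the toric model $g: \Spec\CC[u]/\!\!\sim \to \Spec\CC[x]$ with $x\mapsto u^c$, where on the source $\Sigma$ is the stacky point $[\Spec\CC[u]/\bmu_e]$ and on the target $D = \{x=0\}$. Since $C$ and $X$ are schemes away from $p$ and $D$, and the root constructions only modify things over $\Sigma$ and $D$ respectively, the lift $\tilde g$ is automatically determined and representable away from $p$, so everything is local. Now $X' = X(\sqrt[r]{D})$ has local model obtained by adjoining a formal $r$-th root $x^{1/r}$, i.e.\ a coordinate $\xi$ with $\xi^r = x$, together with the $\bmu_r$-action $\xi\mapsto \zeta_r\xi$; similarly $C' = C(\sqrt[r_\Sigma]{\Sigma})$ has a coordinate $\upsilon$ with $\upsilon^{r_\Sigma} = u$ and a $\bmu_{r_\Sigma}$-action. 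The coarse map pulls $\xi$ back to $u^{c/r}\cdot(\text{unit})$ formally, but this only makes sense as a genuine section of the root bundle on $C'$ when the fractional power $u^{c/r}$ can be written as an integral power of $\upsilon$: since $u = \upsilon^{r_\Sigma}$, we need $c r_\Sigma/r \in \ZZ$, i.e.\ $r \mid c\cdot r_\Sigma$. This gives part (1); I would phrase it cleanly using Proposition \ref{rootdiv} and the universal property of the root stack $X(\sqrt[r]{D})$, pulling back the tautological root $(M,\phi,s)$ of $\cO_X(D)$ and checking that $g^*$ of this data acquires the required $r_\Sigma$-compatible root on $C'$ exactly when $r\mid cr_\Sigma$, since $g^*\cO_X(D) = \cO_C(c\Sigma)$ and $\cO_{C'}(\tilde\Sigma)^{\otimes ?}$ realizes the needed roots precisely in divisibility terms. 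For part (2), uniqueness: by \ref{SSS:compare-roots} one reduces to the key fact that $\Aut_X(X(\sqrt[r]{D}))$ is trivial (the subsubsection ``Triviality of relative automorphisms''), so any two lifts of $g$ differ by an automorphism of $X'$ over $X$ pulled back to $C'$, hence are canonically isomorphic; alternatively the groupoid $X(\sqrt[r]{D})(S)$ is rigid for $S$ with $f^*D$ Cartier (as noted after Proposition before \ref{rootdiv}), and $C'$ has this property, so the lift is unique up to unique isomorphism once it exists.

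For parts (3) and (4) I would compute the induced map on stabilizer groups at $\tilde\Sigma \mapsto \cD$. The stacky point $\tilde\Sigma$ has automorphism group $\bmu_{r_\Sigma}$ (the marking of $C$ has index $e$ and the root adds a factor, but what matters here is the band of the gerbe $\tilde\Sigma$, which is $\bmu_{r_\Sigma}$ when we view $C' = C(\sqrt[r_\Sigma]{\Sigma})$ over the \emph{scheme} $C^u$; I'd keep the bookkeeping consistent with \ref{SSS:compare-roots}), while $\cD$ has band $\bmu_r$. In the local coordinates, $\xi = \upsilon^{cr_\Sigma/r}\cdot(\text{unit})$, so an element $\zeta_{r_\Sigma}\in\bmu_{r_\Sigma}$ acting by $\upsilon\mapsto\zeta_{r_\Sigma}\upsilon$ sends $\xi\mapsto \zeta_{r_\Sigma}^{cr_\Sigma/r}\xi$, i.e.\ the homomorphism $\bmu_{r_\Sigma}\to\bmu_r$ on bands is $\zeta\mapsto\zeta^{cr_\Sigma/r}$, which has image $\bmu_{r/\gcd(r,cr_\Sigma/r\cdot r_\Sigma/r\cdots)}$ — more simply, the image of $\bmu_{r_\Sigma}$ in $\bmu_r$ has order $r/\gcd(r, cr_\Sigma/r)$; when $r\mid cr_\Sigma$ this image has order equal to $r$ divided by $\gcd(r, cr_\Sigma/r)$, and the map on stabilizers is injective $\iff$ $r_\Sigma \mid$ (order of image) which forces $cr_\Sigma/r$ coprime-ish conditions. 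Rather than chase this combinatorially I would observe that representability of $\tilde g$ is equivalent to the homomorphism $\bmu_{r_\Sigma}\to\bmu_r$, $\zeta\mapsto\zeta^{cr_\Sigma/r}$, being injective, which holds $\iff$ $\gcd(cr_\Sigma/r,\, r) $ makes $cr_\Sigma/r$ a generator-compatible exponent, and the cleanest equivalent statement is $r = c\cdot r_\Sigma$ (so that $cr_\Sigma/r = 1$ and the map is the identity inclusion $\bmu_{r_\Sigma}\hookrightarrow\bmu_r$). I would prove the ``only if'' direction by noting that if $r < cr_\Sigma$ then $cr_\Sigma/r \geq 2$ and the exponent-$cr_\Sigma/r$ map $\bmu_{r_\Sigma}\to\bmu_r$ has nontrivial kernel whenever $\gcd(cr_\Sigma/r, r_\Sigma\cdot r/(cr_\Sigma)\cdots)$... again this is the delicate point. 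Finally, for (4), transversality of $\tilde g$ at $\tilde\Sigma$ means $\tilde g^*\cD = \tilde\Sigma$ with multiplicity one, i.e.\ the contact order of the lift is $1$; since $\tilde g^*\cD = (c r_\Sigma/r)\tilde\Sigma$ in the local model, transversality $\iff cr_\Sigma/r = 1 \iff r = cr_\Sigma$, and I would note that in the locally smooth pair setting representability and transversality are in fact equivalent (transversal $\Rightarrow$ the map on normal bundles is an isomorphism $\Rightarrow$ representable at $p$, and conversely representable at a point of $\cD$ with the coarse contact data fixed forces contact order $1$), which is why (3) and (4) have the same answer.

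The main obstacle I anticipate is \emph{bookkeeping the two layers of stackiness cleanly}: the marking $\Sigma$ of $C$ already carries index $e$, and $C' = C(\sqrt[r_\Sigma]{\Sigma})$ adds another root, so one must be careful whether $r_\Sigma$ refers to the additional root order or the total index of $\tilde\Sigma$, and correspondingly whether ``$c$'' is the contact order relative to $\Sigma$ or the raw intersection multiplicity $d = c/e$. Getting the divisibility condition in (1) and the equalities in (3)--(4) to come out as stated requires fixing these conventions to match Cadman \cite{Cadman1} and the excerpt's own normalization (where $g^*D = c\Sigma$, so $c$ is the contact order relative to the \emph{stacky} marking). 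Once the conventions are pinned down, (1) is a direct application of the universal property of root stacks plus the comparison isomorphism \ref{SSS:compare-roots}, (2) is triviality of relative automorphisms, and (3)--(4) are the single computation $\tilde g^*\cD = (cr_\Sigma/r)\,\tilde\Sigma$ in local coordinates, so the essential content is really the careful local model and that one exponent computation.
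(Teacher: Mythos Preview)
Your approach is essentially the paper's: use the universal property of the root stack for (1), triviality of relative automorphisms for (2), and the single computation $\tilde g^*\cD = d\,\tilde\Sigma$ with $d = cr_\Sigma/r$ for (4). The paper phrases (1) directly via the data $(M,s)$ with $s^r = \tilde g^*\bone_D$, observes $\tilde g^*\bone_D = \bone_{\tilde\Sigma}^{cr_\Sigma}$, and takes $(M,s) = (\cO(d\tilde\Sigma), \bone_{\tilde\Sigma}^d)$ when $rd = cr_\Sigma$; your local-coordinate version is equivalent.

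The gap you flag yourself at (3) is real but closes immediately once you use the hypothesis $c \mid r$, which you never invoked. Write $d = cr_\Sigma/r$. Since $c \mid r$, say $r = cm$, we get $d = r_\Sigma/m$, so in particular $d \mid r_\Sigma$. The map on relative stabilizers $\bmu_{r_\Sigma} \to \bmu_r$ is $\zeta \mapsto \zeta^d$, and because $d \mid r_\Sigma$ its kernel is $\bmu_{\gcd(d, r_\Sigma)} = \bmu_d$. Hence the kernel is trivial iff $d = 1$, i.e.\ iff $r = cr_\Sigma$. This is exactly the paper's computation: automorphisms of an object of $C'$ over $C$ at $\tilde\Sigma$ are $\zeta \in \bmu_{r_\Sigma}$ acting on the tautological root $L$, and they act on the lift data $L^d$ by $\zeta^d$, with nontrivial kernel precisely when $d > 1$. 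The Remark following the lemma confirms that without $c \mid r$ part (3) genuinely changes, so your difficulty was not with the method but with locating where that hypothesis enters. Your bookkeeping worry about the index $e$ is harmless here: since $g: C \to X$ is already representable, representability of $\tilde g$ is tested only on the \emph{relative} stabilizers $\bmu_{r_\Sigma}$ of $C'$ over $C$, and $e$ never appears.
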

\begin{proof} (see \cite[Theorem 3.3.6]{Cadman1})
For (1),  a lift corresponds to a line bundle $M$ on $C'$ and a section $s$ such that $s^r = \tilde g^* \bone_D$. But $g^*\bone_D = \bone_\Sigma^c$ and $\bone_\Sigma$ pulls back to $\bone_{\tilde\Sigma}^{r_\sigma}$ on $C'$. So $\tilde g^*\bone_D= \bone_{\tilde\Sigma}^{c\cdot r_\Sigma}$. It follows that if a lift exists then $r| c\cdot r_\sigma$. And if  $r d =  c\cdot r_\sigma$ for some integer $d$, the pair $(\cO(d\tilde\Sigma),\bone_{\tilde\Sigma}^d)$ gives the desired lift $C'\to X'$.
% It follows that after replacing  $s$ by $\zeta_r^a s$ with $\zeta_r$ a root of unity, we have $s^r/c = \bone_D$.

(2) Uniqueness follows since $(\cO(\tilde\Sigma),\bone_{\tilde\Sigma})|_{C\setminus \Sigma} \simeq (\cO,\bone)$ which has no nontrivial automorphisms.

(3)  It suffices to consider points over $\Sigma$. Here automorphisms of an object $(L,s=0,L^{r_\Sigma}\simeq \cO(\tilde\Sigma))$ of $C'$ are given by $L \stackrel{\zeta}{\to} L$ with $\zeta$ a primitive $r_\Sigma$ - root. The action on   $\cO(d\tilde\Sigma)= L^d$ is via $\zeta^{d}$, which has a nontrivial kernel exactly when $d>1$. 

(4) We have $\tilde g^* \cD = d\tilde\Sigma$, so $\tilde g$ is transversal if and only if $d=1$. %\Dan{This lemma is strictly in \cite{Cadman1},  drop the proof?} 
%\Dan{could be clearer, I know}
\end{proof}
\begin{remark}
A similar result holds without the assumption that $c|r$, with part (3) modified. See \cite[Theorem 3.3.6]{Cadman1}.\end{remark}

Given a morphism $(C,\Sigma) \to (X,D)$, the restriction $\Sigma \to D$ is an object of a component $Z$ of $\ocI(D)$. When $r = c r_\Sigma$, the representable and transversal lift $(C',\tilde\Sigma) \to (X',\cD)$ gives rise to an object $\tilde\Sigma \to \cD$ of a component $\cZ$ of $\ocI(\cD)$ lying over $Z$. The following computation will be used in the degeneration formula:

\begin{lemma}\label{L:eval-lift-c}
The morphism $\cZ \to Z$ is a gerbe banded by $\bmu_c$.
\end{lemma}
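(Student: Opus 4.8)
The plan is to deduce Lemma \ref{L:eval-lift-c} from Lemma \ref{L:rig-in-gerbe-c}, which already computes the band of the morphism $\overline\cZ \to \overline Z$ between components of rigidified inertia stacks, once I match up the indices. First I would recall the setup: a morphism $(C,\Sigma)\to(X,D)$ restricts to an object $\Sigma\to D$, which represents a point of a component $\overline Z\subset \ocI(D)$; similarly the representable transversal lift $(C',\tilde\Sigma)\to(X',\cD)$ gives an object $\tilde\Sigma\to\cD$ representing a point of a component $\overline\cZ\subset\ocI(\cD)$ over $\overline Z$. So the morphism $\cZ\to Z$ in the statement of this lemma is exactly the morphism $\overline\cZ\to\overline Z$ appearing in Lemma \ref{L:rig-in-gerbe-c} (the overlines were suppressed in the present statement but the objects named are genuinely objects of the rigidified inertia).

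Next I would identify the gerbe-band integer. Lemma \ref{L:rig-in-gerbe-c} says $\overline\cZ\to\overline Z$ is banded by $\bmu_{c'}$ where $c'$ is defined by $|\langle\tilde g\rangle| = \tfrac{r}{c'}|\langle g\rangle|$, with $(\tilde x,\tilde g)$ the object of $\cI(\cD)$ over $(x,g)\in\cI(D)$. Here $g$ is the monodromy automorphism of $\Sigma\to D$, that is, the generator of $\Aut(\Sigma) = \bmu_e$ acting on the normal direction, so $|\langle g\rangle| = e$; and $\tilde g$ is the monodromy of $\tilde\Sigma\to\cD$, the generator of $\Aut(\tilde\Sigma) = \bmu_{r_\Sigma}$, so $|\langle\tilde g\rangle| = r_\Sigma$. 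Since we are in the representable transversal case, Lemma \ref{L:lift-pd-pair}(3)--(4) gives $r = c\cdot r_\Sigma$, hence $r_\Sigma = r/c$, hence $|\langle\tilde g\rangle| = r_\Sigma = (r/c)\cdot 1$. Comparing with $|\langle\tilde g\rangle| = \tfrac{r}{c'}|\langle g\rangle| = \tfrac{r}{c'}\cdot e$: wait — I need to be careful, because $e$ need not be $1$. Let me instead just track: $\tfrac{r}{c'} = \tfrac{|\langle\tilde g\rangle|}{|\langle g\rangle|} = \tfrac{r_\Sigma}{e}$. Now $C' = C(\sqrt[r_\Sigma]{\Sigma})$ is the $r_\Sigma$-th root of $\Sigma$ (which itself has index $e$), so by \ref{SSS:compare-roots} the total orbifold index of $\tilde\Sigma$ is $e\cdot r_\Sigma$; the contact order of $C\to X$ being $c$ means $g^*D = c\Sigma$ as divisors on the twisted curve, and the representability/transversality conditions in Lemma \ref{L:lift-pd-pair} are stated with the convention $c\mid r$, $r = c\cdot r_\Sigma$. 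So $r/c' = r_\Sigma/e$ forces $c' = re/r_\Sigma = re/(r/c) = ce$. Hmm, that gives $c' = ce$, not $c$; I would need to reconcile this with the normalization — most likely the cleanest route is to work directly with the intersection-multiplicity normalization $d = c/e$ and observe that in Lemma \ref{L:rig-in-gerbe-c} the relevant "$c$" there is really our $c$ here once the conventions are aligned, i.e. I should pick local models in which $e = 1$ on the nose (which one may do since the statement is étale-local and the component $Z$ only remembers $\langle g\rangle$), reducing to the scheme-target computation $\cI(\cD) = \bmu_r\times\cI(D)\times_D\cD$ and the gerbe chase of Lemma \ref{L:rig-in-gerbe-c}.

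The main obstacle, then, is precisely this bookkeeping of orbifold indices versus contact orders versus intersection multiplicities: making sure that the integer $c$ in the hypothesis $r = c\cdot r_\Sigma$ of Lemma \ref{L:lift-pd-pair} matches the integer $c$ produced by the band computation in Lemma \ref{L:rig-in-gerbe-c}, and that the twisting of $\Sigma$ itself (index $e$) is accounted for consistently on both sides. Concretely I would: (i) reduce to an étale-local model of $(X,D)$ of the form $[V/G]$ with $D = \{x = 0\}$ and of $(C,\Sigma)$ where $\Sigma$ has index $e$; (ii) apply Lemma \ref{L:lift-pd-pair}(3)--(4) to record $r_\Sigma = r/c$ for the representable transversal lift; (iii) apply Lemma \ref{L:rig-in-gerbe-c} with $|\langle g\rangle|$ and $|\langle\tilde g\rangle|$ read off from the monodromy of $\Sigma\to D$ and $\tilde\Sigma\to\cD$ respectively, obtaining that $\overline\cZ\to\overline Z$ is banded by $\bmu$ of order $|\langle g\rangle|\cdot r/|\langle\tilde g\rangle|$; and (iv) check this order equals $c$ by the identity from step (ii). Once the identifications are in place the conclusion is immediate; essentially no further computation is required beyond the diagram chase already carried out in the proof of Lemma \ref{L:rig-in-gerbe-c}.
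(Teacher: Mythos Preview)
Your approach is exactly the paper's: reduce to Lemma \ref{L:rig-in-gerbe-c} after identifying $|\langle g\rangle|$ and $|\langle\tilde g\rangle|$. The gap is a single misidentification that you almost catch yourself. You write $\Aut(\tilde\Sigma)=\bmu_{r_\Sigma}$, hence $|\langle\tilde g\rangle|=r_\Sigma$; but two sentences later you correctly observe, via \ref{SSS:compare-roots}, that the total orbifold index of $\tilde\Sigma$ in $C'=C(\sqrt[r_\Sigma]{\Sigma})$ is $e\cdot r_\Sigma$. That total index \emph{is} $|\langle\tilde g\rangle|$: the object $\tilde\Sigma\to\cD$ is a representable morphism from a $\bmu_{e\cdot r_\Sigma}$-gerbe, so the corresponding point of $\cI(\cD)$ has $|\langle\tilde g\rangle|=e\cdot r_\Sigma$, not $r_\Sigma$. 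With this correction the formula from Lemma \ref{L:rig-in-gerbe-c} gives
\[
\frac{|\langle g\rangle|\cdot r}{|\langle\tilde g\rangle|}=\frac{e\cdot r}{e\cdot r_\Sigma}=\frac{r}{r_\Sigma}=c,
\]
and you are done --- no spurious factor of $e$, and no need for the \'etale-local reduction to $e=1$. That reduction is in any case not available: the integer $e=|\langle g\rangle|$ is the order of the sector $Z\subset\ocI(D)$ and is intrinsic to the component, so it cannot be set to $1$ by a local model.
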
 
\begin{proof}
The object $\Sigma \to D$ is the rigidification of an object corresponding to $(x,g)$ with $|\langle g \rangle| = e$, and  $\tilde\Sigma \to \cD$ is the rigidification of an object corresponding to $(\tilde x,\tilde g)$ with $|\langle \tilde g \rangle| = e \cdot r_\Sigma.$ But $r_\Sigma = r/c$ and the result follows from Lemma \ref{L:rig-in-gerbe-c}. 
\end{proof}

\subsection{Twisting along a nodal divisor}\label{balanced_node}

%The {\em model balanced node} of index $r$ is the stack $$N_r=[(\Spec \CC[x,y]/xy)\,/\,\bmu_r]$$  where $\bmu_r$ acts via $\eps(x,y)=(\eps x,\eps^{-1}y)$. Its coarse moduli space is $N_1=\Spec \CC[u,v]/(uv)$, with structure morphism $e_r:N_r\to N_1$ induced by $u=x^r$, $v=y^r$; the group of automorphisms of $N_r$ commuting with $e_r$ is isomorphic to $\bmu_r$, induced by $\eps(x,y)=(\eps x,y)$ (or, equivalently, $(x,\eps y)$); see \cite{ACV}, Lemma 7.1.1. These automorphisms are called {\em ghost automorphisms}.

%A singular point $p$ in a one dimensional Deligne--Mumford stack $X$ is called a {\em balanced node} if $X$ is locally isomorphic near $p$ to the model balanced node described in \ref{SS:tw-curves}.
%; equivalently if $X\setminus \{p\}$ is an algebraic space locally near $p$, and $p$ is a first-order smoothable nodal singularity. 
%Note that if $X$ is a nodal curve (one-dimensional scheme) with nodes $p_1,\ldots, p_k$ and we are given integers $r_1,\ldots,r_k$, there is a unique way to define a nodal one-dimensional DM curve $X':=X(\sqrt[r_i]{D_i})$ with a morphism $\pi:X'\to X$ such that:
%\begin{enumerate}
%\item $\pi$ identifies $X$ with the coarse moduli space of $X'$;
%\item $\pi$ is an isomorphism over $X^\smooth$;
%\item for every $i=1,\ldots,k$, there is a unique point of $X'$ over $p_i$, which is a balanced node of index $r_i$.
%\end{enumerate}
%An alternative construction using logarithmic structures and valid on families of curves is due to Olsson, see \cite{Olsson-twisted}.

We now want to generalize the singularity structure of a twisted curve  to the following case. Let $W$ be an algebraic stack, nodal and first order smoothable along a closed substack $D$, as discussed in \ref{SS:1st-order-sm}. Assume also that $W=X_1\sqcup_D X_2$ where $X_1$ and $X_2$ are closed substacks, which along $D$ are smooth and intersect transversally. The ordering of $X_i$ will be kept throughout the discussion. Assume we are given an isomorphism of $\cExt^1(\Omega_W,\cO_W)|_D$ with $\cO_D$, i.e., an isomorphism $\alpha:N_1\to N_2^\vee$, where $N_i=N_{D/X_i}$. 

\begin{defn}\label{D:twist-node}
 We define the stack obtained by {\em twisting $W$ along $D$} with index $r$, or {\em adding a balanced node structure} of index $r$ along $D$, denoted $W(\sqrt[r]{D})$, as follows.
Let $X'_i:=X_i(\sqrt[r]{D})$, and $D'_i\subset X_i'$ the reduced inverse image of $D$, so that $D_i'$ is isomorphic to $\sqrt[r]{N_i/D}$. Let $\pi_i:D_i'\to D$ be the structure morphism, and let $\phi_i:L_i^{\otimes r}\to \pi_i^*N_i$ the universal line $r$-th root. Let $\beta:D_1'\to D_2'$ be the morphism defined by the triple $(L^\vee_1,\pi_1,\alpha\circ (\phi_1^\vee)^{-1})$; it is easy to see that $\beta$ is an isomorphism. We define $W(\sqrt[r]{D})$ to be the stack obtained by gluing $X_1'$ to $X_2'$ along the identification of $D_1'$ with $D_2'$ via $\beta$. We let $D'$ be the closed substack image of either $D_i'$.%\Dan{This discussion is I think implicit for the precise definition of the partial untwisting later on - make explicit!}
\end{defn}

\begin{remark}
A generalization of this construction and more information about it may be found in \cite{ACFW} and \cite{Borne-Vistoli}.
\end{remark}

\subsubsection{Gluing as pushout}\label{2_pushout} In Definition \ref{D:twist-node}, the term {\em gluing}  means the existence of a $2$-pushout diagram$$\xymatrix{
D' \ar[r]\ar[d] &X_1'\ar[d]\\
X_2'\ar[r] &W(\sqrt[r]{D}).
}$$ 
See \cite[Appendix A]{AGV}. In particular, for every algebraic stack $Y$, the groupoid of morphisms from $W(\sqrt[r]{D})$ to $Y$ has as objects the triples $(f_1,f_2,\eps)$ where $f_i:X_i'\to Y$ is a morphism and $\eps$ is a $2$-morphism between $f_1|_{D_1'}$ and $f_2|_{D_2'}\circ\beta$; a $2$-morphism from $(f_1,f_2,\eps)$ to $(g_1,g_2,\zeta)$ is a pair of $2$-morphisms $\alpha_i:f_i\to g_i$ such that the two $2$-morphisms from $f_1|_{D_1'}$ to $g_2|_{D_2'}\circ \beta$ induced respectively by $\eps$ and $\alpha_2$ and by $\alpha_1$ and $\zeta$ coincide.

In particular, the structure maps $X_i'\to W$ and the identity of $\beta$ define a morphism $\pi:W(\sqrt[r]{D})\to W$.

\begin{lemma} \begin{enumerate}
        \item $W(\sqrt[r]{D})$ is an algebraic stack which is nodal and first-order smoothable along $D'$;
	\item $\pi$ is proper,  quasifinite and an isomorphism over $W\setminus D$.
             \end{enumerate}
\end{lemma}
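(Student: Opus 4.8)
The statement is local on $W$, so I will work \'etale-locally and reduce to model cases; all the claimed properties ($W(\sqrt[r]{D})$ algebraic, nodal, first-order smoothable along $D'$; $\pi$ proper, quasifinite, iso over $W\setminus D$) are local on the target $W$, and formation of $W(\sqrt[r]{D})$ commutes with \'etale base change on $W$ because the root stacks $X_i'=X_i(\sqrt[r]{D})$ and the gluing via $\beta$ do. For part (1), algebraicity follows because $W(\sqrt[r]{D})$ was constructed as a $2$-pushout of algebraic stacks along the closed immersions $D_i'\hookrightarrow X_i'$ which are finite unramified; such pushouts are algebraic (as recorded in \cite[Appendix A]{AGV}, and in the references \cite{ACFW}, \cite{Borne-Vistoli} cited after Definition \ref{D:twist-node}). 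To see $W(\sqrt[r]{D})$ is nodal and first-order smoothable along $D'$, I reduce to the local model: \'etale-locally $X_i$ is smooth near $D$, so by Proposition \ref{rootdiv} the pair $(X_i',\cD_i')$ is again locally smooth, i.e. $X_i'$ is smooth near $D_i'$ and $D_i'$ is a smooth Cartier divisor; gluing two smooth stacks transversally along a common smooth divisor produces exactly a nodal, first-order smoothable singularity along $D'$, as in \ref{SS:1st-order-sm}. The identification $\alpha:N_1\to N_2^\vee$ of the original data is precisely what is promoted through the line-bundle roots by $\beta$ to the isomorphism $\cExt^1(\Omega_{W(\sqrt[r]D)},\cO)|_{D'}\simeq\cO_{D'}$ giving first-order smoothability; this is the analog of the ``balanced'' condition for nodal twisted curves in \ref{balanced_node}.

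For part (2), I again argue \'etale-locally on $W$, away from and along $D$. Away from $D$, Proposition \ref{rootdiv} says $X_i'\to X_i$ is an isomorphism over $X_i\setminus D$, and since the gluing only modifies things over $D$, the morphism $\pi$ is an isomorphism over $W\setminus D$; in particular it is proper and quasifinite there. Along $D$, I use that each $X_i'\to X_i$ is proper (it is a root stack, hence proper by Proposition \ref{rootdiv} / \ref{rootlb}) and quasifinite (flat of relative dimension zero). Properness of $\pi$ then follows by the valuative criterion applied through the pushout description in \ref{2_pushout}: a map from the generic point of a DVR to $W(\sqrt[r]{D})$ lands \'etale-locally in one of the $X_i'$ or crosses $D'$, and in either case the valuative criterion for $X_i'\to X_i$ (together with the observation that on the central fiber one falls into $D'$, where $D_1'\cong D_2'$) provides a unique extension. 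Quasifiniteness of $\pi$ is checked on fibers: over a point of $W\setminus D$ the fiber is a point; over a point of $D$, the fiber of $\pi$ is a quotient of the (finite) fibers of $D_i'\to D$, hence finite.

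\textbf{Main obstacle.} The routine parts — algebraicity from the pushout, the isomorphism over $W\setminus D$, and quasifiniteness — are quick. The delicate point is \emph{properness of $\pi$ over $D$} and, intertwined with it, checking that the glued stack really is nodal and first-order smoothable rather than something worse along $D'$. Concretely, one must verify that the two root-divisor models $(X_1',D_1')$ and $(X_2',D_2')$, when glued via $\beta$, produce the standard local picture $[\Spec(\CC[\xi,\eta,z_i]/(\xi\eta))/\bmu_r]$ of \ref{balanced_node} — i.e. that the $\bmu_r$-actions on the two branches are genuinely inverse to each other under the identification encoded in $\alpha$ and $(\phi_i^\vee)^{-1}$ — so that the node is balanced and smoothable. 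This is the bookkeeping analogue of the balanced-node condition for twisted curves, and it is exactly the content the authors have isolated into Definition \ref{D:twist-node}; once that local model is confirmed, properness of $\pi$ is the assertion that the partial-coarse-space map of this model to $[\Spec(\CC[x,y,z_i]/(xy))]$ (with $x=\xi^r$, $y=\eta^r$) is proper, which one sees directly since $\Spec\CC[\xi,\eta]/(\xi\eta)\to\Spec\CC[x,y]/(xy)$ is finite.
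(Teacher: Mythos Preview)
Your proposal is correct and ultimately arrives at the same key observation as the paper, but via an unnecessary detour. The paper's proof is much more direct: for part (2) it immediately observes that properness and quasifiniteness are local in the smooth topology of the target, reduces to the model case $W=\AA^n\times N_1$ (where $N_1=\Spec\CC[u,v]/(uv)$ is the untwisted node), identifies $W(\sqrt[r]{D})=\AA^n\times N_r$, and concludes because $N_r\to N_1$ is proper and quasifinite. This is exactly the local model you eventually reach in your ``Main obstacle'' paragraph, but you only get there after proposing to run the valuative criterion through the $2$-pushout description---an argument that is both unnecessary and somewhat delicate to make precise (your phrase ``lands \'etale-locally in one of the $X_i'$ or crosses $D'$'' would need care for stacks). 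For part (1), your argument for first-order smoothability is the same as the paper's: the gluing isomorphism $\beta$ identifies $L_1^\vee$ with $L_2$, and since $L_i=N_{D_i'/X_i'}$ this trivializes $\cExt^1(\Omega_{W(\sqrt[r]{D})},\cO)|_{D'}$. The paper simply asserts that algebraicity and nodality are the trivial parts and spends its one sentence on smoothability; your more expansive treatment of these routine pieces is fine but not needed.
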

\begin{proof}
(1) the only nontrivial part to check is the first-order smoothability. This come from the identification of $L_2$ with $L_1^\vee$ via $\beta$, and the fact that $L_i=N_{D_i'/X_i'}$. 

(2) Properness is local in the smooth topology of the target, so we may assume that $W=\AA^n\times N_1$; then $W(\sqrt[r]{D})=\AA^n\times N_r$, and we are done since $N_r\to N_1$ is proper. Quasifiniteness is also smooth local in the target and follows in the same way. The isomorphism over $W\setminus D$ is obvious from the definition, as it is the case $D=\emptyset$.
\end{proof}

\subsubsection{Automorphisms}\label{SSS:bal-node-aut} Unlike root stacks, twisted nodal stacks do have non-trivial relative automorphisms.

\begin{prop}
 Let $h_i$ be an automorphism of $X_i$ restricting to the identity on $D$, and let $h:W\to W$ be the induced automorphism. Each $h_i$ acts on $N_i$ by multiplication by a nowhere vanishing regular function $\lambda_i$ on $D$. Then the set of isomorphism classes of automorphisms of $W(\sqrt[r]{D})$ lifting $h$ (in the sense of Lemma \ref{rigid_liftings}) is in natural bijection with the set of regular functions $\eps$ on $D$ such that $\eps^r=\lambda_1\lambda_2$.
\end{prop}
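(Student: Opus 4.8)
The plan is to use the pushout presentation of $W(\sqrt[r]{D})$ from \ref{2_pushout} to reduce the statement to a computation with the tautological root of the normal bundle along $D'$. By \ref{2_pushout}, an automorphism of $W(\sqrt[r]{D})$ lifting $h$ is the same as a triple $(f_1,f_2,\eps)$, where $f_i\colon X_i'\to W(\sqrt[r]{D})$, where $\eps$ is a $2$-isomorphism between $f_1|_{D_1'}$ and $f_2|_{D_2'}\circ\beta$, and where $\pi\circ f_i$ is $2$-isomorphic to the composite $X_i'\to X_i\xrightarrow{h_i}X_i\hookrightarrow W$; the notion of lifting ``in the sense of Lemma \ref{rigid_liftings}'' is precisely the data of a $2$-isomorphism $\pi\circ\tilde h\cong h\circ\pi$, which translates into these conditions. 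Since $\pi\circ f_i$ factors through the closed substack $X_i\hookrightarrow W$ and $X_i'$ is the (reduced) inverse image of $X_i$, the map $f_i$ factors through the canonical closed immersion $\iota_i\colon X_i'\hookrightarrow W(\sqrt[r]{D})$, so $f_i=\iota_i\circ\tilde h_i$ with $\tilde h_i\colon X_i'\to X_i'$ lifting $h_i$. Because $h_i$ carries the Cartier divisor $D$ to itself there is a canonical isomorphism $h_i^*(\cO_{X_i}(D),\bone_D)\cong(\cO_{X_i}(D),\bone_D)$, and composing the universal root on $X_i'$ with it exhibits such a lift $\tilde h_i$; by the triviality of $\Aut_{X_i}(X_i')$ recalled above, $\tilde h_i$ is unique up to a unique $2$-isomorphism. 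Finally, $X_i'$ is reduced and generically a scheme, so $\Aut(\id_{X_i'})=1$; hence $(\iota_1\tilde h_1,\iota_2\tilde h_2,\eps)$ and $(\iota_1\tilde h_1,\iota_2\tilde h_2,\eps')$ are isomorphic exactly when $\eps=\eps'$, and the set to be described is precisely the set of admissible $2$-isomorphisms $\eps$ (possibly empty).

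Next I would identify $\tilde h_i|_{D_i'}$. Under the canonical isomorphism $\cO_{X_i}(D)|_D\cong N_i$, the restriction to $D$ of the isomorphism $h_i^*\cO_{X_i}(D)\cong\cO_{X_i}(D)$ used above is multiplication by $\lambda_i$ on $N_i$; this is where the hypothesis on the action of $h_i$ on $N_i$ enters. It follows that $\tilde h_i|_{D_i'}$ is the automorphism $F_{\lambda_i}$ of $D_i'=\sqrt[r]{N_i/D}$ that fixes the tautological line bundle $L_i$ and multiplies the structural isomorphism $\phi_i\colon L_i^{\otimes r}\to\pi_i^*N_i$ by $\lambda_i$. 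Since by Definition \ref{D:twist-node} the gluing $\beta$ is built from $L_1^\vee$ and $\alpha$, I would check that conjugating by $\beta$ sends the automorphism $F_{\lambda_2}$ of $D_2'$ to the automorphism $F_{\lambda_2^{-1}}$ of $D_1'$: on an object $(M,\psi)$ one has $\beta(M,\psi)=(M^\vee,-)$, and transporting the endomorphism ``multiply the structural isomorphism by $\lambda_2$'' back through the dualization $M\mapsto M^\vee$ multiplies $\psi$ by $\lambda_2^{-1}$.

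Then the counting becomes easy. The closed immersion $D_1'\hookrightarrow W(\sqrt[r]{D})$ is a monomorphism, so an admissible $\eps$ is the same as a $2$-isomorphism $F_{\lambda_1}\Rightarrow F_{\lambda_2^{-1}}$ of endofunctors of $D_1'$. For units $\mu,\nu$ on $D$, a $2$-isomorphism $F_\mu\Rightarrow F_\nu$ assigns naturally to each object $(M,\psi)$ an automorphism $c$ of $M$ with $\nu\,\psi\circ c^{\otimes r}=\mu\,\psi$, equivalently $c^{\,r}=\mu/\nu$; naturality forces $c$ to be a global unit on $D_1'$, and $\Gamma(D_1',\cO^*)=\Gamma(D,\cO_D^*)$ since $D_1'\to D$ is a gerbe, while conversely each $c\in\Gamma(D,\cO_D^*)$ with $c^{\,r}=\mu/\nu$ defines one. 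Taking $\mu=\lambda_1$ and $\nu=\lambda_2^{-1}$ yields a natural bijection between the admissible $\eps$ and the set $\{\eps\in\Gamma(D,\cO_D^*):\eps^{\,r}=\lambda_1\lambda_2\}$; combined with the reduction step, this proves the lemma.

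The step I expect to be the main obstacle is the second one: verifying, with consistent sign conventions, that $\tilde h_i|_{D_i'}=F_{\lambda_i}$ (keeping track of the canonical identification $\cO_{X_i}(D)|_D\cong N_i$) and that the dualization hidden in the definition of $\beta$ contributes the inversion, so that the compatibility constraint comes out exactly as $\eps^{\,r}=\lambda_1\lambda_2$ rather than $\eps^{\,r}=\lambda_1/\lambda_2$ or $\lambda_1^{-1}\lambda_2^{-1}$.
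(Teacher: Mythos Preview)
Your argument is correct and follows essentially the same route as the paper's proof: reduce via the pushout description \ref{2_pushout}, use the triviality of $\Aut_{X_i}(X_i')$ to pin down $\tilde h_i$ uniquely, and then identify the remaining freedom as the choice of the gluing $2$-isomorphism $\eps$. The paper condenses your second and third steps by invoking the first-order smoothability directly: rather than tracking $F_{\lambda_1}$ and $F_{\lambda_2^{-1}}$ separately and conjugating through $\beta$, it observes that $h|_D$ acts on $L=\cExt^1(\Omega_W,\cO_W)|_D\cong N_1\otimes N_2$ by $\lambda_1\lambda_2$, so the gluing datum is an automorphism of the tautological $r$-th root of $L$ whose $r$-th power is $\lambda_1\lambda_2$. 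Your explicit unpacking of the dualization hidden in $\beta$ is exactly what justifies that shortcut, so the ``main obstacle'' you flag is real but you have handled it correctly.
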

\begin{proof} Write $W':=W(\sqrt[r]{D})$ for brevity. 
We apply  \ref{2_pushout} with the stack $Y=W'$. Let $h':W'\to W'$ be a lifting, and $(h_1',h_2',\eps)$ the corresponding triple. Then $h_1'$ and $h_2'$ are determined up to unique $2$-isomorphism by Lemma \ref{rigid_liftings}. All that is left is to find a two-isomorphism $\eps:h_1'|_{D_1'}\to h_2'\circ \beta$ lifting the $2$-isomorphism $\lambda:h_1|_{D_1}\to h_2|_{D_1}$ induced by $h$. Since $h|_D$ acts on $L=\cExt^1(\Omega_W,\cO_W)|_D$ by multiplication with $\lambda_1\lambda_2$, we have to find an automorphism $\eps$ of its $r$-th root whose $r$th power is $\lambda_1\lambda_2$. Every automorphism of a line bundle is a nowhere vanishing function $\eps$, and the lifting condition means $\eps^r=\lambda_1\lambda_2$.
\end{proof}

\begin{corollary}  In particular the group of automorphisms of $W(\sqrt[r]{D})$ in the sense of \ref{2stacks_as_stacks} lifting the identity of $W$ is naturally isomorphic to $\bmu_r(D)$; if $D$ is connected, then it is just $\bmu_r$. \end{corollary}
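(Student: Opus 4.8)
The plan is to derive the corollary directly from the preceding proposition by specializing to the case $h = \id_W$. First I would note that when $h$ is the identity, each induced automorphism $h_i$ of $X_i$ is the identity on all of $X_i$, so it acts on the normal bundle $N_i$ by multiplication by the constant function $\lambda_i = 1$. Hence the product $\lambda_1\lambda_2$ is the constant function $1$ on $D$, and the proposition identifies the set of isomorphism classes of automorphisms of $W(\sqrt[r]{D})$ lifting $\id_W$ with the set of regular functions $\eps$ on $D$ satisfying $\eps^r = 1$.

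Next I would identify this set of functions with $\bmu_r(D)$. By definition $\bmu_r(D) = \Hom_{\CC}(D, \bmu_r)$ is precisely the set of regular functions $D \to \GGm$ whose $r$-th power is identically $1$, which is exactly the set just described. If $D$ is connected (and reduced), such a function is locally constant hence constant, so it takes a single value which is an $r$-th root of unity; thus $\bmu_r(D) \cong \bmu_r$ in that case. This gives the bijection on isomorphism classes of liftings.

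Finally I would upgrade the bijection of sets to an isomorphism of groups. Composition of two liftings $h'$ and $h''$ of the identity corresponds, via the pushout description in \ref{2_pushout}, to composing the respective $2$-isomorphisms $\eps', \eps''$ on the $r$-th root bundle $L_1$; since automorphisms of a line bundle are multiplication by nowhere-vanishing functions and composition of such is multiplication of the functions, the induced operation on the parametrizing functions is pointwise multiplication. Therefore the bijection with $\bmu_r(D)$ is a group homomorphism, hence an isomorphism. Here one should also invoke the fact (from \ref{2stacks_as_stacks}, as used for twisted curves in \ref{SSS:aut-tw-curve}) that the relevant $2$-groupoid of automorphisms is equivalent to a group, so that "the group of automorphisms lifting the identity" is well-defined; this is the only mildly delicate point, and it is already available in the cited setup. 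The bulk of the argument is the bookkeeping translating the proposition's statement about $2$-isomorphisms of line bundle $r$-th roots into the group $\bmu_r(D)$, and I expect no serious obstacle beyond making sure the group law matches up.
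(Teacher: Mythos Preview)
Your proposal is correct and follows exactly the approach the paper intends: the corollary is stated without proof in the paper, as an immediate specialization of the preceding proposition to $h=\id_W$, giving $\lambda_1\lambda_2=1$ and hence $\eps\in\bmu_r(D)$. Your additional care in verifying that composition of liftings corresponds to multiplication of the functions $\eps$, and in invoking the $1$-rigidity from \ref{2stacks_as_stacks} so that the group is well-defined, simply makes explicit what the paper leaves to the reader.
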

As in the case of curves, elements of this group are called ghost automorphisms. 

\subsubsection{}\label{SS:bal-node-def}

  See \cite{Cime}, Section 3.3: suppose given a one-parameter smoothing of $W$ along $D$, that is a flat morphism $\pi:\cW\to B$ with $B$ a smooth curve and an isomorphism of $\cW_0:=\pi^{-1}(b_0)$ with $W$ such that $\cW$ is smooth along $D$. Then the balanced node structure can be described more directly as follows. Let $\cW'$ be the fiber product over $\tilde W$ of $\cW(\sqrt[r]{\cW_1})$ and $\cW(\sqrt[r]{\cW_2})$; by the universal property, the morphism $\cW'\to B$ induces $\tilde \pi:\cW'\to B(\sqrt[r]{b_0})$. Then $W(\sqrt[r]{D})$ is just the fiber of $\tilde \pi$ over a geometric point of $B(\sqrt[r]{b_0})$ over $b_0$.

Note that the one-parameter smoothing induces a trivialization of $\cExt^1(\Omega_W,\cO_W)$, unique up to a non-zero scalar (corresponding to a choice of the basis of $T_{b_0}B$).%\Dan{This section \ref{SS:bal-node-def} needs some work}

\subsubsection{Twisted curves and twisted nodes}\label{SSS:tw-c-tw-n} Consider a node $\wp$ of index $r$ on a twisted curve $\cC$, and assume the two branches of $\cC$ at $\wp$ belong to two different components of $\cC$. Replacing $\cC$ by an open neighborhood of $\wp$ we may assume $\cC = \cC_1\sqcup_\wp\cC_2$. Let $C$ be the coarse moduli space of $\cC$ and $p$ the image of $\wp$. It follows from the definition that $\cC = C(\sqrt[r]{p})$.

Even if the node $\wp$ is not locally separating in the Zariski topology, there is an \'etale neighborhood where it is, so in fact every twisted node is obtained by this construction locally in the \'etale topology. See \cite{Olsson-twisted} for a formalism which works in general; we will not use this generality in this paper.%\Dan{make sure it is correct. Precise reference}

\subsubsection{Maps and lifts}

Here is the result analogous to \ref{L:lift-pd-pair} for nodes:

\begin{lemma}\label{L:lift-pd-node}
Let $C=C_1\sqcup_\Sigma C_2$ be a nodal twisted curve and let $W=X_1\sqcup_D X_2$ have first-order smoothable nodal singularities. Let $g: C \to W$ be a representable morphism, given via $g_i : C_i \to X_i \subset W$ and an isomorphism $\phi: (g_1)|_\Sigma \to (g_2)|_\Sigma$. Assume $g_i^* D = c\cdot \Sigma$ as Cartier divisor on $C_i$ (so the two contact orders agree). Let $r,r_\Sigma\geq 1$ be an integers, and consider the twisted  structures $W' = Y(\sqrt[r]{D})$ and  $C' = C(\sqrt[r_\Sigma]{D})$. We again assume $c$ divides $r$. Then
\begin{enumerate}
\item $g:C\to W$ lifts to $\tilde g:C'\to X'$ if and only if $r| c\cdot r_\Sigma$,%\Dan{What about automorphisms??}
%\item when $r| c\cdot f$ such lift is unique up to unique isomorphism, and
\item such lift is representable if and only if $r=c\cdot r_\Sigma$, and
\item such lift is transversal if and only if $r=c\cdot r_\Sigma$.
\end{enumerate}
\end{lemma}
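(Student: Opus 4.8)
The plan is to reduce Lemma \ref{L:lift-pd-node} to the already-established Lemma \ref{L:lift-pd-pair} by working branch-by-branch and then reassembling via the pushout description of twisted nodes. First I would use \ref{SSS:tw-c-tw-n}: passing to an \'etale neighborhood of the node we may assume $C = C_1 \sqcup_\Sigma C_2$ with coarse space $C = C_1 \sqcup_p C_2$, and likewise $W = X_1 \sqcup_D X_2$; moreover $C' = C(\sqrt[r_\Sigma]{D})$ is obtained by the root construction on each branch, $C_i' = C_i(\sqrt[r_\Sigma]{\Sigma})$, glued along $\tilde\Sigma$ via the canonical identification of normal bundles $L_1 \simeq L_2^\vee$, and $W' = W(\sqrt[r]{D})$ is $X_i' = X_i(\sqrt[r]{D})$ glued along $\cD$ via the isomorphism $\beta$ of Definition \ref{D:twist-node}. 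So a morphism $C' \to W'$ is, by \ref{2_pushout}, the same data as morphisms $\tilde g_i : C_i' \to X_i'$ together with a $2$-isomorphism between $\tilde g_1|_{\tilde\Sigma}$ and $\tilde g_2|_{\tilde\Sigma} \circ (\text{gluing})$ compatible with the gluing data on both sides.

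The existence statement (1) is then immediate: by Lemma \ref{L:lift-pd-pair}(1) applied to each pair $(X_i,D)$ and each curve branch $(C_i,\Sigma)$, the lift $\tilde g_i : C_i' \to X_i'$ exists (uniquely up to unique isomorphism, by \ref{L:lift-pd-pair}(2)) if and only if $r \mid c \cdot r_\Sigma$, and this is the same condition for $i=1,2$ since the contact orders agree. What remains is to glue the two branch lifts: I would check that the canonical isomorphism $\tilde g_1|_{\tilde\Sigma} \to \tilde g_2|_{\tilde\Sigma}$ lifting $\phi$ is compatible with $\beta$ and the curve-side gluing identification $L_1 \simeq L_2^\vee$. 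This compatibility is the one genuinely node-specific point — it is where the balanced structure and the matching of normal bundles enters — but it is forced: over $C \setminus \Sigma$ everything restricts to the trivial line bundle with trivial section, which has no nontrivial automorphisms (as in the proof of \ref{L:lift-pd-pair}(2)), so the gluing $2$-isomorphism, if it exists, is unique, and one checks existence by tracking the $r$-th root sections $\bone_{\tilde\Sigma}^d$ on each side against the universal roots $\phi_i$ defining $\beta$.

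For (2) and (3), representability and transversality are properties that can be checked \'etale-locally on the source, and away from the node $C' \to W'$ agrees with $C \to W$ which is already representable (respectively transversal, vacuously, since $D$ is not met there). So it suffices to check at points of $C'$ lying over $\tilde\Sigma$. But in an \'etale neighborhood of such a point $C'$ is just the root stack $C_i'$ on one branch and $W'$ is $X_i'$, so representability and transversality of $\tilde g$ at that point are exactly representability and transversality of $\tilde g_i : C_i' \to X_i'$, which by Lemma \ref{L:lift-pd-pair}(3),(4) hold if and only if $r = c \cdot r_\Sigma$. Hence the same criterion $r = c \cdot r_\Sigma$ governs both for the node.

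I expect the main obstacle to be the gluing-compatibility check in step (1): one must verify that the branch-wise lifts, each canonical over its own pair, actually agree on the overlap $\tilde\Sigma$ in a way compatible with the pushout diagram defining $W'$ — in particular that the isomorphism $\beta$ built from the $r$-th roots of $N_1 \to N_2^\vee$ intertwines the sections $\bone_{\tilde\Sigma}^d$ and the lifted $\phi$. Everything else is a routine transcription of Lemma \ref{L:lift-pd-pair} together with the \'etale-local and pushout descriptions already recorded in \ref{SSS:tw-c-tw-n} and \ref{2_pushout}.
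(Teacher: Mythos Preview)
Your proposal is correct and follows essentially the same route as the paper: reduce to the branch-by-branch statement via Lemma~\ref{L:lift-pd-pair}, then handle the one genuinely nodal point---the gluing compatibility---by tracking how $\beta$ carries $\cO_{(C_1)'}(\tilde\Sigma)|_{\tilde\Sigma}$ to $\cO_{(C_2)'}(\tilde\Sigma)^\vee|_{\tilde\Sigma}$ and hence matches the sections $\bone_{\tilde\Sigma}^d$ on the two sides. Your identification of the gluing check as the crux is exactly right, and your treatment of (2) and (3) as \'etale-local consequences of Lemma~\ref{L:lift-pd-pair}(3),(4) is what the paper does as well.
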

\begin{proof} The necessity in (1) as well as (2) and (3) hold since the Lemma \ref{L:lift-pd-pair} applies to $g_i$.
To show that $r| c\cdot r_\Sigma$ is sufficient in (1),  consider the lift $\tilde g_i$ of $g_i$  corresponding to the line bundle $\cO(d\tilde\Sigma)$ on $(C_i)'$ with section $s_i = \bone_{\tilde\Sigma}^d$, where $rd=cr_\Sigma$ as in Lemma \ref{L:lift-pd-pair}; it clearly satisfies $s_i^r = \tilde g_i^* \bone_E$. 

The isomorphism $\beta$ of \ref{D:twist-node} sends $\cO_{(C_1)'}(\tilde\Sigma)|_{\tilde\Sigma}$ to   $\cO_{(C_2)'}(\tilde\Sigma)^\vee|_{\tilde\Sigma}$. In particular it sends $\cO_{(C_1)'}(d\tilde\Sigma)|_{\tilde\Sigma}$  to   $\cO_{(C_1)'}(d\tilde\Sigma)|_{\tilde\Sigma}$, giving the gluing data for a lift $\tilde g: C' \to X'$.%\Dan{I hope this is fine}
% It follows that after replacing  $s$ by $\zeta_r^a s$ with $\zeta_r$ a root of unity, we have $s^r/c = \bone_D$.
%\item Uniqueness follows since $(\cO(\cD),\bone_{\cD})|_{X\setminus D} \simeq (\cO,\bone)$ which has no nontrivial automorphisms.
%\item  It suffices to consider points over $D$. Here automorphisms of an object $(L,s=0,L^f\simeq \cO(D))$ of $X_f$ are given by $M \stackrel{\zera_f^a}{\to} M$ with $\zeta_f$ a primitive root. The action on   $\cO(d\cD)= L^d$ is via $\zeta_f^{ad}$, which has a nontrivial kernel exactly when $d>1$. \Dan{could be clearer, I know}
\end{proof}
\begin{remark}
In this case we do not have uniqueness since we may compose by ghost automorphisms.
\end{remark}

%Note that we may be able to do without the $n_D$ and $f_j$ entirely; it is enough to have $n=n_X$, the $e_i$, and nonnegative integers $c_i$ such that $f^*D=\sum c_i\Sigma_i$. 
%\newpage
 \section{Expanded pairs and degenerations}\label{S:Expansions}
 %\Dan{only reference to section heading in introduction outline} 

\subsection{Expanded pairs}

\begin{conv} In this subsection we fix a locally smooth pair $(X,D)$, i.e. $X$ is an algebraic stack smooth along a smooth divisor $D$. 
Write $N$ for $N_{D/X}$. Let $\PP:=\PP_D(N\oplus \cO_D)$, with its sections $D^-$ and $D^+$ having normal bundle $N^\vee$ and $N$ respectively. Let $\PP_i$ (for $i\ge 1$) be copies of $\PP$, and let $D_i^-$ and $D_i^+$ be the corresponding sections. For a positive integer $r$, let $\cD_r:=\sqrt[r]{N/D}$, see \ref{SS:root}.
\end{conv}

\begin{defn} Let $k\ge 0$ be an integer. Define $X[k]$ to be the algebraic stack obtained by gluing $X$ to $\PP_1$ via the natural isomorphism of $D$ with $D_1^-$, and for every $i\in\{1,\ldots,k-1\}$ gluing $\PP_i$ with $\PP_{i+1}$ via the natural isomorphism of $D_i^+$ with $D_{i+1}^-$.

 \begin{equation*}
X[k] := X_1 \hskip -1ex \mathop{\sqcup}\limits_{\hphantom{_-}D=D_1^-}  \PP_1 \mathop{\sqcup}\limits_{D_1^+=D_2^-}  \cdots   \mathop{\sqcup}\limits_{D_{k-1}^+=D_k^-}  \PP_k
\end{equation*}

Note that $X[k]$ has nodal first-order smoothable singularities along its singular locus, which is the disjoint union of $D_0,\ldots,D_{k-1}$ (where $D_i$ is the image of either $D_{i+1}^-$ or $D_{i}^+$). Write $D_k$ for the image of $D_k^+$; note that $X[k]$ is smooth along $D_k$ and that $\iota^*N_{D_k/X[k]}=N$, if $\iota:D\to D_k\subset X[k]$ is the natural isomorphism.

We call the sequence of morphisms $D\to X[k]\to X$ an 
%{\em untwisted half accordion} 
{\em untwisted expanded pair}. (It is also sometimes called an   {\em untwisted half accordion}.)
of length $k$.\end{defn} 

\begin{defn}
Let ${\bf r}:=(r_0,\ldots,r_{k})$ be a sequence of positive integers. Define $X[k]({\bf r})\to X[k]$ to be the morphism obtained by twisting $X[k]$ along $D_i$ with index $r_i$; note that we can apply the construction in section \ref{balanced_node} since the relevant normal bundles are naturally dual to each other. 
We call the sequence of morphisms $\cD_{r_k}\stackrel{\iota}{\to} X[k]({\bf r})\to X$  the {\em ${\bf r}$-twisted expanded pair of length $k$} over $(X,D)$, or just a twisted {\em expansion} of (X,D). Its {\em twisting index}%\Dan{do we need this index? maybe a name indicating this is at the end divisor?} 
is defined to be the integer $r_k$. 
%An {\em untwisted (resp. twisted) expansion} of the pair $(X,D)$ is an untwisted (resp. twisted) half accordion.
\end{defn} 

Note that an untwisted expansions of length $k$ is the same as a ${\bf 1}$-twisted expansion of length $k$, where ${\bf 1}=(1,\ldots,1)$. Moreover, there is a natural morphism $X[k]({\bf r})\to X[k]$ which, if $X$ is a scheme or algebraic space, is just the morphism to the coarse moduli space. 

\subsubsection{Stacks of expanded pairs}\label{Sec:pairs} Forming stacks of \ChDan{families of} expanded pairs is slightly more subtle than it might seem, since in general not all deformations are good.
The stack $\cT^u$ of untwisted expanded pairs was defined by Jun Li, see \cite[Proposition 4.5]{Li1} where the notation ${\mathfrak Z}^{rel}$ is used for its universal family, and \cite[Section 2.8]{Graber-Vakil} where the notation $\cT$ for the stack we denote here by $\cT^u$ is introduced. In this paper we also use the stack $\cT^\tw$ of twisted expanded pairs. 
The stacks $\cT^u$ and $\cT^\tw$ are studied in \cite{ACFW}, where various alternative definitions and properties are described. It is shown in \cite[Theorem 1.3.2]{ACFW} that the various definitions of $\cT^u$ coincide. We rely on \cite[Definitions 2.1.5 and 2.4.2(1)]{ACFW} in this text. 

We denote by $\cT_r^\tw \subset \cT^\tw$ the open and closed substack of twisted expansions of index $r$, so that $\cT^\tw = \sqcup \cT_r^\tw$.

We denote the universal families of expanded pairs $\cX^u_{univ} \to \cT^u$ and   $\cX^\tw_{univ} \to \cT^\tw$.

\begin{proposition}\label{Prop:cT}
% \Dan{referenced only in this section, really also in \ref{Sec:transversal-and-predeformable}, can somehow replace by \cite[7.3]{ACFW}. Connectedness requires a proof}
\begin{enumerate}
 \item The stacks $\cT_r^\tw$ and $\cT^u$ are smooth  connected algebraic stacks of dimension 0 (\cite[Theorem 1.3.1]{ACFW}).
%\item There is a natural open embedding $\cT^u \hookrightarrow \cT^\tw$ and a natural right inverse $\cT^\tw \to \cT^u$.\Dan{remove this? Do we need it?}
\item The stacks $\cT^\tw$ and $\cT^u$ do not depend on the choice of the pair $(X,D)$ (\cite[Remark 2.1.7]{ACFW}).
\end{enumerate}
\end{proposition}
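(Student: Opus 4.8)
The proposition has two parts, and both are essentially bookkeeping on top of results cited from \cite{ACFW}, so the ``proof'' should be a short argument that reduces each statement to the quoted theorems and explains why the reduction is legitimate rather than an independent construction.

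\medskip

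\noindent\textbf{Proof strategy for part (1).} The plan is to note that $\cT^u$ and $\cT^\tw_r$ are by definition the stacks of (untwisted, resp.\ index-$r$ twisted) expansions of \emph{some} chosen locally smooth pair $(X,D)$, and that all the relevant structure is detected on local models. First I would recall that a length-$k$ untwisted expanded pair is locally, in the smooth topology on the target, isomorphic to the $k$-fold expansion of the standard local pair $(\AA^1\times \Spec\CC, \{0\}\times\Spec\CC)$ — equivalently one can take $W=\AA^n\times N_1$ as in the proof of the earlier lemma — and similarly in the twisted case one applies the root construction from Section~\ref{balanced_node} fibrewise. Then I would simply invoke \cite[Theorem 1.3.1]{ACFW}, which asserts precisely that the resulting moduli stack $\cT^u$ (and its twisted analogue $\cT^\tw$, decomposed as $\sqcup_r \cT^\tw_r$ according to twisting index) is a smooth connected algebraic stack of dimension $0$. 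The only thing to check by hand is that the stratification by $\cT^\tw = \sqcup_r \cT^\tw_r$ is indeed by open and closed substacks, which is clear because the twisting index $r$ is locally constant in families (it is the order of the orbifold structure along $D_k$, which cannot jump), so each $\cT^\tw_r$ is a union of connected components of $\cT^\tw$ and hence inherits smoothness, algebraicity and dimension $0$; connectedness of each individual $\cT^\tw_r$ is then again what \cite[Theorem 1.3.1]{ACFW} provides.

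\medskip

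\noindent\textbf{Proof strategy for part (2).} Here the claim is that the \emph{abstract} stack $\cT^u$ (resp.\ $\cT^\tw$) is independent of the auxiliary pair $(X,D)$ used to define it — only the family of local models along $D$ matters, and those are the same for any locally smooth pair. I would argue this by exhibiting a canonical equivalence: given two locally smooth pairs $(X,D)$ and $(X',D')$, an expansion of one restricts, near the singular/twisting locus, to data that depends only on the formal (or étale-local) neighbourhood of a smooth divisor in a smooth stack, and this neighbourhood is étale-locally $\AA^1\times(-)$ in both cases. One then checks that the functor sending an expansion of $(X,D)$ to the corresponding configuration of $\PP^1$-bundles (with their sections, root-stack structures and gluings) does not see $(X,D)$ at all beyond the normal bundle $N$, and that even $N$ can be removed because $\PP_D(N\oplus\cO_D)$ with its two sections is, for the purposes of the moduli problem, replaced by its fibrewise structure. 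But rather than reconstruct this, I would just cite \cite[Remark 2.1.7]{ACFW}, which records exactly this independence, and remark that the weighted and twisted variants we need are covered by \cite[Definitions 2.1.5 and 2.4.2(1)]{ACFW} together with the compatibility of root constructions with the gluing of Definition~\ref{D:twist-node}.

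\medskip

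\noindent\textbf{Main obstacle.} Since the substance is delegated to \cite{ACFW}, the only genuine point requiring care — and the one I would spend the most words on — is the passage ``$\cT^\tw = \sqcup_r \cT^\tw_r$ with each piece open and closed'': one must be sure that the twisting index $r_k$ along the last divisor is a discrete invariant that is both open and closed on the base, i.e.\ that no family of twisted expanded pairs can have generic twisting index $r$ and special index $r'\neq r$. This follows because the orbifold structure at a gerbe $\cD_{r_k}$ is rigid and its band $\bmu_{r_k}$ is locally constant in flat families of twisted curves/targets (as in \ref{SSS:def-tw-curve} and \ref{SSS:mod-tw-curve}), so $r_k$ defines a locally constant function on $\cT^\tw$; everything else is then immediate from \cite[Theorem 1.3.1]{ACFW}.
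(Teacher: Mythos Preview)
Your proposal is correct and takes essentially the same approach as the paper: both delegate the substance entirely to \cite{ACFW}. In fact the paper gives no separate proof at all --- the citations to \cite[Theorem 1.3.1]{ACFW} and \cite[Remark 2.1.7]{ACFW} are embedded directly in the statement, and the text moves on immediately to the next subsection. Your additional remarks (that $\cT^\tw = \sqcup_r \cT^\tw_r$ is a decomposition into open and closed substacks because the twisting index is locally constant, and the sketch of why independence of $(X,D)$ should hold) are correct and helpful glosses, but they go beyond what the paper itself supplies.
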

%A proof is given in \cite{ACFW}.

%\begin{remark} It is easy to see that there is a natural isomorphism from the stack of index $1$ expanded pairs to that of index $r$ expanded pairs; it is given by replacing the universal family by its $r$-th root $\cX(\sqrt[r]{D_k})$ (in the sense of Subsection \ref{onpairs}). 
%Hence it is enough to study the moduli stack of index one expanded pairs.
%\end{remark}

\subsection{Twisted expanded degenerations}\label{expdeg}
%\Dan{external reference in \ref{C:deg-data}, really to \ref{degeneration_conv} - may be replaced by \cite[2.2.1]{ACFW}}
We now discuss stacks analogous to the Artin stack underlying Jun Li's family $\fW$ of expanded degenerations.

\begin{conv}\label{degeneration_conv}
%\Dan{no references} 
In this subsection we fix $\pi: W \to B$, a flat morphism such that $B$ is a smooth curve, $W$ is a smooth stack, and $b_0\in B$ is the unique critical value of $\pi$; we set $W_0:=\pi^{-1}(b_0)$ an assume $W_0= X_1\sqcup_D X_2$  is the union of two smooth closed substacks $X_1$ and $X_2$ intersecting transversally along $D$, a smooth divisor in each $X_i$. This implies that $W_0$ is nodal and first-order smoothable along its singular locus $D$. %We also assume\Barbara{In the final version we should discuss what to do if $D$ doesn't satisfy this property (i.e., stick to the stack $\cT$ resp.~$\fT$.} that $\CC\to H^0(D,\cO_D)$ is an isomorphism.
\end{conv}

\begin{defn} 
\begin{enumerate}
\item Let $k\ge 0$ be an integer. Define $W_0[k]$ to be the Deligne--Mumford stack obtained by gluing the $k$-th untwisted \ChDan{expanded pair}  $X_1[k]$ over $(X_1,D)$ to $X_2$ via the identification of $D_k\subset X_1[k]$ with $D\subset X_2$; denote the image of $D_i\subset X_1[k]$ again by $D_i$.  
\item Let ${\bf r}:=(r_0,\ldots,r_k)$ be a sequence of positive integers. Define $W_0[k]({\bf r})\to W_0[k]$ to be the morphism obtained by twisting $W_0[k]$ along $D_i$ with index $r_i$. 
\item We call the morphism $W_0[k]({\bf r})\to W_0$ the $(k,{\bf r})$ {\em twisted \ChDan{expansion}} or  over $W_0$, or just an expansion of $W_0$. We call it an {\em untwisted  \ChDan{expansion}} if ${\bf r}=(1,\ldots,1)$ or equivalently if it has the form $W_0[k]\to W_0$. An {\em expanded degeneration} of $W$ is either a general fiber or a twisted or untwisted  \ChDan{expansion}.
\end{enumerate}
\end{defn} 

Again we rely on \cite[Definitions 2.3.6 and 2.4.2(2)]{ACFW} for the definition of the stacks $\fT_{B,b_0}^u$ and  $\fT_{B,b_0}^\tw$ of untwisted and twisted expanded degenerations.

%The stack $\fT_{B,b_0}^u$ of untwisted expanded degenerations is defined by insisting that the fiber $\cW_s\to W_{p(s)}$ is  an {\em untwisted} accordion over $W_0$ if $p(s)=b_0$. 

We denote the universal families of expanded degenerations $\cW^u_{univ} \to \fT^u$ and   $\cW^\tw_{univ} \to \fT^\tw$.

Here is the analogue of Proposition \ref{Prop:cT}:

\begin{proposition}\label{P:fTtw-fTu}
%\Dan{reference in \ref{Sec:transversal-and-predeformable}, should be able to replace by \cite[7.3]{ACFW}.}
 \begin{enumerate}
\item The stacks $\fT^\tw_{B,b_0}$ and $\fT^u_{B,b_0}$ only depend on the base $(B,b_0)$ but not on the family $W$ (\cite[Remark 2.3.8]{ACFW}).
 \item The stacks $\fT^\tw_{B,b_0}$ and $\fT^u_{B,b_0}$ are smooth, algebraic connected stacks of dimension $1$ (\cite[Theorem 1.3.1]{ACFW}).
%\item There is a natural open embedding $\fT^u_{B,b_0} \hookrightarrow \fT^\tw_{B,b_0}$ and a natural right inverse $\fT^\tw_{B,b_0} \to \fT^u_{B,b_0}$;
%\item If $Pic(D)$ is representable and discrete, the objects of  $\fT^\tw_{B,b_0}(S),  \fT^u_{B,b_0}(S)$ are described as above even for nonreduced $S$. 
\end{enumerate}
\end{proposition}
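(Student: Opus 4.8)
The plan is to reduce Proposition \ref{P:fTtw-fTu} entirely to the quoted results of \cite{ACFW}, since the excerpt explicitly grants us \cite[Theorem 1.3.1]{ACFW} and \cite[Remark 2.3.8]{ACFW}. Part (1) is essentially a restatement of \cite[Remark 2.3.8]{ACFW}: by the defining universal property of $\fT^u_{B,b_0}$ and $\fT^\tw_{B,b_0}$ — parametrizing families of untwisted resp. twisted expanded degenerations of the fixed pointed curve $(B,b_0)$ — the construction of $W_0[k]$ and $W_0[k](\mathbf r)$ as a gluing $X_1[k]\sqcup_{D_k = D}X_2$, followed by twisting along the $D_i$, depends only on the abstract data $(B,b_0)$ together with the normal-crossing structure of $W_0$ along $D$, and any two flat families $\pi:W\to B$, $\pi':W'\to B$ with the same critical value $b_0$ and with first-order smoothable nodal central fibre produce canonically isomorphic moduli stacks. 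So I would first recall the precise form of \cite[Definitions 2.3.6 and 2.4.2(2)]{ACFW}, observe that the data entering those definitions is intrinsic to $(B,b_0)$ (the smoothing parameter only enters through the trivialization of $\cExt^1(\Omega_{W_0},\cO_{W_0})$, unique up to scalar, as noted in \ref{SS:bal-node-def}), and conclude (1).

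For part (2), algebraicity and connectedness of both stacks, and the dimension statement, are \cite[Theorem 1.3.1]{ACFW}; the one thing worth spelling out is why the dimension is $1$ rather than $0$ as in Proposition \ref{Prop:cT}(1). The heuristic I would record: the stack of expanded pairs $\cT^u$ is $0$-dimensional because an expanded pair over $(X,D)$ has no modulus — the length $k$ is discrete and for each $k$ the automorphisms ($\GG_m$ scalings of each $\PP_i$, subject to compatibility) exactly cancel the $k$ gluing parameters, giving dimension $0$. For degenerations one has in addition the base parameter of $B$ near $b_0$: a generic fibre of $W\to B$ is itself an object, so there is a $1$-dimensional family (the disc around $b_0$) and the boundary stratum over $b_0$, where the node degenerates, sits in codimension $1$; hence $\dim \fT^u_{B,b_0} = 1$, and twisting, being étale/quasifinite onto the untwisted stack by the analogue of the lemma in \ref{2_pushout}, does not change the dimension, so $\dim\fT^\tw_{B,b_0} = 1$ as well. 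I would phrase this as: the morphism $\fT^u_{B,b_0}\to B$ (remembering the underlying point of $B$, or rather the underlying non-expanded degeneration) is of relative dimension $0$ away from $b_0$ and the fibre over $b_0$ is $\cT^u$-like, which combined with $\dim B = 1$ gives the claim — but strictly one defers to \cite[Theorem 1.3.1]{ACFW} for the rigorous count.

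The main obstacle, such as it is, is purely expository rather than mathematical: one must check that the definitions \cite[Definitions 2.3.6 and 2.4.2(2)]{ACFW} being cited are literally the ones governing $W_0[k](\mathbf r)\to W_0$ as constructed in the excerpt, i.e. that the twisting-along-$D_i$ operation of section \ref{balanced_node} (with its choice of isomorphism $N_1\to N_2^\vee$) is compatible with the \cite{ACFW} formalism — this is exactly the content of the sentence ``we can apply the construction in section \ref{balanced_node} since the relevant normal bundles are naturally dual to each other,'' together with \ref{SS:bal-node-def} identifying $W_0(\sqrt[r]{D})$ as a fibre of $\cW(\sqrt[r]{\cW_1})\times_{\cW}\cW(\sqrt[r]{\cW_2})\to B(\sqrt[r]{b_0})$. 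Once that identification is in hand, both parts of the proposition are immediate citations, and I would write the proof as essentially one paragraph: ``Part (1) is \cite[Remark 2.3.8]{ACFW}; part (2) is \cite[Theorem 1.3.1]{ACFW}, the dimension being $1$ because of the extra base direction along $B$ compared with the $0$-dimensional stack $\cT^u$ of Proposition \ref{Prop:cT}.''
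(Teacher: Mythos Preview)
Your proposal is correct and aligns with the paper's treatment: the paper gives no proof at all for this proposition, simply stating it with the citations to \cite[Remark 2.3.8]{ACFW} and \cite[Theorem 1.3.1]{ACFW} embedded in the statement itself. Your additional heuristic for the dimension count and the compatibility check with \S\ref{balanced_node} are helpful exposition but go beyond what the paper itself provides.
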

%The proof of this proposition can be found\Dan{Give precise reference!} in \cite{ACFW}. Briefly, given (1), which is already observed in \cite{Graber-Vakil} in the relative situation, we can deduce parts (2) and (3) using the observation that $\fT^\tw_{B,b_0}$ and $\fT^u_{B,b_0}$ can be identified as certain stacks of twisted and untwisted semistable maps, again an observation due to Graber and Vakil in the relative case.

\subsection{The singular case in the absence of smoothing}
We have defined $\fT^u_0 = \fT^u_{B,b_0}\times_B \{b_0\}$ and $\fT^\tw_0 = \fT^\tw_{B,b_0}\times_B \{b_0\}$. When $\pi:W\to B$ is as in \ref{expdeg} above, these stacks parametrize untwisted and twisted expansions of the singular fiber $W_0$. But more is true: consider a first-order smoothable $W_0 = X_1\sqcup_D X_2$. We define untwisted and twisted \ChDan{expansions} on $W_0$ as before, and they obviously coincide with the previous definition if $W_0$ is provided with a smoothing over some curve $B$. 
\ChDan{The general setup in \cite[Definitions 2.3.6 and 2.4.2(2)]{ACFW}  includes families of  expansions of $W_0$.} We have the following.
\begin{proposition}\begin{enumerate}
\item The stacks $\fT^\tw_{0}$ and $\fT^u_{0}$ are the stacks of twisted and untwisted expansions of  $W_0$.
\item These stacks $\fT^\tw_{0}$ and $\fT^u_{0}$ are independent of $W_0$.
\end{enumerate}
\end{proposition}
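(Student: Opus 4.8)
The plan is to prove both parts by reducing to the case of a one-parameter smoothing, where the analogous statements are already available (Propositions \ref{Prop:cT} and \ref{P:fTtw-fTu}), and bridging the gap with the fact that the whole construction is \'etale-local on $W_0$ while the \'etale-local structure of $(W_0,D)$ is universal.

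For part (1) I would first make precise the two moduli problems being compared. By definition $\fT^\tw_0=\fT^\tw_{B,b_0}\times_B\{b_0\}$, and by \cite[Definitions 2.3.6 and 2.4.2(2)]{ACFW} the stack $\fT^\tw_{B,b_0}$ is the stack of families of twisted expanded degenerations of $\pi:W\to B$; such a family over $S$ carries a morphism $S\to B$, and over the part of $S$ mapping to $b_0$ its fibers are exactly the twisted expansions of $W_0=\pi^{-1}(b_0)$ in the sense of \ref{expdeg}. Hence $\fT^\tw_0$ represents the functor of families of twisted expansions of $W_0$. On the other hand that functor is defined directly, without reference to any smoothing, in the general setup of \cite[Definitions 2.3.6 and 2.4.2(2)]{ACFW}: beyond $W_0=X_1\sqcup_D X_2$ the only extra datum it uses is the duality isomorphism $\alpha:N_1\to N_2^\vee$ needed for the twisting construction of \ref{balanced_node}. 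Since the smoothing $\pi$ induces such an $\alpha$, unique up to a nonzero scalar (see \ref{SS:bal-node-def}), and rescaling $\alpha$ merely reparametrizes the gluing isomorphism $\beta$ of \ref{balanced_node} and produces an isomorphic stack (the scalar being absorbed by an automorphism of the root stack components $X_i(\sqrt[r]{D})$), the two stacks coincide. The untwisted statement is the special case ${\bf r}=(1,\ldots,1)$, where no $\alpha$ enters, and is identical but simpler.

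For part (2), let $W_0$ and $W_0'$ be two first-order smoothable stacks as in \ref{balanced_node}. The intrinsic construction of the stacks of expansions commutes with \'etale base change on $W_0$: an expansion modifies only a neighborhood of the splitting divisor, inserting copies of the bubble $\PP_D(N\oplus\cO_D)$ (with $N=N_{D/X_1}$) and applying the root constructions of \ref{balanced_node}, all of which are \'etale-local. \'Etale-locally on $W_0$, however, $(W_0,D)$ is isomorphic to $\bbA^{n-1}\times\bigl(\Spec\CC[u,v]/(uv),\{u=v=0\}\bigr)$, which is the fiber over $0$ of the tautological smoothing $\bbA^{n-1}\times\Spec\CC[u,v,t]/(uv-t)\to\bbA^1$; likewise for $W_0'$. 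Applying part (1) to these local smoothings, together with the facts that $\fT^\tw_{B,b_0}$ depends only on $(B,b_0)$ (Proposition \ref{P:fTtw-fTu}(1)), that any pointed smooth curve is \'etale-locally $(\bbA^1,0)$, and that the formation of $\fT^\tw_{B,b_0}$ is compatible with \'etale localization of the base, one identifies both $\fT^\tw_0$ and its analogue for $W_0'$ with the single stack $\fT^\tw_{\bbA^1,0}\times_{\bbA^1}\{0\}$, and similarly in the untwisted case. Equivalently --- and this is how I would organize the final write-up --- I would observe that, exactly as $\cT^u$ and $\cT^\tw$ are already known to be independent of the chosen pair (Proposition \ref{Prop:cT}(2)), the stacks $\fT^\tw_0$ and $\fT^u_0$ depend on $W_0$ only through the combinatorics of the single splitting divisor $D$, which does not change.

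The step I expect to be the main obstacle is the bookkeeping needed when $W_0$ admits no smoothing over a smooth curve, since the independence results we may cite are phrased in the presence of a smoothing: one must then genuinely assemble the comparison isomorphism from the \'etale-local ones and check that it descends. The clean route is to present the stack of expansions of $W_0$ as the fiber over a point of a tautological base stack --- a suitable blowup and root stack of a product of copies of $\cA=[\bbA^1/\GGm]$ --- that visibly involves only the combinatorial type of $(W_0,D)$; producing that presentation and making it work uniformly in the non-smoothable case is where the real content lies, after which both parts are formal.
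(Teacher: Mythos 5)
The paper does not actually argue this proposition: both statements are imported from \cite{ACFW}, whose Definitions 2.3.6 and 2.4.2(2) define families of (twisted) expansions of $W_0$ intrinsically, with no reference to any smoothing, by presenting the stacks of expansions in terms of universal data built out of $\cA=[\bbA^1/\GG_m]$; with that definition in hand, (1) is a matter of unwinding the two moduli problems and (2) is the independence statement established there (compare Proposition \ref{Prop:cT}(2) and Proposition \ref{P:fTtw-fTu}(1), likewise quoted from \cite{ACFW}). Your final paragraph --- presenting the stack of expansions of $W_0$ as the fiber of a tautological base stack built from copies of $\cA$ --- is exactly this route, and you rightly identify it as where the real content lies; but you only name it, you do not carry it out.

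The argument you do spell out, \'etale-local reduction to the standard smoothing $\{uv=t\}$, has a genuine gap at the step ``the construction commutes with \'etale base change on $W_0$, so assemble the comparison isomorphism from local charts and descend.'' The stack $\fT^\tw_0$ does not live over $W_0$: an object over a test scheme $S$ is a global family $\cW\to S$ of expansions together with a morphism to $W_0\times S$, and such an object is neither determined by, nor reconstructible from, its restrictions over an \'etale cover of $W_0$ without specifying descent data along the splitting divisors --- which is precisely the bookkeeping the intrinsic definition exists to handle. Moreover the local model $\bbA^{n-1}\times\{uv=0\}$ forgets the global normal bundles $N_{D/X_i}$ and the trivialization $\alpha$ of $N_1\otimes N_2$ entering the gluing of Definition \ref{D:twist-node}; an argument by localization must show the moduli stack is insensitive to these global choices, and that again is the content of the tautological presentation rather than something the localization furnishes for free. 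Your treatment of part (1) (the smoothing pins down $\alpha$ up to a scalar, and rescaling only changes the gluing by an isomorphism) is reasonable in spirit, but the proof of (2) as proposed defers the essential point to a construction you only gesture at.
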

In particular,  if $W_0$ is a fiber in a one-parameter family $\pi:W \to B$, the universal families over $\fT^\tw_{0}$ and $\fT^u_{0}$ are independent of the smoothing.

%In fact the stacks $\fT^u_{B,b_0}$ and $\fT^\tw_{B,b_0}$ and their restrictions $\fT^\tw_{0}$ and $\fT^u_{0}$ are obtained from a more general and universal situation which implies this proposition, see \cite{ACFW}.  

Note that the  stacks $\fT^\tw_{0}$ and $\fT^u_{0}$ are singular, and $\fT^\tw_0$ is nonreduced. 
%\Dan{removed "So a direct definition analogous  to \ref{D:exp-pairs} and \ref{D:exp-degs} in terms of families over reduced schemes is insufficient."}

% We take $W = \cA\times \cA\to B = \cA$ given by tensor product and $b_0 = B\GG_m\subset \cA$ and show in \cite{ACFW} that there are stack $\fT^u_{B,b_0}$ and $\fT^\tw_{B,b_0}$ parametrizing expansions. Whenever one has $\pi':W' \to B'$, a pair of line bundles with sections $ (L_1,s_1)$ and $ (L_2,s_2)$ on $W'$, a line bundle with section $(L_{B'},s_{B'})$ on $B'$, and an isomorphism $L_1\otimes L_2 \simeq (\pi')^*L_{B'}$ taking $s_1s_2$ to $(\pi')^*s_{B'}$, we have a commutative diagram
%$$\xymatrix{
%W' \ar[rrr]^{((L_1,s_1),(L_2,s_2))} \ar[d]_{pi'} &&& \cA \times \cA\ar[d]\\
%B'\ar[rrr]^{(L_{B'},s_{B'})}&&&\cA.
%}$$
%Then we can write $b'_0 = B' \times_\cA B\GG_m$, and we have $\fT_{B',b'_0} =  B'\times_\cA \fT_{\cA,B\GG_m}$. The universal family of expansions of $W'\to B'$ is obtained from the  expansion $(\cA\times \cA)^{exp}\to \cA\times \cA\to \fT_{\cA,B\GG_m}$ of $\cA\times \cA \to \cA$ as 
%$\cW' = W' \times_{\cA\times\cA}   (\cA\times \cA)^{exp}$.

%The case of $\fT_0$ is obtained by taking on $W_0$ the following line bundles with sections: $L_1$ is obtained by gluing $\cO_{X_1}(D)$ with  $\cO_{X_2}(-D)$ with the section $s_1$ obianed by gluing $\bone_D$ to the zero section of $\cO_{X_2}(-D)$; $L_2$ is obtained similarly by reversing the roles, and noting that there is a  canonical isomrphism $L_1\otimes L_2\simeq \cO_{W_0}$, and $s_1s_2=0$. \Dan{needs a bit of cleanup..}

\subsection{Split expansions}\label{SS:spl-exp} As the  stacks $\fT^\tw_{0}$ and $\fT^u_{0}$ are normal crossings, their normalizations are of interest.

In \cite[Section 7.4]{ACFW} we define the stack $\fT^{r,\spl}_{0}$ to be the stack of twisted  \ChDan{expansions}  with a choice of a splitting divisor $\cD$ of twisting index $r$. We similarly  $\fT^{u,\spl}_{0}$ in the untwisted case. We have a natural map $\fT^{r,\spl}_{0}\to \fT^{r}_{0}\subset \fT^{\tw}_{0}$ and  $\fT^{u,\spl}_{0}\to \fT^{u}_{0}$.  
%\Dan{removed "On the level of charts \dots"}

Consider now the stack $\fQ = \fT^{u,\spl}_{0} \times_{\fT^{u}_{0}} \fT^{\tw}_{0}$. It decomposes as a disjoint union $\fQ = \sqcup_r \fQ_r$, where over he reduction of $\fQ_r$ the splitting divisor in the universal family is twisted with index $r$. 

\begin{lemma}[{\cite[Proposition 7.4.2(2)]{ACFW}}]\label{L:tw-1/r}
The morphism $\fT^{r,\spl}_{0} \to \fQ_r$ is of degree $1/r$.
\end{lemma}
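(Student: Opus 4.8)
The plan is to work \'etale-locally on $\fT^u_0$ near the generic point of a splitting divisor and reduce the degree computation to the comparison of deformation theory of a twisted node versus that of an untwisted one, which was already recorded in \ref{SSS:def-tw-untw} and \ref{SS:bal-node-def}. Concretely, the stack $\fT^{u,\spl}_0$ has, near such a point, a versal family whose underlying expansion acquires an extra node along the chosen splitting divisor $\cD$; passing to $\fT^{\tw}_0$ via the fiber product $\fQ = \fT^{u,\spl}_0\times_{\fT^u_0}\fT^\tw_0$ amounts to remembering, along that distinguished node, which twisted structure of index $r$ is being smoothed. Meanwhile $\fT^{r,\spl}_0$ parametrizes the same split expansions but now the node along $\cD$ is itself twisted of index $r$ from the outset; the map $\fT^{r,\spl}_0 \to \fQ_r$ forgets nothing about the geometry but records the curve-of-smoothing with its $r$-fold ramified structure at $b_0$, exactly as in \ref{SS:bal-node-def}.

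The key steps, in order, are: first, choose local (smooth) charts so that both $\fT^{r,\spl}_0$ and $\fQ_r$ are represented near the relevant point; this uses Proposition \ref{P:fTtw-fTu} and the description of split expansions from \cite[Section 7.4]{ACFW}. Second, identify the forgetful morphism $\fT^{r,\spl}_0\to\fQ_r$ with the deformation-theoretic picture of \ref{SSS:def-tw-untw}: smoothing a twisted node of index $r$ is a degree-$1/r$ operation relative to smoothing the coarse (or untwisted-structure) node, because the pullback of the persistence divisor $\Delta_{\bar p}$ is $r_p\Delta_p$, equivalently the base of the twisted smoothing is an $r$-th root stack $B(\sqrt[r]{b_0})$ of the base of the untwisted one. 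Third, observe that all other data entering $\fT^{r,\spl}_0$ and $\fQ_r$ — the length $k$, the remaining twisting indices $r_i$ along the other nodes, the ordering — match identically under the morphism, so the only discrepancy in degree is concentrated at the distinguished splitting node and contributes exactly the factor $1/r$. Finally, conclude by the local-to-global nature of degree for a quasifinite representable morphism of equidimensional stacks.

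The main obstacle I expect is the bookkeeping needed to make precise what ``degree $1/r$'' means here: $\fT^{r,\spl}_0$ is nonreduced (indeed $\fT^\tw_0$ is, by the remark at the end of the excerpt), so one must be careful whether the claimed degree is computed on reductions, on the level of the normalization, or scheme-theoretically. The honest way to handle this is to work over the smooth locus of the splitting divisor, where \ref{SSS:def-tw-untw} gives the clean local model $\Delta_p\times\Spec\CC[\eps]/(\eps^{r_p})$ for the scheme-theoretic fiber; matching this with the local model for $\fQ_r$ — which remembers only the untwisted-structure smoothing of that node — shows the morphism $\fT^{r,\spl}_0\to\fQ_r$ is, locally, $B(\sqrt[r]{0})\to \Spec\CC$ crossed with an isomorphism, and hence of degree $1/r$ in the orbifold sense. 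A secondary technical point is to check that the fiber product defining $\fQ$ genuinely decomposes as $\sqcup_r\fQ_r$ with the stated twisting index on the reduction of each piece; this is essentially the content of \cite[Proposition 7.4.2]{ACFW} and can be cited, so it is not a real difficulty, merely something that must be invoked cleanly before the degree count makes sense.
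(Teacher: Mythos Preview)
The paper does not give its own proof of this lemma; it is simply cited from \cite[Proposition 7.4.2(2)]{ACFW}. Your strategy of reducing \'etale-locally to a generic split point and invoking the comparison of deformations of twisted versus untwisted nodes from \ref{SSS:def-tw-untw} and \ref{SS:bal-node-def} is sound in outline and would lead to a correct argument.

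However, you have the nonreducedness reversed, and this causes your local model to be wrong. It is $\fQ_r$ that is nonreduced, not $\fT^{r,\spl}_0$: the latter is a $\bmu_r$-gerbe over the smooth stack $\cT^\tw_r\times\cT^\tw_r$ by Lemma \ref{L:gl-1/r}, hence smooth, and the paper records explicitly in Section \ref{Sec:split-stack-target} that $\fT^{r,\spl}_0$ is the reduced substack of $\fQ_r$. Concretely, near a generic split point the local model of $\fT^\tw_0$ (and hence of $\fQ_r$, since $\fT^{u,\spl}_0\to\fT^u_0$ is an isomorphism there) is $\left[\Spec\bigl(\CC[\tau]/(\tau^r)\bigr)\big/\bmu_r\right]$, obtained as the fiber over $b_0$ of the $r$-th root stack of $(B,b_0)$ as in \ref{SS:bal-node-def}; the stack $\fT^{r,\spl}_0$ is its reduction $B\bmu_r$. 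The map $\fT^{r,\spl}_0\to\fQ_r$ is therefore the closed immersion of the reduced substack, and the degree-$1/r$ statement is the cycle identity $[\fQ_r]=r\cdot[\fT^{r,\spl}_0]$ coming from the length-$r$ nilpotent thickening. Your model ``$B\bmu_r\to\Spec\CC$'' happens to have degree $1/r$ as well, but for the wrong reason --- the gerbe structure rather than the multiplicity --- and conflating the two would cause trouble downstream in Lemma \ref{Lem:split-target-r}, where Costello's theorem is applied via \cite{Manolache} precisely by exploiting the nonreduced structure on $\fQ_r$.
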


Consider the universal family $\cW^{r,\spl}_{univ}$ over $\fT^{r,\spl}_{0}$. Taking the partial normalization along the splitting divisor $\cD$ gives two families  $\cD \to \cX_1$ and $\cD \to \cX_2$ of expanded pairs, which define a morphism $\fT^{r,\spl}_{0} \to \cT^\tw_r \times \cT^\tw_r$. 
%%%jump here

\begin{lemma}[{\cite[Proposition 7.4.2(1)]{ACFW}}]\label{L:gl-1/r}
The natural morphism $$\fT_{0}^{r,\spl} \to \cT^\tw_r \times \cT^\tw_r$$
corresponding to the two components of the partial normalization of the universal family $\cW^{r,\spl}_{univ}$ is a gerbe banded by $\bmu_r$; in particular it has degree $1/r$.
\end{lemma}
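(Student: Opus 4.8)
The plan is to reduce the statement to a concrete local computation, in the spirit of the gluing-as-pushout description in \ref{2_pushout}, and then to identify the resulting band with $\bmu_r$ by inspecting automorphisms.

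First I would recall the setup: a point of $\fT^{r,\spl}_{0}$ is a twisted expansion $\cW_0[k]({\bf r})$ together with a chosen splitting divisor $\cD$ of twisting index $r$, which separates it into two twisted half-accordions. Taking the partial normalization along $\cD$ produces the two families of expanded pairs $\cD\to\cX_1$ and $\cD\to\cX_2$; this is exactly the data classified by the target $\cT^\tw_r\times\cT^\tw_r$. The morphism in question forgets the gluing isomorphism $\beta:\cD_1'\to\cD_2'$ of Definition \ref{D:twist-node} (equivalently, it forgets how the two pieces are reglued). So the fiber over a point of $\cT^\tw_r\times\cT^\tw_r$ consists of the choices of such gluing data, and the relative automorphism group is the group of ghost automorphisms supported along $\cD$.

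The key steps, in order: (1) Show the morphism is representable, smooth and surjective onto its image, and that its fibers are gerbes — this follows from the pushout description, since giving a point of $\fT^{r,\spl}_0$ over a fixed point of $\cT^\tw_r\times\cT^\tw_r$ amounts to choosing a $2$-isomorphism between the restrictions of the two canonical $\bmu_r$-gerbes $\cD_i'=\sqrt[r]{N_i/D}$, together with the identification $\alpha:N_1\to N_2^\vee$; all such choices are canonically isomorphic, so the fiber is a one-object groupoid, i.e. a gerbe. (2) Compute the band: by the Corollary in \ref{SSS:bal-node-aut}, the automorphisms of a twisted nodal stack $W(\sqrt[r]{D})$ lifting the identity of $W$ form the group $\bmu_r(D)$, and since a splitting divisor is connected this is just $\bmu_r$; these are precisely the relative automorphisms for our morphism, so the gerbe is banded by $\bmu_r$. (3) Conclude that the degree is $1/r$: a gerbe banded by a finite group scheme $G$ over a base has degree $1/|G|$, and here $|\bmu_r|=r$. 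This last point can also be cross-checked against Lemma \ref{L:tw-1/r} together with Lemma \ref{L:tw-1/r}'s companion diagram, since $\fQ_r = \fT^{u,\spl}_0\times_{\fT^u_0}\fT^\tw_0$ and the untwisted gluing morphism $\fT^{u,\spl}_0\to\cT^u\times\cT^u$ is an isomorphism onto its image, so the factor $1/r$ is forced.

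The main obstacle I expect is step (1): making precise that the partial-normalization construction really does define the morphism to $\cT^\tw_r\times\cT^\tw_r$ in a functorial way (over arbitrary base schemes, not just geometric points), and that this morphism is \emph{surjective} onto the locus of pairs with compatibly dual normal bundles — one must check that any pair of twisted half-accordions whose boundary gerbes are abstractly dual can be glued, which uses the existence of the identification $\alpha$ and the isomorphism $\beta$ from Definition \ref{D:twist-node}. Once the morphism is set up correctly, the gerbe structure and the band computation are formal consequences of \ref{2_pushout} and the automorphism computation of \ref{SSS:bal-node-aut}; the heavy lifting has already been done in \cite{ACFW}, so in the paper this is legitimately cited as \cite[Proposition 7.4.2(1)]{ACFW} and the argument above is the sketch of why it holds.
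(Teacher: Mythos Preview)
The paper does not give its own proof of this lemma: it is stated with a bare citation to \cite[Proposition 7.4.2(1)]{ACFW}, so there is nothing in the paper to compare your sketch against beyond the surrounding machinery you invoke. Your outline is in the right spirit and correctly locates the mechanism: the fiber over a pair of twisted half-accordions is the groupoid of gluing data along $\cD$, and the Corollary in \ref{SSS:bal-node-aut} identifies the relative automorphism group with $\bmu_r$, which is exactly what bands the gerbe.

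One genuine slip: in step~(1) you say the morphism is \emph{representable} and that its fibers are gerbes. These are incompatible---a gerbe banded by a nontrivial $\bmu_r$ is never representable. What you presumably mean is that the morphism is smooth, surjective, and has the property that any two objects in a fiber are locally isomorphic (which is the gerbe condition); drop the word ``representable.'' Also, your phrase ``surjective onto the locus of pairs with compatibly dual normal bundles'' is slightly off in this context: the duality of normal bundles is automatic for expansions of a fixed first-order smoothable $W_0$ (it is built into the definitions in \S\ref{expdeg}), so the target $\cT^\tw_r\times\cT^\tw_r$ already has this compatibility and the morphism is surjective without restriction. With these corrections, your sketch is a faithful reconstruction of the argument behind the cited result.
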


\subsection{Weighted and $\fr$-twisted expansions}\label{weighted-expansions}
%\Dan{reference in \ref{twistprop} (properness of twisted) and \ref{gw-ind-twisting} (independence of twisting choice). May replace by \cite[Section 7]{ACFW}}
 In Theorem \ref{gw-ind-twisting} we will need a slightly refined version of the stacks $\cT^\tw$ of expanded pairs and $\fT^\tw$ of expanded degenerations, namely the stacks $\cT^\fr$ and $\fT^\fr$ of $\fr$-twisted $\bDelta$-wighted expansions. 
 
\subsubsection{$\bDelta$-weighted expansions} As a first step we consider  a set $\bDelta$ and stacks $\cT^\bDelta$ and $\fT^\bDelta$ in which the splitting divisors of the universal object are weighted, in a sense analogous to Costello \cite[Section 2]{Costello}, by elements of $\bDelta$. In our application of Theorem \ref{gw-ind-twisting} the set  $\bDelta$ is the set of finite multi-sets $\bc = \{c_1,\ldots,c_n\}$ of positive integers, which will indicate contact orders of branches of a curve along a splitting divisor. (Recall that a multi-set is an unordered sequence, with elements possibly appearing several times). Unlike Costello's situation we will not need a monoid structure on the weight set. 

The construction, algebraicity and basic properties of the stacks $\cT^\bDelta$ and $\fT^\bDelta$ is detailed in \cite[Section 7.1]{ACFW}. 
 
 There are evident forgetful maps $\fT^\bDelta\to\fT^\tw$ and  $\cT^\bDelta\to\cT^\tw$. By construction these maps are \'etale and representable. The universal families on $\fT^\bDelta$ and $\cT^\bDelta$ are given by pullback along the respective forgetful map. As before we define $\fT_0^\bDelta$ to be the inverse image of $b_0$.
%\Dan{removed "The relationship of families\dots"}
  
\subsubsection{$\fr$-twisted expansions} Next we consider a function $\fr: \bDelta \to \NN$ with positive integer values. An object of $\fT^\bDelta$ or $\cT^\bDelta$ is said to be {\em $\fr$-twisted} if  the  $\ell$-th splitting divisor is twisted with index $\fr(\bc^\ell)$. We obtain open substacks $\cT^\fr\subset \cT^\bDelta$ and $\fT^\fr\subset \fT^\bDelta$ of $\fr$-twisted, $\bDelta$-weighted expansions. Again $\fT_0^\fr$ denotes the inverse image of $b_0$. \ChDan{These stacks are described using logarithmic structures in \cite[Definition 7.2.3, Lemma 7.2.4]{ACFW}.}

We define a partial ordering $\prec$ on functions $\fr: \bDelta \to \NN$ by divisibility: $\fr\prec\fr'$ if an only if $\fr(\bc)|\fr'(\bc)$ for all $\bc\in \bDelta$.

\begin{lemma}[{\cite[Section 7.3]{ACFW}}]\label{compare-twisted}
%\Dan{reference in \ref{twist-compat-obs}} 
Assume that we are given functions  $\fr$ and $\fr':\bDelta\to \NN$ such that $\fr\prec\fr'$. Then are natural partial untwisting morphisms $\fT_{B,b_0}^{\fr'}\to\fT_{B,b_0}^\fr$, $\fT_0^{\fr'}\to\fT_0^\fr$ and $\cT^{\fr'}\to\cT^\fr$. These morphisms lift canonically to universal families, i.e.: for every algebraic  stack $S$ and every family of expansions $\cW'\to S$ corresponding to a morphism $f':S\to \fT^{\fr'}$, let $\cW\to S$ be the family induced by the composite morphism $f:S\to \fT^\fr$; then there is a natural morphism $p:\cW'\to \cW$ lifting $\fT_{B,b_0}^{\fr'}\to\fT_{B,b_0}^\fr$.
%\Dan{Clarify why composition with ghosts does not intervene?} 
\end{lemma}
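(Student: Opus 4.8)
The statement is essentially a compatibility result: given $\fr \prec \fr'$, we produce partial untwisting morphisms between the various stacks of weighted expansions, together with compatible morphisms of universal families. The plan is to reduce everything to the already-established comparison result for root stacks, namely \ref{SSS:compare-roots}, and the corresponding statement for twisted nodes in \ref{SSS:compare-roots} (the isomorphism $W(\sqrt[r]{D}) \cong (W(\sqrt[r']{D}))(\sqrt[r'']{\cD})$ reading backwards, when $r = r' r''$). The first step is to treat a single splitting divisor: if $\cD$ is a splitting divisor twisted with index $\fr(\bc)$ in one family and $\fr'(\bc)$ in another, then since $\fr(\bc) \mid \fr'(\bc)$, the ``partial coarse moduli'' construction along $\cD$ gives a canonical untwisting of the $\fr'(\bc)$-twisted structure down to the $\fr(\bc)$-twisted structure. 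Concretely, if $\fr'(\bc) = \fr(\bc)\cdot s$, one passes from $W(\sqrt[\fr'(\bc)]{D})$ to $W(\sqrt[\fr(\bc)]{D})$ via the isomorphism of \ref{SSS:compare-roots} applied to each of the two branches $X_i'$, compatibly with the gluing data $\beta$ of Definition \ref{D:twist-node}; one checks the gluing isomorphism $\beta$ descends under this partial coarsening because it is defined via the universal $r$-th root and the identification $\alpha$, both of which are compatible with $\nu_s$.

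\textbf{Globalizing over the stacks.} With the pointwise/single-divisor construction in hand, the second step is to make this a morphism of stacks. I would use the modular description of $\cT^\bDelta$, $\fT^\bDelta$ from \cite[Section 7.1]{ACFW}: an object over a base $S$ is a $\bDelta$-weighted expansion, and the $\fr'$-twisted ones form an open substack where the $\ell$-th splitting divisor carries index $\fr'(\bc^\ell)$. Given such a family $\cW' \to S$, apply the partial-untwisting construction of the previous paragraph simultaneously along every splitting divisor $\cD_\ell$, using $s_\ell = \fr'(\bc^\ell)/\fr(\bc^\ell)$; this produces a new family $\cW \to S$ whose $\ell$-th splitting divisor is twisted with index $\fr(\bc^\ell)$, i.e.\ an $\fr$-twisted $\bDelta$-weighted expansion, hence a morphism $S \to \fT^\fr$ (resp.\ $\to \cT^\fr$, resp.\ $\to \fT_0^\fr$ after restricting over $b_0$). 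By the universal property this is exactly the claimed morphism $\fT^{\fr'} \to \fT^\fr$, etc. The construction comes with a canonical morphism $p\colon \cW' \to \cW$ — the partial coarse-moduli map along the splitting divisors — which is precisely the promised lift to universal families; naturality in $S$ is automatic from the functoriality of the root-stack/coarse-moduli constructions.

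\textbf{Main obstacle.} The genuinely delicate point is not the existence of the untwisting on each local model — that is just \ref{SSS:compare-roots} — but checking that it is compatible with the gluing data along the \emph{nodal} splitting divisors in a way that is canonical (no choices) and functorial, so that it glues to a morphism of stacks rather than merely a correspondence. In particular one must verify that the isomorphism $\beta$ of Definition \ref{D:twist-node}, which involves the universal line $r$-th root $L_i$ and the fixed identification $\alpha\colon N_1 \to N_2^\vee$, transforms correctly under $r \mapsto r/s$: the key identity is that $L_i$ for index $r$ is canonically the $\sqrt[s]{\cdot}$ of $L_i$ for index $r/s$, so $\beta$ at level $r$ is compatible with $\beta$ at level $r/s$ via the partial coarsening. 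Once this is checked, everything else is formal, and indeed the reference \cite[Section 7.3]{ACFW} is cited precisely because this bookkeeping is carried out there; the role of the statement here is to record the consequence in the form needed later.
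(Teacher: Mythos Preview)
The paper does not give its own proof of this lemma: it is stated with the citation \cite[Section 7.3]{ACFW} in the header and no proof environment follows. So there is no in-paper argument to compare against directly. What the paper does indicate, in the proof of the very next result (Proposition~\ref{P:untwisting-proper}) and in the remark just before it, is that in \cite{ACFW} these stacks and the comparison morphisms are handled via \emph{logarithmic structures}: one works locally on an $S$-point, trivializes the log structure so it is free, and then the relevant fibre product is identified with a root stack. The untwisting morphism is thus produced uniformly by the log description rather than by assembling local partial-coarse-moduli maps.

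Your approach is different: you construct the untwisting directly, splitting divisor by splitting divisor, via the comparison of root stacks in \S\ref{SSS:compare-roots} and the gluing description of the balanced node in Definition~\ref{D:twist-node}, then argue functoriality. This is a legitimate route and your identification of the key compatibility (that the gluing isomorphism $\beta$ descends under $r \mapsto r/s$) is the right thing to check. The trade-off is that your argument is phrased as ``apply the construction simultaneously along every splitting divisor'', but in a family over $S$ the set of splitting divisors is only locally constant --- components of the expansion can appear or disappear. Handling this uniformly is exactly what the logarithmic framework in \cite{ACFW} buys; your approach would need an additional \'etale-local gluing step to account for varying combinatorial type, which you gesture at but do not spell out. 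Since you yourself defer the bookkeeping to the cited reference at the end, the proposal is acceptable as a sketch, but you should be aware that the actual mechanism in \cite{ACFW} is the log-geometric one rather than the hands-on one you outline.
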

%\begin{proof}
%This can be described on the level of charts defined above. We have a canonical morphism from 
%$U_{k,\bf r',\fc} = U_k(\sqrt[r'_1]{\Delta_1},\ldots,\sqrt[r'_k]{\Delta_k})$ to $U_{k,\bf r,\fc} = U_k(\sqrt[r_1]{\Delta_1},\ldots,\sqrt[r_k]{\Delta_k})$ and similarly on the universal families, see \ref{SSS:compare-roots}.

%Since the stack  $\fT^\tw_0$ is universal, we consider its universal family of curves, say corresponding to $W_0 = \{xy=0\}\subset \PP^2$. According to \cite[Theorem 1.8]{Olsson-twisted}, given an untwisted accordion $\cW \to S$, namely an object of $\fT^u$, the data of a twisted accordion over it is equivalent to the data of a simple extension $l' : \cM_S \hookrightarrow  \cM_S$; the stack $\fT_0^{\fr'}$ corresponds to labelled accordions with a simple extension $l' : \cM_S \hookrightarrow  \cM^{\fr'}_S$ of degree $(\fr'(\bc^1),\ldots,\fr'(\bc^k))$.  But every such extension contains a unique intermediate extension  $l : \cM_S \hookrightarrow  \cM^\fr_S\subset \cM^{\fr'}_S$ of degree   $(\fr(\bc^1),\ldots,\fr(\bc^k))$, giving rise to an $\fr$-twisted accordion.\Dan{Should be reritten. Would be good to describe maps canonically in terms of gluing data}
%\end{proof}

The following  proposition is a manifestation of the well-known fact that given a twisted curve, there is essentially a unique way to increase its indices by any given amount.

\begin{proposition}\label{P:untwisting-proper}
%\Dan{References  in \ref{twistprop} (properness of twisted) and \ref{gw-ind-twisting} (independence of twisting choice), should replace by \cite[Proposition 7.3.1]{ACFW}, but forgot to state properness there!}
The morphisms $\fT_{B,b_0}^{\fr'}\to\fT_{B,b_0}^\fr$, $\fT_0^{\fr'}\to\fT_0^\fr$ and $\cT^{\fr'}\to\cT^\fr$ are proper, quasi-finite, flat and surjective. Moreover this map has pure degree 1 in the sense of \cite[Section 5]{Costello}.
\end{proposition}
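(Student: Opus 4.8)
The morphisms in question are all obtained from the single ``local model'' of increasing the index along a splitting divisor, so the plan is to reduce everything to that model and to the corresponding statement for twisted curves, and then invoke the comparison lemmas already available. First I would record that properness, quasi-finiteness, flatness and surjectivity are all local in the smooth (indeed \'etale) topology on the target, so by Proposition \ref{P:fTtw-fTu}(1) and Proposition \ref{Prop:cT}(2) it suffices to check these after pulling back to a smooth atlas, and by the construction of $\cT^\fr\subset\cT^\bDelta$ and $\fT^\fr\subset\fT^\bDelta$ (and since $\cT^\bDelta\to\cT^\tw$, $\fT^\bDelta\to\fT^\tw$ are \'etale and representable by construction) it suffices to treat the untwisted-to-twisted and twisted-to-twisted comparison maps on the stacks $\fT^\tw_{B,b_0}$, $\fT^\tw_0$, $\cT^\tw$ themselves. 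All three cases are handled uniformly: \'etale-locally on the base an expansion is governed by its behaviour at a single node/splitting divisor, and there the partial untwisting morphism of Lemma \ref{compare-twisted} is exactly the map $\cA \to \cA$ (or rather $X(\sqrt[\fr'(\bc)]{D}) \to X(\sqrt[\fr(\bc)]{D})$) coming from $\nu_{\fr'(\bc)/\fr(\bc)}$, via the decomposition $\sqrt[r]{L/X}\cong \sqrt[r'']{M/(\sqrt[r']{L/X})}$ of \ref{SSS:compare-roots} and the analogous statement for twisted nodal stacks (Definition \ref{D:twist-node} together with \ref{SSS:compare-roots}).

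\textbf{Key steps.} (i) Reduce to the local model as above. (ii) Show that $\nu_m:\sqrt[m]{L/X}\to X$, and more relevantly the relative root map $X(\sqrt[rm]{D})\to X(\sqrt[r]{D})$, is proper, quasi-finite, flat and surjective: this is immediate from Proposition \ref{rootlb} and Proposition \ref{rootdiv} (a root stack map is \'etale-locally $[\Spec \cO_V[u]/(u^m - t)\,/\,\bmu_m] \to [V/\bmu_r]$ over the divisor and an isomorphism away from it), so properness and quasi-finiteness follow from the explicit $N_{rm}\to N_r$-type models, flatness because $\cO_V[u]/(u^m-t)$ is free over $\cO_V$, and surjectivity because it is surjective on the open complement and on the divisor separately. (iii) Assemble: because $\cT^\tw_r$, $\fT^\tw_0$ etc.\ are built by gluing root constructions along splitting divisors, the global maps $\cT^{\fr'}\to\cT^\fr$, $\fT^{\fr'}_{B,b_0}\to\fT^\fr_{B,b_0}$, $\fT^{\fr'}_0\to\fT^\fr_0$ inherit these four properties from the local model, and the compatibility with universal families is precisely the content of Lemma \ref{compare-twisted}. (iv) Degree: on the open dense locus where no divisor is twisted (or, more generally, over any point, after \'etale base change) the map is an isomorphism, hence it is birational and quasi-finite between integral (or equidimensional reduced) stacks of the same dimension, so it has generic degree $1$; to upgrade ``generic degree $1$'' to ``pure degree $1$'' in Costello's sense \cite[Section 5]{Costello} one checks the defining condition fibrewise, again using the local root model where the fibre of $X(\sqrt[rm]{D})\to X(\sqrt[r]{D})$ over a geometric point of $\cD$ is $B\bmu_m$, a connected stack with trivial coarse space, so the pushforward on Chow groups of the structure sheaf is the identity.

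\textbf{Main obstacle.} The routine part is properness/quasi-finiteness/flatness/surjectivity --- these are essentially bookkeeping on top of the root-stack propositions and the gluing description. The delicate point is the final sentence: verifying that the morphism has \emph{pure} degree $1$ in the precise sense of \cite[Section 5]{Costello} (so that it can be fed into Costello's and Manolache's comparison theorems for virtual classes), rather than merely degree $1$ over a dense open. This requires knowing that \emph{every} fibre --- including those over the deepest strata, where several splitting divisors are simultaneously twisted and the index jump is distributed among them --- is a stack whose coarse space is a point with the structure-sheaf pushforward equal to $[\mathrm{pt}]$; the cleanest way to see this is the observation flagged just before the proposition, that a twisted curve (or expansion) admits an essentially unique index increase, so the fibres of the untwisting map are iterated $B\bmu$-gerbes, for which the required Chow-theoretic triviality is standard. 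I would spell this out using the explicit $[\tilde V/\tilde G]$ local models of \ref{SSS:compare-roots}/\ref{balanced_node} and Lemma \ref{L:tw-1/r}-style degree computations to confirm the multiplicities cancel correctly.
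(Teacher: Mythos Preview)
Your proposal is correct and follows essentially the same strategy as the paper: reduce \'etale-locally on the target to the case where the partial untwisting map is a root-stack morphism, and then read off the required properties from Propositions \ref{rootlb} and \ref{rootdiv}. The paper's proof is more compressed --- it invokes the logarithmic description from \cite[Proposition 7.2.2]{ACFW} to identify the pullback of $\cT^{\fr'}\to\cT^\fr$ to any $S$-point (with free log structure) as a root stack, and cites \cite[Proposition 7.3.1]{ACFW} directly for the pure-degree-$1$ claim --- whereas you unpack the local reduction and the degree argument by hand. Both routes are the same in substance.

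One small correction to your step (iv): the \emph{scheme-theoretic} fibre of $X(\sqrt[rm]{D})\to X(\sqrt[r]{D})$ over a geometric point of $\cD$ is not $B\bmu_m$ but the non-reduced thickening $[\Spec k[\epsilon]/(\epsilon^m)\,/\,\bmu_m]$; the length-$m$ nilpotent structure and the $\bmu_m$-automorphisms cancel in Chow theory, which is why the degree is still $1$. More to the point, this fibre computation is unnecessary: once you have flat, proper, quasi-finite, and an isomorphism over a dense open of a connected target, the degree is locally constant and hence equal to $1$ everywhere, and for a flat morphism Costello's pure-degree condition reduces to exactly this. (For the $\fT_0$ case, where the target is non-reduced and reducible, you can simply obtain the statement by base change from the smooth connected $\fT_{B,b_0}$ case, rather than arguing directly.)
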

\begin{proof}
We indicate the proof for  $\cT^{\fr'}\to\cT^\fr$, the other cases are identical. This is essentially \cite[Proposition 7.3.1]{ACFW}, except that the fact that the morphisms are proper, quasi-finite, flat and surjective was not stated there. We use the proof of \cite[Proposition 7.2.2]{ACFW} instead:  the required properties are local on an $S$-point of $\cT^\fr$ and we may assume the logarithmic structure on $S$ is free. Then  $\cT^{\fr'}\times_{\cT^{\fr}} S \to S$ is a root stack, which is indeed  proper, quasi-finite, flat and surjective. 
%This can be tested locally on the target, and the statement for $\fT_0^{\fr'}\to\fT_0^\fr$ follows from that for  $\fT_{B,b_0}^{\fr'}\to\fT_{B,b_0}^\fr$. It suffices to consider a chart  $U_{k,\bfr,\fc}$ in $ \fT_{B,b_0}^\fr$ or $\cT^\fr$, and note that there is a canonical  chart $U_{k,\bfr',\fc}$  in $ \fT_{B,b_0}^{\fr'}$ or $\cT^{\fr'}$ and a proper quasi-finite birational surjective map $U_{k,\bfr',\fc} \to U_{k,\bfr,\fc}$. This is because $U_{k,\bfr',\fc}$ is the root stack of $U_{k,\bfr,\fc}$ with indices given by $r'_\ell/r_\ell$ along the divisor corresponding to the $\ell$-th node. Since these charts are smooth, the map is flat.
\end{proof}

\newcommand{\TTT}{\mathfrak T}

%\newpage
 \section{Stacks of twisted stable maps and their properness}\label{S:Stablemaps}

\subsection{Conventions on stacks of maps}\label{S:Stablemaps-conventions}

We will extend Jun Li's construction \cite[Section 3]{Li1} to the case where we degenerate an orbifold instead of a smooth variety, and we introduce the twisted version of that stack which allows us to replace predeformable maps by transversal ones. We do the same for pairs as well. In this sections, the ambient space for both degenerations and pairs will be denoted by the letter $W$.

\ChDan{By a {\em curve class} on $W$ we mean an algebraic equivalence class $\beta = [\underline C]$ of an algebraic curve $\underline C\subset \underline W$ in the coarse moduli space of $W$, an element of the Chow group modulo algebraic equivalence. } 

We will use data as set in one of the following two conventions:

\begin{conv}[Data for a degeneration] \label{C:deg-data} Consider a morphism $\pi:W\to B$ and $W_0=X_1\sqcup_DX_2$ as in Section \ref{expdeg}
%\Dan{may refer instead to \cite[Sec 2.2.1]{ACFW}}.  
We also fix  $\Gamma=(\beta,g,N,\bfe)$ where \begin{enumerate}
\item $\beta$  is a curve class in the fiber of $W\to B$;
\item  $g\ge 0$ is an integer;
\item $N$ is a finite ordered set, possibly empty, which we may take to be $\{1,\ldots,n\}$.
\item ${\bfe}=(e_i)_{i\in N}$ is a tuple of positive integers such that $\cI_{e_i}(W_b)\ne \emptyset$ for all $b\in B$, $i\in N$; note that if $W$ is a variety, one must have $e_i=1$ for all $i$.
\end{enumerate}
\end{conv}

\begin{conv}[Data for a pair] \label{C:pair-data} Fix a smooth pair $(W,D)$ with $W$ a  Deligne--Mumford stack. We also fix
$\Gamma=\{\beta,g, N,M,\bfe,\bff, \bfc\}$ where\begin{enumerate}
\item $\beta$ is a curve class on \ChDan{$W$};
\item  $g\ge 0$ is an integer;
\item $N,M$ are disjoint finite ordered sets, which we may take to be $\{1,\ldots,n\}$ and $\{n+1,\ldots,n+|M|\}$.
\item ${\bfe}=(e_i)_{i\in N}$ as above, and ${\bff}=(f_j)_{j\in M}$ is similarly a tuple of positive integers such that $\cI_{f_j}(D)\ne \emptyset$ for all  $j\in M$.
\item ${\bfc}=(c_j)_{j\in M}$ are positive integers such that 
 $\sum_{j\in M} c_j/f_j=(\beta\cdot D)_X$. 
\end{enumerate}
We denote by ${\bfd}=(d_j)_{j\in M}$ the tuple formed by $d_j = c_j/f_j$.
%Fix also a twisting choice $\fr$.\Dan{Remark on modular graphs again}
\end{conv}

\begin{remark} Following Jun Li, we will later find it useful to think in either case of the data $\Gamma$ as a weighted modular graph. In the degenerate case it has one vertex marked with $(g,\beta)$, no edges or loops, legs corresponding to the set $N$ and weighted by $e_i$, and no roots. In the pair case it has again one vertex, no edges or loops, legs corresponding to the set $N$ and weighted by $e_i$, and roots corresponding to the set $M$ and weighted by $(f_j,c_j)$. See definition \ref{D:modular-graph}.
Indeed, the degeneration formula requires working with disconnected graphs. To avoid heavy notation here  we postpone the disconnected case to section \ref{SS:disconnected-maps}.
\end{remark}

%From the definition of $\fT^\tw_{B,b_0}$ it follows that it carries a universal family of expanded degenerations $\cW^\tw_{univ}\to W\times_B\fT^\tw_{B,b_0}$; see \cite{ACFW} for details\Dan{Put precise reference}.

Consider the universal families $\cW^\tw_{univ}\to \fT^\tw_{B,b_0}$ and $\cW^u_{univ}\to \fT^u_{B,b_0}$ of twisted and untwisted expanded degenerations. They both satisfy the assumptions in Convention \ref{flsp} with $D=\emptyset$. Similarly consider the universal families of pairs $(\cW_{univ}^\tw,\cD_{univ}^\tw) \to \cT^\tw$ and $(\cW_{univ}^u,\cD_{univ}^u) \to \cT^u$.  These  satisfy the assumptions in   \ref{flsp} with $D = \cD_{univ}$. 

\begin{conv}[Shorthand for stacks of maps] The notation $\fK^\tw$, $\fK^u$, and $K(W)$ will be used  for  either one of the following three cases 
\begin{enumerate} 
\item (Degeneration case) $W \to B$ is a degeneration as in \ref{C:deg-data}, $T^\tw = \fT^\tw_{B,b_0}$,  $T^u = \fT^u_{B,b_0}$.
$$\fK^\tw:= \fK_{\Gamma}(\cW^\tw_{univ}/\fT^\tw_{B,b_0}),  \quad 
    \fK^u := \fK_{\Gamma}(\cW^u_{univ}   /\fT^u_{B,b_0}) ),$$ $$ K(W) := \fK_{\Gamma}(W/B).  $$ 
\item (Singular case) $W=W_0$ is first-order smoothable, $B = \Spec \CC$, $T^\tw = \fT^\tw_0$,  $T^u = \fT^u_0$.  
$$\fK^\tw:= \fK_{\Gamma}(\cW^\tw_{0\,univ}/\fT^\tw_0),  \quad 
    \fK^u := \fK_{\Gamma}(\cW^u_{0,univ}   /\fT^u_0) ),$$ $$ K(W) := \fK_{\Gamma}(W_0).  $$ 
\item  (Relative case)   $(W,D)$ a pair, $T^\tw = \cT^\tw$,  $T^u = \cT^u$.  
$$\fK^\tw:=  \fK_{\Gamma}((\cW^\tw_{univ},\cD^\tw_{univ})/\cT^\tw), \quad    \fK^u :=  \fK_{\Gamma}((\cW^u_{univ},\cD^u_{univ})/\cT^u) ), $$ $$  K(W) := \fK_{\Gamma}(W,D).  $$
\end{enumerate}
  We follow  the notation  $K,\fK_{nd}$ and $K_{nd}$ of \ref{N:fK}, adding a superscript $\tw$ or $u$ to denote the corresponding substacks of $\fK^\tw$ or $\fK^u$. We will suppress the superscript $\tw$ or $u$ when statements hold for wither one.
  
 In either of the three cases we will write  $f:(C,\Sigma)\to \cW\to W$ for a stable map belonging to $\fK$ (i.e. $\fK^\tw$ or $\fK^u$). We will indicate the divisor $D$ only when necessary. 
\end{conv}

\ChDan{
\begin{remark}
We note that if we view a pair as a subvariety of a singular fiber, and view a fiber as a subvariety of a degeneration, the notion of curve class changes, as inequivalent classes can become equivalent through each transition.  
\end{remark}
}

%In Appendix \ref{obs_th} we  define a perfect relative obstruction theory $\EE$ for $\fK^\tw_{nd}$ over $\fT^\tw_{B,b_0}$ and for $\fK^u_{nd}$ over $\TTT^u_{B,b_0}$.

Following \cite{Li1}, we can characterize points of $\fK$ belonging to the Deligne--Mumford locus $K$ using semistable components:

\begin{definition} Let $f:(C,\Sigma)\to \cW\to W$  be a stable map corresponding to a point in   $\fK^\tw$. Let $E\subset C$ be an irreducible component. 

We say that $E$ is {\em a semistable component} if it is a standard cyclic cover of a line occuring as a fiber in some $\PP$, explicitly  if 
\begin{enumerate} 
\item $E$ is smooth, irreducible, of genus zero, contains no marked points;
\item $E$ maps to a fiber $F$ of $\cW\to W$;
\item $f|_E:E\to F$ is branched at most over the intersection of $F$ with the singular locus of $\cW$;
\item $E\cap C^{sing}$ maps to $\cW^{sing}$.
\end{enumerate}
\end{definition}

\begin{lemma}\label{pointsincK} A point of $\fK$ given by a stable map $f:(C,\Sigma)\to \cW\to W$ is in $K$ if and only if there is no irreducible component $\PP_i$  such that every component of $C$ whose image meets  $\PP_i\setminus (D_i^+\cup D_i^-)$  is a semistable component. 
%\Dan{is there a cleaner way to state this?}
\end{lemma}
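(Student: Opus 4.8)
The plan is to characterize membership in $K$ — the Deligne--Mumford, i.e.\ stable, locus of $\fK$ — in terms of the automorphism group of the stable map, and then translate that condition into the combinatorial statement about semistable components. Recall that a point of $\fK$ fails to lie in $K$ precisely when the stable map has positive-dimensional automorphisms, equivalently when the target expansion $\cW$ can be nontrivially contracted (an unstable $\PP_i$ can be ``scaled away'') compatibly with the map. So the first step is to recall, following \cite[Section 3]{Li1} and the structure of the stacks $\fT^\tw$, $\cT^\tw$ from Section~\ref{S:Expansions}, that each component $\PP_i$ of the expansion carries a $\GGm$-action (fixing $D_i^-$ and $D_i^+$), and that the automorphism group of a point of $\fK$ contains the subgroup of $\prod_i \GGm$ acting on those $\PP_i$ over which the map is equivariant, i.e.\ over which the map factors through the $\GGm$-action. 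I would then observe that the map restricted to the preimage of $\PP_i\setminus(D_i^+\cup D_i^-)$ is $\GGm$-equivariant exactly when every component of $C$ whose image meets $\PP_i\setminus(D_i^+\cup D_i^-)$ is contracted by the $\GGm$-quotient $\PP_i\to D_i$ up to finite cover — which, after unwinding, is exactly the condition that each such component is a \emph{semistable component} in the sense of the preceding definition (genus zero, no markings, mapping to a fiber $F$ of $\cW\to W$ with branching only over $\cW^{\sing}$, and nodes going to $\cW^{\sing}$).

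Concretely I would argue in two directions. For the ``only if'': suppose there is a $\PP_i$ such that every component of $C$ meeting $\PP_i\setminus(D_i^+\cup D_i^-)$ is semistable. Then the $\GGm$ acting on $\PP_i$ lifts to an automorphism of the whole configuration $(C,\Sigma)\to\cW\to W$: on the components over $\PP_i$ the lift exists because a standard cyclic cover of a $\GGm$-line is again acted on by $\GGm$ (here one uses that these components are smooth rational, unmarked, and branched only over the two fixed sections), and on the remaining components the action is trivial since their image meets $\PP_i$ only inside $D_i^\pm$, which is $\GGm$-fixed; the gluing at nodes is respected because nodes of $C$ lying over $\PP_i$ map into $\cW^{\sing}\cap\PP_i = D_i^\pm$. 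This produces a one-parameter family of automorphisms, so the point is not in $K$. For the converse, if the point is not in $K$ it has a positive-dimensional automorphism group; by the general structure of automorphisms of twisted maps to expansions (the automorphisms of the twisted curve are finite by Section~\ref{SSS:aut-tw-curve}, and the ghost automorphisms of the target along nodes are also finite by the Corollary in \ref{SSS:bal-node-aut}), the infinitesimal part must come from the $\GGm$'s scaling some collection of the $\PP_i$; pick an $i$ in the support of this action. Equivariance of the map over $\PP_i$ forces every component of $C$ dominating part of $\PP_i\setminus(D_i^\pm)$ to be $\GGm$-invariant, hence mapped into a single $\GGm$-orbit closure — a fiber $F$ of $\cW\to W$ — with the branch locus and the nodes constrained to the two $\GGm$-fixed points, i.e.\ to $\cW^{\sing}$; genus zero and absence of markings follow because a positive-genus or marked component cannot be $\GGm$-invariant with this orbit structure while keeping the map stable. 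Thus every such component is semistable, contradicting the hypothesis on the right-hand side, and we are done.

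The main obstacle I anticipate is making the ``automorphisms come exactly from scaling the $\PP_i$'' step fully rigorous in the \emph{twisted} setting: one must check that passing to twisted curves and twisted targets introduces only finite extra automorphisms (ghost automorphisms at nodes, finite stabilizers at twisted markings) and does not create or destroy the one-parameter families, so that the DM condition is detected on the coarse level exactly as in Jun Li's original argument. This is where I would lean on Section~\ref{SS:tw-curves} — in particular that deformations and automorphisms of $\cC$ differ from those of $C$ only by the discrete group $\prod_p\bmu_{r_p}$ (\ref{SSS:def-tw-untw}) — and on the analogous statement for twisted nodal targets in \ref{SSS:bal-node-aut}, to reduce the twisted case to the coarse case of \cite{Li1}. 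A secondary technical point, easily handled, is checking that a standard cyclic cover of $\PP^1$ branched over $\{0,\infty\}$ carries the $\GGm$-action: this is the content of the phrase ``standard cyclic cover of a line occurring as a fiber'' in the definition of semistable component, and it is precisely why that definition was set up the way it was.
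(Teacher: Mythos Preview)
Your proposal is correct and follows essentially the same approach as the paper's proof: both identify the positive-dimensional automorphisms of the map with the $\GGm$-factors scaling each $\PP_i$ (noting that ghost automorphisms of the twisted structures are finite), and then observe that such a $\GGm$ lifts to the curve precisely when every component of $C$ meeting $\PP_i\setminus(D_i^+\cup D_i^-)$ is a semistable component. The paper's version is much terser, simply asserting the last equivalence as ``easy to see,'' whereas you spell out both directions and correctly flag the reduction from the twisted to the coarse case via \ref{SSS:aut-tw-curve} and \ref{SSS:bal-node-aut}.
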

\begin{proof}
The point is in $K$ if and only if no positive dimensional subgroup of the group of automorphisms of the  \ChDan{expansion}  $\cW$ lifts to an automorphism of $C$. Up to the finitely many ghost automorphisms, such automorphism are given by a copy of $\GG_m$ for every component $\PP_i$.  The Deligne--Mumford condition is equivalent to ensuring that for each $i$ there is at least one component $X_i$ mapping to $\PP_i$ to which the $\GG_m$ action doesn't lift. It is easy to see that the only components whose image meets $\PP_i\setminus (D_i^+\cup D_i^-)$ to which the action lifts are exactly the semistable components.
\end{proof}

\subsection{Transversal maps and predeformable maps}\label{Sec:transversal-and-predeformable}

Recall the natural morphisms between $\fT^\tw_{B,b_0}$ and $\fT^u_{B,b_0}$ in Proposition \ref{P:fTtw-fTu} (3),
%\Dan{Should be able to refer to \cite{ACFW}, not sure where. The partial untwisting is in \cite[7.3]{ACFW}.} 
and the corresponding ones for the singular and relative cases. By the functoriality of \cite[Corollary 9.1.2]{Abramovich-Vistoli}, these induce an open embedding $\fK^u\to \fK^\tw$ and an untwisting morphism $\fK^\tw\to\fK^u$ which is its left inverse. These are compatible with the morphisms $\fK^u\to K(W), \fK^\tw\to K(W)$.

\begin{definition} Let $f:(C,\Sigma)\to \cW\to W$ be a stable map corresponding to a point in   $\fK^\tw$. We say that $f$ is {\em transversal} if it is transversal to both the singular locus and the boundary divisor in the sense of \ref{logsmpair}. 
\end{definition}
Note that the condition is vacuous when $W$ is nonsingular and $D$ empty.

\begin{remark} If $f$ is a transversal stable map, then it is nondegenerate.\end{remark}

By Lemma \ref{L:tr-open} the transversal condition is open. This allows us to formulate the following:

\begin{definition} We define $\fK^{tr}\subset \fK^\tw$ to be the open substack of transversal maps; we write $K^{tr}:=K^\tw\cap\fK^{tr}$.\end{definition}

The main objects we will use are transversal maps. However we find it appropriate to relate them to {\em predeformable} maps, a notion used by Jun Li and other previous authors. Accordingly, our arguments will go by way of predeformable maps, even though this detour can be avoided.

\begin{definition}\label{D:predeformable} We say that a non-degenerate map $\cC \to \cW$ is {\em predeformable} if it is in the set-theoretic image of $\fK^{tr}$ in $\fK^u$.
%; by definition, it is actually contained in $\fK_{nd}^u$. Let $K_{pd}^u:=\fK_{pd}^u\cap K^u$. An object of  $\fK^u_{pd}$ is called a {\em predeformable map.}
\end{definition}

\subsubsection{Contact orders and Li's predeformability} In \cite[Definition 2.5]{Li1}, a nondegenerate morphism $f: C \to W$ over an algebraically closed field  is defined to be predeformable if at every point $p\in C$ mapping to a singular component $D^\ell$ of $W$, locally the map is described as 
$$ \left(\Spec k[u,v]/(uv)\right)^{\sh} \lrar \left(\Spec k[x,y,z_1,\ldots,z_m]/(xy)\right)^{\sh} $$ 
where
$$ x \mapsto u^{c_p}, \quad y \mapsto v^{c_p}.
$$
for some positive integer $c_p$, called the {\em contact order at $p$}. Note that we put no condition at points mapping to the boundary divisor, if nonempty; however for points $p$ mapping to the boundary divisor we  define a contact order  as usual: locally the map is described as 
$$ \left(\Spec k[u]\right)^{\sh} \lrar \left(\Spec k[x,z_1,\ldots,z_m]\right)^{\sh} $$ where
$ x \mapsto u^{c_p}$, and the contact order is again $c_p$. 

We now show that the two notions of predeformability coincide in a precise way. The key is Lemmas \ref{L:lift-pd-pair} and \ref{L:lift-pd-node}.

\begin{lemma}\label{pd-is-pd}  \begin{enumerate}
\item Let $C\to W$ be a predeformable map in the sense of Definition \ref{D:predeformable}.
%and let $C^\tw \to \cW^\tw$ be a transversal map mappting to  $C\to W$. 
Then $C \to W$  is predeformable in the sense of \cite[Definition 2.5]{Li1}.
\item Let $f: C \to W$ be predeformable in the sense of \cite[Definition 2.5]{Li1}. For  any component $D$ of the singular locus of $W$ fix a positive integer $r_D$  divisible by the contact order $c_p$ for every node of $C$ mapping to the given singular component  $D$. Let $\cW$ be the root stack of $W$ with index $r_D$ over each component $D$ of the singular locus. Then there is a transversal map $\tilde f: \cC \to \cW$ mapping to   $f: C \to W$; in particular $f$ is predeformable in the sense of Definition \ref{D:predeformable}.
\item Any twisted stable map $\tilde f: \cC \to \cW$, with $\cW$ as in (2) above, and lifting   $f: C \to W$ is transversal.
\end{enumerate}
\end{lemma}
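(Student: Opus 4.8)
The plan is to prove the three parts of Lemma \ref{pd-is-pd} by reducing everything to the local analysis already carried out in Lemmas \ref{L:lift-pd-pair} and \ref{L:lift-pd-node}, since all three statements are \'etale-local on $C$ near the points mapping to the singular locus or the boundary divisor of $W$. First I would observe that away from those loci all the conditions (predeformability in either sense, transversality) are vacuous, so the content is entirely concentrated at a node $p$ of $C$ mapping to a component $D^\ell$ of $\cW^{\sing}$, or at a point mapping to the boundary divisor; and near such a point $\cW$ and $W$ look \'etale-locally like the model (twisted, resp.\ untwisted) node or the model root-stack pair, by \ref{SSS:tw-c-tw-n}, \ref{balanced_node}, and the definition of the root construction.

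For part (1): start from a transversal map $\tilde f:\cC\to\cW$ lifting $f:C\to W$. At a node $p$ of $C$ over $D^\ell$, the curve $\cC$ is \'etale-locally $C(\sqrt[r_\Sigma]{p})$ and $\cW$ is $W(\sqrt[r]{D^\ell})$; transversality of $\tilde f$ means $r=c\cdot r_\Sigma$ by Lemma \ref{L:lift-pd-node}(3). Passing to coarse moduli spaces (or rather to the morphism $\cW\to W$, which on local models is $N_r\to N_1$) kills the twisting and shows that on $C$ the map is given, in suitable \'etale coordinates $u,v$ on the two branches and $x,y$ on $W$, by $x\mapsto u^{c}$, $y\mapsto v^{c}$, with the remaining coordinates $z_i$ arbitrary — this is exactly Li's condition, with contact order $c=c_p$. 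The boundary-divisor points are handled the same way using Lemma \ref{L:lift-pd-pair}, and nondegeneracy of $f$ follows because transversal maps are nondegenerate (the remark after the definition of transversal) and nondegeneracy descends along $\cW\to W$.

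For part (2): given $f:C\to W$ predeformable in Li's sense, put on $C$ the twisted curve structure $\cC$ which at each node $p$ over a component $D^\ell$ has index $r_{D^\ell}/c_p$ (and index $c_p/(\text{order at }p)$, or simply leave markings untwisted at boundary points unless a twist is already prescribed), and let $\cW=W(\sqrt[r_{D^\ell}]{D^\ell})$ over each singular component. Then at each such node the hypotheses of Lemma \ref{L:lift-pd-node}(1) are met with $r=r_{D^\ell}$, $r_\Sigma=r_{D^\ell}/c_p$, so $r=c_p\cdot r_\Sigma$ and a lift $\tilde g_i$ exists on each branch and glues (via the isomorphism $\beta$ of Definition \ref{D:twist-node}) to a local lift; at boundary points apply Lemma \ref{L:lift-pd-pair}(1) similarly. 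These local lifts are unique up to unique isomorphism away from the special locus (Lemma \ref{L:lift-pd-pair}(2)), hence patch to a global transversal map $\tilde f:\cC\to\cW$ — transversality being precisely $r=c_p r_\Sigma$ at each point, which holds by construction — and this exhibits $f$ as predeformable in the sense of Definition \ref{D:predeformable}. Part (3) is then immediate: any twisted stable map $\tilde f:\cC\to\cW$ lifting $f$, with $\cW$ the root stack of index $r_{D^\ell}$, is automatically transversal, because at a node $p$ the lift forces $r_{D^\ell}\mid c_p\cdot r_\Sigma$ by Lemma \ref{L:lift-pd-node}(1), while stability of the twisted map forbids superfluous stacky structure, so $r_\Sigma$ is exactly $r_{D^\ell}/c_p$ and hence $r_{D^\ell}=c_p r_\Sigma$, the transversality condition of Lemma \ref{L:lift-pd-node}(3); the boundary points are handled by Lemma \ref{L:lift-pd-pair}(4).

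The main obstacle I anticipate is the bookkeeping in part (3): one must argue carefully that a \emph{stable} twisted map cannot carry extra stacky structure at the nodes beyond what is forced, i.e.\ that the index $r_\Sigma$ at $p$ is not a proper multiple of $r_{D^\ell}/c_p$. This is where the definition of twisted stable map (minimality of the orbifold structure, \ref{D:rtsm} and the surrounding discussion) must be invoked, together with the fact that the coarse map $f$ already has contact order $c_p$; once this is pinned down, transversality is a formal consequence of Lemma \ref{L:lift-pd-node}. A secondary technical point is checking that the local lifts in part (2) really do glue globally — this requires the uniqueness statements in Lemmas \ref{L:lift-pd-pair}(2) and the compatibility of the gluing isomorphism $\beta$ with the chosen line-bundle-with-section data, as in the proof of Lemma \ref{L:lift-pd-node}, but this is routine descent once the local picture is in place.
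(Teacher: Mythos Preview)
Your treatment of parts (1) and (2) is essentially the paper's own argument: a local coordinate computation at each node to extract the contact order in (1), and an invocation of Lemmas \ref{L:lift-pd-pair} and \ref{L:lift-pd-node} with $r_\Sigma=r_D/c_p$ together with uniqueness-away-from-$D$ to glue in (2).

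Your argument for part (3), however, is overcomplicated and rests on the wrong principle. You try to deduce $r_\Sigma=r_{D^\ell}/c_p$ from \emph{stability} via some unstated ``minimality of the orbifold structure'', and you flag this as the main obstacle. But stability (finiteness of automorphisms of the map) does not by itself prevent extra twisting at a node; what does is \emph{representability}, which is part of the definition of a twisted stable map in \cite{Abramovich-Vistoli}. Once you observe that $\tilde f$ is representable, Lemma \ref{L:lift-pd-node}(2) gives $r_{D^\ell}=c_p\cdot r_\Sigma$ immediately, and then (3) of the same lemma (together with Lemma \ref{L:lift-pd-pair}(3),(4) at boundary points) yields transversality. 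This is exactly the paper's one-line proof: cite part (4) of Lemma \ref{L:lift-pd-pair} and part (3) of Lemma \ref{L:lift-pd-node}. The ``obstacle'' you anticipate dissolves once you use representability rather than stability.
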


\begin{proof}
\begin{enumerate}
\item 
Let $\cC \to \cW$ be a transversal map which lifts $C \to W$. Consider a point $p\in C$, and a lifting $\tilde p$ in $\cC$. 
In local coordinates, if $p$ is a node the map $\cC \to \cW$ is given by $x\mapsto u, y\mapsto v$ and arbitrary $z_i\mapsto f_i(u,v)$. The coordinates on $C$ are $\bar u = u^{r_p}$ and $\bar v = v^{r_p}$. The coordinates on $W$ are $\bar x = x^{r_D}$, $\bar y = y^{r_D}$, and $z_i$ unchanged. Then  $\bar x (u^{r_p}) = u^{r_D}$ and similarly for $\bar y$.  
We thus have $r_D = c_p\cdot r_p$ for some $c_p$ and may take $\bar u = \bar x^{c_p},\ \bar v = \bar y^{c_p}$ as needed. When $p$ maps  to  boundary divisor the calculation is similar.
\item 
Applying  Lemmas \ref{L:lift-pd-pair} and  \ref{L:lift-pd-node} a transversal representable lift exists with $\cC$ having twisting index $r_p=r_D/c_p$, locally at each $p$ mapping to a component $D$. Since such a lift is unique away from the union of the $D$, these local lifts glue to a lift $\cC \to \cW$. 
%We may take the root stack $\cW$ of $W$ with index $r_D$ along $D$, and replace $C$ by the root stack $\cC$ with index $r_p=r_D/c_p$ at each node $p$. Then the map $C \to W$ lifts to a morphism $\cC \to \cW$\Dan{go back to the definition?}. The coordinates on $\cC$ are $u',v'$ with $(u')^{r_p} = u$ and $(v')^{r_p} = v$. Similarly on $\cW$ we have new coordinates $x',y'$ with $(x')^{r_D} = x$ and $(y')^{r_D} = y$. The lift is given by $x' \mapsto u', \ y' \mapsto v'$ which is indeed transversal.\Dan{indicate the lifts more canonically}
\item This follows from part (4) of Lemma \ref{L:lift-pd-pair}  and part (3) of Lemma  \ref{L:lift-pd-node}.
%Any lift  $\tilde f: \cC \to \cW$ of $f: C \to W$ locally comes from a root construction with some coordinates $u',v'$ with $(u')^{r_p'} = u$ and $(v')^{r_p'} = v$ on $\cC$  as in (2). The map $\cC \to \cW$ exists only if $r_p|r_p'$. Since $x$ maps to $u^{c_p}$, the map of inertia stacks of the root stacks factors through multiplication by $c_p$.  Since $\tilde f$ is stable, it is representable, therefore $r_p' | r_p=r_D/c_p$.\Dan{maybe want more details in \ref{SS:root} and \ref{balanced_node}.} 
\end{enumerate}
\end{proof}

We now consider the scheme structure for stacks of transversal and  predeformable maps. Since there are infinitely many choices for the twisting index $r_D$ in Lemma \ref{pd-is-pd}, we need the following for constructibility: 
\begin{lemma}\label{L:trans-prop}
\begin{enumerate}
\item (Base change property.) Let $\cC \to \cW \to S$ be a family of transversal maps over $S$, and let $\tilde \cW \to \cW$ be a family of root stacks. Let $\tilde \cC =\tilde \cW \times_\cW \cC$. Then $\tilde \cC\to S$ is a twisted nodal curve and $\tilde \cC \to \tilde\cW\to S$ is a family of transversal maps.
\item (Descent property.) Assume $\tilde \cC \to \tilde\cW\to S$ is a family of transversal maps with underlying untwisted maps $C\to W \to S$. Assume that there is an integer $d$, and $D\subset W$, either a component of the singular locus or the boundary divisor, such that $\tilde\cW$ is twisted  at $D$ with twisting index $r_D$ such that for any point $p$ of $C$ mapping to $D^\ell$, we have $d\cdot c_p | r_D.$ Write $r=\frac{r_{D}}{d}$ and let $\cW\to W$ be isomorphic to $W(\sqrt[r]{D})$ near $D$ and to $\tilde \cW$ elsewhere. Then the representable map $\cC \to \cW \to S$ obtained by stabilizing $\tilde \cC \to \cW\to S$ is transversal.
\end{enumerate}
\end{lemma}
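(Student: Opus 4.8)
The plan is to handle the two parts separately, both by reducing to the local étale models of transversal maps and applying the lifting lemmas \ref{L:lift-pd-pair} and \ref{L:lift-pd-node}. For part (1), the base change property, I would first observe that the statements are local and compatible with étale base change on $S$, so it suffices to work at a point $p$ of $C$ mapping into a component $D^\ell$ of the singular locus (the case of the boundary divisor being identical but simpler). On the source, forming $\tilde\cC = \tilde\cW\times_\cW\cC$ locally amounts to taking a root stack of $\cC$ along the preimage of $D^\ell$; by \ref{SSS:tw-c-tw-n} and the root-stack description of twisted curves, this is again a balanced twisted nodal curve, so $\tilde\cC\to S$ is a family of twisted prestable curves. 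For transversality: if $\cW\to W$ is the root stack of index $r_D$ at $D^\ell$ and $\tilde\cW\to\cW$ further twists with index $s$, then $\tilde\cW\to W$ is the root of index $r_Ds$ by the comparison of roots in \ref{SSS:compare-roots}, and correspondingly $\tilde\cC\to\cC$ multiplies the source twisting index $r_p$ by $s$. Since $\cC\to\cW$ was transversal we had $r_D = c_p r_p$ (parts (3)–(4) of Lemmas \ref{L:lift-pd-pair}, \ref{L:lift-pd-node}), whence $r_Ds = c_p(r_ps)$, and the same lemmas give that the induced lift $\tilde\cC\to\tilde\cW$ is transversal (the contact order $c_p$ is unchanged). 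Fibrewise flatness and the fact that each fibre is again a transversal map follow from the construction.

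For part (2), the descent property, the main point is to produce the representable map $\cC\to\cW$ and check transversality at the special locus. Here $\cW\to W$ is twisted at $D$ with index $r_D/d$, while $\tilde\cW$ is twisted with the larger index $r_D$; by \ref{SSS:compare-roots} there is a canonical morphism $\tilde\cW\to\cW$, which is an isomorphism away from $D$. Composing $\tilde\cC\to\tilde\cW\to\cW$ and then stabilizing the source (replacing $\tilde\cC$ by the twisted curve obtained by reducing source twisting indices minimally, as in \ref{SSS:mod-tw-curve}) produces the desired representable family $\cC\to\cW\to S$. To verify transversality at a point $p$ over $D^\ell$: the source twisting index of $\tilde\cC$ at $p$ satisfies $r_D = c_p\tilde r_p$, hence $\tilde r_p$ is divisible by $d c_p / \gcd(\dots)$—more precisely, from $dc_p \mid r_D$ we get $d \mid \tilde r_p$, so after stabilizing, the new source index is $r_p := \tilde r_p/d$ (the minimal representable choice relative to the target index $r_D/d$), and then $r_D/d = c_p r_p$. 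By part (3) of Lemma \ref{L:lift-pd-pair} and part (2) of Lemma \ref{L:lift-pd-node} the stabilized lift is representable, and by part (4) of \ref{L:lift-pd-pair} and part (3) of \ref{L:lift-pd-node} it is transversal. Away from $D$ nothing changes since $\cW \cong \tilde\cW$ there and the original map was transversal.

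The main obstacle I anticipate is the bookkeeping in part (2): one must check carefully that stabilizing the source curve (reducing twisting indices) is compatible in families over $S$, i.e. that the minimal representable twisting index at each point is locally constant on $S$ and that the resulting $\cC$ is flat over $S$ with twisted-curve fibres. This relies on the openness of the transversal locus (Lemma \ref{L:tr-open}) together with the fact that the contact orders $c_p$ are locally constant in flat families of nondegenerate maps, plus the universal property of the "partial coarsening" morphisms of \ref{SSS:mod-tw-curve}. Once that compatibility is in hand, the transversality statement is a pointwise application of the lifting lemmas and requires no new input. I would also remark, as in the excerpt's surrounding discussion, that all divisibility hypotheses are exactly engineered so that the relevant index ratios are integers, so no case analysis beyond "node versus marking versus boundary divisor" is needed.
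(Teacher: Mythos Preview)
Your approach is correct and essentially the same as the paper's: reduce to fibers, work locally at each relevant point, and apply Lemmas \ref{L:lift-pd-pair} and \ref{L:lift-pd-node}. The only noteworthy difference is in part (2): where you describe ``stabilizing the source by reducing twisting indices minimally'' and then worry about compatibility in families, the paper simply identifies $\cC$ as the \emph{relative coarse moduli space} of $\tilde\cC \to \cW$ and invokes that its formation commutes with base change --- this resolves your anticipated obstacle in one line, without needing local constancy of contact orders or the machinery of \ref{SSS:mod-tw-curve}. Your index computation $r_p = \tilde r_p/d$ and $r_D/d = c_p r_p$ is exactly right and matches what the paper records as ``$r = f_p c_p$''.
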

\begin{proof} The transversality property is tested on fibers. 
The base change property is now a local computation, e.g. in case of a node
$$ \left(\Spec k[u,v]/(uv)\right)^{\sh} \lrar \left(\Spec k[x,y,z_1,\ldots,z_m]/(xy)\right)^{\sh} $$
where
$ x \mapsto u, \quad y \mapsto v 
$ and $z_ \mapsto f_i(u,v)$, the map $\tilde \cC \to \tilde\cW$ is given by 
$$ \left(\Spec k[\tilde u,\tilde v]/(\tilde u\tilde v)\right)^{\sh} \lrar \left(\Spec k[\tilde x,\tilde y,z_1,\ldots,z_m]/(\tilde x\tilde y)\right)^{\sh} $$
where
$ \tilde x \mapsto \tilde u, \quad \tilde y \mapsto \tilde v 
$ and $z_ \mapsto f_i(u,v)$ as before. 

For the descent property, note that $\cC$ is obtained as the relative coarse moduli space of $\tilde \cC \to \cW$. Its formation commutes with base change so we can restrict to fibers again. The integer $r$ is divisible by the contact orders, so Lemmas \ref{L:lift-pd-pair} and \ref{L:lift-pd-node} apply. In particular, since the map is representable we have $r=f_pc_p$, and therefore it is transversal. 
\end{proof}

\begin{lemma}\label{predefisclosed} The  collection of predeformable maps 
%substack $\fK^u_{pd}$
 is closed in $\fK^u_{nd}\subset \fK^u$, the open locus of nondegenerate maps.
\end{lemma}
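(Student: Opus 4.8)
The plan is to show that the non-predeformable locus is open in $\fK^u_{nd}$ by producing, near any non-predeformable point, an open neighborhood consisting of non-predeformable maps; equivalently, to show predeformability is a constructible condition that is stable under generization, hence closed. The starting point is Lemma \ref{pd-is-pd}: a nondegenerate map $C\to W$ is predeformable in the sense of Definition \ref{D:predeformable} if and only if it is predeformable in Jun Li's sense, i.e.\ at each point $p$ of $C$ mapping to a singular component $D^\ell$ (or the boundary divisor) the map has the standard local form $x\mapsto u^{c_p}$, $y\mapsto v^{c_p}$. So it suffices to work with Li's pointwise condition and prove that it defines a closed subset of $\fK^u_{nd}$.

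First I would reduce to a local statement. Over $\fK^u_{nd}$ there is a universal family $C\to W$ of nondegenerate maps; the locus where a node of $C$ (resp.\ a marked point) maps into the singular locus $W^{\sing}$ (resp.\ the boundary divisor) is closed, and on it the contact order $c_p$ is a constructible, upper/lower-semicontinuous function. By Lemma \ref{pd-is-pd}(2), on a neighborhood of any nondegenerate predeformable map we may pick a fixed integer $r_D$ divisible by all relevant contact orders and pass to the root stack $\cW = W(\sqrt[r_D]{D})$ along each component $D$ of the singular locus; then the transversal representable lift $\tilde C\to\cW$ exists with $\tilde C$ twisted of index $r_p = r_D/c_p$ at $p$. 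The constructibility of the set of transversal lifts, together with Lemma \ref{L:trans-prop} (base change and descent for transversal maps), lets me compare fibers as $r_D$ varies and conclude that "predeformable with contact orders dividing a fixed $r_D$" is a closed condition on the corresponding locally closed stratum of $\fK^u_{nd}$.

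Concretely, the cleanest route is: fix $r_D$; consider the root stack $\cW = W(\sqrt[r_D]{D})$ and the stack $\fK^{tr}(\cW)$ of transversal maps to it (open in the twisted stack by Lemma \ref{L:tr-open}); its image in $\fK^u_{nd}$ under the untwisting morphism $\fK^\tw\to\fK^u$ of \ref{Sec:transversal-and-predeformable} is, by Lemma \ref{pd-is-pd}, exactly the set of predeformable maps whose contact orders all divide $r_D$. Since $\fK^{tr}(\cW)\to\fK^u_{nd}$ is proper on the nondegenerate locus (the twisted stack is proper over the untwisted one along this morphism, the root-stack twisting being proper and quasi-finite as in Proposition \ref{Prop:cT}-type arguments), the image is closed. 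Finally, because contact orders are bounded on $\fK^u_{nd}$ — they are determined by $\beta\cdot D$ and the combinatorics of $\Gamma$ — finitely many values of $r_D$ suffice, so the predeformable locus is a finite union of closed sets, hence closed.

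The main obstacle I anticipate is the properness claim: showing that the morphism from the stack of transversal maps into the root stack $\cW$ down to $\fK^u_{nd}$ is proper (equivalently, that the set-theoretic image is closed rather than merely constructible). This is where one must use that the twisting index $r_p = r_D/c_p$ is forced by the descent property of Lemma \ref{L:trans-prop}(2) — so that no extra degenerations of the twisted curve appear in limits — together with the fact that $\fT^\tw\to\fT^u$ and the induced map on stacks of maps is proper and quasi-finite (Proposition \ref{P:fTtw-fTu} and the functoriality of \cite[Corollary 9.1.2]{Abramovich-Vistoli}). Handling marked points mapping to the boundary divisor, where Li imposes no condition but a contact order is still defined, requires only a cosmetic variant of the same argument.
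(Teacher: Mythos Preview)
The paper's own proof is a single sentence: since the statement is local in the \'etale topology, one simply invokes Jun Li's direct local computation \cite[Lemma 2.7]{Li1}, which shows that the predeformability condition at a node is closed in families. Your approach is genuinely different in spirit---global rather than local---but it contains a real gap that you yourself flag.

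The crux of your argument is the claim that, for a fixed root stack $\cW$ with twisting index $r_D$ along each singular component, the morphism $\fK^{tr}(\cW)\to\fK^u_{nd}$ is proper, so that its image is closed. But this properness is essentially equivalent to the statement you are trying to prove. Concretely: take a DVR $\Delta$ with generic point $\eta$, a transversal map $\tilde f_\eta:\cC_\eta\to\cW$ over $\eta$, and suppose the underlying untwisted map $f_\eta:C_\eta\to W$ extends to $f:C\to W$ over $\Delta$ with $f_s$ still nondegenerate. By Lemma \ref{L:lift-pd-node}, a twisted lift of $f_s$ to $\cW$ exists if and only if $f_s$ is predeformable with all contact orders dividing $r_D$. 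So to verify the valuative criterion for your map you must already know that the limit $f_s$ is predeformable---precisely the closedness you are after. The references you invoke do not help: Proposition \ref{P:fTtw-fTu} only asserts smoothness of $\fT^\tw$ and $\fT^u$, not properness of any map between them, and in fact $\fT^\tw\to\fT^u$ is not proper; Lemma \ref{L:trans-prop} lets you adjust twisting indices once you already have a transversal family, but does not produce one from an arbitrary nondegenerate limit.

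In short, your global properness strategy cannot avoid a local analysis of what happens to the predeformability condition under specialization; once you do that analysis you have reproduced Li's argument, and the root-stack superstructure becomes unnecessary for this lemma.
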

\begin{proof} This is a local statement in the \'etale topology; hence we can use the proof given by Jun Li, see \cite[Lemma 2.7]{Li1}.
%\Dan{I wonder if we could simplify that proof at some point. Not urgent.}
\end{proof}

Given a deformation of a nondegenerate map over a base of finite type, only finitely many contact orders occur. Lemma \ref{L:trans-prop} implies that the following gives a well defined closed substack:
%\Dan{Do we need more?}

\begin{definition} We define the stack of {\em predeformable maps} $\fK^\tw_{pd}$  to be the stack-theoretic image of $\fK^{tr}$ in $\fK^u$. We define $\fK^u_{pd}$, $K^u_{pd}$, $K^\tw_{pd}$ as the intersections of the appropriate sunbstacks of $\fK^\tw$ with $\fK^\tw_{pd}$.
\end{definition}

%We can therefore give $\fK^u_{pd}$ (and its open substack $K^u_{pd}$) the scheme-theoretic image structure under the map from $\fK^{tr}$.
%; this turns out to coincide with the one defined by Jun Li for the stack of predeformable maps.  

\begin{remark} A more precise form of the statement in Lemma \ref{pd-is-pd}, which we do not use in this paper, is the following: if $W\to B$ is a projective morphism of schemes, then the stack $K^u_{pd}$ is naturally isomorphic to $\fM(W,\Gamma)$ in \cite{Li1}.

 Set theoretically this is shown in Lemma \ref{pd-is-pd}. The subtle scheme structure in J. Li's stack relies on \cite[Lemmas 2.3]{Li1}, which describes the scheme structure of a predeformable map over a base scheme, and \cite[Lemmas 2.4]{Li1} which shows that the scheme structure behaves well under base change and can be glued. One can lift these to the root stacks described in Lemma \ref{pd-is-pd} and show that indeed a family of predeformable maps underlies a family of transversal maps.
\end{remark}

\begin{remark} As remarked by Jun Li, $\fK^u_{pd}$ is locally closed, but in general not open, in $\fK^u$; this makes it hard  to write down a perfect obstruction theory for it, see \cite[1.2-1.3 p. 213-129 and Appendix, p. 284-288]{Li2}. 
\end{remark}

\subsection{Properness of the stack of predeformable maps}\label{Sec:proper} Our goal below is to prove properness for certain stacks of transversal maps, which can be done directly. However we find it appropriate to relate this to previous work and go through properness of predeformable maps.
 
\begin{theorem}\label{propermappd} The natural morphism $K^u_{pd}\to K(W)$ is proper. 
\end{theorem}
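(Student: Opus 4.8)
The plan is to reduce the properness of $K^u_{pd}\to K(W)$ to the valuative criterion, applied over a DVR, using the fact established in Lemma \ref{pd-is-pd} that the set-theoretic image of transversal maps coincides with J. Li's predeformable maps. First I would note that the morphism $K^u_{pd}\to K(W)$ is of finite type and separated, since both stacks are built out of moduli of twisted stable maps (with obstruction theories as in Appendix \ref{obs_th}) and the forgetful maps to $K(W)$ remember the same curve class $\Gamma$; separatedness and finite type are inherited from the corresponding statements for $\fK^u$ over the base. So the substance is the existence part of the valuative criterion: given a DVR $R$ with fraction field $K$ and generic point $\operatorname{Spec} K \to K^u_{pd}$, together with an extension of the underlying map in $K(W)$ over $\operatorname{Spec} R$, one must find (after a finite extension of $R$) a limiting point in $K^u_{pd}$ lifting it.

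The key step is the construction of the limiting predeformable map. Here I would follow J. Li's reduction closely, but using the twisting viewpoint to streamline the argument. Given the generic transversal map $\tilde{\cC}_K \to \tilde{\cW}_K \to W_K$ lifting the generic predeformable map, first apply stable reduction for twisted stable maps to the orbifold target $\cW^\tw_{univ}/T^\tw$ — this is available since $\fK^\tw$ is proper over the stack of expansions (or rather: twisted stable maps with fixed discrete data to a fixed proper target form a proper stack, by Abramovich--Vistoli type arguments). This produces, after a finite base change, a limit twisted stable map $\tilde{\cC}_R \to \cW_R$. The issue is that this limit need not be transversal: the special fiber might have components mapping into the singular/boundary locus, or the expansion might need to be enlarged, or the contact orders at some nodes might have forced the wrong twisting index. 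This is where I would invoke the stack $\cT^\fr$ of $\fr$-twisted expansions and the comparison/untwisting results (Lemma \ref{compare-twisted}, Proposition \ref{P:untwisting-proper}, and the base-change/descent properties in Lemma \ref{L:trans-prop}): one arranges the expansion by the standard ``bubbling'' procedure along the singular locus so that the limiting map becomes nondegenerate and transversal after passing to an appropriate root stack, then descends back down to the untwisted target to land in $K^u_{pd}$. Uniqueness of such a limit (separatedness) follows because away from the singular/boundary divisor everything is an ordinary stable map, and along the divisor the transversal lift is unique up to unique isomorphism by Lemmas \ref{L:lift-pd-pair} and \ref{L:lift-pd-node}, so two limits agreeing at the generic point agree.

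\textbf{Main obstacle.} The hard part will be the bubbling/stabilization step in the special fiber: ensuring that after finitely many blow-ups of the expansion and a controlled choice of twisting indices one actually achieves transversality, and that this is compatible with the requirement that the limit be a \emph{stable} map (no contracted semistable components with the wrong automorphisms, cf. Lemma \ref{pointsincK}). One must check that the induced curve $\cC$ over the special point is again a twisted nodal curve of the right genus, that the contact orders along the new splitting divisors match on the two branches of each node (so that the gluing of Definition \ref{D:twist-node} makes sense), and that no component is lost. J. Li handled the analogous point by an intricate explicit deformation-theoretic analysis of predeformable maps; in our setting the analysis is replaced by the geometry of root stacks, so the work is in verifying that the algorithm terminates and produces an object of $K^u_{pd}$ rather than merely of $\fK^u$. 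Everything else — finite type, separatedness, the formal manipulations with the valuative criterion — is routine given the results already in place.
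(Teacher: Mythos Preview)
Your plan follows the first of the two routes the paper itself offers: the paper does not give a self-contained proof here but explicitly defers, saying that Jun Li's argument in \cite[Lemmas 3.8, 3.9]{Li1} can be adapted to the orbifold case, and separately pointing to an alternative proof via stable expanded configurations of points in \cite{AF-config}. Your sketch is precisely the Jun Li route, rephrased in the twisting language of this paper; the bubbling/stabilization step you flag as the main obstacle is exactly the content of those lemmas, and you are right that this is where all the work lies.

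The alternative the paper cites is genuinely different in method. Rather than analyzing limits of maps directly and performing the bubbling by hand, the configuration approach in \cite{AF-config} introduces an auxiliary proper moduli stack of stable expanded configurations of points, and deduces properness of $K^u_{pd}\to K(W)$ by comparing with it. That trades the delicate deformation-theoretic bubbling analysis for a more combinatorial argument about how finite point sets degenerate on expansions --- conceptually cleaner and more uniform across the degeneration and pair cases, at the cost of setting up the auxiliary moduli problem. Your route stays closer to the original literature and reuses more of the machinery already in the paper (Lemmas \ref{L:lift-pd-pair}, \ref{L:lift-pd-node}, \ref{L:trans-prop}); the paper's cited alternative is more self-contained but requires the companion paper.
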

 The reader who is familiar with both Jun Li's proof and the definition of twisted stable maps will be able to directly modify Jun Li's proof of \cite[Lemmas 3.8, 3.9]{Li1} to cover the orbifold case treated here. We have provided a different proof, based on stable expanded configurations of points,  in \cite{AF-config}.

\begin{corollary}\label{C:pd-proper} Under the assumptions for this section, assume moreover that $W\to B$ is proper and has projective coarse moduli space. Then $K^u_{pd}$ is proper over $B$.
\end{corollary}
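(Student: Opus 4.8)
The statement is a corollary of Theorem~\ref{propermappd}, so the plan is essentially to unwind the definitions and combine properness of $K^u_{pd} \to K(W)$ with properness of $K(W) \to B$.

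\medskip

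\noindent\textbf{Plan.} First I would recall that $K(W) = \fK_\Gamma(W/B)$ is, by construction, the substack of the stack of stable maps $\barM_{g,n}$-type object over $B$ with the given discrete data $\Gamma$, and that properness of the stack of twisted stable maps over its base is the orbifold analogue of the Kontsevich properness theorem --- this is exactly \cite[Theorem 1.4.1]{Abramovich-Vistoli} (or its relative-over-$B$ version). The hypothesis that $W \to B$ is proper with projective coarse moduli space $\underline W \to B$ is precisely what is needed to invoke that theorem: the coarse stable maps into the projective scheme $\underline W$ form a proper $B$-stack by boundedness and the valuative criterion, and the twisted enhancement is proper over the coarse version since adding compatible orbifold structure at nodes and markings of index bounded by the $e_i$ is a proper operation (finitely many choices, each a gerbe/root construction). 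Hence $K(W)$ is proper over $B$. I would state this as the first step, citing the relevant theorem from \cite{Abramovich-Vistoli} together with the conventions of Section~\ref{S:Stablemaps-conventions}.

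\medskip

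\noindent Second, Theorem~\ref{propermappd} gives that the natural morphism $K^u_{pd} \to K(W)$ is proper. Composing, $K^u_{pd} \to K(W) \to B$ is a composition of proper morphisms, hence proper, which is the assertion. So the corollary is immediate once the two inputs are in place.

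\medskip

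\noindent\textbf{Main obstacle.} The only genuine content beyond citation is making sure that $K(W) \to B$ really is proper under the stated hypotheses, i.e. that the boundedness and valuative-criterion arguments of the twisted-stable-maps literature apply verbatim here. The subtlety is that $W$ may be a Deligne--Mumford stack rather than a scheme, and that in the degeneration and singular cases $W$ (or $W_0$) is reducible and nodal; one must check that "$W \to B$ proper with projective coarse moduli space" suffices for the boundedness of the family of stable maps with fixed $\beta, g, n$ and fixed markings' indices $\bfe$. This reduces to the corresponding statement for maps to the projective coarse space $\underline W$ (which is classical) plus the fact, used repeatedly in \cite{Abramovich-Vistoli}, that a twisted stable map is determined by its coarsening together with a bounded amount of extra data at the special points. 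I would spell out this reduction in a sentence or two and cite \cite{Abramovich-Vistoli} for the twisted case; everything else is formal.

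\medskip

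\begin{proof}
By the conventions of Section~\ref{S:Stablemaps-conventions}, $K(W) = \fK_\Gamma(W/B)$ is an open and closed substack of the stack of twisted stable maps to $W$ relative to $B$ with discrete data fixed by $\Gamma$. Since $W \to B$ is proper and its coarse moduli space $\underline W \to B$ is projective, the stack of twisted stable maps to $W/B$ with bounded markings' indices $\bfe$ is proper over $B$: this is the relative version of \cite[Theorem 1.4.1]{Abramovich-Vistoli}, the coarse statement being the classical properness of the Kontsevich space of stable maps to the projective $B$-scheme $\underline W$, and the twisted refinement being proper over it because the orbifold structure at nodes and markings involves only finitely many bounded choices, each given by a gerbe or root construction. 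Hence $K(W)$ is proper over $B$. By Theorem~\ref{propermappd} the morphism $K^u_{pd} \to K(W)$ is proper. Composing the two proper morphisms $K^u_{pd} \to K(W) \to B$ shows that $K^u_{pd}$ is proper over $B$.
\end{proof}
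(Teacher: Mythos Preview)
Your proof is correct and follows exactly the same approach as the paper: invoke \cite[Theorem 1.4.1]{Abramovich-Vistoli} to conclude that $K(W)$ is proper over $B$, then compose with the proper morphism $K^u_{pd} \to K(W)$ from Theorem~\ref{propermappd}. The paper's own proof is a single sentence citing the same theorem, so your only difference is that you spell out in more detail why that theorem applies.
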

\begin{proof}
This follows since by \cite[Theorem 1.4.1]{Abramovich-Vistoli} the stack $K(W)$ is proper.\end{proof}

\subsection{$\fr$--twisted stable maps and their properness}

%\begin{definition}  Here we need to define contact order and intersection multiplicities. Or we need to put a reference to section 1 to define the stuff there.\Dan{Need to do this}
%\end{definition}
% \Dan{ Barbara - you remarked here that we needed Lemma \ref{pd-is-pd}}

\begin{definition}\label{D:twistingchoice} A {\em twisting choice} is a map $\fr$ which associates to every finite multi-set  of positive integers $\bc=\{c_1,\ldots,c_n\}$ a positive integer $\fr(\bc)$ such that $c_j | \fr(\bc)$ for all $j$.\end{definition}   
\begin{definition}\label{comptc}
We define a partial order  on twisting choices by saying that $\fr\prec\fr'$ if $\fr(\bc)$ divides $\fr'(\bc)$ for every $\bc$.
\end{definition}
Note that there is a unique minimal twisting choice, namely $$\fr_{\min}(\bc) = \lcm(\bc_1,\ldots,\bc_k).$$  Similarly, given any two twisting choices we obtain a third larger than both by laking their least common multiple.
  
\begin{remark} We find it important to allow choosing a twisting choice, for two reasons.  First, a non-minimal choice is used in \cite{ACW}). Second,  keeping track of the fact that the invariants we define do not depend on the twisting choice helps make sure that we are doing things right.
\end{remark}
\begin{definition} Let $\fr$ be a twisting choice. A  map   $f:(C,\Sigma)\to \cW$ is called an {\em $\fr$-twisted stable map} if it is in $K^{tr}$ and the following holds. Let ${f^u}:{C^u}\to \cW^u$ be the image of $f$ in $K^u_{pd}$. Consider any splitting divisor $D_\ell$ of $\cW$, and let  $\bc^\ell$ be the multiset of contact orders of the nodes in $C^u$ mapping to $D^u_\ell$. Then the twisting index  of   $D_\ell$ is $\fr(\bc^\ell)$. 
\end{definition}

Finally we arrive at the main moduli stacks of this paper:

\begin{definition}\label{D:rtsm} If $\fr$ is a twisting choice, we define the substack $\cK^\fr$ of $K^{tr}$ to be  the full substack of families whose points are $\fr$-twisted stable maps. In the following sections we will use case-specific notations for $\cK^\fr$:
\begin{enumerate}
\item (Degeneration case) $\cK_\Gamma^\fr(W/B)$, 
\item (Singular case) $\cK_\Gamma^\fr(W_0)$,
\item (Pair case) $\cK_\Gamma^\fr(X,D)$.
\end{enumerate}
\end{definition} 
\begin{lemma}  The stack $\cK^\fr$ is open in $K^{tr}$. 
\end{lemma}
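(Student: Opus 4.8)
The claim is that $\cK^\fr \subset K^{tr}$ is open. Since $K^{tr}$ is itself open in $\fK^\tw$ by definition, it suffices to show that being an $\fr$-twisted stable map is an open condition among transversal maps. I would test openness on a family: let $\cC \to \cW \to S$ be a family of transversal maps parametrized by $S \to K^{tr}$, and I want to show the locus of $s \in S$ where the fiber is $\fr$-twisted is open. The definition of $\fr$-twisted has two ingredients that must both be controlled in families: (i) the combinatorial data of contact orders $\bc^\ell$ at each splitting divisor $D_\ell$, extracted from the underlying predeformable map $f^u : C^u \to \cW^u$; and (ii) the twisting index of $D_\ell$ in $\cW$, which must equal $\fr(\bc^\ell)$.

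\textbf{Key steps.} First I would observe that the twisting indices along the splitting divisors of $\cW$ are locally constant on $S$: the stack $\cT^\tw$ (resp. $\fT^\tw$) decomposes as a disjoint union over the twisting indices, and the universal family is pulled back along $S \to T^\tw$, so the function assigning to $s$ the tuple of twisting indices of the splitting divisors factors through a discrete set. Hence condition (ii) only makes sense once we know which components of $C^u$ hit which $D_\ell$ with which contact order, and the comparison $\fr(\bc^\ell) = (\text{twisting index of } D_\ell)$ is a condition on a locally constant datum versus a constructible datum. Second, and this is the substantive point, I would argue that the multiset $\bc^\ell$ of contact orders is itself locally constant on $S$ for a family of transversal maps. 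This is exactly where transversality does the work: for a transversal map $\cC \to \cW$, Lemma \ref{L:lift-pd-pair}(4) and Lemma \ref{L:lift-pd-node}(3) force $r_{D_\ell} = c_p \cdot r_{p}$ at each node or marking $p$ mapping to $D_\ell$, so the contact order $c_p = r_{D_\ell}/r_p$ is read off from the orbifold index $r_p$ of the twisted curve $\cC$ at $p$ and the (locally constant) twisting index $r_{D_\ell}$. The orbifold indices of a family of twisted curves are locally constant (the markings are $\bmu_{e_i}$-gerbes and the nodes have locally constant index $r_p$ by the structure of $\fM^\tw_{g,n}$, cf. \ref{SSS:mod-tw-curve}), and the incidence data ``which twisted point of $\cC$ maps into $D_\ell$'' is locally constant because $\cD_{univ}$ is a closed substack and transversal maps meet it in a flat family of substacks of the expected codimension. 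Putting these together, $s \mapsto \bc^\ell$ is locally constant.

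\textbf{Conclusion of the argument.} Once both $s \mapsto \bc^\ell$ and $s \mapsto (\text{twisting index of } D_\ell)$ are locally constant, the equality $\fr(\bc^\ell) = (\text{twisting index of } D_\ell)$ for all $\ell$ holds on a union of connected components of $S$, which is in particular open. Since this works for every family, $\cK^\fr$ is open in $K^{tr}$. One subtlety to handle carefully: different splitting divisors $D_\ell$ can collide or appear/disappear under specialization (the boundary of the stacks of expansions), so ``the $\ell$-th splitting divisor'' is only well-defined locally, and I would phrase the argument on the open strata where the combinatorial type of the expansion is fixed, then note that $\cK^\fr$ being open is a condition that may be checked on such a stratification since openness is local.

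\textbf{Main obstacle.} The delicate point is step two: showing $\bc^\ell$ is locally constant, i.e. that contact orders cannot jump in a family of \emph{transversal} maps. The content is that for transversal maps the contact order is not a subtle deformation-theoretic invariant (as it is for predeformable maps, where $\fK^u_{pd}$ is only locally closed — see the remark after Lemma \ref{predefisclosed}) but is instead rigidly determined by the discrete orbifold structure of the twisted curve via Lemmas \ref{L:lift-pd-pair} and \ref{L:lift-pd-node}. This is precisely the payoff of the twisting method, so the proof is essentially an application of those lemmas in families combined with the local constancy of orbifold indices of twisted curves and of twisting indices of expansions.
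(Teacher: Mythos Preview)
Your core observation is exactly the paper's: for \emph{transversal} maps the contact orders $c_p$ are rigidly determined by the discrete orbifold indices via $r_{D_\ell}=c_p\cdot r_p$ (Lemmas \ref{L:lift-pd-pair}, \ref{L:lift-pd-node}), and both the $r_p$ and the $r_{D_\ell}$ are locally constant in families. So on any locus where the combinatorial type of the expansion is fixed, your argument is complete and coincides with the paper's reasoning.

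Where you diverge from the paper is in the packaging. The paper does not try to show that the $\bc^\ell$ are globally locally constant (they are not even globally defined, as you note: the index set of splitting divisors jumps). Instead it checks \emph{stability under generalization} directly over a DVR: if the special fibre is $\fr$-twisted and $D_\ell$ is a splitting divisor on the \emph{generic} fibre, then $D_\ell$ persists to a splitting divisor on the special fibre (divisors can only be created, not destroyed, under specialization), and for that persisting divisor both the contact orders and the twisting index are unchanged. Hence the $\fr$-condition on the generic fibre is inherited from the special fibre. This one-directional argument sidesteps the stratification issue entirely.

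Your proposed fix --- ``check on strata, openness is local'' --- is the one genuine gap. Openness cannot be checked stratum by stratum: the strata are locally closed, not open, and a condition that is open on each stratum need not be open on the whole. What you actually need is precisely the generalization argument above, comparing a point to a nearby point in a \emph{shallower} stratum. Once you add that, your proof is complete and is essentially a reorganization of the paper's.
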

\begin{proof} It is enough to prove that it is stable under generalization. Assume we have a family of twisted transversal stable maps over $\Spec R$, with $R$ a discrete valuation ring, and that the fiber over the special point is $\fr$-twisted. Let $D_\ell$ be a splitting divisor on the general fiber; then it induces a splitting divisor on the special fiber, and since twisting indices are locally constant along deformations so long as the node (or the splitting divisor) doesn't get smoothed out, all the contact orders and the twisting of the splitting divisor are the same at the special point and at the general point of $\Spec R$.
\end{proof}
%\begin{lemma} \label{twistandlift} Consider a commutative diagram $$
% \xymatrix{\eta\ar[r]\ar[d] & \fT^\tw_{B,b_0}\ar[d]\\
%\Delta\ar[r] & \fT_{B,b_0}^u,
%}$$
%where the lower arrow corresponds to a family $\cW\to \Delta$ and the upper one to a partial twisting $\tilde \cW\to\eta$. Consider the set $L$ of splitting divisors in $\cW_s$., and divide them into generic divisors $L_G$  (i.e., those that survive over $\eta$) and special divisors $L_S$ (those that are smoothed out over $\eta$). Then there is a bijection between the equivalence classes of liftings\Dan{after base change} of $\Delta$ to $\fT^\tw_{B,b_0}$ and the choices of twisting index along each special splitting divisor. 
%\end{lemma}
%\begin{proof}
%This does not depend on the choice of the family $W\to B$; it therefore reduces to an analogous statement for the morphism $\fM_{0,n}^{\tw}\to \fM_{0,n}$, which can be found\Dan{Find reference. Can we avoid this lemma?} in \cite{Olsson-twisted}. 
%\end{proof}

\begin{theorem}\label{twistprop} Let $\fr$ be a twisting choice. Then the canonical morphism  $\cK^\fr\to K^u_{pd}$ is proper.%\Dan{needs cleaning}
\end{theorem}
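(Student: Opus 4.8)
# Proof proposal for Theorem \ref{twistprop}

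The plan is to use the valuative criterion for properness, together with the properness results already established. First I would reduce to checking the valuative criterion for the morphism $\cK^\fr \to K^u_{pd}$ over a discrete valuation ring $R$ with fraction field $K$. So suppose we are given a family $f^u_R : C^u_R \to \cW^u_R$ of predeformable stable maps over $\Spec R$, together with a lift $f_K : (C_K,\Sigma_K) \to \cW_K$ of the generic fiber to an $\fr$-twisted stable map. We must show that, after a finite base change, $f_K$ extends uniquely to an $\fr$-twisted stable map $f_R$ over $\Spec R$ lifting $f^u_R$.

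The key point is that the $\fr$-twisted structure is determined by combinatorial data — the contact orders of the nodes of $C^u$ along each splitting divisor $D_\ell$ — and this data is constructible, in fact locally constant along the deformation away from smoothing of a node. Concretely, I would first use the family $f^u_R$ to read off, for each splitting divisor $D_\ell$ in $\cW^u_R$ and each point of $C^u$ mapping to it, the contact order $c_p$; since $\Spec R$ is the spectrum of a DVR and the map is predeformable, the multiset $\bc^\ell$ of contact orders along $D_\ell$ is determined on the closed fiber, and matches that of the generic fiber unless the splitting divisor itself degenerates. This gives us, via the twisting choice $\fr$, the prescribed twisting index $r_\ell = \fr(\bc^\ell)$ on every splitting divisor. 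I would then invoke Lemma \ref{pd-is-pd}(2): taking $\cW_R \to \cW^u_R$ to be the root stack with index $r_\ell$ along each component of the singular locus (and boundary divisor), there is a transversal lift $\cC_R \to \cW_R$ of $C^u_R \to \cW^u_R$, with $\cC_R$ a twisted nodal curve whose index at each node $p$ over $D_\ell$ is $r_\ell / c_p$. By Lemma \ref{pd-is-pd}(3) any such lift is automatically transversal, and by construction it has the twisting indices prescribed by $\fr$, so it defines an object of $\cK^\fr$ over $\Spec R$. For uniqueness of the extension, I would use part (2) of Lemma \ref{L:lift-pd-pair} together with the rigidity of root stacks (triviality of relative automorphisms, \ref{SS:root}): the lift is unique up to unique isomorphism away from the divisors, and the separatedness of $K^{tr}$ over $K^u_{pd}$ then forces the generic lift $f_K$ to agree with the restriction of $f_R$, possibly after a further base change to absorb ghost automorphisms along nodal splitting divisors (cf. the corollary in \ref{SSS:bal-node-aut}).

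The main obstacle I expect is bookkeeping around the splitting divisors that \emph{do} degenerate — i.e. a node of $\cW$ that appears only on the special fiber, or where an expansion component collapses — because there the contact-order multiset on the closed fiber need not literally equal that on the generic fiber; rather, it is obtained by summing/merging contributions as the curve and target degenerate. One must check that the twisting index assigned by $\fr$ on the closed fiber is still divisible by all the contact orders $c_p$ appearing there, so that Lemma \ref{pd-is-pd}(2) still applies, and that the resulting closed-fiber object is the flat limit of the generic $\fr$-twisted map. This is exactly the place where separatedness and the "increase indices uniquely" phenomenon of Proposition \ref{P:untwisting-proper} are used: one first produces a limit in a larger stack $K^u_{pd}$ (which is proper over $K(W)$ by Theorem \ref{propermappd}), then shows the $\fr$-twisted lift of the generic fiber extends over it. Finiteness of type — needed to know only finitely many twisting choices are relevant over $\Spec R$ — follows from Lemma \ref{L:trans-prop} as in the discussion preceding Definition \ref{D:rtsm}.
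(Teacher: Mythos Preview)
Your proposal has the right ingredients --- the valuative criterion, Lemma \ref{pd-is-pd}, and Proposition \ref{P:untwisting-proper} --- but the way you assemble them leaves the very gap you flag as ``the main obstacle,'' and your suggested fix does not close it.

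The concrete problem is your step ``take $\cW_R \to \cW^u_R$ to be the root stack with index $r_\ell$ along each component of the singular locus.'' Lemma \ref{pd-is-pd}(2) is stated for a single map over a point, not for a family; more seriously, over $\Spec R$ the combinatorics of the singular locus of $\cW^u_R$ can change between generic and special fiber, so there is no well-defined global $r_\ell$ to root by, and the naive fiberwise root does not obviously glue to a flat family of twisted expansions. Your final paragraph gestures at Proposition \ref{P:untwisting-proper} but then says ``one first produces a limit in $K^u_{pd}$\ldots then shows the $\fr$-twisted lift extends over it'' --- which is just a restatement of the problem, since the $K^u_{pd}$-family is already the \emph{input}.

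The paper's proof avoids this by never attempting to build $\cW_R$ by hand. Instead it factors through the \emph{target} stacks: there are canonical maps $K^u_{pd} \to T^\bDelta$ (label each splitting divisor by its multiset of contact orders) and $\cK^\fr \to T^\fr$, sitting over $T^\fr \to T^\bDelta$. Proposition \ref{P:untwisting-proper} says this last map is proper, so the valuative-criterion square lifts uniquely on the level of \emph{targets}: one obtains $\Delta \to T^\fr$, i.e.\ a family of $\fr$-twisted expansions over $\Spec R$ extending the generic one. Now twisted stable maps into $\cW^\tw_{univ}$ are proper over $T^\tw$, so the generic map extends uniquely to a twisted stable map into this target; Lemma \ref{pd-is-pd}(3) then shows the special fiber is transversal and hence lies in $\cK^\fr$. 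In short, the missing idea is to separate ``extend the twisted target'' from ``extend the map,'' and to recognize that Proposition \ref{P:untwisting-proper} handles the first step wholesale rather than being a patch on a direct root-stack construction.
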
 
\begin{proof} First we claim that the morphism is of finite type locally over $K^u_{pd}$.%\Dan{I hope this suffices}
 Choose an open covering of  $K^u_{pd}$ where in each chart only finitely many contact orders  apear. Then objects in $\cK^\fr$ involve maps to targets with bounded twisting, and the stack of these targets is of finite type over the base. Since $\cK^\fr$ is proper over that stack the claim follows.

 \ChDan{It now suffices to use the valuative criterion of properness. 
 We take $\Delta=\Spec R$ for $R$ a discrete valuation ring, with generic point $\eta$ and closed point $s$. 
We} assume we are given a commutative diagram $$\xymatrix{
\eta\ar[r]\ar[d] &K^{\fr}\ar[d]\ar[r] & T^\tw\ar[d]\\
\Delta\ar[r] & K^u_{pd}\ar[r] &T^u
}$$
and we want to find a unique lifting $\Delta\to K^{\fr}$. 
We denote the induced families of stable maps as $(C,\Sigma,f:C\to \cW)$  and $(\tilde C_\eta,\tilde\Sigma_\eta, \tilde f_\eta:C_\eta\to \cW_\eta)$. 
The stack of twisted stable maps into $\cW^\tw_{univ}$ is proper over $T^\tw$. Therefore it is enough to show that there is a unique lifting $\Delta\to T^\tw$ such that the induced  family's central fiber $(\tilde C_s,\tilde\Sigma_s,\tilde f_s)$ is in $K^\fr$. 

Consider the set $\bDelta$ of multisets of positive integers. We have a canonical lifting of $K^u_{pd}\to T^u $ to a morphism $K^u_{pd}\to T^\bDelta$ to the stack of $\bDelta$-weighted expansions, see \ref{weighted-expansions},
%\Dan{May refer to \cite[Section 7]{ACFW}}, 
where we weigh the expansions by the contact order at each splitting divisor. Similarly we have a canonical lifting of $K^\fr\to T^u $ to a morphism $K^\fr \to T^\fr$ to the stack of $\fr$-twisted $\bDelta$-weighted expansions, since the expansions in $K^\fr$ are by definition $\fr$-twisted $\bDelta$-weighted expansions. We obtain he following refinement of the previous commutative diagram:  
$$\xymatrix{
\eta\ar[r]\ar[d] &K^{\fr}\ar[d]\ar[r] & T^\fr\ar[d]\\
\Delta\ar[r] & K^u_{pd}\ar[r] &T^\bDelta.
}$$
Note that $T^\bDelta = T^\bone$ is the stack of $\bone$-twisted (namely untwisted) $\bDelta$-weighted expansions, so by Proposition \ref{P:untwisting-proper}
%\Dan{should replace by \cite[Proposition 7.3.1]{ACFW}, but forgot to state properness there!} 
the morphism $T^\fr \to T^\bDelta$ is proper. Therefore, after a suitable base change we obtain a unique lifting $\Delta\to T^\fr$.
%After a base change on $\Delta$, we can  take roots as in Steps 3 and 4 of \cite[Proposition 6.0.1]{Abramovich-Vistoli}, and obtain a unique lifting $\Delta \to \fT^\tw_{B,b_0}$ such that the splitting divisor divisor $D_\ell$ is twisted with index  $\fr(\bc^\ell)$, where $\bc^\ell$ is the set of orders of contact of points of $C_s$ to $D_\ell$.
 By Lemma \ref{pd-is-pd} (3), any lift of $\{s\} \to K^u_{pd}\times_{T^u}T^\fr$ is in $K^\fr$, in particular the limit $(\tilde C_s,\tilde\Sigma_s,\tilde f_s)$ is in $K^\fr$, as needed.
 %By Lemma \ref{twistandlift} it is enough to show that there is a unique choice of twisting on each special twisting divisor $D_\ell$ such that the map $\Delta\to K^\tw$ so obtained maps to $K^{\fr}$. This unique choice is then exactly $\fr(\bc^\ell)$, where $\bc^\ell$ is the set of orders of contact of points of $C_s$ to $D_\ell$.
\end{proof}
\begin{corollary} If $W/B$ is proper with projective coarse moduli space, then the stack $\cK^\fr$ is proper for every twisting choice $\fr$.
\end{corollary}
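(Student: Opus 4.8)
The plan is to obtain the statement by a one-line composition of properness results already established. First I would recall that $\cK^\fr$ is open in $K^{tr}$ (as noted just before Theorem \ref{twistprop}), hence is an algebraic stack, so the assertion makes sense; and that by Theorem \ref{twistprop} the canonical morphism $\cK^\fr \to K^u_{pd}$ is proper for every twisting choice $\fr$. The proof of that theorem also verifies that this morphism is of finite type, so "proper" is here used in its full sense, including finite type and separatedness.

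Next I would invoke Corollary \ref{C:pd-proper}: under the hypotheses of the present statement --- namely $W/B$ proper with projective coarse moduli space --- the stack $K^u_{pd}$ is proper over $B$. This rests in turn on Theorem \ref{propermappd}, that $K^u_{pd}\to K(W)$ is proper, together with the properness of the stack $K(W)$ of twisted stable maps from \cite[Theorem 1.4.1]{Abramovich-Vistoli}.

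Finally, since a composition of proper morphisms of algebraic stacks is proper, the composite $\cK^\fr \to K^u_{pd} \to B$ is proper, which is exactly the claim (with the understanding that $B = \Spec\CC$ in the singular and relative cases, so that there "proper over $B$" means proper). There is no substantial obstacle: all the real work lives in Theorems \ref{propermappd} and \ref{twistprop} and in \cite{Abramovich-Vistoli}. The only points deserving a moment's attention are bookkeeping ones --- that properness is being applied consistently with its finite-type component, and that the three cases of the Convention on stacks of maps (degeneration, singular, relative) are all covered uniformly by the single composition above.
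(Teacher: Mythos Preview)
Your proof is correct and follows exactly the same route as the paper's, which simply writes ``This follows from Corollary \ref{C:pd-proper}'' --- implicitly composing the just-proved properness of $\cK^\fr \to K^u_{pd}$ from Theorem \ref{twistprop} with the properness of $K^u_{pd}$ over $B$ from Corollary \ref{C:pd-proper}. You have merely unpacked this one-line reference into its constituent steps.
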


\begin{proof}
This follows from Corollary \ref{C:pd-proper}.
\end{proof}

 \section{Relative and degenerate Gromov-Witten invariants}\label{S:Invariants}
\subsection{Curve classes and orbifold cohomology on nodal DM stacks}
%Intersection conditions for curve classes; transversal part of orbifold cohomology; issue of monodromy, etc.\Dan{to be written!}

\ChDan{Gromov--Witten invariants require two type of homological entries: {\em curve classes $\beta$}, which affect the moduli space one constructs, and {\em cohomology insertions $\gamma_i$}, whose pullbacks are integrated over the moduli space and determine the invariants. Curve classes were introduced in Section \ref{S:Stablemaps-conventions}. Since we are considering invariants of stacks, the cohomological insertions we must use are elements $\gamma_1,\ldots,\gamma_n\in H^*_{orb}(W_0,\QQ) := H^*(\ocI(W_0),\QQ)$ of Chen and Ruan's   {\em orbifold cohomology}. Since all evaluation maps land in the smooth locus of an expansion of $W_0$, we need only consider classes in $H^*(\ocI(W_0),\QQ)$ supported on sectors meeting the smooth locus of $W_0$. } 

\subsection{Gromov--Witten invariants for nodal DM stacks}\label{DGWI}
%\begin{conv} 
In this Section \ref{DGWI} we fix a nodal, first order smoothable proper DM stack $W_0= X_1\sqcup_D X_1$ with a projective coarse moduli scheme.
%, written  as the union of two smooth closed substacks $X_1$ and $X_2$ meeting transversally along a smooth closed substack $D$ which is a divisor on each $X_i$.
We also fix data $\Gamma=(\beta,g,N,\bfe)$ as in \ref{C:deg-data}.
% We fix also nonnegative integers $g,n$ - genus and number of markings, and positive integers ${\bf e}=(e_1,\ldots,e_n)$ indicating the index at each marking. 
%We let $\beta$ be a curve\Barbara{This should be written better. In fact, the whole curve class issue should be dealt with more elegantly.} class\Dan{Can you remind me what your formalism was?} on $W_0$ such that, for some (and hence any\Dan{I don't get this}) decomposition of $\beta$ as $\beta_1+\beta_2$ with $\beta_i$ curve class in $X_i$, the intersection number $\beta_1\cdot D$ in $X_1$ equals $\beta_2\cdot D$ in $X_2$.\Dan{This paragraph needs rewriting}
%\Dan{need serious discussion of curve classes on nodal things}
%\end{conv}

 We denote by $\cK^\fr_\Gamma(W_0)$ stack of  transversal, $\fr$-twisted stable maps to expansions of $W_0$. This stack  is proper by Theorem \ref{twistprop}. The structure morphism to  $\fT^\tw_0$  has a relative perfect obstruction theory, by a standard construction described in Section \ref{SS:obs-th}. Since $\fT^\tw_0$ has pure dimension zero, it has a natural fundamental class, and we can therefore define an induced virtual fundamental class $[\cK^\fr_\Gamma(W_0)]^\vir$. 

\begin{definition} \label{D:ev-deg}
The stack $\cK^\fr_\Gamma(W_0)$ carries a universal family of twisted stable maps to expansions of $W_0$ denoted as follows:
$$
\xymatrix{
\Sigma_i \ar@{^(->}[r]\ar[dr] &\cC^\tw\ar[r]^{f^\tw}\ar[d]^{p^\tw}& \cW^{\tw}_{0,univ}\\
& \cK^\fr_\Gamma(W_0). }
$$
We denote the underlying family of predeformable maps as follows:
$$
\xymatrix{
\Sigma_i \ar@{^(->}[r]\ar[dr] &\cC^u\ar[r]^{f^u}\ar[d]^{p^u}& \cW^{u}_{0,univ}\\
& \cK^\fr_\Gamma(W_0).}
$$

Composing with the morphism $\cW^{u}_{0,univ} \to W_0$ and stabilizing, we obtain
a diagram $$
\xymatrix{
\Sigma_i \ar@{^(->}[r]\ar[dr] &C\ar[r]\ar[d]& W_0\\
& \cK^\fr_\Gamma(W_0). }
$$
The markings $\Sigma_i$ remain the same since the curves $\cC^\tw,\cC^u$ and $C$ are isomorphic in a neighborhood of the $\Sigma_i$.

For $i\in N$ the morphism $\Sigma_i\to W_0$  induces an  evaluation morphism $\ev_i:\cK^\fr_\Gamma(W_0)\to \bcI_{e_i}(W_0)\subset \bcI(W_0)$, where $\bcI(W_0)$ is the rigidified inertia stack of $W_0$. See \cite[4.4]{AGV}.

Finally, the family of coarse curves $\underline C \to \cK^\fr_\Gamma(W_0)$ has sections $s_i:  \cK^\fr_\Gamma(W_0) \to \underline C$ induced by $\Sigma_i$. Following \cite[8.3]{AGV} we denote  
%\Dan{remark that other conventions are possible.}
$$\psi_i \ =\  c_1\left(s_i^*\,\omega_{\underline{C}/\cK^\fr_\Gamma(W_0)}\right).$$
% and a universal line bundle $L_i$ on $\cK_\Gamma^\fr(W_0)$\Dan{fix the line bundles and classes}. 
\end{definition}

\begin{definition} Assume for $i\in N$ we are given positive integers $m_i$ and cohomology classes
%\Barbara{what we mean by cohomology classes should be clarified}
 $\gamma_i\in H^*(\ocI(W_0))$.
We define the Gromov--Witten invariant 
%$\left\langle \prod_{i\in N}\tau_{m_i}(\gamma_i)\right\rangle$ by 
$$\left\langle \prod_{i\in N}\tau_{m_i}(\gamma_i) \right\rangle^{W_0}_{\Gamma}\  :=\ 
\deg  \left( \prod_{i\in N}(\psi_i ^{m_i} \cdot\ev_i^{*}\gamma_i)\cap [\cK^\fr_\Gamma(W_0)]^\vir\right).
$$
\end{definition}

\begin{remark} Recall that in $\fr$-twisted stable maps, the image of a marked point is never in a splitting divisor.  This implies that the evaluation maps only land in sectors on $\ocI(W_0)$ transversal to $D$. In particular Gromov--Witten invariants involving a class $\gamma_i$ from a sector supported in $\ocI(D)$ vanish.
\end{remark}

\subsection{Relative Gromov--Witten invariants}
\label{RGWI}
Here we fix a proper DM smooth pair $(X,D)$  with a projective coarse moduli scheme.
We also fix data $\Gamma=(\beta,g,N,M,\bfe,\bff,\bfc)$ as in \ref{C:pair-data}.

 We denote by $\cK^\fr_\Gamma(X,D)$ stack of  transversal, $\fr$-twisted  stable maps to expansions of $(X,D)$. Again this stack  is proper by Theorem \ref{twistprop}, and the structure morphism to  $\cT^\tw$  has a relative perfect obstruction theory described in Section \ref{SS:obs-th}, with induced virtual fundamental class $[\cK^\fr_\Gamma(X,D)]^\vir$. 

Evaluation maps $\ev_i: \cK^\fr_\Gamma(X,D) \to \cI(X)$ and classes $\psi_i$ for $i\in N$ are defined as in \ref{D:ev-deg}. Note however that for $j\in M$ the markings $\Sigma_j^\fr\subset \cC^\fr$ and $\Sigma_j\subset \cC^u$ are not isomorphic: $\Sigma_j^\fr \to \Sigma_j^u$ is a gerbe banded by $\bmu_{r/c_j}$, where $r=\fr(\{c_j\}_{ j\in M})$ is the twisting of the divisor.
%\Dan{refer back} 
An additional subtlety is the fact that the stack $\cD\to \cT^\tw$ is not a product, and we are only interested in the relative part of inertia.  Since the root markings $\Sigma_j$ map to $D$ and $\Sigma^\fr_j$ map to $\cD$ we can define the following:

\begin{definition}\label{D:ev-pair}
We denote by $\ev_j: \cK^\fr_\Gamma(X,D) \to \ocI_{f_j}(D)\subset \ocI(D)$ the evaluation map induced by $ \Sigma_j\to D$.
We denote by $\ev^\fr_j: \cK^\fr_\Gamma(X,D) \to  \ocI(\cD/\cT^\tw)$ the evaluation map induced by $\Sigma^\fr_j\to \cD$. Note that these land in $\ocI_{f_jr/c_j}(\cD/\cT^\tw)\subset  \ocI(\cD/\cT^\tw)$.
\end{definition}
The maps $\ev^\fr_j$ will play a role in the proof of the degeneration formula. Gromov--Witten invarians involve only $\ev_j$:
%We keep the notation introduced in the previos subsection.
%We proceed to define relative Gromov--Witten invariants using our stacks $\cK^\fr_\Gamma(X,D)$ for a fixed twisting choice $\fr$,  which are independent of the choice of $\fr$.  The construction follows closely that in the degeneration case with modifications as necessary. We let the leg markings be labelled by an ordered set $N$ - often in applications $N=\{1,\ldots,n\}$ -  and the root markings by an ordered set $M$ disjoint from $N$, sometimes taken to be $\{n+1,\ldots n+|M|\}$.

\begin{defn} Let $m_i,i\in N$ be nonnegative integers; $\gamma_i\in H^*(X),i\in N$ and $\gamma_j\in H^*(\ocI(D)), j\in M$. We define relative Gromov--Witten invariants with gravitational descendants by the formula
%\Dan{fix notation, add alternative notation as in \cite{ACW}} 
\begin{eqnarray*}
\lefteqn{
\left\langle \prod_{i\in N}\tau_{m_i}(\gamma_i)\big\vert\prod_{j\in M}\gamma_{j}\right\rangle_{\Gamma}^{(X,D)}}\\
&:=& \deg\left(\left(  \prod_{i\in N} \psi_i^{m_i}\cdot \ev_i^*\gamma_i\right)\cdot\left(\prod_{j\in M}{\ev_j}^{*}\gamma_{j}\right)\cap [\cK^\fr_\Gamma(X,D)]^\vir\right).
\end{eqnarray*}
\end{defn}
\begin{remark}
Note that unless the condition  \begin{equation}\label{goodass} \sum_{i\in M} d_i=\beta\cdot D\end{equation} is satisfied, the moduli stack $\cK^\fr_\Gamma(X,D)$ is empty and hence the invariant is zero. 
\end{remark}

%\begin{defn} For the degeneration formula we will need a slightly more flexible notation.  Let $A\subset \{1,\ldots,n\}$, $B\subset M$, and package the data in $\Gamma = (\beta, g, A,B)$. 
%, and $B\subset \{n+1,\ldots,n+m\}$; replace assumption \ref{goodass} by $$ \sum _{i\in B}\alpha_i=\beta\cdot D.$$ 
%We define 
%\begin{eqnarray*}
%\lefteqn{
%\left\langle \prod_{a\in A}\tau_{b_a}(\gamma_a)\big\vert\prod_{b\in B}\gamma_{b}\right\rangle_{\Gamma}^{(X,D)}}\\
%& :=& \deg\left(\left(  \prod_{a\in A} \psi_a^{b_a}\cdot \ev_a^*\gamma_a\right)\cdot\left(\prod_{b\in B}{\ev_j}^{*}\gamma_{b}\right)\cap [\cK^\delta_{\Gamma}(X,D)]^\vir\right).
%\end{eqnarray*}
%\end{defn}

\subsection{Independence of twisting choice}
 \begin{theorem}\label{gw-ind-twisting}
The Gromov--Witten invariants defined above are independent of the twisting choice $\fr$. \end{theorem}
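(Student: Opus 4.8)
The plan is to reduce to the comparison of two choices $\fr\prec\fr'$, build a partial-untwisting morphism $b\colon\cK^{\fr'}\to\cK^{\fr}$ lying over the untwisting $\fT^{\fr'}\to\fT^{\fr}$ of Proposition~\ref{P:untwisting-proper}, check that $b$ leaves untouched everything entering the definition of the invariants except the virtual class, and deduce the equality of virtual classes from the Costello--Manolache comparison. Since by Definition~\ref{comptc} the twisting choices form a directed poset under $\prec$ (any two are dominated by their least common multiple), it suffices to treat $\fr\prec\fr'$.

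For the construction of $b$: given a family of $\fr'$-twisted stable maps $(\cC'\to\cW'\to S)$ classified by $S\to\cK^{\fr'}$, compose $S\to\fT^{\fr'}$ with the untwisting $\fT^{\fr'}\to\fT^{\fr}$; by Lemma~\ref{compare-twisted} this carries a canonical morphism of universal families $p\colon\cW'\to\cW$ over $S$. Form $\cC'\to\cW'\to\cW$ (middle map $p$) and take its relative coarse moduli source, i.e. partially coarsen $\cC'$ precisely at the nodes over those splitting divisors $D_\ell$ whose twisting drops from $\fr'(\bc^\ell)$ to $\fr(\bc^\ell)$; this is a genuine partial coarsening of twisted curves since $\fr(\bc^\ell)/c_j$ divides $\fr'(\bc^\ell)/c_j$. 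The descent property Lemma~\ref{L:trans-prop}(2), applied with $r_D=\fr'(\bc^\ell)$ and $d=\fr'(\bc^\ell)/\fr(\bc^\ell)$ (using that $c_j\mid\fr(\bc^\ell)$ as $\fr$ is a twisting choice), shows that the resulting representable stabilized map $\cC\to\cW\to S$ is transversal; it is stable and $\fr$-twisted by construction. Functoriality in $S$ gives $b\colon\cK^{\fr'}\to\cK^{\fr}$, fitting in a $2$-commutative square over $\fT^{\fr'}\to\fT^{\fr}$.

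Next I would observe that $b$ fixes all the data other than the virtual class. The partial untwisting and coarsening take place only over the splitting divisors, which no marking ever meets; hence the markings $\Sigma_i$ ($i\in N$) and their images, the underlying root markings $\Sigma_j\to D$ ($j\in M$), the coarse source curve $\underline{C}$ and its sections $s_i$ are all unaffected by $b$. Consequently $b^{*}\ev_i=\ev_i$ and $b^{*}\psi_i=\psi_i$ for $i\in N$, and $b^{*}\ev_j=\ev_j$ for $j\in M$, with the conventions of \ref{D:ev-deg} and \ref{D:ev-pair}. By the projection formula it then remains to prove $b_{*}[\cK^{\fr'}]^{\vir}=[\cK^{\fr}]^{\vir}$.

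This last point is the crux, and I would attack it via Costello's comparison of virtual classes \cite{Costello} in the form extended (removing the smoothness hypothesis) by Manolache \cite{Manolache}. The relative perfect obstruction theories of $\cK^{\fr'}/\fT^{\fr'}$ and $\cK^{\fr}/\fT^{\fr}$ are the standard ones of Section~\ref{SS:obs-th}, built from the same universal target (pulled back through an étale morphism from the universal twisted expansion over $\fT^{\tw}$ in both cases); the two source curves differ only by the canonical re-twisting at nodes over splitting divisors, which by the balanced condition changes neither the deformation theory of the curve nor that of the map, as recalled in \ref{SSS:def-tw-untw}. Hence $b$ identifies the obstruction theory of $\cK^{\fr'}/\fT^{\fr'}$ with the $b$-pullback of that of $\cK^{\fr}/\fT^{\fr}$, compatibly with the maps to the respective cotangent complexes; since $\fT^{\fr'}\to\fT^{\fr}$ is proper, quasi-finite, flat, surjective and of pure degree $1$ (Proposition~\ref{P:untwisting-proper}), Costello--Manolache yields $b_{*}[\cK^{\fr'}]^{\vir}=[\cK^{\fr}]^{\vir}$, which together with the previous paragraph is the assertion. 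The delicate step is exactly this obstruction-theoretic compatibility: the source curves in $\cK^{\fr}$ and $\cK^{\fr'}$ are non-isomorphic stacks (different cyclic gerbes at the relevant nodes) and the bases $\fT^{\fr},\fT^{\fr'}$ are singular and non-reduced, so one must check that $Rp_{*}f^{*}T_{\cW/T}$ matches under $b$ precisely enough — and compatibly with cotangent complexes — to feed the Costello--Manolache machine, and verify that pure degree $1$ is the correct numerical input.
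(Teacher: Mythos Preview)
Your approach is essentially the paper's own, and your outline is correct in spirit. The one point you flag as ``delicate'' and leave unresolved is exactly the one the paper pins down, and it is worth saying how.

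The paper packages your construction of $b$ and the obstruction-theory comparison into Proposition~\ref{twist-compat-obs}, whose key part~(2) is that the square
\[
\xymatrix{\cK^{\fr'}\ar[r]^{\phi}\ar[d] &\cK^{\fr}\ar[d]\\ T^{\fr'} \ar[r] & T^{\fr}}
\]
is $2$-\emph{Cartesian}, not merely $2$-commutative. You only assert commutativity. Cartesianness is needed twice: first, it is a hypothesis of Costello's Theorem~5.0.1 (and of Manolache's version); second, it makes the obstruction-theory comparison immediate, because it forces the square of universal maps
\[
\xymatrix{C'\ar[r]\ar[d] & \cW'\ar[d]\\ C\ar[r] & \cW}
\]
to be Cartesian as well, and then $\phi^*\EE_{\cK^{\fr}/T^{\fr}}\simeq\EE_{\cK^{\fr'}/T^{\fr'}}$ follows directly from the definition of the obstruction theory (Section~\ref{SS:obs-th}) by pullback. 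Your hand-wavy appeal to ``balanced nodes not changing deformation theory'' and to matching $Rp_*f^*T_{\cW/T}$ is replaced by this one-line argument once Cartesianness is in hand. The inverse to your partial-coarsening map $b$ is exactly the fiber-product construction $C':=\cW'\times_{\cW}C$ (Lemma~\ref{L:trans-prop}(1), the base-change property), and checking these are mutually inverse is what proves the square Cartesian.
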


The proof requires some preparation. Let $\bDelta$ be the set of multi-sets $\bc=\{c_1,\ldots,c_k\} $ of positive integers, and recall the stack $\fT_0^\fr\subset \fT_0^\bDelta$ of $\fr$-twisted, $\bDelta$-weighted  \ChDan{expansions}, defined in section \ref{weighted-expansions}.
%\Dan{May refer to \cite[Section 7]{ACFW}} 
 There is a natural morphism $\cK_\Gamma^\fr(W_0)\to \fT_0^\bDelta$ mapping each stable map to the labeling of each divisor by  the multiset of contact orders of the associated predeformable map. Its image is clearly contained in the open substack $\fT_0^\fr$. Similarly we have $\cK_\Gamma^\fr(X,D)\to \cT^\fr$.

\begin{proposition}\label{twist-compat-obs} Assume that $\fr$ and $\fr'$ are twisting choices with $\fr\prec\fr'$. Write for brevity $\cK\to T$  for $\cK_\Gamma^\fr(W_0)\to \fT_0^\fr$ (respectively, $\cK_\Gamma^\fr((X,D))\to \cT^\fr$)  and $\cK'\to T'$  for $\cK_\Gamma^{\fr'}(W_0)\to \fT_0^{\fr'}$, (respectively, $\cK_\Gamma^{\fr'}((X,D))\to \cT^{\fr'}$); similarly for the maps $\ev_i,\ev_i'$ and classes $\psi_i$, $\psi_i'$.
\begin{enumerate}
\item There is a natural morphism $\cK'\to\cK$.
\item This morphism induces a $2$--cartesian diagram 
$$\xymatrix{\cK'\ar[r]^\phi\ar[d] &\cK\ar[d]\\
T^{\fr'} \ar[r] & T^\fr,
}$$ 
where the lower arrow is given by Lemma \ref{compare-twisted}.
%\Dan{May replace by \cite[7.3]{ACFW}}.
\item There is a natural isomorphism $\phi^*\EE_{\cK/T^\fr}\to \EE_{\cK'/T^{\fr'}}$.
\item We have $\psi_i'= \phi^*\psi_i$, and there is a natural equivalence between $\ev_i'$ and $\ev_i\circ\phi$ for $i\in N\sqcup M$.
\end{enumerate} 
\end{proposition}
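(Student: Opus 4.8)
The plan is to derive parts (1) and (2) from the partial untwisting morphism of Lemma \ref{compare-twisted} combined with the base‑change and descent properties of transversal maps in Lemma \ref{L:trans-prop}. For part (1): starting from an $\fr'$-twisted stable map $f':\cC'\to\cW'$ over a base $S$, I would apply the partial untwisting morphism $T^{\fr'}\to T^\fr$ of Lemma \ref{compare-twisted}, which lifts canonically to universal families and so produces a family of root stacks $\pi:\cW'\to\cW$ over $S$. The composite $\cC'\to\cW'\to\cW$ is in general neither representable nor transversal, but because $\fr\prec\fr'$ the contact orders of $f'$ divide the twisting indices of $\cW$, so the descent property Lemma \ref{L:trans-prop}(2) applies with $d$ equal to the ratio $\fr'/\fr$ at each splitting divisor: stabilizing gives a transversal map $\cC\to\cW$, which one checks is $\fr$-twisted and stable since its coarse map is unchanged. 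This defines $\cK'\to\cK$, compatible with $T^{\fr'}\to T^\fr$ by construction.

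For part (2), I would produce a quasi-inverse to the resulting functor $\cK'\to\cK\times_{T^\fr}T^{\fr'}$. An object of the right-hand side is a transversal $\fr$-twisted stable map $\cC\to\cW$ over $S$ together with an $\fr'$-twisted expansion $\cW'\to S$ and an identification in $T^\fr$ of its image with that of $\cW$; by Lemma \ref{compare-twisted} this makes $\cW'\to\cW$ a family of root stacks. Setting $\cC':=\cW'\times_\cW\cC$, the base-change property Lemma \ref{L:trans-prop}(1) guarantees that $\cC'\to S$ is a twisted nodal curve and $\cC'\to\cW'$ a family of transversal maps, which is $\fr'$-twisted with the induced weighting and stable. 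That the two composites with the functor of part (1) are canonically the identity follows in one direction from the uniqueness up to unique isomorphism of transversal lifts (Lemmas \ref{L:lift-pd-pair}(2) and \ref{L:lift-pd-node}) and in the other from compatibility of relative coarse moduli spaces with the cartesian square; hence the square is $2$-cartesian.

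Part (3) is where the real work lies. By part (2) and the construction in Section \ref{SS:obs-th}, both $\EE_{\cK/T^\fr}$ and $\EE_{\cK'/T^{\fr'}}$ are assembled from the relative cotangent complex of the universal transversal map, and the universal map over $\cK'$ is the base change along $\pi:(\cW')^\tw\to\cW^\tw$ of the universal map over $\cK$ with its target refined. Since the square comparing the two universal maps is cartesian and $\pi$ is flat, pulling back the relative cotangent complex of $\cC^\tw\to\cW^\tw$ to $(\cC')^\tw$ differs from that of $(\cC')^\tw\to(\cW')^\tw$ only by the discrepancy $\LL_{(\cW')^\tw/(\cW^\tw\times_{T^\fr}T^{\fr'})}$, supported on the refined splitting divisors. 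The hard part will be to check that this discrepancy is killed after applying $Rp^\tw_*$ along the curve — equivalently, that it is cancelled by the analogous discrepancy along the twisted markings and nodes of $(\cC')^\tw$ — which is exactly what transversality of $f'$ provides, since $f'$ pulls each refined splitting divisor back to the reduced twisted divisor on $(\cC')^\tw$ with matching stabilizer orders (Lemmas \ref{L:lift-pd-pair}, \ref{L:lift-pd-node}). I expect this to be more a matter of careful bookkeeping with twisted structures than of conceptual difficulty, and it yields the desired isomorphism $\phi^*\EE_{\cK/T^\fr}\to\EE_{\cK'/T^{\fr'}}$.

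Finally, part (4) should be immediate from the constructions above: the coarse contracted curve $\underline C$ with its sections $s_i$, and the underlying predeformable map $C^u\to\cW^u$ with its markings $\Sigma_i$ for $i\in N$ and $\Sigma_j$ for $j\in M$, are all unchanged under refinement of the twisting and are simply pulled back from $\cK$ to $\cK'$ along $\phi$. Hence $\psi_i'=\phi^*\psi_i$ and $\ev_i'\simeq\ev_i\circ\phi$ for every $i\in N\sqcup M$; note that this only concerns the untwisted evaluation maps, not the maps $\ev_j^\fr$, whose targets $\ocI(\cD/\cT^\tw)$ genuinely depend on $\fr$.
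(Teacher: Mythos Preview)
Your treatment of parts (1), (2), and (4) is correct and essentially identical to the paper's: construct $\cK'\to\cK$ by composing with $\cW'\to\cW$ and stabilizing, produce the inverse functor on the fiber product via $C':=\cW'\times_\cW C$ using Lemma~\ref{L:trans-prop}(1), and observe that the untwisted curves and markings are unchanged.

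For part (3), however, you significantly overestimate the difficulty. You anticipate having to cancel the discrepancy $\LL_{\cW'/(\cW\times_{T^\fr}T^{\fr'})}$ against a matching discrepancy on the curve after applying $\bR p_*$. The paper bypasses this entirely with a one-line observation: by the very construction in part (2), the square
\[
\xymatrix{C'\ar[r]\ar[d] & \cW'\ar[d]\\ C\ar[r] & \cW}
\]
is cartesian. Now recall from Section~\ref{SS:obs-th} that $\LL_\Box$ is defined as the cotangent complex of the diagonal map $\bar f:C\to\cW_\cK$, not separately from $f^*\LL_{\cW/T}$ and $\Omega_{C/\cK}$. Since the square is cartesian and $\cW'\to\cW$ is flat, $\LL_{\Box'}$ on $C'$ is the pullback of $\LL_\Box$ from $C$; and since $C'\to C$ is a root stack (so $\bR(C'\to C)_*\cO=\cO$), the derived pushforward commutes with this pullback. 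The isomorphism of obstruction theories is then immediate from their definition. Your anticipated cancellation is real, but it is precisely what the cartesian square encodes --- there is no need to track the two discrepancies separately.
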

\begin{proof} \begin{enumerate}
\item  Let $(C',\Sigma')\to \cW'\to W$ be a family of $\fr'$-twisted maps over a base scheme $S$. By Lemma \ref{compare-twisted}
%\Dan{May replace by \cite[7.3]{ACFW}} 
there are morphisms  $T^{\fr'}\to T^\fr$ and correspondingly $\cW'\to \cW$. Consider the stabilization $(C,\Sigma)\to \cW$ of  the composition $(C',\Sigma')\to \cW'\to\cW$ (in the sense of \cite[Proposition 9.1.1]{Abramovich-Vistoli}). It is easy to see that this defines a family of $\fr$-twisted stable maps over $S$, and that this construction commutes with base change; hence, it defines a morphism $\cK'\to\cK$.
\item It is easy to check that the diagram is commutative. To construct a morphism from the fiber product to $\cK'$, assume that we are given a family of labeled  \ChDan{expansions}  $\cW'\to S$ (corresponding to a morphism $S\to T^{\fr'}$) and a family of twisted degenerate stable maps $(C,\Sigma)\to \cW$ over $S$, in such a way that $\cW\to S$ is induced by $\cW'\to S$ as a family of labeled twisted  \ChDan{expansions}. We then define $C':=\cW'\times_{\cW}C$, and $\Sigma'$ the inverse image of $\Sigma$; again, it is easy to check the required properties.
\item The diagram 
$$
\xymatrix{
C' \ar[r]\ar[d] & \cW'\ar[d] \\
C \ar[r] & \cW
}
$$ is Cartesian. The isomorphism of obstruction theories now follows directly from their definition.
% follows immediately from the definition of the obstruction theory and from the fact that (in the notation of the previous step)  $C'\to C\times_{\cW}\cW'$ is cartesian
%\Barbara{I guess we need also the tiny argument that $p_*\LL p^*\cO_C=\cO_C$.}.
\item The untwisted curve $(\cC^{u'},\Sigma')$ is the pullback of $(\cC^u,\Sigma)$. Both claims easily follow.
\end{enumerate}
\end{proof}

\begin{proof}[Proof of Theorem \ref{gw-ind-twisting}]
By Remark \ref{comptc} we may assume that $\fr\prec\fr'$. Using the notation in Proposition \ref{twist-compat-obs} we have that $\phi^*\EE_{\cK/\fT_0^\fr}\to \EE_{\cK'/\fT_0^{\fr'}}$. The morphism $\fT_0^{\fr'} \to \fT_0^{\fr}$ has degree 1 in the sense of \cite[Section 5]{Costello} by Proposition \ref{P:untwisting-proper}. By  \cite[Theorem 5.0.1]{Costello} we have an equality of the associated virtual fundamental classes $[\cK_\Gamma^\fr(W)]^\vir = [\cK_\Gamma^\fr(W)]^\vir$. The equality of invariants follows by the projection formula.
\end{proof}

\subsection{Invariance under twisting}
The following is used in \cite{ACW}:
\begin{proposition}
%\begin{enumerate} 
%\item 
Let $\cX = X(\sqrt[r]{D})$ and $\cD = \sqrt[r]{N/D}$, and let $\pi: \cX \to X$ be the natural map. Then 
$$\left\langle \prod_{i\in N}\tau_{m_i}(\gamma_i)\big\vert\prod_{j\in M}\gamma_{j}\right\rangle_{\Gamma}^{(X,D)} = \left\langle \prod_{i\in N}\tau_{b_i}(\pi^*\gamma_i)\big\vert\prod_{j\in M}\pi^*\gamma_{j}\right\rangle_{\Gamma}^{(\cX,\cD)}
$$
%\item 
%Let $\cW = W(\sqrt[r]{D})$, and let $\pi: \cW \to W$ be the natural map. Then\Dan{Check that thee is no hidden $r$. } 
%$$\left\langle \prod_{i=1}^n\tau_{b_i}(\gamma_i)\right\rangle_{\Gamma}^{W} = \  \left\langle \prod_{i=1}^n\tau_{b_i}(\pi^*\gamma_i)\right\rangle_{\Gamma}^{\cW}
%$$
%\end{enumerate}
\end{proposition}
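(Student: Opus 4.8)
The idea is that twisting the divisor $D$ to order $r$ is, at the level of moduli of $\fr$-twisted stable maps, essentially invisible: a map to an expansion of $(\cX,\cD)$ together with its transversal lifts along the interior splitting divisors is the same data as a map to the corresponding expansion of $(X,D)$, except that the \emph{boundary} divisor carries one extra layer of twisting by $r$. I would make this precise by constructing an isomorphism of moduli stacks (after a suitable matching of the combinatorial data $\Gamma$), identifying the obstruction theories, and then comparing the integrands term by term.

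\textbf{Step 1: matching the discrete data.} Given $\Gamma=(\beta,g,N,M,\bfe,\bff,\bfc)$ for $(X,D)$, the curve class $\beta$ pulls back to a curve class on $\cX$ via $\pi$ (note $\underline{\cX}=\underline X$), and $\beta\cdot D = \beta\cdot\cD$ since $\cD\to D$ is a gerbe (degree-preserving on the coarse spaces, with the intersection numbers $d_j=c_j/f_j$ unchanged). Using Lemma~\ref{L:eval-lift-c} and Lemma~\ref{L:rig-in-gerbe-c}, a marking $\Sigma_j\to D$ of type $f_j$ with contact order $c_j$ corresponds, after lifting to $\cD$, to a marking of type $f_j\cdot r/c_j$; so I replace $\bff$ by $\bff'=(f_j r/c_j)_{j\in M}$ and keep $\bfc$, $\bfe$, $\beta$, $g$, $N$, $M$. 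Call the resulting data $\Gamma'$.

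\textbf{Step 2: isomorphism of moduli stacks.} I would produce a canonical isomorphism $\cK^\fr_\Gamma(X,D)\xrightarrow{\sim}\cK^\fr_{\Gamma'}(\cX,\cD)$. In one direction: given a transversal $\fr$-twisted stable map $f:(C,\Sigma)\to\cW$ to an expansion of $(X,D)$, I form the fiber product $\cC'=\cW'\times_\cW C$, where $\cW'$ is the expansion of $(\cX,\cD)$ obtained from $\cW$ by replacing the boundary divisor $D_k$ by $\cD$ (and leaving the interior splitting divisors with their $\fr$-determined twistings). By Lemma~\ref{L:trans-prop}(1), $\cC'\to S$ is again a twisted nodal curve and $\cC'\to\cW'$ is transversal; the interior contact orders are unchanged, so it is still $\fr$-twisted. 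The inverse is obtained by taking relative coarse moduli along the preimage of $\cD$ (using Lemma~\ref{L:trans-prop}(2) with $d=r$ on the boundary divisor only), as in the proof of Lemma~\ref{pd-is-pd}. Both constructions commute with base change and are mutually inverse away from the boundary where everything is an isomorphism, hence everywhere. The underlying predeformable maps $C^u\to\cW^u$ and the contracted coarse curves $\underline C$ are literally the same.

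\textbf{Step 3: obstruction theories and classes.} Since the squares $\cC'\to\cW'$ over $\cC\to\cW$ (and over the base expansion stacks $\cT^\tw$, which are the same for both by Proposition~\ref{Prop:cT}) are Cartesian, the relative obstruction theories match under the isomorphism of Step 2, exactly as in Proposition~\ref{twist-compat-obs}(3); hence $[\cK^\fr_\Gamma(X,D)]^\vir$ maps to $[\cK^\fr_{\Gamma'}(\cX,\cD)]^\vir$. The $\psi_i$ classes agree because the coarse contracted curves and their markings agree. For the evaluation maps: for $i\in N$ the marking $\Sigma_i$ is disjoint from $D$, so $\ev_i$ on the $(\cX,\cD)$ side is $\pi_*^{-1}$-compatibly identified with the $(X,D)$ side via the étale map $\ocI(\cX)\to\ocI(X)$ away from the divisor — more precisely $\ev_i^{(\cX,\cD)}$ composed with $\cI(\cX)\to\cI(X)$ equals $\ev_i^{(X,D)}$, so $(\ev_i^{(\cX,\cD)})^*\pi^*\gamma_i=(\ev_i^{(X,D)})^*\gamma_i$. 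For $j\in M$, the relevant evaluation is the non-gerby one $\ev_j\to\ocI(D)$ (resp.\ $\ocI(\cD)$), and again these are compatible with $\ocI(\cD)\to\ocI(D)$ by construction of the lift, giving $(\ev_j^{(\cX,\cD)})^*\pi^*\gamma_j=(\ev_j^{(X,D)})^*\gamma_j$. The statement then follows by the projection formula, with $b_i=m_i$.

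\textbf{Main obstacle.} The delicate point is Step 2 — checking that the fiber-product curve $\cC'$ is genuinely a \emph{balanced} twisted curve with the right orbifold structure at the boundary markings (so that it lies in $K^{tr}$ and is $\fr$-twisted for the \emph{same} $\fr$, now read against the modified tuple $\bff'$), and that the relative coarse moduli operation in the inverse direction produces exactly a representable transversal map with no stabilization needed at the boundary. Everything needed is contained in Lemmas~\ref{L:lift-pd-pair}, \ref{L:trans-prop} and the root-stack comparison in \ref{SSS:compare-roots}, but the bookkeeping of twisting indices at the boundary markings (the factor $r/c_j$) is exactly where one must be careful, and it is the reason the descendant indices and all numerical factors are preserved.
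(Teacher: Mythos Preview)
Your approach differs substantially from the paper's, and it has a genuine gap.

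The paper's proof is two lines: by Theorem~\ref{gw-ind-twisting} one may change the twisting choice at will, so pick $\fr$ giving a constant $r_0$ for $(X,D)$ and $\fr'=r\cdot\fr$; then $\cK^{\fr'}_\Gamma(\cX,\cD)$ and $\cK^{\fr}_\Gamma(X,D)$ are \emph{literally the same stack}, and the result follows by the projection formula. The entire weight is carried by the independence-of-twisting-choice theorem already proved.

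You instead try to keep the \emph{same} $\fr$ on both sides and build an isomorphism $\cK^\fr_\Gamma(X,D)\simeq\cK^\fr_{\Gamma'}(\cX,\cD)$ by fiber products. The problem is in Step~2: your $\cW'$, obtained from an $\fr$-twisted expansion $\cW$ of $(X,D)$ by further twisting only the boundary by $r$, is generally \emph{not} a twisted expansion of $(\cX,\cD)$ at all. The reason is that the untwisted accordion components for $(\cX,\cD)$ are $\tilde\PP=\PP_\cD(M\oplus\cO_\cD)$, and (by the root--stack comparison of \S\ref{SSS:compare-roots} together with the local football computation) one has $\tilde\PP\simeq\PP(\sqrt[r]{D^-},\sqrt[r]{D^+})$. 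Thus the \emph{untwisted} expansion $\cX[k]$ already equals $X[k]$ with \emph{every} divisor, interior and boundary, twisted by $r$. Consequently $\cW=X[k](\fr(\bc^0),\dots,\fr(\bc^k))$ can be rewritten as an expansion of $(\cX,\cD)$ only when $r\mid\fr(\bc^i)$ for every interior $i$, which your fixed $\fr$ does not guarantee. In particular your assertion that ``the interior contact orders are unchanged, so it is still $\fr$-twisted'' is incorrect: the interior twisting indices, read relative to $(\cX,\cD)$, are $\fr(\bc^i)/r$, not $\fr(\bc^i)$.

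There is also a bookkeeping slip in Step~1: keeping $\bfc$ unchanged while replacing $\bff$ by $(f_jr/c_j)$ violates the constraint $\sum c_j/f'_j=(\beta\cdot\cD)_\cX=(\beta\cdot D)_X/r$; the transversal lift to $\cD$ has contact order $1$, not $c_j$.

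The natural repair of your argument---impose $r\mid\fr(\bc)$ for all relevant $\bc$, then compare the two sides---amounts to replacing $\fr$ by $r\cdot\fr$ on one side and invoking Theorem~\ref{gw-ind-twisting} to pass between them. At that point you have reproduced the paper's proof, only with more scaffolding.
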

\begin{proof} For fixed $\Gamma$ Let $\fr$ be a twisting choice giving a constant $r_0$ for all contact orders appearing in $\cK_{pd}$, and $\fr' = r\cdot \fr$. Then $\cK^{\fr'}_\Gamma(\cX,\cD)$ and  $\cK^{\fr}_\Gamma(X,D)$ are identical.%\Dan{make sure we use the right construction} 
The result follows by the projection formula.

%For (2), we have a Costello diagram giving an identity. Also follows from deformation invariance.\Dan{Complete this}
\end{proof}
\begin{remark}
A similar result holds for invariants of a singular variety.
%\Dan{Should we treat here or remove?}
\end{remark}

\subsection{Deformation invariance}
\label{definv}

Assume we are given a flat proper family $\pi:W\to B$ such that $B$ is a smooth curve, $b_0\in B$ is a point, $\pi$ is smooth over $B\setminus b_0$ and $W_0:=\pi^{-1}(b_0)$ is as in the previous section

%Assume we are given a flat proper morphism $\pi:W\to B$ where each fiber of $\pi$ is nodal and $B$ is connected. Let $\beta$ be a curve class in the fiber, $g,n$ nonegative integers and ${\bf e}:=(e_1,\ldots,e_n)$ an $n$-tuple of positive integers.  
\begin{theorem}\label{T:definv}
 The Gromov--Witten invariants of $W_b$ \ChDan{are independent of $b$, in the following sense: fix data $\Gamma = (\beta,\ldots)$ on $W$ and enumerate $\Gamma_t = (\beta_t,\ldots), t=1,\ldots $ where $\beta_t$ are all the distinct curve classes on $W_b$ whose image in $W$ is $\beta$.  Let $\gamma_i$ be cohomology classes on $W$, and denote their pullback to $W_b$ also by $\gamma_i$. Then the sum $$\sum_t \left\langle \prod_{i\in N}\tau_{m_i}(\gamma_i) \right\rangle^{W_b}_{\Gamma_t}$$ is independent of $b$.}
\end{theorem}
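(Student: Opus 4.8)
The plan is to realize every Gromov--Witten number appearing in the statement as the degree of $i_b^{!}$ applied to a single fixed $1$-cycle class on one moduli stack defined over the whole base $B$, and then exploit that $B$ is an irreducible smooth curve, so that $A_1(B)=\ZZ\cdot[B]$.

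First I would fix an arbitrary twisting choice $\fr$ (legitimate by Theorem \ref{gw-ind-twisting}) and form $\cK:=\cK^{\fr}_{\Gamma}(W/B)$, the stack of transversal, $\fr$-twisted stable maps to expansions of the family $W/B$ (Definition \ref{D:rtsm}). By the Corollary to Theorem \ref{twistprop} the structure morphism $g:\cK\to B$ is proper. The stack $\cK$ maps to $T^{\tw}:=\fT^{\tw}_{B,b_0}$, which by Proposition \ref{P:fTtw-fTu} is smooth, connected and of dimension $1$; the morphism $\cK\to T^{\tw}$ carries a relative perfect obstruction theory by the standard construction of Section \ref{SS:obs-th}. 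Since $T^{\tw}$ is smooth it has a fundamental class $[T^{\tw}]$, and pulling it back through the relative obstruction theory produces a class $[\cK]^{\vir}$ of dimension $d+1$, where $d$ is the expected dimension of the fibrewise Gromov--Witten moduli.

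Next I would identify the fibres. For $b\in B$ write $i_b:\{b\}\hookrightarrow B$ and $\cK_b:=g^{-1}(b)$. Since $T^{\tw}\to B$ is an isomorphism over $B\setminus b_0$ with special fibre $\fT^{\tw}_0=T^{\tw}\times_B\{b_0\}$, the fibre $\cK_b$ is $\coprod_t \cK^{\fr}_{\Gamma_t}(W_b)$: when $b\neq b_0$ the stack $W_b$ is smooth, no expansion or twisting occurs, and each summand is the ordinary stack of twisted stable maps with its usual virtual class; when $b=b_0$ it is $\coprod_t \cK^{\fr}_{\Gamma_t}(W_0)$. The crucial compatibility is $i_b^{!}[\cK]^{\vir}=[\cK_b]^{\vir}$, the sum of the fibrewise virtual classes. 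Over $B\setminus b_0$ this is ordinary functoriality of virtual classes, $T^{\tw}\to B$ being there an isomorphism. Over $b_0$ one must be more careful, because $\fT^{\tw}_0$ is singular and non-reduced; here I would invoke the non-smooth generalization of Costello's comparison, \cite[Proposition 2, Section 4.3]{Manolache} and \cite[Theorem 5.0.1]{Costello}, using that the relative obstruction theory of $\cK\to T^{\tw}$ is compatible with the base change $\fT^{\tw}_0\to T^{\tw}$ and that $i_{b_0}^{!}[T^{\tw}]$ is precisely the natural fundamental class of $\fT^{\tw}_0$ entering the definition of $[\cK^{\fr}_{\Gamma}(W_0)]^{\vir}$ in Section \ref{DGWI}.

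Finally, the descendant and evaluation classes on $\cK$ restrict fibrewise to the corresponding classes on each $\cK_b$: the $\psi$-classes because the coarse contracted curve is pulled back, and the $\ev_i^{*}\gamma_i$ because each $\gamma_i$ is pulled back from $W$. Setting $\alpha:=\prod_{i\in N}\psi_i^{m_i}\cdot\ev_i^{*}\gamma_i\in A^{*}(\cK)$ and $\xi:=\alpha\cap[\cK]^{\vir}$ (if the relevant degrees do not match then $\xi=0$ and every invariant vanishes), one gets $\xi\in A_1(\cK)$ and $\sum_t\langle\prod_i\tau_{m_i}(\gamma_i)\rangle^{W_b}_{\Gamma_t}=\deg(i_b^{!}\xi)$ for every $b$, including $b=b_0$. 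By the projection formula $\deg(i_b^{!}\xi)=\deg(i_b^{!}g_{*}\xi)$, and $g_{*}\xi\in A_1(B)=\ZZ\cdot[B]$ since $B$ is an irreducible smooth curve; writing $g_{*}\xi=N[B]$ and using $\deg(i_b^{!}[B])=1$ yields $\deg(i_b^{!}\xi)=N$ independently of $b$, which is the assertion. The main obstacle is exactly the fibrewise compatibility $i_{b}^{!}[\cK]^{\vir}=[\cK_{b}]^{\vir}$ at the special point $b_0$: since $\fT^{\tw}_0$ is singular and non-reduced, Behrend--Fantechi functoriality does not apply directly, and one must run the argument through the non-smooth form of Costello's theorem while checking that the scheme-theoretic fibre of $T^{\tw}$ over $b_0$ carries exactly the multiplicities built into the definition of $[\cK^{\fr}_{\Gamma}(W_0)]^{\vir}$.
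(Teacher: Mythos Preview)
Your approach is correct and is the same as the paper's, which gives only a one-line proof invoking \cite[Proposition~7.2(2)]{Behrend-Fantechi}: the perfect obstruction theory for $\cK^\fr_\Gamma(W/B)\to\fT^{\tw}_{B,b_0}$ restricts to that of $\cK^\fr_\Gamma(W_b)\to\fT^{\tw}_0$, so the virtual classes satisfy $i_b^{!}[\cK]^{\vir}=[\cK_b]^{\vir}$ and the invariants are constant in $b$.

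Your detour through Costello--Manolache at $b_0$ is unnecessary. You worry that $\fT^{\tw}_0$ is singular and non-reduced, but Behrend--Fantechi's Proposition~7.2(2) does not require the base stacks to be smooth; it only requires the morphism of bases to be a regular local immersion. Here $\fT^{\tw}_{B,b_0}\to B$ is a dominant morphism from a smooth one-dimensional stack to a smooth curve, hence flat, so its fibre $\fT^{\tw}_0=\fT^{\tw}_{B,b_0}\times_B\{b_0\}$ is a Cartier divisor in $\fT^{\tw}_{B,b_0}$, hence a regular embedding. That is exactly the hypothesis of \cite[Proposition~7.2(2)]{Behrend-Fantechi}, and the ``natural fundamental class'' of $\fT^{\tw}_0$ used in Section~\ref{DGWI} is precisely $i_{b_0}^{!}[\fT^{\tw}_{B,b_0}]$. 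So the comparison at $b_0$ is no harder than at a general $b$.
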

\begin{proof}
This follows from \cite[Proposition 7.2 (2)]{Behrend-Fantechi}, since the obstruction theory for $\cK^\fr_\Gamma(W/B)$ relative to $\fT_{B,b_0}$ restricts to that of $\cK^\fr_\Gamma(W_b)$ relative to $\fT_{0}$.
\end{proof}

\ChDan{
\begin{remark}
It would be  interesting to prove a version of this result which accounts for monodromy, and separates different $\beta_i$ and at the same time allows for cohomological insertions $\gamma_i$ supported on the fibers. 
\end{remark}
}

\subsection{Disconnected maps and invariants}\label{SS:disconnected-maps}
 We fix an abelian semigroup $H$. In the application we have in mind, $H$ will be the semigroup of effective curve classes.

\begin{defn}\label{D:modular-graph} An {\em  modular graph} $\Csi$ is a collection of vertices $V(\Csi)$, edges $E(\Csi)$  legs $L(\Csi)$ and roots $R(\Csi)$ with the usual relations and properties - here we divided the usual set of legs into two disjoint sets of legs and roots. These are weighted by the following data 
\begin{enumerate}
\item each vertex $v\in V(\Csi)$ is assigned an integer $g(v)\geq 0$ - the genus -  and an element $\beta(v)\in H$ -its weight.
\item each leg $l\in L(\Csi)$  is assigned an integer $e(l)>0$ - its index.
\item each root $r\in R(\Csi)$ is assigned two integers $f(l),c(l)>0$ - its index and contact order.
\end{enumerate}
The {\em total weight} $\beta(\Csi)$ of the graph is the sum of the weight of the vertices.   The {\em total genus} $g(\Csi)$ is given by the standard formula
$$2g(\Csi)-2 = \sum_{v\in V(\Csi)} (2g(v) -2) + 2\#E(\Csi).$$
Note that we do not assume the graph to be connected or even nonempty. The letter $\Csi$ is supposed to remind you of that. 
 
A labeling of the legs and roots by disjoint sets $M,N$ is the choice of a bijection  $M \leftrightarrow L(\Csi)$ and a bijection  $N\leftrightarrow R(\Csi)$.
% ; for a vertex $v$, we denote $S(v)$ the set of labels corresponding to legs based at $v$.
\end{defn}   

\subsubsection{Moduli stacks and their properties}  
The theory developed in the last two sections applies to disconnected curves as well. Consider a possibly disconected twisted curve $C = \sqcup_{\nu=1}^h C_\nu$ with $C_\nu$ its  connected components of arithmetic genera $g_j$, each with marked points with labeling in disjoint  subsets $N_\nu$ and $M_\nu$ forming partitions of given ordered sets $N,M$. We  assume for simplicity that for each $\nu$ the set $M_\nu\cup N_\nu$ is nonempty. There is no loss of generality in assuming $N = \{1,\ldots,n\}$ and $M = \{n+1,\ldots,n+|M|\}$.   We assign each $C_\nu$ with a target curve class $\beta_\nu$ and package the data in the notation of a modular graph $\Csi = \sqcup \Gamma_\nu$ consisting of $h$ vertices assigned genera $g_\nu$ and weights  $\beta_\nu$, no edges, legs indexed by $N_\nu\subset N$ with weights $e_i,i\in N_\nu$ corresponding to the indices of these markings, and roots indexed by $M_\nu\subset M$ with similar weights $f_j, j\in M_\nu$.  We further  assume given  contact orders $\{c_j\vert j\in N\}$.

An {\em expanded $\fr$ twisted stable map} of type $\Csi$ into $(X,D)$ is a morphism $C \to \cX$ of type $\Xi$ into an  \ChDan{expanded pair}, with finite automorphism group over $X$, which is transversal and $\fr$ twisted. We again denote by $\cK_\Xi^\fr(X,D)$ the moduli stack of expanded $\fr$ twisted stable map of type $\Xi$. These stacks are algebraic and proper by the same arguments as in the connected case.
%; in case all components  $C_j$ have roots - the case arising in the degeneration formula - we can again use the trick of attaching a component $\PP$ to $X$ and a comb curve to $C$ and reduce to a connected situation.
 \subsubsection{Disconnected Gromov--Witten invariants}
 The stack $\cK_\Xi^\fr(X,D)$ admits a perfect obstruction theory relative to $\cT^\tw$ as before. This gives rise to a virtual fundamental class.  We can use \ref{RGWI} to  construct Gromov--Witten invariants 
$ \left\langle \prod_{i\in N}\tau_{b_i}(\gamma_i)\big\vert\prod_{j\in M}\gamma_{j}\right\rangle_{\Xi}^{(X,D)}$ with the exact same formula.

\subsubsection{Contraction morphisms}
Given an $\fr$-twisted stable map $C \to \cX$ of type $\Xi$, we have for each $\nu$ a map $C_\nu\to \cX$. This map is not necessarily stable, since some components of $\cX$ require other $C_{\nu'}$ to stabilize them; it is not necessarily $\fr$-twisted since some contact orders are removed. There is a canonical way to stabilize  $C_\nu\to \cX$ as follows: 
\begin{enumerate}
\item Let $\cX \to \cX^\fr$ the canonical partial untwisting associated to the contact orders appearing in $C_\nu$ and the twisting choice $\fr$. Let $C_\nu^\fr\to \cX^\fr$ be the associated twisted stable map as in \cite[Corollary 9.1.2]{Abramovich-Vistoli}.
\item By definition, any semistable component $\PP$ of $\cX^\fr$ has the same twisting on its two boundary divisors. It follows from \cite[Section 9.2]{Abramovich-Vistoli} that there is a canonical contraction $\cX^\fr\to\bar\cX^\fr$ of all semistable components. We again define $\bar C_\nu^\fr\to \bar\cX^\fr$ as in \cite[Corollary 9.1.2]{Abramovich-Vistoli}.
\end{enumerate}

This defines a {\em contraction morphism} $\cK^\fr_\Xi(X,D) \to \cK_{\Gamma_\nu}^\fr(X,D)$. Combining these we obtain  a morphism $$\eps:\cK_\Xi^\fr(X,D) \to\prod_{\nu=1}^h \cK_{\Gamma_\nu}^\fr(X,D).$$

\begin{proposition}\label{P:disc-compare}
 $$
\eps_*[\cK_\Xi^\fr(X,D)]^{vir}\ \ =\ \ \prod_{\nu=1}^h[\cK_{\Gamma_\nu}^\fr(X,D)]^{vir}.$$
\end{proposition}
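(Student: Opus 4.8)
The plan is to reduce the claim to a statement about the underlying morphism of stacks together with a compatibility of perfect obstruction theories, and then invoke Costello's theorem in the form cited in the introduction (\cite[Theorem 5.0.1]{Costello}, or its extension \cite[Proposition 2, Section 4.3]{Manolache}). First I would analyze the contraction morphism $\eps$ componentwise. By construction $\eps$ factors through the fiber product $\prod_{\nu=1}^h \cK_{\Gamma_\nu}^\fr(X,D)$ taken over the base stack $\cT^\tw$, i.e. over the choice of a single expanded pair; indeed, a disconnected stable map of type $\Csi$ lives on a single expansion $\cX$, whereas a point of the naive product $\prod_\nu \cK_{\Gamma_\nu}^\fr(X,D)$ records $h$ independently chosen expansions, one for each connected piece. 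So the first key step is to identify the image of $\eps$ and to understand the natural morphism
$$
\cT^\tw \ \longrightarrow\ \prod_{\nu=1}^h \cT^\tw
$$
together with its refinement accounting for the partial untwistings and the contraction of semistable components performed in the two steps defining the contraction morphism. Using Proposition \ref{P:untwisting-proper} (partial untwisting is proper, quasi-finite, flat, surjective of pure degree $1$) and the structure of the semistable-contraction morphism from \cite[Section 9.2]{Abramovich-Vistoli}, I would show that $\cK_\Xi^\fr(X,D) \to \prod_\nu \cK_{\Gamma_\nu}^\fr(X,D)$ factors as a morphism which is, up to the degree-$1$ untwisting factors, a relative version of the diagonal/fiber-product comparison; the point is that fixing the connected pieces $C_\nu \to \bar\cX^\fr$ pins down the ambient expansion $\cX$ up to the finite data already recorded, so the relevant map is representable, proper, and of pure degree $1$ in Costello's sense.

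The second key step is the obstruction-theory compatibility. The perfect obstruction theory on $\cK_\Xi^\fr(X,D)$ relative to $\cT^\tw$ is the standard one built from $\bR p_* f^* T_{\cW/\cT^\tw}$ (see Section \ref{SS:obs-th}), and since $C = \sqcup_\nu C_\nu$ is a disjoint union and the structure morphism $p$ restricts to $p_\nu$ on each piece, one has a canonical direct-sum decomposition
$$
\EE_{\cK_\Xi^\fr(X,D)/\cT^\tw} \ \simeq\ \bigoplus_{\nu=1}^h \EE_\nu,
$$
where $\EE_\nu$ is pulled back, via the contraction-to-the-$\nu$-th-factor morphism, from $\EE_{\cK_{\Gamma_\nu}^\fr(X,D)/\cT^\tw}$ — this uses that the partial untwisting and semistable contraction do not change the obstruction theory, exactly as in Proposition \ref{twist-compat-obs}(3) where a Cartesian square of curve/target pairs yields an isomorphism of obstruction theories, and as in the invariance-of-twisting-choice argument. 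Thus the obstruction theory of the source is pulled back from a product of obstruction theories on the targets, which is precisely the hypothesis needed to apply Costello's/Manolache's compatibility of virtual classes along a pure-degree-$1$ morphism.

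The third step is the bookkeeping for the base stack: the target of $\eps$ as written in the statement is the honest product $\prod_\nu \cK_{\Gamma_\nu}^\fr(X,D)$, whose virtual class is $\prod_\nu [\cK_{\Gamma_\nu}^\fr(X,D)]^\vir$, and this product is taken relative to $\prod_\nu \cT^\tw$, each factor of dimension $\dim \cT^\tw$. Meanwhile $[\cK_\Xi^\fr(X,D)]^\vir$ is defined relative to a single $\cT^\tw$. I would reconcile these by observing that, because each connected piece must be nonempty on legs or roots and the expansions carry no continuous moduli ($\cT^\tw$ has dimension $0$ for pairs), capping the pure-dimension-$0$ fundamental classes is harmless; more precisely I would write the comparison over $\cT^\tw$ and push forward, using that $\cT^\tw \to \prod_\nu \cT^\tw$ is (after the finite untwisting corrections) an iterated diagonal which is compatible with the virtual classes by the same Costello/Manolache input, or simply absorb this into the single statement that $\eps$ itself has pure degree $1$ and the obstruction theory of the source is the $\eps$-pullback of that of the target. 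Then $\eps_*[\cK_\Xi^\fr(X,D)]^\vir = \eps_*(\eps^! [\prod_\nu \cK_{\Gamma_\nu}^\fr(X,D)]^\vir) = \prod_\nu [\cK_{\Gamma_\nu}^\fr(X,D)]^\vir$, as desired.

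\textbf{Main obstacle.} I expect the principal difficulty to be Step 1: carefully verifying that the contraction morphism $\eps$, which is assembled from a partial untwisting followed by a contraction of semistable components on each connected piece, is genuinely of pure degree $1$ onto its image, and correctly identifying that image inside the honest product $\prod_\nu \cK_{\Gamma_\nu}^\fr(X,D)$ — in particular that the various choices of partial untwistings on the different pieces are compatible and do not introduce a nontrivial multiplicity. Once the morphism is pinned down and its degree computed, the obstruction-theory compatibility is essentially formal (it is the same direct-sum-over-connected-components observation used throughout, combined with Proposition \ref{twist-compat-obs}), and the conclusion is a direct application of the cited compatibility results for virtual fundamental classes together with the projection/product formula.
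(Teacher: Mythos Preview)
Your overall strategy---set up a Costello square and verify obstruction-theory compatibility---is exactly the paper's, and you have correctly located the main obstacle in Step~1. But your proposed resolution via a morphism $\cT^\tw \to \prod_\nu \cT^\tw$ does not work: no such morphism makes the relevant square commute, because the structure map $\cK_\Xi^\fr \to \cT^\tw$ records the full expansion $\cX$, whereas the composite $\cK_\Xi^\fr \to \cK_{\Gamma_\nu}^\fr \to \cT^\tw$ records the \emph{contracted} target $\bar\cX_\nu^\fr$, and the latter is not determined by the former alone (nor conversely). The diagonal you suggest is simply the wrong base.

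The paper fixes this by introducing an auxiliary stack $\cT'$ whose objects are an expansion $\cX$ together with, for each $\nu$, a partial contraction $\eps_\nu:\cX \to \cX_\nu$, subject to the condition that every component $\PP_i$ of $\cX$ survives in at least one $\cX_\nu$. Then (i) the forgetful map $\cT' \to (\cT^\tw)^h$ sending the datum to $(\cX_\nu)_\nu$ has pure degree $1$, since both source and target contain $\Spec\CC$ as a dense open substack; (ii) the square with $\cK_\Xi$ and $\prod_\nu \cK_{\Gamma_\nu}$ on top and $\cT'\to(\cT^\tw)^h$ on the bottom is cartesian, the inverse being $C'_\nu := C_\nu \times_{\cX_\nu} \cX$; and (iii) the other forgetful map $\cT' \to \cT^\tw$ is \'etale, so the obstruction theory of $\cK_\Xi$ relative to $\cT'$ coincides with the one relative to $\cT^\tw$.

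For the obstruction-theory comparison your direct-sum-over-components is the right first step, but it is not yet an identification with the pullback from $\prod_\nu \cK_{\Gamma_\nu}$: on $\cK_\Xi$ the $\nu$-th summand is built from $C'_\nu \to \cX$, while the pullback from the $\nu$-th factor is built from $C_\nu \to \cX_\nu$. The paper bridges this by observing that the square
\[
\xymatrix{C'_\nu \ar[r]\ar[d]_p & \cX \ar[d]\\ C_\nu \ar[r] & \cX_\nu}
\]
is cartesian (so the complexes $\LL_\Box$ correspond under pullback) and that $p:C'_\nu \to C_\nu$ is a partial stabilization, hence $\bR p_*\cO_{C'_\nu} = \cO_{C_\nu}$, which makes $\bR p_*\circ\bL p^*$ the identity and yields the required isomorphism after pushforward. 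This is the precise content behind your phrase ``partial untwisting and semistable contraction do not change the obstruction theory.''
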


This immediately implies a result on Gromov--Witten invariants.
%\Dan{cite Li's version - if exists}. 
Assume all classes $\gamma_i$ have homogeneous parity and consider the sign $(-1)^\epsilon$ determined formally by the equality
$$\prod_{i\in N}\gamma_i\,\cdot\,\prod_{j\in M}\gamma_j\ \  =\ \  (-1)^\epsilon \prod_{\nu=1}^h
\left(\prod_{i\in N_\nu}\gamma_i\,\cdot\,\prod_{j\in M_\nu}\gamma_j\right)$$
\begin{corollary}
$$ \left\langle \prod_{i\in N}\tau_{b_i}(\gamma_i)\big\vert\prod_{j\in M}\gamma_{j}\right\rangle_{\Xi}^{(X,D)}\ \  = \ \ (-1)^\epsilon \prod_{\nu=1}^h 
 \left\langle \prod_{i\in N_\nu}\tau_{b_i}(\gamma_i)\big\vert\prod_{j\in M_\nu}\gamma_{j}\right\rangle_{\Gamma_\nu}^{(X,D)}$$
\end{corollary}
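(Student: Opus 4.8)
The identity is a formal consequence of Proposition \ref{P:disc-compare}, the projection formula, and the K\"unneth decomposition; the plan is therefore mostly bookkeeping, the sign being the one point requiring care. First I would record how the tautological classes interact with the contraction morphism $\eps:\cK_\Xi^\fr(X,D)\to\prod_{\nu=1}^h\cK_{\Gamma_\nu}^\fr(X,D)$. Write $\eps_\nu$ for the composition of $\eps$ with the projection to the $\nu$-th factor. Contracting semistable components touches neither the markings $\Sigma_i$ for $i\in N_\nu\sqcup M_\nu$, nor the corresponding sections of the coarse curve, nor the dualizing sheaf near them; hence the class $\psi_i$ on $\cK_\Xi^\fr(X,D)$ is the pullback along $\eps_\nu$ of the corresponding $\psi_i$ on $\cK_{\Gamma_\nu}^\fr(X,D)$, and likewise the evaluation map $\ev_i$ on $\cK_\Xi^\fr(X,D)$ is the composite of $\eps_\nu$ with the evaluation map on $\cK_{\Gamma_\nu}^\fr(X,D)$, for each such $i$. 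Consequently the integrand $\prod_{i\in N}\psi_i^{b_i}\,\ev_i^*\gamma_i\cdot\prod_{j\in M}\ev_j^*\gamma_j$ on $\cK_\Xi^\fr(X,D)$ equals $\eps^*$ of a class on the product: setting $\alpha_\nu:=\prod_{i\in N_\nu}\psi_i^{b_i}\,\ev_i^*\gamma_i\cdot\prod_{j\in M_\nu}\ev_j^*\gamma_j\in H^*\!\left(\cK_{\Gamma_\nu}^\fr(X,D)\right)$, it is the image under $\eps^*$ of the external product $\alpha_1\times\cdots\times\alpha_h$ formed via the K\"unneth isomorphism $H^*\!\left(\prod_\nu\cK_{\Gamma_\nu}^\fr(X,D)\right)\cong\bigotimes_\nu H^*\!\left(\cK_{\Gamma_\nu}^\fr(X,D)\right)$. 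Rearranging the cup product on $\cK_\Xi^\fr(X,D)$ so as to collect the factors belonging to each $\nu$ in order is exactly where the Koszul sign rule of the skew-commutative graded ring enters; since the $\psi_i$ are of even degree, only the $\gamma$'s contribute, and the resulting sign is $(-1)^\epsilon$ by the equation defining it.

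Next I would apply the projection formula along $\eps$:
$$\deg\!\left(\eps^*(\alpha_1\times\cdots\times\alpha_h)\cap[\cK_\Xi^\fr(X,D)]^\vir\right)=\deg\!\left((\alpha_1\times\cdots\times\alpha_h)\cap\eps_*[\cK_\Xi^\fr(X,D)]^\vir\right),$$
and then substitute $\eps_*[\cK_\Xi^\fr(X,D)]^\vir=\prod_{\nu=1}^h[\cK_{\Gamma_\nu}^\fr(X,D)]^\vir$ from Proposition \ref{P:disc-compare}, the right-hand side being the external product of the virtual classes of the factors. Since the degree of the cap product of an external product of cohomology classes with an external product of cycle classes on a product of proper Deligne--Mumford stacks is the product of the individual degrees — both sides vanishing unless each $\alpha_\nu$ is of complementary dimension to $[\cK_{\Gamma_\nu}^\fr(X,D)]^\vir$ — we obtain
$$\deg\!\left((\alpha_1\times\cdots\times\alpha_h)\cap\prod_{\nu=1}^h[\cK_{\Gamma_\nu}^\fr(X,D)]^\vir\right)=\prod_{\nu=1}^h\deg\!\left(\alpha_\nu\cap[\cK_{\Gamma_\nu}^\fr(X,D)]^\vir\right)=\prod_{\nu=1}^h\left\langle\prod_{i\in N_\nu}\tau_{b_i}(\gamma_i)\Big\vert\prod_{j\in M_\nu}\gamma_j\right\rangle_{\Gamma_\nu}^{(X,D)}.$$
Combining the two displays with the factor $(-1)^\epsilon$ extracted in the previous paragraph yields the asserted identity.

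The only genuinely delicate point is the sign: one must check that the Koszul reordering sign for the cup product of the insertions $\gamma_i$ ($i\in N$) and $\gamma_j$ ($j\in M$), viewed as pullbacks along the $\eps_\nu$ from the linearly ordered factors $\cK_{\Gamma_1}^\fr,\dots,\cK_{\Gamma_h}^\fr$, coincides with the combinatorially defined $(-1)^\epsilon$. This holds because pullback and cup product are parity-preserving ring homomorphisms, the $\psi$-classes are even, and the defining relation of $(-1)^\epsilon$ says precisely that permuting the $\gamma$'s in the skew-commutative ring to group them by $\nu$ costs that sign; the remaining ingredients — the K\"unneth identification, the projection formula, and multiplicativity of degree over products — are all sign-neutral. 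One may of course fix a single convenient twisting choice $\fr$ throughout, by Theorem \ref{gw-ind-twisting}.
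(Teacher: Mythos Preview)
Your proof is correct and follows the same approach the paper intends: the paper's own proof consists of the single sentence ``This follows from the projection formula,'' and what you have written is precisely the unpacking of that sentence --- identifying the integrand as $\eps^*$ of an external product (with the Koszul sign $(-1)^\epsilon$), applying the projection formula, invoking Proposition \ref{P:disc-compare}, and using multiplicativity of degree under the K\"unneth decomposition. Your care with the compatibility of $\psi_i$ and $\ev_i$ under the contraction $\eps$ is well placed and mirrors the analogous verification the paper makes later in Section \ref{SS:endpf}.
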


\begin{proof}
This follows from the projection formula.
\end{proof}

\subsubsection{An auxiliary stack of expansions}
We prove Proposition \ref{P:disc-compare} by applying the main technical result, Theorem 5.0.1 of \cite{Costello}. This needs some preparation. 

Let $\cT'$ be the stack defined as follows. $\cT'(S)=\{\cD\times S\subset \cX \stackrel{\eps_\nu}{\to} \cX_\nu \stackrel{\theta_\nu}{\to} X\times S\}$ where:
\begin{enumerate}
 \item  $\cD\times S\to \cX_i\to X\times S$ is an \ChDan{expanded pair} for each $\nu=1,\ldots,h$;
\item there is a morphism $\rho:\cX\to X\times S$ such that $\rho$ is isomorphic to $\theta_\nu\circ \eps_\nu$ for each $\nu=1\ldots,h$;
\item $\cD\times S\to \cX\stackrel{\rho}{\to} X\times S$ is an \ChDan{expanded pair};
\item for each $\nu=1,\ldots,h$ the morphism $\eps_\nu$ is a partial contraction;
%\Barbara{Is this a good name? How about a contracting morphism of half accordions?};
\item for each $\PP_i$ in $\cX$ there exists at least one $1\le \nu\le h$ such that $\eps_\nu|_{\PP_i}$ is an isomorphism with its image. 
\end{enumerate}

There is a natural forgetful morphism $\cT'\to (\cT^\tw)^h$ given by $$
\{\cD\times S\subset \cX \to \cX_\nu \to X\times S\}\mapsto \{\cD\times S\subset \cX_\nu {\to} X\times S\} _{\nu=1,\ldots,h}.
$$
This map has degree 1 in the sense of \cite[Section 5]{Costello}, since both source and target have $\Spec \CC$ as a dense open substack.

\begin{lemma}
 There is a natural cartesian diagram $$
\xymatrix{
\cK_{\Xi}\ar[r]\ar[d] & \prod \cK_{\Gamma_\nu}\ar[d]\\
\cT' \ar[r] & \cT^h
}
$$
\end{lemma}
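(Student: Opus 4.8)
The plan is to construct the two horizontal maps and then verify the universal property. First I would define the top map $\cK_\Xi \to \cT'$: given an $\fr$-twisted stable map $C\to \cX$ of type $\Xi$ over a base $S$, the target $\cD\times S\subset \cX\to X\times S$ is a twisted expanded pair, and for each connected component $C_\nu$ the canonical stabilization of $C_\nu \to \cX$ (as described in the preceding subsubsection, first untwisting along the contact orders not appearing in $C_\nu$, then contracting the resulting semistable components) produces expanded pairs $\cD\times S\subset \cX_\nu\to X\times S$ together with the partial contractions $\eps_\nu:\cX\to\cX_\nu$. Conditions (1)--(4) defining $\cT'$ hold by construction of the stabilization; condition (5) --- that every $\PP_i$ in $\cX$ is preserved by at least one $\eps_\nu$ --- holds precisely because $\PP_i$ was not contracted in $\cX$, which by the stability criterion of Lemma \ref{pointsincK} means some $C_\nu$ has a non-semistable component mapping to it, and that component survives the $\nu$-th stabilization. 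This data is visibly compatible with base change, hence defines the morphism $\cK_\Xi\to\cT'$. The bottom map $\cT'\to (\cT^\tw)^h$ is the forgetful map already written down, and the right map $\prod\cK_{\Gamma_\nu}\to (\cT^\tw)^h$ is the product of the structure morphisms; commutativity of the square is immediate since the $\nu$-th component of $\eps$ is by definition the stabilization $C_\nu^\fr\to\bar\cX_\nu^\fr$ living over $\cX_\nu$.

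Next I would exhibit the inverse to the induced map $\cK_\Xi \to \cT'\times_{(\cT^\tw)^h}\prod\cK_{\Gamma_\nu}$. An $S$-point of the fiber product consists of: an object $\cD\times S\subset\cX\to\cX_\nu\to X\times S$ of $\cT'$, together with $\fr$-twisted stable maps $D_\nu\to \cX_\nu$ of type $\Gamma_\nu$, such that the underlying expanded pairs agree with the $\cX_\nu$. To build an object of $\cK_\Xi$, set $C_\nu := \cX\times_{\cX_\nu} D_\nu$ and $C := \sqcup_\nu C_\nu$, with markings the preimages of those on $D_\nu$; the map $C\to\cX$ is the obvious one. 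One checks this is a twisted curve (the fibered product of a twisted curve by a partial contraction of expanded pairs is again a twisted curve, since $\eps_\nu$ is an isomorphism away from contracted $\PP_i$'s and a $\PP^1$-fibration there), that the map is transversal (transversality is tested fiberwise on components not contracted by $\eps_\nu$, where $C_\nu\cong D_\nu$, and on the contracted $\PP_i$'s the curve $C_\nu$ restricts to a semistable component, which is transversal by the normal-crossings local model), and that it is $\fr$-twisted (the twisting index of each splitting divisor of $\cX$ matches $\fr$ of its multiset of contact orders: for divisors surviving in some $\cX_\nu$ this is inherited from $D_\nu$, and condition (5) of $\cT'$ guarantees every divisor does survive somewhere, while the full multiset of contact orders along it is recovered by summing the contributions from the various $C_\nu$). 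Stability of $C\to\cX$ over $X\times S$ follows from condition (5) together with stability of the $D_\nu$. These two constructions are mutually inverse and compatible with base change, giving the claimed cartesian diagram.

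The main obstacle I anticipate is the bookkeeping in the inverse construction: one must check carefully that gluing the fibered products $C_\nu$ back together recovers exactly the type-$\Xi$ data, in particular that the contact orders of $C$ along a splitting divisor $D_\ell$ of $\cX$ are the union of the contact orders of those $C_\nu$ whose $\eps_\nu$ does not contract $D_\ell$'s ambient component, with correct multiplicities --- this is the heart of why the resulting map is $\fr$-twisted for the given $\fr$ and of type $\Xi$ rather than some refinement. A secondary subtlety is verifying that a partial contraction of a twisted expanded pair pulls a twisted stable curve back to a twisted stable curve without introducing non-representable or unbalanced structure; this uses the explicit local description of contractions of semistable components from \cite[Section 9.2]{Abramovich-Vistoli} and the fact that a semistable component has equal twisting on its two boundary divisors, so pulling back along it produces a balanced node of the expected index. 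Everything else is formal manipulation with $2$-fibered products of stacks, for which the pushout description in \ref{2_pushout} and the functoriality of \cite[Corollary 9.1.2]{Abramovich-Vistoli} suffice.
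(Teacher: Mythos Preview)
Your proposal is correct and follows essentially the same approach as the paper's own proof: construct the inverse to the natural map $\cK_\Xi \to \cT'\times_{(\cT^\tw)^h}\prod\cK_{\Gamma_\nu}$ by taking the fibered products $C_\nu' := C_\nu\times_{\cX_\nu}\cX$, declaring $C:=\sqcup_\nu C_\nu'$, and checking the result lies in $\cK_\Xi$. The paper's proof is much terser---it dispatches commutativity as ``obvious,'' writes down the fibered-product construction in one line, and explicitly leaves all the verifications (that $C$ is a twisted curve, that the map is transversal, stable, and $\fr$-twisted) to the reader; you have spelled out precisely those checks, including the role of condition~(5) in ensuring stability and correct $\fr$-twisting, which is the genuine content behind the ``left to the reader.''
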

\begin{proof} That the diagram is commutative is obvious. To prove that it is cartesian,
let $S$ be a base scheme. Let $\{(C_\nu,f_\nu,\cD\times S\to \cX_\nu\to X\times S\}_{\nu=1\ldots,h}$ be an object of $\prod \cK_{\Gamma_\nu}(S)$, and $\{\cD\times S\subset \cX \to \cX_\nu \to X\times S\}$ an object in $\cT'(S)$. Define curves $C_\nu'$ by $C_\nu':=C_\nu\times_{\cX_\nu}\cX$; lift the marked gerbes of $C_\nu'$ to $\tilde C_\nu'$ in the unique possible way if they do not map to $\cD\times S$ and respecting the map to $\cD\times S$ otherwise. Let $C:=\sqcup C_\nu'$ and  $f:C\to \cX$ be the morphism such that $f|_{C_\nu'}$ is induced by the fiber product. We leave it to the reader to check that this provides an inverse to the natural map from $\cK_{\Xi}$ to  the fiber product of $\cT'$ and $\prod \cK_{\Gamma_\nu}$ over $\cT^h$.  
\end{proof}

Let $\cT'\to \cT$ be the forgetful morphism defined by $$
\{\cD\times S\subset \cX \to \cX_\nu \to X\times S\}\mapsto \{\cD\times S\subset \cX {\to} X\times S\}.$$

\begin{lemma} The morphism $\cT'\to \cT$ so defined is \'etale.
\end{lemma}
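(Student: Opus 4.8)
The plan is to prove that $\cT'\to\cT$ is representable by algebraic spaces and étale at the same time, by base-changing to a scheme. Fix a scheme $T$ with a morphism $T\to\cT$ classifying a family of twisted expanded pairs $\cD\times T\subset\cX\to X\times T$, and let $\cT'_T:=\cT'\times_\cT T$ be the fibered category over $T$ whose objects are the supplementary data $(\cX_\nu,\eps_\nu,\theta_\nu)_{\nu=1}^{h}$ subject to conditions (1)--(5) in the definition of $\cT'$. I would then establish: (a) objects of $\cT'_T$ have no nontrivial automorphisms; (b) $\cT'_T\to T$ is formally unramified and of finite type; (c) $\cT'_T\to T$ is formally smooth. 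Granting (a)--(c), $\cT'_T\to T$ is formally étale, of finite type, and with trivial automorphism groups, hence an algebraic space étale over $T$; so $\cT'\to\cT$ is representable and étale (and in particular $\cT'$ is algebraic).

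For (a) and the unramified half of (b): once $\cX/T$ is fixed, each $\eps_\nu\colon\cX\to\cX_\nu$ is a partial contraction of a twisted expanded pair, and by the structural description of $\cT^\tw$ and of its contraction morphisms in \cite{ACFW} together with \cite[Section 9]{Abramovich-Vistoli}, such a contraction is determined, up to unique isomorphism, by $\cX$ and by the combinatorial datum of which components $\PP_i$ it contracts; moreover the formation of the contraction of a prescribed (admissible) combinatorial type commutes with base change, and that type is locally constant in families. Hence an automorphism of an object of $\cT'_T$ fixing $\cX$ fixes each $\eps_\nu$, and two objects of $\cT'_T$ over a square-zero extension $S_0\hookrightarrow S$ of $T$-schemes that agree over $S_0$ and have the same underlying $\cX$ over $S$ share the combinatorial type of every contraction, hence are uniquely isomorphic over $S$. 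Finiteness of the fibers (and thence finite type, the relevant contraction morphisms of $\cT^\tw$ being of finite type) holds because a twisted expanded pair of bounded length admits only finitely many admissible contractions, so only finitely many tuples $(\cX_\nu,\eps_\nu)_\nu$ can satisfy (1)--(5).

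For (c), let $S_0\hookrightarrow S$ be a square-zero extension of $T$-schemes, let $\xi_0$ be an object of $\cT'_T$ over $S_0$ with underlying expanded pair $\cX_0$, and let $\cX$ over $S$ be the restriction of $\cX/T$ along $S\to T$, so $\cX$ extends $\cX_0$. For each $\nu$ I would take $\eps_\nu\colon\cX\to\cX_\nu$ to be the contraction of $\cX$ of the same combinatorial type as $\eps_{\nu,0}$; this exists because contractions of a prescribed admissible type exist over an arbitrary base, and its restriction to $S_0$ recovers $\eps_{\nu,0}$ by base-change compatibility. Conditions (1)--(4) are then automatic: $\cX_\nu$ is an expanded pair since the contracted type is admissible, and the compatibility $\rho\simeq\theta_\nu\circ\eps_\nu$ with a single structure map $\rho\colon\cX\to X\times S$ is forced because contractions are morphisms over $X\times S$. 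Condition (5) is purely combinatorial --- it asserts that the intersection over $\nu$ of the sets of contracted components is empty --- so it is inherited from $\xi_0$. Thus $\xi_0$ lifts to $S$, uniquely by (b), proving formal smoothness and completing the proof.

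The main obstacle is the family-theoretic ``rigidity of contractions'' used throughout: that the combinatorial type of a partial contraction of a twisted expanded pair is locally constant, that a contraction of a prescribed admissible type exists and is unique up to unique isomorphism, and that its formation commutes with base change. These inputs should be extracted from \cite{ACFW} and \cite[Section 9]{Abramovich-Vistoli}, but care is needed both in the twisted setting --- the two boundary divisors of a contracted $\PP_i$ must carry equal twisting indices, as in \cite[Section 9.2]{Abramovich-Vistoli} --- and in verifying that condition (5) is genuinely open-and-closed on $\cT'$, so that it cuts out precisely a substack étale over $\cT$ rather than something more delicate.
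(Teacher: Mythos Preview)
Your argument is sound, and in fact more substantive than what the paper offers: the paper's entire proof reads ``This is identical to \cite{ACW}'' (with an unresolved note to locate the precise reference), so there is no in-paper argument to compare against beyond that citation. Your deformation-theoretic outline---reducing to representability plus formal \'etaleness via the rigidity and base-change compatibility of partial contractions---is exactly the content one would expect such a cited argument to contain, and the inputs you flag from \cite{ACFW} and \cite[Section 9]{Abramovich-Vistoli} are the right ones. The one caution you already raise, that contracting a twisted $\PP_i$ requires its two boundary divisors to carry equal twisting indices, is not an obstruction here: in the definition of $\cT'$ the $\eps_\nu$ are \emph{given} as partial contractions, so the admissibility of the combinatorial type is part of the data, and your lifting in step~(c) only needs to reproduce a contraction of an already-admissible type over a nilpotent thickening.
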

\begin{proof} This is identical to \cite{ACW}\Barbara{find reference}.
\end{proof}

The construction in Section \ref{obs_th} give relative obstruction theories for the morphisms $\cK_{\Gamma_\nu}\to \cT$, hence for the morphism $\prod \cK_{\Gamma_\nu}\to \cT^h$, and for $\cK_\Xi\to \cT'$.
\begin{lemma} 
The obstruction theory for $\cK_\Xi/\cT'$ is the pullback of the obstruction theory $\prod \cK_{\Gamma_\nu}\to \cT^h$.
\end{lemma}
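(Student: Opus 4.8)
The plan is to unwind the construction of the relative obstruction theory recalled in Appendix \ref{obs_th} and to verify that it is compatible both with base change of the base stack and with the contraction of semistable components. Recall that for a stack of twisted stable maps $\cK_\Gamma(\cW/T)$ the relative obstruction theory over $T$ is produced from the universal curve $p:\cC\to\cK_\Gamma(\cW/T)$ and the universal map $f:\cC\to\cW$; up to the (locally free) contribution of the smooth stack of twisted prestable curves it is a twist of $Rp_*\,f^*\LL_{\cW/T}$, and in particular its formation commutes with an arbitrary base change $T''\to T$.

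First I would pass to the individual factors. The universal curve over $\prod_\nu\cK_{\Gamma_\nu}$ is the disjoint union of the pullbacks of the universal curves of the factors, and the universal target over $\cT^h$ is correspondingly a disjoint union; hence the obstruction theory of $\prod_\nu\cK_{\Gamma_\nu}\to\cT^h$ is the direct sum, along the projections, of the pullbacks of the obstruction theories of the morphisms $\cK_{\Gamma_\nu}\to\cT$. By the preceding lemma the square with vertices $\cK_\Xi,\ \prod_\nu\cK_{\Gamma_\nu},\ \cT',\ \cT^h$ is cartesian, so by base-change functoriality it suffices to match, for each $\nu$, the summand of $\EE_{\cK_\Xi/\cT'}$ carried by the $\nu$-th connected component of the universal curve with the pullback to $\cK_\Xi$ of $\EE_{\cK_{\Gamma_\nu}/\cT}$.

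Here I would use the explicit description of the universal families given in the proof of the preceding lemma. Over $\cK_\Xi$ the $\nu$-th component of the universal curve is $\cC'_\nu=\bar\cC_\nu\times_{\cX_\nu}\cX$, where $\bar\cC_\nu$ is the pullback of the $\nu$-th universal curve, $\eps_\nu:\cX\to\cX_\nu$ is the universal partial contraction, and $\eps_\nu$ contracts only semistable $\PP$-components. Thus $\cC'_\nu\to\bar\cC_\nu$ is an isomorphism away from the contracted locus and, along it, attaches to $\bar\cC_\nu$ chains of rational curves — the fibers of $\eps_\nu$ — on which $\cC'_\nu\to\cX$ is an isomorphism onto its image. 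Applying $Rp'_{\nu,*}(f'_\nu)^*(-)$ to the triangle $\eps_\nu^*\LL_{\cX_\nu}\to\LL_\cX\to\LL_{\eps_\nu}$ and factoring $p'_\nu$ through $\bar\cC_\nu$, the discrepancy between $Rp'_{\nu,*}(f'_\nu)^*\LL_\cX$ and $R\bar p_{\nu,*}\bar f_\nu^*\LL_{\cX_\nu}$ is governed by $(f'_\nu)^*\LL_{\eps_\nu}$, which is supported on the attached rational chains and there is the sheaf of relative differentials of the chain; since each such chain is a tree of genus-zero curves meeting $\bar\cC_\nu$ in a single point, it pushes forward trivially to $\bar\cC_\nu$. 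This is precisely the standard fact — compare \cite[Section 9.2]{Abramovich-Vistoli} and \cite{Li1} — that contracting semistable components leaves the relative obstruction theory unchanged; together with the analogous, elementary invariance of the locally free curve contribution it gives the required identification after dualizing.

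The step I expect to be the main obstacle is this last comparison of pushforwards: showing that enlarging the target from $\cX_\nu$ to $\cX$, and correspondingly replacing $\bar\cC_\nu$ by $\cC'_\nu$, leaves $Rp_*f^*\LL$ unchanged. This is where the geometry of the chains contracted by $\eps_\nu$ enters, and where one must check the vanishing carefully on fibers. The remaining ingredients — the direct-sum decomposition of the product obstruction theory and base-change functoriality of the construction of Appendix \ref{obs_th} — are formal and follow directly from the constructions together with the cartesian diagram of the preceding lemma.
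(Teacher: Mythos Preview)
Your overall strategy---reduce to a single index $\nu$, use base change and the construction of $\cC'_\nu$ as a fiber product from the preceding lemma---is the right one, and you correctly identify that one must push forward along the partial stabilization $\pi:\cC'_\nu\to\bar\cC_\nu$. However the specific way you organize the comparison has a genuine gap.

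You split $\LL_\cX$ via the triangle $\eps_\nu^*\LL_{\cX_\nu}\to\LL_\cX\to\LL_{\eps_\nu}$ and claim that $(f'_\nu)^*\LL_{\eps_\nu}$ pushes forward trivially because the attached rational trees meet $\bar\cC_\nu$ in a single point. Both assertions are off. The contracted $\PP$-components of $\cX$ are interior, so each inserted chain in $\cC'_\nu$ replaces a \emph{node} of $\bar\cC_\nu$ and is therefore attached at \emph{two} points, one on each branch. More seriously, on a single inserted $\PP^1$ the pullback of $\LL_{\eps_\nu}$ contains the relative differentials of the fiber, i.e.\ $\Omega_{\PP^1}\simeq\cO(-2)$, whose $H^1$ is one-dimensional; so $R\pi_*(f'_\nu)^*\LL_{\eps_\nu}$ is a nonzero skyscraper in degree $1$ at each such node and does not vanish. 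What actually happens is that this target-side discrepancy is cancelled by the curve-side discrepancy between $\Omega_{\cC'_\nu/\cK}$ and $\pi^*\Omega_{\bar\cC_\nu/\cK}$, so the two cannot be treated as independent ``trivial'' corrections.

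The paper avoids this bookkeeping entirely. Since by construction the square
\[
\xymatrix{\cC'_\nu\ar[r]^{f'_\nu}\ar[d]_\pi & \cX\ar[d]^{\eps_\nu}\\ \bar\cC_\nu\ar[r]^{\bar f_\nu} & \cX_\nu}
\]
is cartesian, one has directly $\LL_{\Box}(\cC'_\nu\!\to\!\cX)\simeq\pi^*\LL_{\Box}(\bar\cC_\nu\!\to\!\cX_\nu)$; then the only remaining input is $R\pi_*\circ L\pi^*=\mathrm{id}$, which holds because $\pi$ is a partial stabilization (locally a base change of a forgetful morphism of stacks of twisted curves, so $R\pi_*\cO_{\cC'_\nu}=\cO_{\bar\cC_\nu}$). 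This packages the cancellation you are trying to see by hand into a single clean statement.
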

\begin{proof}
Write $\cK'$ for $\cK_\Xi$, and fix an index $\nu$. We consider the commutative diagram $$
\xymatrix{
\cC'_\nu\ar[d]_p\ar[r]^{g} &\cX\ar[d]^q\\
\cC_\nu\ar[d]\ar[r]^{f} & \cX_\nu\\
\cK'&
}
$$
where $\cC_\nu$ is the pullback to $\cK'$ of the universal curve over $\cK_{\Gamma_\nu}$ and $\cC_\nu'$ is the corresponding component of the universal curve over $\cK'$, together with the structure maps. Note that $\cC'_\nu\to \cC_\nu$ is a partial stabilization map, i.e. it is locally a base change of some forgetful morphism $\bar M_{g,n+k}^\tw\to \bar M_{g,n}^\tw$ . This implies that $\RR\pi_*\cO_{\cC'}=\cO_C$, and therefore that $\RR\pi_*\circ L\pi^*:D(\cC)\to D(\cC)$ is the identity morphism. 
On the other hand, the fact that the square in the diagram is cartesian shows that the pullback of the complex $\LL$ from $\cC_\nu$ is the corresponding complex for $\cC_\nu'$.
\end{proof}
\begin{proof}[Proof of Proposition \ref{P:disc-compare}]
This is now immediate from \cite[Theorem 5.0.1]{Costello}, as we have shown that the obstruction theories are compatible and the map $\cT'\to (\cT^\tw)^h$ is of pure degree 1. 
\end{proof}

%\newpage
 \section{Degeneration formula}\label{S:Degeneration}
\subsection{Setup}
We fix a variety $W_0=X_1\sqcup_D X_2$ with first-order smoothable singularity along $D$ dividing it in two smooth pairs $(X_1,D) $ and $(X_2,D)$. We let $H$ be the monoid of curve classes on $W_0$, and $H_1, H_2$ the coresponding monoids on $X_1,X_2$. We view $H_1,H_2$ as submonoids of $H$.

We Notice that, although $H^*_{orb}(W_0) = H^*(\ocI(W_0))$ has a rational degree shifting, when we consider parity we always refer to the unshifted grading.

In this section we will keep fixed the notation introduced in Subsection \ref{DGWI}. In particular we fix  $\Gamma = (g,N,\beta,\bfe)$ as in Convention \ref{C:deg-data}, which we may view as a modular graph with one vertex with genus $g$ and weight $\beta$, a curve class on $W_0$  and legs labelled by an ordered set $N$ and marked with indices $e_i$.   We will also fix cohomology classes $$\gamma_i\in H^*(\ocI(W_0)), i\in N$$ with homogeneous parity, and nonnegative integers $m_1,\ldots,m_n$. These will be used for insertions
%\Dan{insertions OK?} 
in Gromov--Witten invariants. In this section we may take $N= \{1,\ldots,n\}$.

\begin{defn}\label{D:enh-spl} A{\em splitting} $\eta$ of $\Gamma$  is an ordered  pair  $\eta=(\Csi_1,\Csi_2)$ where
\begin{enumerate}
\item $\Csi_1$ and $ \Csi_2$ are modular graphs as in Definition \ref{D:modular-graph} with no edges or loops,
\item The labelling of legs $L(\Csi_1)\leftrightarrow N_1$ and $L(\Csi_2)\leftrightarrow N_2$ form a partition $N_1\sqcup N_2 = N$.
\item The labelling of roots $R(\Csi_1)\leftrightarrow M$ and $R(\Csi_2)\leftrightarrow M$ are in the same ordered set disjoint from $N$, which can be safely taken as $\{n+1,\ldots,n+|M|\}$.
\item A leg $l\in L(\Csi_1)\cup L(\Csi_2)$ corresponding to $i\in N$  is assigned the corresponding index $e_i$.
\item A root $r\in R(\Csi_1)$  corresponding to $j\in M$  is assigned index $f_j$ and contact order $c_j$. A root   $r\in R(\Csi_2)$ is assigned {\em the same} corresponding index $f_j$ and contact order $c_j$. 
\item A vertex $v\in V(\Csi_1)$ is assigned genus $g(v)$ and weight $\beta(v)\in H_1$; similarly the weight of  $v\in V(\Csi_2)$ is a curve class  $\beta(v)\in H_2.$
\end{enumerate}

We define  $\mathbf{d}=(d_j,j\in M)$ by $d_j=c_j/f_j$. These are in general rational numbers, which we call {\em intersection multiplicities}. We denote the set of roots incident to a vertex $v$ by $R(v)$ and identify it with a subset of $M$.

These data must satisfy the following conditions:
\begin{enumerate}
\item[A.] The graph $\Gamma$ obtained by gluing $\Csi_1$ and $\Csi_2$ along the legs labeled by $\{n+1,\ldots,n+n_D\}$ is connected of genus $g$ and total weight\footnote{Of course here we are \ChDan{tacitly identifying an element of $H_2(X_e)$} with its image in $H_2(X)$.} $\beta$.
\item[B.] For every vertex $v\in V(\Csi_1)$, one has $$
\sum_{j\in R(v)}d_j=(\beta(v)\cdot D)_{X_1}.$$ Similarly if $v\in V(\Csi_2)$ then $$
\sum_{j\in R(v)}d_j=(\beta(v)\cdot D)_{X_2}.$$
\end{enumerate}
\end{defn} 
\begin{remark}
 Let $\beta_1$ be the total weight of $\Csi_1$ and $\beta_2$ the total weight of $\Csi_2$. Then (B) implies that $(\beta_1\cdot D)_{X_1} = (\beta_2\cdot D)_{X_2}$.
\end{remark}

\begin{remark}
The distinction between intersection multiplicities $d_j$ and contact orders $c_j$ is a feature of the orbifold situation, the ratios $f_i = c_i/d_i$ being the indices of the corresponding marked points mapping to $D$.  We see in \ref{S:pf-mainth} that  one can avoid the need for the $c_i$ in the forumla as stated in \ref{maintheorem}, but our proof requires using them.\end{remark}

\begin{defn} An isomorphism of splittings $(\Csi_1,\Csi_2)\to (\Csi_1',\Csi_2')$ is an isomorphism of modular graphs respecting the labellings, in particular the orders of $N$ and $M$. 
%\end{defn}

We denote by $\Omega(\Gamma)$  the set  of isomorphism classes of splittings of $\Gamma$. 
\end{defn}

\begin{remark} Passing to isomorphism classes is harmless:
since by assumption the  glued graph is connected, every vertex in $\Csi_1,\Csi_2$ is incident to at least one root, and since the roots are labelled by an ordered finite set, the automorphism group of a splitting is trivial. So  the groupoid of splittings is rigid and therefore equivalent to the set  $\Omega(\Gamma)$.
\end{remark}

\begin{defn}
The symmetric group $S(M)$ acts on $\Omega$ by its action on $M$. Two splittings are said to be {\em equivalent} if they belong to the same $S(M)$-orbit. 
We let $\overline\Omega$ be the set of equivalence classes.
\end{defn}
\begin{defn} Fix a twisting choice $\fr$. For each $\eta\in \Omega$, we define
%\Dan{is this necessary?} 
$$
r(\eta):=\fr(\bc).$$
\end{defn}
%\begin{definition}
%Let $A=\{i_1<\cdots <i_a\} $ be an ordered finite set. We use the notation $\gamma_A$ for a sequence $\gamma_{i_1},\ldots,\gamma_{i_a}$, and 
%$\tau_{m_A}(\gamma_A)$  for a sequence $\tau_{i_1} (\gamma_{i_1}),\ldots,\tau_{i_a} (\gamma_{i_a})$. 
%\end{definition}
%This notation in particular gives meaning to the expression $$\langle\tau_{m_{N_1(v)}}(\gamma_{N_1(v)})\ ,\ \theta_{M(v)}\rangle^{X_1,D}_{\beta(v),g(v)},$$ where $v\in \Gamma_1$ in the modular graph notation above, and where as above $N_1(v) = S_1(v) \cap N_1$ and $M(v) = S_1(v) \cap M$.   
%\begin{defn} Choose $\eta\in \Omega$ and assume we are given cohomology classes $\theta_1,\ldots,\theta_{n_D}\in H^*(\ocI(D))$ each with homogeneous parity. We define\Dan{ Maybe Should go in the section on relative invariants. Note that moduli of disconn map is not precisely the product because of stability of expansions, though invariants are same} 
%\begin{eqnarray*}
%\lefteqn{\langle\tau_{m_{N_1}}(\gamma_{N_1})\ ,\ \theta_M\rangle^{X_1,D}_{\Gamma_1} :=}\\
%&   & (-1)^{\epsilon(\Gamma_1,\gamma_{N_1},\theta_M)}  \prod_{v\in V(\Gamma_1)}
%\langle\tau_{m_{N_1(v)}}(\gamma_{N_1(v)})\ ,\ \theta_{M(v)}\rangle^{X_1,D}_{\beta(v),g(v)},
%\end{eqnarray*}
%and similarly for $X_2,D$. 
%The sign is fixed in terms of the parity of the classes so that formally the following holds:
%$$\gamma_{N_1}\delta_M = (-1)^{\epsilon(\Gamma_1,\gamma_{N_1},\theta_M)}\prod \gamma_{N_1(v)} \theta_{M(v)}.$$
%\end{defn}
\begin{definition}
Consider the {\em standard} pairing 
$$\begin{array}{ccl} 
H^*(\ocI(D)) \times H^*(\ocI(D)) &\to &  \QQ \\
(\theta_1,\theta_2) & \mapsto & \int_{\ocI(D))}\theta_1\cdot\theta_2.
\end{array}
$$
Let $F$ be a basis of $H^*(\ocI(D))$ of classes with homogeneous parity. For each $\delta\in F$ we denote by $\delta^\vee$ be the dual element in the dual basis with respect to this pairing. {\em In order to avoid issues of signs we define $\delta^\vee$ to be dual to $\delta$ if $\int_{\ocI(D))}\delta^\vee\cdot\delta = 1$ in this order} - this ensures that the Poincar\'e dual class of the diagonal of $\ocI(D) $ is $\sum_{\delta\in F} \delta\times \delta^\vee$.
\end{definition}

\subsection{Statement of the formula} \label{SS:Degeneration}

Here is the degeneration formula the way it naturally arises in our proof:
\begin{theorem}\label{T:degeneration}  For any choice of nonegative integers $m_1,\ldots,m_n$, and cohomology classes $\gamma_i\in H^*(I(W_0))$, the following degeneration formula holds:
\begin{eqnarray*}
\lefteqn{\left\langle\prod_{i=1}^n \tau_{m_i}(\gamma_i) \right\rangle_{\beta,g}^{W_{b_0}} }\\
&=
\sum_{\eta\in \Omega}\frac{\prod_{j\in M} c_j}{|M|!}\ \sum_{\delta_i\in F} (-1)^{\epsilon}&
\left\langle\prod_{i\in N_1}\tau_{m_{i}}(\gamma_{i})\big\vert\prod_{i\in M}\delta_i\right\rangle^{(X_1,D)}_{\Gamma_1} \\ &&\cdot
\left\langle\prod_{i\in N_2}\tau_{m_{i}}(\gamma_{i})\big\vert\prod_{i\in M}\iota^*\delta^\vee_i\right\rangle^{(X_2,D)}_{\Gamma_2}.
\end{eqnarray*}
The sign $ (-1)^{\epsilon}$ is fixed in terms of the parity of the classes so that formally the following holds:
$$\prod_{i\in N}\gamma_i\cdot\prod_{j\in M} \delta_j\delta_j^\vee = (-1)^{\epsilon}\prod_{i\in N_1} \gamma_{i} \prod_{i\in M}\delta_i \prod_{i\in N_2}\gamma_{i}\prod_{i\in M}\delta_i^\vee.$$
\end{theorem}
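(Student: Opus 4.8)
The plan is to run the standard degeneration argument, but packaging all the delicate combinatorics into the geometry of twisted stable maps so that the only technical inputs are Costello's comparison theorem and the Gysin-pullback compatibility. First I would set up the basic deformation-to-the-normal-cone diagram: apply Theorem \ref{T:definv} to reduce the computation of $\langle\prod\tau_{m_i}(\gamma_i)\rangle^{W_{b_0}}_{\beta,g}$ to the sum, over all curve classes $\beta_t$ on the smooth fiber $W_b$ lying over $\beta$, of the invariants $\langle\prod\tau_{m_i}(\gamma_i)\rangle^{W_b}_{\Gamma_t}$. Because the total space $\fT^\tw_{B,b_0}$ is smooth of dimension $1$ and $\fT^\tw_0 = \fT^\tw_{B,b_0}\times_B\{b_0\}$ is a Cartier divisor, the virtual class $[\cK^\fr_\Gamma(W/B)]^\vir$ restricts, via the compatibility of virtual classes with Gysin maps (Behrend--Fantechi, as in the proof of Theorem \ref{T:definv}), to $[\cK^\fr_\Gamma(W_0)]^\vir$. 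So the left-hand side equals $\deg\bigl(\prod_i(\psi_i^{m_i}\ev_i^*\gamma_i)\cap[\cK^\fr_\Gamma(W_0)]^\vir\bigr)$, an invariant of the singular fiber alone — at which point deformation invariance has done its job and we work entirely over $\fT^\tw_0$.

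Next I would analyze the boundary of $\cK^\fr_\Gamma(W_0)$ according to whether the expansion is twisted (i.e. has a splitting divisor) or not; the non-twisted locus contributes nothing to the virtual class since a transversal map to an unexpanded $W_0$ must miss $D$, contradicting $\beta\cdot D>0$. Using the split-expansion stacks of Section \ref{SS:spl-exp}, I would stratify by the choice of splitting divisor $\cD$ and decompose $\cK^\fr_\Gamma(W_0)$ along it: normalizing the source curve along the nodes mapping to $\cD$ and normalizing the target $\cW_0$ along $\cD$ produces, over each splitting type $\eta=(\Xi_1,\Xi_2)\in\Omega$, a fiber product description
\[
\cK^\fr_{\eta}(W_0)\ \simeq\ \cK^\fr_{\Xi_1}(X_1,D)\ \times_{\prod_{j\in M}\ocI(\cD/\cT^\tw)}\ \cK^\fr_{\Xi_2}(X_2,D),
\]
where the fiber product is taken over the diagonal in $\prod_{j\in M}\ocI(\cD/\cT^\tw)^2$ via the evaluation maps $\ev^\fr_j$ of Definition \ref{D:ev-pair}. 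Here is where the combinatorial factors are born: Lemma \ref{L:gl-1/r} contributes the gerbe factor of degree $1/r$ from gluing the two expanded pairs back into an expanded degeneration, Lemma \ref{L:tw-1/r} accounts for the normalization-vs-split discrepancy, and — crucially — Lemma \ref{L:eval-lift-c} identifies $\ev^\fr_j$ as factoring through $\ev_j$ by a $\bmu_{c_j}$-gerbe, which is precisely what converts the Chen--Ruan diagonal class $\sum_\delta \delta\times\iota^*\delta^\vee$ on $\ocI(D)$ into the version pulled back to $\ocI(\cD/\cT^\tw)$ and produces the factor $\prod_{j\in M}c_j$. I would carry this out using the Künneth decomposition of the diagonal of $\ocI(D)$ over $\CC$ (the reason for the complex-numbers hypothesis), insert it into the Gysin pullback along the diagonal, and apply the projection formula on both sides; the overall $1/|M|!$ comes from passing from the labeled set $M$ to unordered splitting data (the $S(M)$-action), exactly as in Definition on equivalence classes, and $(-1)^\epsilon$ is the Koszul sign from reordering the $\gamma_i,\delta_j,\delta_j^\vee$ past each other.

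The compatibility of virtual fundamental classes through all of these decompositions is where I would lean on Costello's theorem and its extension by Manolache: each of the maps appearing — the forgetful map to the auxiliary expansion stack $\cT'$-type objects, the gerbe/torsor maps of Lemmas \ref{L:tw-1/r} and \ref{L:gl-1/r}, and the diagonal Gysin map — must be shown to carry the relative obstruction theory to the expected one, so that virtual classes multiply and pull back as required; the model for this is already Proposition \ref{P:disc-compare} for disconnected targets, and I would imitate that argument, with the new ingredient being the normalization along a twisted nodal divisor rather than a disjoint union. I expect the main obstacle to be precisely the bookkeeping of the $\bmu_r$- and $\bmu_{c_j}$-gerbe structures on the evaluation targets: one must verify that the virtual class of the fiber product over $\prod_j\ocI(\cD/\cT^\tw)$ equals the Gysin pullback of the external product of the two pair-invariants' virtual classes along the diagonal — a relative-obstruction-theory computation (this is the content of the promised Proposition \ref{P:diag-gysin}) — and that when one re-expresses everything in terms of the coarser evaluation maps $\ev_j$ valued in $\ocI(D)$, the cumulative gerbe degrees collapse exactly to $\prod_{j\in M}c_j$ and not some other divisor-theoretic quantity. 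Everything else is the routine excision/projection-formula manipulation familiar from Jun Li's and Li--Ruan's treatments; the orbifold subtleties are entirely localized in Lemmas \ref{L:rig-in-gerbe-c}, \ref{L:eval-lift-c} and the two lemmas of Section \ref{SS:spl-exp}.
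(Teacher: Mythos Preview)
Your overall architecture matches the paper's: decompose $[\cK^\fr_\Gamma(W_0)]^\vir$ by splitting type via Costello/Manolache, glue targets using Lemma~\ref{L:gl-1/r}, glue sources via the diagonal Gysin map on $\ocI(\cD/\cT^\tw)^M$, and then descend to $\ocI(D)^M$ picking up $\prod_j c_j$ from Lemma~\ref{L:eval-lift-c}. You have identified every key lemma the paper uses, and the endgame (K\"unneth decomposition of the diagonal, projection formula, Koszul sign) is exactly right.

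Two points need correction, however. First, your opening paragraph is a detour that goes nowhere: the left-hand side is \emph{defined} as $\deg\bigl(\prod_i(\psi_i^{m_i}\ev_i^*\gamma_i)\cap[\cK^\fr_\Gamma(W_0)]^\vir\bigr)$, so invoking Theorem~\ref{T:definv} to pass to a smooth fiber and back is circular. Drop it and start directly over $\fT^\tw_0$.

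Second, and more seriously, your justification for passing to split expansions is wrong. You write that ``the non-twisted locus contributes nothing to the virtual class since a transversal map to an unexpanded $W_0$ must miss $D$.'' This is false on two counts: every expansion of $W_0$, including $W_0$ itself, already has at least one splitting divisor (namely $D$), and transversal maps certainly may hit $D$ --- that is exactly what happens at the distinguished nodes. There is no ``unsplit'' locus to discard. The paper's argument is not a stratification-and-vanishing argument at all: it is that $\fT_0^{u,\spl}\to\fT_0^u$ is the normalization of a reduced normal crossings stack, hence has pure degree $1$, so after base change to $\fT_0^\tw$ Costello's theorem gives $s_*[\cK_\fQ]^\vir=[\cK]^\vir$ (Lemma~\ref{L:coarse-split}). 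Then Lemma~\ref{L:tw-1/r} is used, not to compare ``normalization vs.\ split,'' but to pass from the non-reduced $\fQ_r$ to its reduction $\fT_0^{r,\spl}$, producing the factor $r(\eta)$ that later cancels against the $1/r(\eta)$ from Lemma~\ref{L:gl-1/r}. If you run your version literally you will not get the right multiplicities, and you will also miss that the paper interposes the intermediate stack $\cK_{1,2}$ (glued target, unglued source) between $\cK_\eta$ and $\cK_{\Csi_1}\times\cK_{\Csi_2}$ rather than writing a single fiber product over $\prod_j\ocI(\cD/\cT^\tw)$; that two-step factorization is what separates the $r(\eta)$-cancellation from the $\prod_j c_j$-appearance and makes the obstruction-theory comparison (Proposition~\ref{P:vir-glue-tw}) clean.
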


\begin{remark}
In \cite{Li2}, one sums over the  set of equivalence classes $\overline\Omega$ of splitting types, and therefore the factor $|M|!$ in the denominator is replaced by $|Eq(\eta)|$, the stabilizer of $\eta$ inside $S_{|M|}$, introduced in \cite[p. 574]{Li1}, \cite[p. 203]{Li2}.     
\end{remark}

\subsubsection{The Chen--Ruan pairing and Theorem \ref{maintheorem}}\label{S:pf-mainth}
	As in \cite{AGV}, Section 6.4, or \cite{Chen-Ruan}, the formalism becomes a bit more elegant if one uses the Chen--Ruan pairing. Here one treats the evaluation maps $\ev_j, j\in M$ as if their target is $\cI(D)$ rather than $\ocI(D)$, and further includes $\iota$ in the pairing. In our situation, if we identify $\delta\in F$ with its pullback in $H^*(\cI(D))$, the dual element with respect to the {\em standard} pairing of $H^*(\cI(D))$ becomes $r\cdot \delta^\vee$. Further, we can change this pairing by applying $\iota$ on the right element, namely use
$$\begin{array}{ccc} 
H^*(\cI(D)) \times H^*(\cI(D)) &\to &  \QQ \\
(\theta_1,\theta_2) & \mapsto & \int_{\cI(D))}\theta_1\iota^*\theta_2,
\end{array}
$$
equivalently
$$\begin{array}{ccc} 
H^*(\ocI(D)) \times H^*(\ocI(D)) &\to &  \QQ \\
(\theta_1,\theta_2) & \mapsto & \int_{\ocI(D))}\frac{1}{r}\theta_1\iota^*\theta_2,
\end{array}
$$
obtaining the Chen Ruan pairing.
Then the dual element of $\delta$ with respect to the Chen--Ruan pairing is $\tilde\delta^\vee = r\cdot\iota^* \delta^\vee$.  Note again that the duality is defined so that $\int_{\cI(D))} \iota^*\tilde\delta^\vee \delta = 1$ to avoid signs in the decomposition of the class of the diagonal.

Theorem \ref{maintheorem} follows as a version of the Theorem \ref{T:degeneration} above, in which the contact orders $c_i$ are not used but  the more invariant intersection multiplicities $d_i=c_j/f_j$ instead. Indeed the pullback under the evaluation map $\ev_j:\cK^\fr_{\Csi_i}(X_i,D)\to \ocI(D)$ of the involution-invariant locally constant factor $r$ is the index $f_j$. This gives 
$$\prod_{j\in M} c_j\ev_j^*\iota^*\delta_j^\vee = 
%\prod_{j\in M} d_j\cdot f_j\ev_j^*\iota^*\delta_j^\vee =\prod_{j\in M} d_j\ev_j^*\iota^*(r\delta_j^\vee) =
\prod_{j\in M} d_j\ev_j^*\iota^*\tilde\delta_j^\vee$$
as required.

\subsection{Outline of proof of Theorem \ref{T:degeneration}} Fix a twisting choice $r$ and write $\cK$ for $\cK^\fr_\Gamma(W,\pi)_{b_0}$. The proof goes in several steps. These will be completed in the next sections, as follows:
%\begin{itemize}

%\item
\noindent {\sc Sections \ref{Sec:split-coarse}, \ref{Sec:split-stack-target}, \ref{Sec:decompose-moduli}:} for $\eta\in\Omega$ we define a proper Deligne--Mumford stack $\cK_\eta$ parametrizing maps to a twisted  \ChDan{expansion} with a fixed splitting divisor of type $\eta$, together with a morphisms $\st_\eta:\cK_\eta \to \cK$. We prove (Proposition \ref{L:virt-split})   
$$
[\cK]^\vir=\sum_{\eta\in\Omega}\frac{r(\eta)}{|M|!}\st_{\eta*}[\cK_{\eta}]^\vir.$$

%\item 
\noindent {\sc Section \ref{Sec:gluing-target}:}
 fix $\eta=(\Csi_1,\Csi_2) \in\Omega$, and let $\cK_{\Csi_1}$ and $\cK_{\Csi_2}$ be the moduli stacks of relative stable maps corresponding to $\Csi_1$ and $\Csi_2$ respectively. On $\cK_{\Csi_1} \times \cK_{\Csi_2}$ there is a canonical gerbe banded by $\bmu_{r(\eta)}$, which we denote $u_\eta:\cK_{1,2} \to \cK_{\Csi_1} \times \cK_{\Csi_2}$, which parametrizes pairs of twisted stable maps together with the data of a glued target. 
%We obtain (\ref Lemma \ref{L:glue-target})
%$$[\cK_{\Csi_1} \times \cK_{\Csi_2}]^\vir \ \ = \ \ r(\eta)\ \cdot \ (u_\eta)_*  [\cK_{1,2}]^\vir.$$

%\item 
\noindent {\sc Sections \ref{Sec:gluing-source}-\ref{Sec:coarse-diagonal}:}
We construct a commutative diagram with cartesian square
$$
\xymatrix{\cK_\eta\ar[r]^{q_\eta}\ar[dr]&\cK_\eta^*\ar[r]\ar[d]&\cK_{1,2}\ar[d]\\
&\ocI(D)^{M}\ar[r]_(.35){\Delta}&(\ocI(D)\times \ocI(D))^M}$$
where  the morphism on the right is the product of the two evaluation maps in $M$, and $\Delta$ is the diagonal with the second factor composed with $\iota$; In Proposition \ref{P:diag-gysin} we prove that $$
q_{\eta*}[\cK_\eta]^\vir=\left(\prod_{j\in M} c_j\right)\cdot \Delta^![\cK_{1,2}]^\vir\in A_*(\cK^*_\eta).$$

%\item 
\noindent {\sc Section \ref{SS:endpf}:} The degeneration formula follows by another application of the projection formula.

\subsection{Splitting the coarse target}\label{Sec:split-coarse} We form the following cartesian diagram:

$$
\xymatrix{
\cK_{\fQ}\ar^s[r]\ar[d] 	& \cK\ar[d]  \\
\fQ\ar[r]\ar[d] 		& \fT^\tw_0 \ar[d] \\
\fT_0^{u,\spl} \ar[r] 			&\fT^u_0  \\
}
$$

%Recall that the stack $\fT^u_0$, being a universal stack for accordions of an arbitrary variety with first-order smoothable singularity, carries a universal family of singular nodal curves $\fY_0\to \fT^u_0$, see \cite[precise]{ACFW}\Dan{precise citation}.  Define $\fT_0^{u,\spl}$ to be the locus of singularities of the fibers $\fT_0^{u,\spl}:=(\fY_0)_\Sing \subset  \fY_0 \to \fT^u_0$. Thus the stack $\fT_0^{u,\spl}$ is the moduli stack of accordions with a choice of a splitting divisor.
%\Dan{again clarify that this is not changed when $W$ is an orbifold}

The stack $\fT_0^{u,\spl}$  of untwisted  \ChDan{expansions} with a choice of splitting divisor is defined in Section \ref{SS:spl-exp}. It is nonsingular, and coincides with the normalization of $\fT^u_0$. 
%Indeed the locus of nodes on the universal family of prestable curves is known to be smooth, and coincides with the normalization of the boundary of moduli. 

The stacks $\fQ$ and $\cK_{\fQ}$  are formed as the fibered products making the  diagram cartesian. 
Therefore the perfect obstruction theory of $\EE^\bullet_{\cK/\fT_0^\tw}$ pulls pack to a perfect obstruction theory $\EE^\bullet_{\cK_{\fQ}/\fQ}$ defining a virtual fundamental class which we denote $[\cK_{\fQ}]^\vir$.  

Since $\fT_0^{u,\spl}\to \fT^u_0$ is the normalization of a reduced normal crossings stack, it has pure degree 1 in the sense of \cite[Section 5]{Costello}. Since $\fT_0^\tw \to \fT_0^u$ is flat, it follows that the morphism $\fQ\to \fT_0^\tw$ is of pure degree 1 in the same sense as well.  We have the following:

\begin{lemma}\label{L:coarse-split}
$$s_* [\cK_{\fQ}]^\vir = [\cK_{b_0}]^\vir$$
\end{lemma}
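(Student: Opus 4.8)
The plan is to deduce this as an immediate consequence of the compatibility of virtual fundamental classes under base change along a morphism of pure degree one, in the generalized form of \cite[Theorem 5.0.1]{Costello} recorded in \cite[Proposition 2, Section 4.3]{Manolache}. First I would collect what is already available from the construction above. By definition $\cK_{\fQ}$ is the fibered product $\fQ\times_{\fT^\tw_0}\cK$, so the top square of the displayed diagram is cartesian, the morphism $s$ is proper (it is a base change of $\fQ\to\fT^\tw_0$, itself pulled back from the finite normalization morphism $\fT_0^{u,\spl}\to\fT^u_0$), and the relative perfect obstruction theory $\EE^\bullet_{\cK_{\fQ}/\fQ}$ is, by its very construction, the derived pullback of $\EE^\bullet_{\cK/\fT^\tw_0}$ along the projection $\cK_{\fQ}\to\cK$. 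Moreover, as observed just above the statement, $\fT_0^{u,\spl}\to\fT^u_0$ has pure degree $1$ in the sense of \cite[Section 5]{Costello} because it is the normalization of a reduced normal crossings stack, and since $\fT^\tw_0\to\fT^u_0$ is flat, its base change $\fQ\to\fT^\tw_0$ inherits pure degree $1$.

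Next I would apply the degree-one comparison theorem to the data $\bigl(\fQ\to\fT^\tw_0,\ \cK\to\fT^\tw_0,\ \EE^\bullet_{\cK/\fT^\tw_0}\bigr)$. Here it is essential to use the version of the theorem stated by Manolache rather than Costello's original one, since $\fT^\tw_0$ is singular and nonreduced, so the smoothness hypothesis on the target in \cite{Costello} is not met; this is precisely the assumption that \cite[Proposition 2, Section 4.3]{Manolache} removes. That statement then gives directly
$$s_*[\cK_{\fQ}]^\vir \;=\; [\cK]^\vir,$$
and since $\cK = \cK^\fr_\Gamma(W_0) = \cK_{b_0}$ in the notation fixed for this section, this is the assertion of the lemma. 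The projection formula plays no role here; it enters only in the subsequent sections when passing from the class identity to the numerical degeneration formula.

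I do not expect a genuine obstacle in this argument: the substantive work has already been done in constructing $\fT_0^{u,\spl}$ and $\fQ$, in identifying $\fT_0^{u,\spl}$ with the normalization of $\fT^u_0$, and in establishing the pure-degree-one property. The one point that must be checked carefully, and that is the reason the argument is not entirely formal, is that the obstruction theory on $\cK_{\fQ}$ is literally the pullback of the one on $\cK$ and that ``pure degree $1$'' is applied in the form valid in the presence of the non-reduced structure of $\fT^\tw_0$; both are guaranteed by the cartesian construction of $\cK_{\fQ}$ together with the flatness of $\fT^\tw_0\to\fT^u_0$, so no additional verification beyond citing Manolache's proposition is required.
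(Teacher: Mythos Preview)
Your proof is correct and takes essentially the same approach as the paper: the paper's proof consists of a single sentence citing \cite[Theorem 5.0.1]{Costello} and \cite[Proposition 2, Section 4.3]{Manolache}, and you have simply unpacked why the hypotheses of that result are satisfied. Your observation that Manolache's version is genuinely needed here because $\fT^\tw_0$ is singular and nonreduced is a useful elaboration that the paper leaves implicit (though it remarks on this point in the introduction when first citing Manolache).
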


\begin{proof}
This follows from \cite[Theorem 5.0.1]{Costello}, see also
%where it was assumed that the stack on the bottom right ($\fT_0^u$ in our case) is smooth; this assumption was removed in 
 \cite[Proposition 2, Section 4.3]{Manolache}. 
\end{proof}

\subsection{Splitting the stack target}\label{Sec:split-stack-target}

In Section \ref{SS:spl-exp}
%\Dan{Can refer to \cite[7.4]{ACFW}} 
we introduced a natural decomposition of $\fQ = \fT_0^{u,spl}\times_{\fT_0^u} \fT_0^\tw$ into open and closed loci according to the twisting index of the twisted  \ChDan{expansion} along the chosen singular component:
$$\fQ = \coprod_{r\geq 1}\fQ_r$$
and accordingly we have a decomposition 
$$\cK_{\fQ} = \coprod_{r\geq 1}\cK_{\fQ_r}.$$

The stack $\fQ_r$ is nonreduced. The reduced substack is the smooth stack $\fT^{r,\spl}_{0}$, the stack of twisted  \ChDan{expansions} with splitting divisor of index $r$. 

By Lemma \ref{L:tw-1/r}
%\Dan{refer to \cite[Proposition 7.4.2(2)]{ACFW}}  
the morphism  $\fT^{r,\spl}_{0}\to \fQ_r$ is of degree $1/r$, in the sense that the image of  $[\fT^{r,\spl}_{0}]$ is $r^{-1} [\fQ_r]$.
%in Costello's sense. 
This is sufficient for applying   \cite[Theorem 5.0.1]{Costello} in Manolache's version \cite[Proposition 2, Section 4.3]{Manolache}. We therefore obtain the following:
\begin{lemma}\label{Lem:split-target-r}
Consider the fiber diagram
$$\xymatrix{
\cK_r^\spl\ar[r]^{t_r} \ar[d] & \cK_{\fQ_r}\ar[d] \\ 
\fT^{r,\spl}_{0}\ar[r]&\fQ_r
}$$
Then $$[\cK_{\fQ_r}]^{vir} \ \ = \ \ r\ \cdot \  (t_r)_*[\cK_r^\spl]^{vir}.$$
\end{lemma}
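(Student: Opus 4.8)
The plan is to obtain this as an immediate application of Costello's comparison theorem for virtual fundamental classes \cite[Theorem 5.0.1]{Costello}, in the form given by Manolache \cite[Proposition 2, Section 4.3]{Manolache}; the latter, more general version, in which the smoothness hypothesis of \cite{Costello} on the base is dropped, is what is needed here, since $\fQ_r$ is nonreduced and hence singular.

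First I would assemble the hypotheses. The square in the statement is cartesian by the definition of $\cK_r^\spl$, and $\cK_{\fQ_r}\to\fQ_r$ is the restriction to the open and closed substack indexed by $r$ of $\cK_\fQ\to\fQ$, so it carries the relative perfect obstruction theory pulled back from $\EE^\bullet_{\cK/\fT^\tw_0}$. Consequently $\cK_r^\spl\to\fT^{r,\spl}_0$ carries the further pullback of this obstruction theory along $t_r$, and since $\fT^{r,\spl}_0$ is smooth, hence has a fundamental class, the associated virtual class $[\cK_r^\spl]^\vir$ is defined. Next I would invoke Lemma \ref{L:tw-1/r}: the morphism $\fT^{r,\spl}_0\to\fQ_r$ has pure degree $1/r$ in the sense of \cite[Section 5]{Costello}, i.e.\ it sends $[\fT^{r,\spl}_0]$ to $r^{-1}[\fQ_r]$. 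Feeding the cartesian square, the compatible relative obstruction theories, and this pure-degree-$1/r$ morphism of bases into the Costello--Manolache theorem yields $(t_r)_*[\cK_r^\spl]^\vir = r^{-1}[\cK_{\fQ_r}]^\vir$, and multiplying through by $r$ gives the asserted identity $[\cK_{\fQ_r}]^\vir = r\cdot(t_r)_*[\cK_r^\spl]^\vir$.

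The only point that will require real care is the bookkeeping. One must check that the obstruction-theory datum supplied to \cite[Proposition 2, Section 4.3]{Manolache} is literally the pullback along $t_r$ of $\EE^\bullet_{\cK_{\fQ_r}/\fQ_r}$, so that the comparison is between one and the same obstruction theory viewed over the two bases; and one must track the multiplicity conventions through the nonreduced structure of $\fQ_r$ so that the resulting factor is exactly $r$ rather than $1/r$ or $r^2$. Both are routine given the setup of Section \ref{SS:spl-exp} and Lemma \ref{L:tw-1/r}, so once they are verified there is nothing further to do.
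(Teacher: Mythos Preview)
Your proposal is correct and follows exactly the paper's approach: the paper's proof consists entirely of the sentence preceding the lemma, which invokes Lemma \ref{L:tw-1/r} for the degree-$1/r$ statement and then applies \cite[Theorem 5.0.1]{Costello} in Manolache's version \cite[Proposition 2, Section 4.3]{Manolache}. Your write-up is in fact more explicit than the paper's, correctly flagging that Manolache's generalization is needed because $\fQ_r$ is nonreduced.
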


The multiplicity $r$ in this lemma depends on the twisting choice, since the formation of the moduli spaces does. It is important to notice that at the end it will be cancelled by that appearing in Lemma \ref{Lem:glue-target} below.

\subsection{Decomposing the moduli space with split target}\label{Sec:decompose-moduli}

%We now refine the decomposition of $\cK_{\fQ^\spl}=\coprod_r \cK_{\fQ^\spl_r}$
%in terms of the topological type of the split map. 
%Loosely following Jun Li's notation, we denote by 
%$$\eta \ \ :=\ \  (\Gamma_1, \Gamma_2, \Phi)$$
%the generic topological type of a possible map in $\cK^\spl$. Here $\Gamma_i$ are rooted graphs with roots and legs but no edges. The roots are labeled by sets $R_i$  weighted by multiplicity vectors $m_i$; and $\Phi:R_1 \to R_1$ is an isomorphism respecting the multiplicities, such that the graph obtained by gluing $R_1$ to $R_2$ using $\Phi$  is connected.

Recall that we denote by $\overline\Omega = \Omega/{\sim}$ the set of equivalence classes of splitting types under the action of the symmetric group $S(M)$, and by $\bar\eta$ the equivalence class of $\eta\in \Omega$. 
%We evidently have a decomposition into open and closed substacks:
%$$  \cK^\spl = \coprod_{\bar\eta\in \bar\Omega} \cK^\spl_{\bar\eta}.$$

%We denote by $r(\eta)$ the integer obtained by the twisting choice
%$$r(\eta) := r(\alpha).$$
Given a positive integer $r$, denote by $\Omega_r$ the set of isomorphism classes of types $\eta$ satisfying $r(\eta) = r$, and by $\bar\Omega_r$ the set of equivalence classes.
We can now refine the decomposition as follows:
$$  \cK^\spl_r = \coprod_{\bar\eta\in \bar\Omega_r} \cK^\spl_{\bar\eta}.$$

Denote by $t_{\bar\eta}:\cK^\spl_{\bar\eta} \to \cK_{\fQ_r}$ the restricted morphism. On the level of virtual fundamental classes, Lemma \ref{Lem:split-target-r} gives

$$[\cK_{\fQ_r}] \ \ = \ \ r\ \cdot \  (t_{\bar\eta})_*\sum_{\bar\eta\in \bar\Omega_r}[\cK_{\bar\eta}^\spl].$$

Now denote by $\cK_{\eta}\to \cK_{\bar\eta}$ the cover obtained by labeling the distinguished nodes of the source curve by the set $M$.  This is clearly an  $S_{|M|}$-bundle, and therefore it has an associated perfect obstruction theory and virtual fundamental class. Denote by $t_\eta:\cK_{\eta}\to \cK_{\fQ_r}$ the composite map.

%We denote by $st_\eta:  \cK^\spl_{\bar\eta} \to \cK_{b_0}$ the restricted map.
Putting Lemmas \ref{L:coarse-split} and \ref{Lem:split-target-r} together we obtain

\begin{proposition}\label{L:virt-split}
$$[\cK] \ \ = \ \ \sum_{\eta\in \Omega}\  \frac{r(\eta)}{|M|!}\ \cdot \  (s\circ t_\eta)_*[\cK_\eta^\spl].$$
\end{proposition}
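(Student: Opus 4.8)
The plan is to concatenate the three reduction steps carried out in Sections~\ref{Sec:split-coarse}--\ref{Sec:decompose-moduli} and then perform a short combinatorial bookkeeping to pass from a sum indexed by equivalence classes $\bar\Omega$ to a sum indexed by $\Omega$. First, Lemma~\ref{L:coarse-split} gives $[\cK]^\vir = [\cK_{b_0}]^\vir = s_*[\cK_\fQ]^\vir$. Next, using the decomposition $\cK_\fQ = \coprod_{r\ge 1}\cK_{\fQ_r}$ into open and closed substacks one has $[\cK_\fQ]^\vir = \sum_{r\ge 1}[\cK_{\fQ_r}]^\vir$, and applying Lemma~\ref{Lem:split-target-r} to each summand yields $[\cK]^\vir = \sum_{r\ge 1} r\,(s\circ t_r)_*[\cK_r^\spl]^\vir$. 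Finally, the refined decomposition $\cK_r^\spl = \coprod_{\bar\eta\in\bar\Omega_r}\cK_{\bar\eta}^\spl$ gives $[\cK]^\vir = \sum_{r\ge 1} r\sum_{\bar\eta\in\bar\Omega_r}(s\circ t_{\bar\eta})_*[\cK_{\bar\eta}^\spl]^\vir$, so everything reduces to re-expressing this inner sum over $\Omega$.

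The step that needs care is the identity $\sum_{\bar\eta\in\bar\Omega_r}(s\circ t_{\bar\eta})_*[\cK_{\bar\eta}^\spl]^\vir = \tfrac{1}{|M|!}\sum_{\eta\in\Omega_r}(s\circ t_\eta)_*[\cK_\eta^\spl]^\vir$, which I would deduce from three observations. First, the labeling cover $\cK_\eta\to\cK_{\bar\eta}$ is finite \'etale of degree $|Eq(\eta)|$, the order of the stabilizer of $\eta$ under the $S(M)$-action: a labeling of the $|M|$ distinguished nodes yields the isomorphism class $\eta$ for exactly the labelings in one $Eq(\eta)$-coset; equivalently $\coprod_{\eta'\sim\eta}\cK_{\eta'}\to\cK_{\bar\eta}$ is the associated $S(M)$-torsor, of degree $|M|! = |Eq(\eta)|\cdot|S(M)\eta|$. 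Second, the relative perfect obstruction theory on $\cK_\eta^\spl$ is the \'etale pullback of the one on $\cK_{\bar\eta}^\spl$, so by the compatibility of virtual fundamental classes under \'etale base change $(t_\eta)_*[\cK_\eta^\spl]^\vir = |Eq(\eta)|\,(t_{\bar\eta})_*[\cK_{\bar\eta}^\spl]^\vir$. Third, relabelling by $\sigma\in S(M)$ induces an isomorphism $\cK_\eta\cong\cK_{\sigma\eta}$ over $\cK$, so $(s\circ t_\eta)_*[\cK_\eta^\spl]^\vir$ depends only on $\bar\eta$. Combining these with the orbit count $|S(M)\eta| = |M|!/|Eq(\eta)|$ gives the identity, and since $r(\eta) = r$ for $\eta\in\Omega_r$ we obtain $[\cK]^\vir = \sum_{\eta\in\Omega}\tfrac{r(\eta)}{|M|!}(s\circ t_\eta)_*[\cK_\eta^\spl]^\vir$, which is the assertion.

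I do not anticipate a genuine obstacle: the three reduction steps are already established and the passage from $\bar\Omega$ to $\Omega$ is a Burnside-type count. The one point I would spell out explicitly, rather than leave implicit, is the compatibility of the obstruction theories under the \'etale labeling cover underlying the second observation --- standard behaviour of virtual classes under \'etale base change, requiring no new input. It is also worth noting, as flagged after Lemma~\ref{Lem:split-target-r}, that the factor $r(\eta)$ surviving here is twisting-choice dependent and will be cancelled in the gluing step by the reciprocal factor coming from Lemma~\ref{Lem:glue-target}.
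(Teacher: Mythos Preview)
Your proof is correct and follows the same route as the paper, which simply writes ``Putting Lemmas~\ref{L:coarse-split} and~\ref{Lem:split-target-r} together we obtain'' the result. You have spelled out the combinatorial passage from $\bar\Omega$ to $\Omega$ in more detail than the paper does; in particular, your identification of the degree of $\cK_\eta\to\cK_{\bar\eta}$ as $|Eq(\eta)|$ (with the full $S_{|M|}$-torsor being $\coprod_{\eta'\sim\eta}\cK_{\eta'}$) is more precise than the paper's somewhat loose assertion that ``this is clearly an $S_{|M|}$-bundle.''
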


\subsection{Gluing the target} \label{Sec:gluing-target}

Recall from \ChDan{Section \ref{Sec:pairs}} that for an integer $r$ (not to be confused by the implicit twisting choice) we denote by $\cT^\tw_r \subset \cT^\tw$ the substack of relative twisted expanded degenerations with twisting index $r$ along $D$.
%\Dan{This is not singled out in \cite{ACFW}}  
In Lemma \ref{L:gl-1/r} we considered the natural morphism
$\fT_{0}^{r,\spl} \to \cT^\tw_r \times \cT^\tw_r$
corresponding to the two components of the partial normalization of the universal family $\cW^\tw_{0,univ}$ and showed that it  is a gerbe banded by $\bmu_r$.
%\Dan{This is \cite[Lemma 7.4.2(1)]{ACFW}}

We begin approaching $\cK_\eta^\spl$ from the other direction, namely from stacks of relative stable maps to the components of $W_0$. Given $\eta=(\Csi_1,\Csi_2)$ we denote $r=r(\eta)$, and use the shorthand notation $\cK_{\Csi_1} = \cK^\fr_{\Csi_1}(X_1,D)$ and $\cK_{\Csi_2} = \cK^\fr_{\Csi_2}(X_2,D)$.

\begin{definition}
We define $\cK_{1,2}$ by the following fiber diagram:
$$
\xymatrix{
\cK_{1,2}\ar^(0.4){u_\eta}[r]\ar[d]& \cK_{\Csi_1} \times \cK_{\Csi_2}\ar[d] \\
\fT_{0}^{r,\spl}\ar[r] & \cT^\tw_r \times \cT^\tw_r.
}$$
The stack $\cK_{1,2}$, which depends on $\eta$,  parametrizes a glued {\em twisted} target, along with a pair of relative stable maps of types  $\Csi_1$ and $\Csi_2$ to the two parts of the twisted target. 

Composing with the projections, we have morphisms $u_{\eta 1}:\cK_{1,2} \to \cK_{\Csi_1}$ and $u_{\eta 2}:\cK_{1,2} \to \cK_{\Csi_2}$
\end{definition}

Recall (Lemmas \ref{obsth}, \ref{itsperf}) that we have perfect obstruction theories $\EE_{\cK_{\Csi_1}/\cT^\tw_r}\to\LL_{\cK_{\Csi_1}/\cT^\tw_r}$ and $\EE_{\cK_{\Csi_2}/\cT^\tw_r}\to\LL_{\cK_{\Csi_2}/\cT^\tw_r}$. These are defined as follows: consider the universal relative twisted stable map
$$\xymatrix{
\cC_1 \ar[d]_{p_1}\ar[r]^{f_1} & \fX_1 \\
\cK_{\Csi_1}
}$$
Denote by $P_1\subset \cC$ the divisor given by the leg markings. 

Consider the complex $\LL_{\Box_1} :=[f_1^*\LL_{\fX_1/\cT_{r}^\tw} \to \Omega^1_{\cC_1/\cK_{\Csi_1}}(\log P_1)]$. We  have a perfect relative obstruction theory on $\cK_{\Csi_1} /\cT_{r}^\tw$ given by taking the complex
$$\EE^\bullet_{\cK_{\Csi_1} /\cT_{r}^\tw} = (\bR p_{1*}(\LL_{\Box_1}^\vee))^\vee[-1]$$
with its natural map to $\LL_{\cK_{\Csi_1}/\cT_{r}^\tw}$. The construction for $\EE^\bullet_{\cK_{\Csi_2} /\cT_{r}^\tw}$ is identical.

Combining these, we have a perfect obstruction theory $\EE^\bullet_{\cK_{\Csi_1}/\cT^\tw_r}\oplus \EE^\bullet_{\cK_{\Csi_2}/\cT^\tw_r}$ on $\cK_{\Csi_1} \times \cK_{\Csi_2} / \cT^\tw_r\times \cT^\tw_r$. As the morphism $\fT_{0}^{r,\spl} \to \cT^\tw_r \times \cT^\tw_r$ is \'etale, so is the morphism $\cK_{1,2} \to \cK_{\Csi_1} \times \cK_{\Csi_2}$, and the pullback of the same complex gives a perfect obstruction theory for $\cK_{1,2}/\fT_{0}^{r,\spl}$. We denote by $[\cK_{1,2}]^\vir$ and $[\cK_{\Csi_1} \times \cK_{\Csi_2}]^\vir=[\cK_{\Csi_1}]^\vir \times [\cK_{\Csi_2}]^\vir$ the associated virtual fundamental classes. Since the degree of $u_\eta:\cK_{1,2} \to \cK_{\Csi_1} \times \cK_{\Csi_2}$ is $r^{-1}$ we obtain the following:

\begin{lemma}\label{Lem:glue-target}
$$[\cK_{\Csi_1} \times \cK_{\Csi_2}]^\vir \ \ = \ \ r\ \cdot \ (u_\eta)_*  [\cK_{1,2}]^\vir.$$
\end{lemma}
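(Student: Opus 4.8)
The plan is to run the same mechanism as in Lemmas \ref{L:coarse-split} and \ref{Lem:split-target-r}: the stack $\cK_{1,2}$, together with its relative perfect obstruction theory over $\fT^{r,\spl}_{0}$, is pulled back from $\cK_{\Csi_1}\times\cK_{\Csi_2}$ with its relative obstruction theory over $\cT^\tw_r\times\cT^\tw_r$ along a morphism of degree $r^{-1}$, so that Costello's comparison of virtual classes applies directly.

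First I would record the relevant properties of $u_\eta$. By Lemma \ref{L:gl-1/r} the morphism $g\colon \fT^{r,\spl}_{0}\to \cT^\tw_r\times\cT^\tw_r$ is a gerbe banded by $\bmu_r$; in particular it is \'etale, flat, and of degree $r^{-1}$ in the sense of \cite[Section 5]{Costello}. In the defining cartesian square
\[
\xymatrix{
\cK_{1,2}\ar[r]^{u_\eta}\ar[d] & \cK_{\Csi_1}\times\cK_{\Csi_2}\ar[d]\\
\fT^{r,\spl}_{0}\ar[r]^{g} & \cT^\tw_r\times\cT^\tw_r
}
\]
the morphism $u_\eta$ is therefore again a gerbe banded by $\bmu_r$, hence \'etale and of degree $r^{-1}$, as recalled just before the statement.

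Next I would invoke the compatibility of the obstruction theories. By construction (recalled above), $\EE^\bullet_{\cK_{1,2}/\fT^{r,\spl}_{0}}$ is the pullback along the \'etale morphism $u_\eta$ of $\EE^\bullet_{\cK_{\Csi_1}/\cT^\tw_r}\oplus\EE^\bullet_{\cK_{\Csi_2}/\cT^\tw_r}$, and the square above is cartesian with $g$ flat. Hence \cite[Theorem 5.0.1]{Costello}, in the form of \cite[Proposition 2, Section 4.3]{Manolache}, applies to $u_\eta$ and gives
\[
(u_\eta)_*[\cK_{1,2}]^\vir \;=\; r^{-1}\,[\cK_{\Csi_1}\times\cK_{\Csi_2}]^\vir ,
\]
which is the assertion after multiplying through by $r$. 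Equivalently one may argue by hand: flat base change along $g$ identifies the intrinsic normal cones, so $[\cK_{1,2}]^\vir = u_\eta^*[\cK_{\Csi_1}\times\cK_{\Csi_2}]^\vir$, and then the projection formula together with $(u_\eta)_* u_\eta^* = r^{-1}\cdot \mathrm{id}$ on rational Chow groups (valid since $u_\eta$ is a $\bmu_r$-gerbe) yields the same identity.

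The only point that needs care is the base-change compatibility of the virtual fundamental class along the $\bmu_r$-gerbe $u_\eta$. Since $u_\eta$ is \'etale and the source obstruction theory is literally defined as a pullback, this is not a genuine obstacle: it is the same situation, with a different cartesian square, as the one already settled in Lemmas \ref{L:coarse-split} and \ref{Lem:split-target-r}, and the multiplicity $r$ produced here will cancel against the one in Lemma \ref{Lem:split-target-r} in the course of assembling the degeneration formula.
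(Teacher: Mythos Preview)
Your argument is correct and is essentially the same as the paper's: the paper simply observes, in the paragraph preceding the lemma, that the defining square is cartesian, that the obstruction theory on $\cK_{1,2}/\fT^{r,\spl}_{0}$ is the pullback of that on $\cK_{\Csi_1}\times\cK_{\Csi_2}/\cT^\tw_r\times\cT^\tw_r$ along the \'etale $\bmu_r$-gerbe, and that $u_\eta$ has degree $r^{-1}$, which is exactly the input for Costello's theorem as you spell out. Your explicit invocation of \cite[Theorem 5.0.1]{Costello} and \cite{Manolache}, together with the alternative flat-pullback/projection-formula argument, only makes explicit what the paper leaves implicit.
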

Notice that the multiplicity $r=r(\eta)$ obtained here, which depends on the twisting choice, coincides with the multiplicity appearing in Lemma \ref{Lem:split-target-r}. In the comparison of invariants this multiplicity  cancels out.

Denote by $\cD$ the universal boundary divisor over $\cK_{1,2}$. It is a gerbe banded by $\bmu_r$ over the coarse boundary divisor $\cK_{1,2}\times D$.

\subsection{Gluing the source} \label{Sec:gluing-source} There is a natural morphism 
$$v_\eta:\cK_\eta \to \cK_{1,2}$$
obtained by associating to a map $\cC \to \cW^\tw_0$ with splitting of type $\eta$ the two maps $\cC_1 \to \cX_1 \hookrightarrow \cW^\tw_0$ of type $\Csi_1$ and $\cC_2 \to \cX_2 \hookrightarrow  \cW^\tw_0$ of type $\Csi_2$ with source curves determined by the splitting. We now put this in a fiber diagram and demonstrate the compatibility of the given perfect obstruction theories.

%For $j\in M$ let $\Sigma^j_1\subset \cC_1$ be the corresponding root marking. Then $\Sigma^j_1$ is a gerbe banded by $\bmu_{f_j\cdot r/c_j} = \bmu_{r/d_j}$: the ndex on the predeformable map is $f_j$ and the twisting index is $r/c_j$ by Lemma \ref{L:lift-pd-pair}(3),(4)\Dan{SHould be in the language of Lemma \ref{pd-is-pd}}.  Similarly the marking $\Sigma^j_2\subset \cC_2$ is a gerbe banded by the same group-scheme.
Recall from Definition \ref{D:ev-pair}  that the restriction of a stable map $f_1:\cC_1 \to \cX_1$ to   $\Sigma_{1j}$   gives rise to the evaluation map ${\ev^\fr_j}_{\Csi_1}: \cK_{\Csi_1} \to \ocI(\cD/\cT^\tw)$.
%\Dan{One could also make $\cD$ constant - then the multiplicities start changing and I am not quite ready for this.}  

%Our discussion from now on is local on $\cK_{1,2}$, therefore we may assume the  image of the morphism $\ev_1^j$ lies in a component we denote $Z^j$ of $\ocI(\cD)$. The other map $\ev_2^j: \cK_{1,2} \to \ocI(\cD)$ coming from $f_1:\cC_2 \to \fW_0$ lands in the component $\check Z^j=\iota(Z^j)$, where $\iota: \ocI(\cD) \to  \ocI(\cD)$ is the involution inverting the band.

Composing with $u_{\eta 1}\circ v_\eta: \cK_\eta \to  \cK_{\Csi_1}$ 
%(or $u_{\eta 2}\circ v_\eta : \cK_\eta \to  \cK_{\Csi_2}$, 
 denote  
%$\cZ := \ocI(\cD)^M$, 
the product  morphisms 
$${\bev}^\fr_\eta = \prod_{j\in M} {{\ev^\fr_{j}}_{\Csi_1}}\circ u_{\eta 1}\circ v_\eta: \cK_{\eta} \to \ocI(\cD/\cT^\tw)_{\cT^\tw}^M.$$ This notation means that we are taking the $M$-th fibered product over $\cT^\tw$. Since this notation is cumbersome we use the shorthand $$\ocI^M :=\ocI(\cD/\cT^\tw)_{\cT^\tw}^M.$$
%, and the composite map ${\bev}_\eta={\bev}_1\circ v_\eta: \cK_\eta \to \ocI(\cD)^M$.
 Also denote 
% $\cZ_{1,2} :=  (\ocI(\cD)\times \ocI(\cD))^M$ and 
 $${\bev}^\fr_{1,2} = \prod_{j\in M} ({{\ev^\fr_{j}}_{\Csi_1}}\circ u_{\eta 1})\times ({{\ev^\fr_{j}}_{\Csi_2}}\circ u_{\eta 2}): \cK_{1,2}\to  (\ocI\times\ocI)^M.$$ On the right  we again use shorthand where $\ocI$ stands for $\ocI(\cD/\cT^\tw)$ and all products are fibered over $\cT^\tw$. As in \cite[Section 5]{AGV}, we have a cartesian diagram

$$\xymatrix{\cK_\eta\ar^v[rr]\ar[d]_{\bev^\fr_\eta} && \cK_{1,2} \ar[d]^{\bev^\fr_{1,2}} \\
\ocI^M\ar[rr]^(0.4){\tilde\Delta}&& (\ocI\times \ocI)^M.
}$$
Here the map $\tilde\Delta$ sends $\ocI(\cD/\cT^\tw)$ to itself by the identity map on the left component, and by the map $\iota : \ocI(\cD/\cT^\tw) \to \ocI(\cD/\cT^\tw)$ inverting the band  on the right. Indeed an object of the fibered product consists of a pair of maps to the glued target along with an isomorphism of the restricted maps on the gerbe with band inverted. Since the glued curve is a pushout, such a pair of maps with isomorphism is precisely the data of a map from the glued curve, hence an object of $\cK_\eta$. This works for arrow as well.
%(It is important to note that $\ocI(\cD)$ has an additional involution,  because the normal bundles of $D$ in $X_1$ and $X_2$  are dual to each other. We do not apply this involution here.)\Dan{Put this parenth. somewhere else?}

We now have the following:
\begin{proposition}\label{P:vir-glue-tw}
$$[\cK_\eta]^\vir \ \ = \ \ \tilde\Delta^! [\cK_{1,2}]^\vir.$$
\end{proposition}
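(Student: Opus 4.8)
The plan is to deduce the equality of virtual classes from the cartesian square relating $\cK_\eta$ to $\cK_{1,2}$ via the diagonal-with-involution $\tilde\Delta\colon \ocI^M\to (\ocI\times\ocI)^M$, by verifying that the refined Gysin pullback $\tilde\Delta^!$ applied to $[\cK_{1,2}]^\vir$ computes $[\cK_\eta]^\vir$. The key input is a compatibility of obstruction theories: the relative obstruction theory of $\cK_\eta$ over $\fT_0^{r,\spl}$ must be recovered from that of $\cK_{1,2}$ over $\fT_0^{r,\spl}$ together with the excess data coming from the gerbe-theoretic diagonal. Concretely, $\tilde\Delta$ is a morphism of smooth Deligne--Mumford stacks (each $\ocI(\cD/\cT^\tw)$ is smooth over $\cT^\tw$ since $\cD$ is, and the diagonal/involution is a regular closed immersion of stacks followed by an \'etale identification), so it has a well-defined refined Gysin homomorphism $\tilde\Delta^!\colon A_*(\cK_{1,2})\to A_*(\cK_\eta)$ in the sense of Vistoli's intersection theory on stacks. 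I would first record that the square is indeed cartesian --- this is already asserted in the text, with the pushout description of the glued curve identifying an object of the fiber product (a pair of relative stable maps plus an isomorphism of the two restricted gerbe-maps with band inverted) precisely with an object of $\cK_\eta$.

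The substantive step is the comparison of obstruction theories. I would use the functoriality of virtual pullback in the sense of Manolache \cite{Manolache} (or the Behrend--Fantechi formalism \cite{Behrend-Fantechi}): it suffices to exhibit a compatible triple relating the perfect obstruction theory $\EE_{\cK_{1,2}/\fT_0^{r,\spl}}$ (pulled back along $v_\eta$), the relative cotangent complex of $\tilde\Delta$, and the perfect obstruction theory $\EE_{\cK_\eta/\fT_0^{r,\spl}}$. The obstruction theory of $\cK_\eta$ is built from $\LL_{\Box}=[f^*\LL_{\cW^\tw_0/\fT} \to \Omega^1_{\cC/\cK_\eta}(\log P)]$ on the glued curve $\cC=\cC_1\sqcup_{\cD\text{-gerbe}}\cC_2$; since $\cC$ is the pushout of $\cC_1\leftarrow \Sigma_M \to \cC_2$ along the marked gerbes, there is a Mayer--Vietoris triangle computing $\bR p_*(\LL_\Box^\vee)$ from the analogous complexes on $\cC_1$, on $\cC_2$, and on the divisor $\Sigma_M$ (which is where the evaluation maps $\bev^\fr_{1,2}$ and $\bev^\fr_\eta$ live, and where the cotangent complex of $\tilde\Delta$ contributes). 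Dualizing, this Mayer--Vietoris triangle is exactly the compatibility datum of obstruction theories along the cartesian square, with $\LL_{\tilde\Delta}$ accounting for the $\Sigma_M$-term. Once this triangle is in place, Manolache's functoriality of virtual pullback \cite[Proposition~2, Section 4.3]{Manolache} (or Costello's theorem \cite[Theorem~5.0.1]{Costello}) gives $[\cK_\eta]^\vir = \tilde\Delta^![\cK_{1,2}]^\vir$ directly.

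The main obstacle I anticipate is handling the gerbe and involution structure carefully in the Mayer--Vietoris/pushout argument: the glued source curve is not a naive pushout of schemes but of twisted curves along a $\bmu_r$-gerbe, and the identification used in gluing twists the band by $\iota$ on one side, exactly as in \ref{D:twist-node} and \ref{2_pushout}. One must check that the relevant triangle for $\bR p_*\LL_\Box^\vee$ is the pullback of the one for $\bR (p_1)_*\LL_{\Box_1}^\vee \oplus \bR (p_2)_*\LL_{\Box_2}^\vee$ along $\tilde\Delta$, i.e.\ that the gerbe structures on $\cD$ and on the marked gerbes $\Sigma_j^\fr$ match up so that no spurious automorphisms or extra cohomology appear; this is where the fact that $\Sigma_j^\fr\to\cD$ is banded compatibly and that inertia commutes with fiber products (invoked in \ref{SS:inertia} and used already in the display of the cartesian square) does the work. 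I would model the situation \'etale-locally on $\cT^\tw$, where $\cD$ becomes a product $\bmu_r\times(\text{scheme})\times B\bmu_r$ type object and the pushout of twisted curves reduces to the classical normalization/gluing exact sequence for nodal curves tensored with the gerbe, making the triangle transparent; then descend. A secondary technical point is that $\fT_0^{r,\spl}$, hence the bases of both obstruction theories, is smooth of pure dimension, so the virtual classes are well-defined and the degree bookkeeping (no extra multiplicities here --- the factors $r(\eta)$ live in Lemmas~\ref{Lem:split-target-r} and \ref{Lem:glue-target}, not in this step) is clean.
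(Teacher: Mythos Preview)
Your proposal is correct and follows essentially the same route as the paper: the paper invokes \cite[Proposition 5.10]{Behrend-Fantechi} and produces exactly the compatible triangle of obstruction theories you describe, via the normalization (Mayer--Vietoris) triangle for $\LL_\Box^\vee$ on the glued curve $\cC=\cC_1\sqcup_\cG\cC_2$. The specific tool the paper uses for the point you flag as the main obstacle --- identifying the gerbe contribution $p_*\iota_*(f\circ\iota)^*T_\cD$ with $\bev_\eta^{\fr\,*}N_{\tilde\Delta}$ --- is the Tangent Bundle Lemma \cite[Lemma 3.6.1]{AGV}, together with the observation that since $\cD$ is a gerbe its tangent sheaf is pulled back from $T_D$.
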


\begin{proof} Recall the perfect obstruction theory $\EE^{\bullet}_{\cK_{\eta}/\fT_{0,r}^{\tw,\spl}} \to \LL^{\bullet}_{\cK_{\eta}/\fT_{0,r}^{\tw,\spl}}$ defined in \ref{obsth}.
By \cite[Proposition 5.10]{Behrend-Fantechi} it suffices to produce a diagram of distinguished triangles 
$$  \xymatrix{
%{}\save+<0pt,-0pt>*+
{v^*\EE^{\bullet}_{\cK_{1,2}/\fT_{0,r}^{\tw,\spl}}}^{\vphantom{\cK_{1,2}/\fT_{0,r}^{\tw,\spl}}} \ar[r]\ar[d]
%\restore
& 
\EE^{\bullet\vphantom{\cK_{\eta}}}_{\cK_{\eta}/\fT_{0,r}^{\tw,\spl}} \ar[r]\ar[d]
& 
\bev_\eta^{\fr\, *}\, \LL_{\tilde\Delta} \ar[r]^(.65){[1]}\ar^{id}[d]
&\\
{v^*\LL^{\bullet}_{\cK_{1,2}/\fT_{0,r}^{\tw,\spl}}}^{\vphantom{\cK_{1,2}/\fT_{0,r}^{\tw,\spl}}} \ar[r] 
&\LL^{\bullet\vphantom{\cK_{\eta}}}_{\cK_{\eta}/\fT_{0,r}^{\tw,\spl}} \ar[r]
& \bev_\eta^{\fr\, *}\, \LL_{\tilde\Delta} \ar[r]^(.65){[1]}&.}
$$
Since $\tilde\Delta$ is a regular embedding $\LL_{\tilde\Delta}\simeq N_{\tilde\Delta}^\vee[1]$.
 
Consider the cartesian and co-cartesian square
$$\xymatrix{
\cG \ar[r]^{\iota_1}\ar[d]_{\iota_2} & \cC_1 \ar[d]^{\nu_1}\\ 
 \cC_1 \ar[r]^{\nu_2} & \cC
}$$ where $\cG$ is the disjoint union of the marking corresponding to the roots of $\Csi_1$ or $\Csi_2$. 
Also denote the normalization map $\nu: \cC_1\sqcup \cC_2 \to \cC$ and the embedding $\iota: \cG\to \cC$.
We have the standard normalization triangle
$$\xymatrix{
\LL_{\Box}^\vee \ar[r] & 
\nu_*\bL\nu^*\LL_{\Box}^\vee \ar[r] &
\iota_*\bL\iota^*\LL_{\Box}^\vee \ar[r]^(.65){[1]}&
}$$
and a natural decomposition 
$$\nu_*\bL\nu^*\LL_{\Box}^\vee \ \ =\ \ 
 \nu_{1*}\bL\nu_1^*\LL_{\Box}^\vee \ \oplus\  \nu_{2*}\bL\nu_2^*\LL_{\Box}^\vee.$$
%Here is the crucial point: 
\begin{lemma}
$$ \bL\nu_1^*\LL_{\Box}^\vee \ \ = \ \ \LL_{\Box_1}^\vee,\quad\quad \bL\nu_2^*\LL_{\Box}^\vee \ \ = \ \ \LL_{\Box_2}^\vee,$$
and
$$\bL\iota^*\LL_{\Box}^\vee \ \ = \ \ (f\circ \iota)^* T_\cD.$$
\end{lemma}
\begin{proof}[Proof of Lemma]
The commutative diagram 
$$\xymatrix{
\cC_1 \ar[r]\ar[d]& \cX_1\ar[d]\\
\cC\ar[r]& \cW
}$$
induces a canonical arrow $\bL\nu_1^*\LL_{\Box} \to \LL_{\Box_1}$, and similarly for $\bL\nu_2^*\LL_{\Box} \to \LL_{\Box_2}$. We can check that this is an isomorphism locally. Away from $\cD$ nothing is changed. Near $\cD$, the complex  $\LL_{\Box}$ is the conormal to $\cC \to \cW$  since $f$ is transversal, and it restricts to $\LL_{\Box_1}$, the conormal to $\cC_1 \to \cX_1$. For the same reason   the conormal to $\cC \to \cW$ restricts on $\cG$ to the conormal of $\cG\to \cD$.
\end{proof}

The triangle now looks as follows: 
$$\xymatrix{
\LL_{\Box}^\vee \ar[r] & 
\nu_{1*}\LL_{\Box_1}^\vee\oplus \nu_{2*}\LL_{\Box_2}^\vee  \ar[r] & \iota_*(f\circ \iota)^* T_\cD \ar[r]^(.65){[1]} &
}$$
Since $\cD$ is a gerbe, the tangent sheaf $T_\cD$ is the pullback of $T_D$, and it follows from the Tangent Bundle Lemma (see \cite[Lemma 3.6.1]{AGV}) that 
$$p_*\iota_*(f\circ \iota)^* T_\cD \ \ = \ \ \bev_\eta^{\fr\,*}\,N_{\tilde\Delta}.$$
Therefore when applying  $\bR\pi_*$, dualizing and rotating the above triangle we get
$$\xymatrix{
(\bR p_{1*}\LL_{\Box_1}^\vee)^\vee\oplus (\bR p_{2*}\LL_{\Box_2}^\vee)^\vee \ar[r] &(\bR p_{*}\LL_{\Box}^\vee)^\vee \ar[r] & 
 \bev_\eta^{\fr\,*}\,N_{\tilde\Delta}^\vee[1] \ar[r]^(.65){[1]} &
}$$
which is clearly compatible with the triangle of cotangent complexes,
as required. (A detailed verification of such compatibility is found in \cite[Appendix]{ACW}.) 
\end{proof}

\subsection{Comparison with $\Delta$}\label{Sec:coarse-diagonal}

We now translate Proposition \ref{P:vir-glue-tw} into a result involving $\Delta: \ocI(D)^{M} \to (\ocI(D)\times \ocI(D))^{M}$ instead of $\ocI^M = \ocI(\cD/\cT^\tw)_{\cT^\tw}^{M}$ and ${\tilde\Delta}$.

We have a cartesian diagram
$$
\xymatrix{
\cK_\eta \ar^{q_\eta}[r]\ar[d]& \cK_\eta^*\ar[rr]\ar[d] && \cK_{1,2}\ar[d]\\
\ocI(\cD)^{M}\ar^{q}[r]& {*} \ar[rr]\ar[d] && (\ocI(\cD)\times \ocI(\cD))^{M}\ar[d] \\
& \cT^\tw\times \ocI(D)^{M}\ar^(.4){Id\times \Delta}[rr]\ar[d] && \cT^\tw\times(\ocI(D)\times \ocI(D))^{M}\ar[d]\\
& \ocI(D)^{M}\ar^(.4){\Delta}[rr] && (\ocI(D)\times \ocI(D))^{M}.
}
$$
The arrow $\Delta$ is again the diagonal composed with $\iota$ on the right.

By Lemma \ref{L:eval-lift-c}, the component $\cZ^j$ of $\ocI(\cD/\cT^\tw)$ where $\ev^\fr_j$ maps is a gerbe over the corresponding component $Z^j$ of $\cT^\tw\times \ocI(D)$, and this gerbe is banded by $\bmu_{c_j}$. 
% This can be understood as follows: an object of $\cZ^j$ consists of a gerbe $\cG$ banded by $\bmu_{f_j \cdot r/c_j}$ together with a representable morphism to $\cD$, in other words the index of $\cG$ has increased by the twisting  factor of $r/c_j$ from the index $f_j$ of the predeformable map. The order of the centralizer in inertia is increased by a factor of $r$ by construction. So the order of the automorphism group of the object of {\em rigidified} inertia is increased by a factor which is the ratio of the above two ratios, namely $c_i$.\Dan{detail the proof and give the non-orbifold example. Maybe this goes somewhere else} 
%
It follows that the arrows $q$ and $q_\eta$ are \'etale surjective of pure degree $\prod_{j\in M} c_j$: the arrow $q$ is the product of the \'etale surjective morphisms $\cZ^j \to  \cZ^j\times_{Z^j}\cZ^j = \cZ^j\times B\bmu_{c_j}$. Therefore we have the following:
\begin{proposition} \label{P:diag-gysin}
$$(q_{\eta})_*\,[\cK_\eta]^\vir \ \ = \ \ \left(\prod_{j\in M} c_j\right) \, \cdot \, \Delta^! [\cK_{1,2}]^\vir.$$
\end{proposition}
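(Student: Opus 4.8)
The plan is to deduce Proposition \ref{P:diag-gysin} from Proposition \ref{P:vir-glue-tw} by a compatibility-of-refined-Gysin-maps argument, once we identify the precise relationship between the two diagonal maps $\tilde\Delta$ and $\Delta$. First I would record the structure of the big cartesian diagram: the lowest square expresses $\tilde\Delta$ (acting on $\ocI^M = \ocI(\cD/\cT^\tw)^M_{\cT^\tw}$) as the base change of $\Delta$ (acting on $\ocI(D)^M$) along the map $\cT^\tw \times \ocI(D)^M \to \ocI(D)^M$; hence $\tilde\Delta^!$ and $\Delta^!$ agree as refined intersection operations, and Proposition \ref{P:vir-glue-tw} already gives $[\cK_\eta]^\vir = \tilde\Delta^![\cK_{1,2}]^\vir = \Delta^![\cK_{1,2}]^\vir$, the Gysin pullback being taken along the top of the diagram landing in $\cK_\eta$ itself. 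The remaining task is purely to push this class forward along $q_\eta:\cK_\eta \to \cK_\eta^*$ and to understand the pushforward of a Gysin pullback under an \'etale surjection of pure degree.

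The heart of the matter is the degree computation for $q$ (equivalently $q_\eta$). By Lemma \ref{L:eval-lift-c}, for each $j\in M$ the component $\cZ^j\subset \ocI(\cD/\cT^\tw)$ hit by $\ev^\fr_j$ is a $\bmu_{c_j}$-gerbe over the corresponding component $Z^j\subset \cT^\tw\times\ocI(D)$. Then $\cK_\eta^*$ is, by construction, the fibered product of $\cK_{1,2}$ with $\cT^\tw\times\ocI(D)^M$ over $\cT^\tw\times(\ocI(D)\times\ocI(D))^M$ — i.e. the locus where the two $M$-tuples of rigidified-inertia evaluations agree after applying $\iota$ on the coarse stack $\ocI(D)$ — whereas $\cK_\eta$ itself records the genuine isomorphism of restricted maps on the gerbe $\cD$ with band inverted. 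The comparison morphism $q_\eta$ is thus fibered over the morphism $q:\ocI(\cD)^M \to {*}$, which factors as the product over $j\in M$ of the maps $\cZ^j \to \cZ^j\times_{Z^j}\cZ^j$. Since $\cZ^j\to Z^j$ is a $\bmu_{c_j}$-gerbe, the fiber product $\cZ^j\times_{Z^j}\cZ^j$ is $\cZ^j\times B\bmu_{c_j}$, and the diagonal-type map $\cZ^j\to\cZ^j\times B\bmu_{c_j}$ is representable, \'etale and surjective of degree $|\bmu_{c_j}| = c_j$ (it is the classifying map of the trivial torsor, whose automorphisms contribute the factor $c_j$). Multiplying over $j$ gives that $q$, hence $q_\eta$, is \'etale surjective of pure degree $\prod_{j\in M} c_j$.

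Granting this, the proposition follows by the projection formula for pushforward along the pure-degree morphism $q_\eta$: if $\alpha$ is a cycle class on $\cK_\eta$ arising by (flat, or \'etale) pullback from $\cK_\eta^*$ together with a Gysin pullback performed on the target side, then $(q_\eta)_* q_\eta^*(\text{class on }\cK_\eta^*) = (\deg q_\eta)\cdot(\text{class on }\cK_\eta^*)$. Concretely, the refined Gysin class $\tilde\Delta^![\cK_{1,2}]^\vir$ computed in $\cK_\eta$ is the $q_\eta$-pullback of the refined Gysin class $\Delta^![\cK_{1,2}]^\vir$ computed in $\cK_\eta^*$ (both are honest refined intersections against the same class $[\cK_{1,2}]^\vir$, only the ambient space for the result differs, and $q_\eta$ is \'etale hence its pullback commutes with refined Gysin homomorphisms, cf. the commutativity of $!$-maps with flat pullback in Fulton's intersection theory). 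Therefore
$$
(q_\eta)_*[\cK_\eta]^\vir = (q_\eta)_*\,\tilde\Delta^![\cK_{1,2}]^\vir
= (q_\eta)_*\, q_\eta^*\bigl(\Delta^![\cK_{1,2}]^\vir\bigr)
= \Bigl(\prod_{j\in M} c_j\Bigr)\,\Delta^![\cK_{1,2}]^\vir,
$$
which is exactly the asserted formula.

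The main obstacle I anticipate is not the formal bookkeeping but the clean justification that $q_\eta$ is literally the base change of $q$ and that $q$ has the stated degree $\prod c_j$ — i.e. pinning down that moduli-theoretically $\cK_\eta$ parametrizes a bona fide isomorphism on the band-inverted gerbe $\cD$ while $\cK_\eta^*$ only parametrizes the weaker rigidified agreement, so that the difference is precisely a product of trivialized $\bmu_{c_j}$-torsors. This uses Lemma \ref{L:eval-lift-c} essentially, together with the description of the gerbe structure of $\cD$ over $\cK_{1,2}$ and the pushout description of the source curve from Section \ref{Sec:gluing-source}. A secondary, more routine point is checking that refined Gysin pullback along $\Delta$ (a regular embedding between Deligne--Mumford stacks) commutes with the \'etale pullback $q_\eta^*$ and that pushforward along the \'etale representable-degree map $q_\eta$ multiplies by $\prod c_j$ at the level of Chow groups with rational coefficients; this is standard but I would cite it explicitly.
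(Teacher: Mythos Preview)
Your proposal is correct and follows essentially the same route as the paper: the paper sets up the identical cartesian tower, invokes Lemma~\ref{L:eval-lift-c} to identify each $\cZ^j\to Z^j$ as a $\bmu_{c_j}$-gerbe, notes that $q$ is therefore the product of diagonals $\cZ^j\to\cZ^j\times_{Z^j}\cZ^j\simeq\cZ^j\times B\bmu_{c_j}$ of degree $c_j$, and concludes immediately. You have simply spelled out more of the intersection-theoretic bookkeeping (compatibility of refined Gysin with flat pullback, the projection formula for $q_\eta$) that the paper leaves implicit; one small slip is your initial sentence claiming ``$\tilde\Delta^!$ and $\Delta^!$ agree'' with the class landing in $\cK_\eta$ --- the natural target of $\Delta^![\cK_{1,2}]^\vir$ is $\cK_\eta^*$, not $\cK_\eta$ --- but you correct this yourself two paragraphs later with the precise statement $\tilde\Delta^![\cK_{1,2}]^\vir = q_\eta^*\bigl(\Delta^![\cK_{1,2}]^\vir\bigr)$.
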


We can now use the projection formula. The composite top morphism in the last diagram is $v_\eta:\cK_\eta \to \cK_{1,2}$. We can compose the vertical arrow on the right and obtain the ``untwisted" evaluation morphism ${\bev}_{1,2}: \cK_{1,2} \to (\ocI(D)\times\ocI(D))^M.$ Denoting by $[\Delta]$ the class
$(\Delta)_*[\ocI(D)^M]$, we have that 
$$v_{\eta*}\,[\cK_\eta]^\vir \ \ = \ \ \left(\prod_{j\in M} c_j\right) \, \cdot \, {\bev}_{1,2}^* [\Delta]\cap [\cK_{1,2}]^\vir.$$
But
%\Dan{Check signs yet again!!}
$[\Delta] = \prod_{j\in M}\left(\sum_{\delta_j \in F}
%(-1)^\epsilon 
\delta_j \times \iota^*\delta_j^\vee\right).$
We thus obtain
\begin{corollary}\label{C:glue-virt}
	$$v_{\eta*}\,[\cK_\eta]^\vir \ \ = \ \ \prod_{j\in M} \left(c_j  \sum_{\delta_j \in F}
	 {{\ev_{j}}_{\Csi_1}}^{*} \delta_j \times {{\ev_{j}}_{\Csi_2}}^{*}   \iota^*\delta_j^\vee \right) \cap [\cK_{1,2}]^\vir;$$
	  combining with Lemma \ref{Lem:glue-target}, with a slight abuse of notation we have
	  \begin{eqnarray*}
\lefteqn{(v_{\eta}\circ u_\eta)_*\,[\cK_\eta]^\vir }\\
\ \ &=& \ \ r(\eta)\,\cdot\,\prod_{j\in M} \left(c_j  \sum_{\delta_j \in F}
	 {{\ev_{j}}_{\Csi_1}}^{*} \delta_j \times {{\ev_{j}}_{\Csi_2}}^{*}   \iota^*\delta_j^\vee \right) \cap [\cK_{\Csi_1}\times\cK_{\Csi_2}]^\vir
\end{eqnarray*}
\end{corollary}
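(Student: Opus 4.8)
The statement follows formally from two facts established just above: the identity
$v_{\eta*}[\cK_\eta]^\vir = \bigl(\prod_{j\in M} c_j\bigr)\,\bev_{1,2}^*[\Delta]\cap [\cK_{1,2}]^\vir$,
which comes out of Proposition \ref{P:diag-gysin} by pushing the refined Gysin class $\Delta^![\cK_{1,2}]^\vir$ forward along the regular embedding $\cK_\eta^*\hookrightarrow\cK_{1,2}$ (the base change of $\Delta$ along $\bev_{1,2}$), using the standard compatibility, valid for the base change of a regular embedding, that this pushforward of $\Delta^!(-)$ equals $\bev_{1,2}^*[\Delta]\cap(-)$; and the K\"unneth decomposition $[\Delta] = \prod_{j\in M}\bigl(\sum_{\delta_j\in F}\delta_j\times\iota^*\delta_j^\vee\bigr)$, available because we work over $\CC$ and because $\delta^\vee$ was normalized so that no sign intervenes. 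The plan is to combine these two inputs via the projection formula.

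First I would establish the first identity. The morphism $\bev_{1,2}:\cK_{1,2}\to(\ocI(D)\times\ocI(D))^M$ is by construction the product over $j\in M$ of the pairs $\ev_{j,\Csi_1}\circ u_{\eta 1}$ and $\ev_{j,\Csi_2}\circ u_{\eta 2}$, where $\ev_{j,\Csi_i}$ is the $\ocI(D)$-valued (``untwisted'') evaluation of Definition \ref{D:ev-pair}; that composing $\ev^\fr_{j,\Csi_i}$ with $\ocI(\cD/\cT^\tw)\to\ocI(D)$ recovers $\ev_{j,\Csi_i}$ is exactly the content of Lemma \ref{L:eval-lift-c} (the component of $\ocI(\cD/\cT^\tw)$ carrying $\ev^\fr_j$ is a $\bmu_{c_j}$-gerbe over the corresponding component of $\ocI(D)$), the same input that produced the factors $c_j$ in Proposition \ref{P:diag-gysin}. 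Since pullback is multiplicative and compatible with external products, $\bev_{1,2}^*[\Delta] = \prod_{j\in M}\bigl(\sum_{\delta_j\in F}\ev_{j,\Csi_1}^*\delta_j\times\ev_{j,\Csi_2}^*\iota^*\delta_j^\vee\bigr)$, and substituting this together with the factor $\prod_{j}c_j$ into the displayed formula for $v_{\eta*}[\cK_\eta]^\vir$ gives the first identity.

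Next I would push this forward along $u_\eta:\cK_{1,2}\to\cK_{\Csi_1}\times\cK_{\Csi_2}$ — here $v_\eta\circ u_\eta$ is, as the statement warns, shorthand for the composite $\cK_\eta\to\cK_{1,2}\to\cK_{\Csi_1}\times\cK_{\Csi_2}$ of $v_\eta$ followed by $u_\eta$. The cohomology class $\prod_{j\in M}\bigl(c_j\sum_{\delta_j}\ev_{j,\Csi_1}^*\delta_j\times\ev_{j,\Csi_2}^*\iota^*\delta_j^\vee\bigr)$ being capped is the $u_\eta$-pullback of the corresponding class on $\cK_{\Csi_1}\times\cK_{\Csi_2}$, since it involves only the evaluations on $\cK_{\Csi_1}$ and $\cK_{\Csi_2}$ composed with the projections $u_{\eta 1},u_{\eta 2}$; so the projection formula moves it out of $u_{\eta*}$ and leaves $u_{\eta*}[\cK_{1,2}]^\vir$. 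Lemma \ref{Lem:glue-target}, which compares these two virtual classes across the $\bmu_{r(\eta)}$-gerbe $u_\eta$, then rewrites this in terms of $[\cK_{\Csi_1}]^\vir\times[\cK_{\Csi_2}]^\vir$, introducing the factor $r(\eta)$ recorded in the statement, and the second identity follows.

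I do not expect a real obstacle: once Proposition \ref{P:diag-gysin} and Lemma \ref{Lem:glue-target} are available, the corollary is pure projection-formula bookkeeping. The two points that demand care are the accurate tallying of the multiplicities — $\prod_{j\in M} c_j$ from the rigidified-inertia gerbes and $r(\eta)$ from the target-gluing gerbe — and the check that the coarsened evaluation morphisms on $\cK_{1,2}$ really are pulled back from $\cK_{\Csi_1}\times\cK_{\Csi_2}$, which is precisely where the comparison of rigidified inertia stacks in Lemma \ref{L:eval-lift-c} enters.
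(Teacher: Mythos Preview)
Your proposal is correct and follows essentially the same route as the paper: push forward Proposition \ref{P:diag-gysin} along the base change of the regular embedding $\Delta$ to obtain $v_{\eta*}[\cK_\eta]^\vir=(\prod_j c_j)\,\bev_{1,2}^*[\Delta]\cap[\cK_{1,2}]^\vir$, expand $[\Delta]$ via K\"unneth, and then push forward along $u_\eta$ using the projection formula together with Lemma \ref{Lem:glue-target}. Two small wording quibbles: the map $\cK_\eta^*\hookrightarrow\cK_{1,2}$ is a base change of a regular embedding rather than one itself (the intersection-theoretic identity you invoke is still the right one), and the fact that the $\ocI(D)$-valued evaluations on $\cK_{1,2}$ descend from $\cK_{\Csi_1}\times\cK_{\Csi_2}$ is immediate from Definition \ref{D:ev-pair} rather than from Lemma \ref{L:eval-lift-c}, whose role is solely to produce the $c_j$ multiplicities.
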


\subsection{End of proof}\label{SS:endpf}
The stack $\cK_\eta$ carries two universal families of contracted curves: a disconnected family $ C'\to \cK_\eta$ pulled back from $ C_1 \sqcup C_2 \to \cK_{\Csi_1}\times \cK_{\Csi_2}$ inducing evaluations $\ev'_i$ with coarse curve $\underline C'\to \cK_\eta$ having sections $s_i'$;  and a connected family $C_\eta \to \cK_\eta$ coming  from $C \to \cK$ inducing evaluations $\ev_i$, with coarse curve  having sections $s_i$. These families differ only where they meet the splitting divisor. In particular  the pullback of the classes $\psi_i$ of the sheaves $s_i^*\omega_{\underline C/\cK}$ coincides with that of the class $\psi'_i$ corresponding to ${s'}_{i}^*\omega_{\underline C_2  \sqcup \underline C_2 /\cK_{\Csi_1}\times\cK_{\Csi_2}}$, and similarly for the pullbacks of $\gamma_i$ via evaluation maps.  We compute:

%The projection formula  and Lemma \ref{L:virt-split} gives 
\begin{align*}
{\lefteqn{\left\langle{ \prod_{i\in N}\tau_{m_i}(\gamma_i)}\right\rangle_{\Gamma}^{W_0}}}\\
  & =  \sum_{\eta\in \Omega}\frac{r(\eta)}{|M|!}\deg\left((s\circ t_\eta)^*\left(\prod_{i\in N} \psi_i^{m_i}\cdot \ev_i^*\gamma_i\right)\cap {[\cK_\eta]^\vir}\right)\\
  \intertext{(by the projection formula and Lemma \ref{L:virt-split})}
   & =  \sum_\eta\frac{r(\eta)}{|M|!}\deg\left((u\circ v_\eta)^*\left(\prod_{i\in N} {\psi'}_i^{m_i}\cdot {\ev'}_i^*\gamma_i\right)\cap {[\cK_\eta]^\vir}\right)\\
   \intertext{(by the discussion above)} 
  & = \sum_{\eta\in \Omega}\frac{r(\eta)}{|M|!}\frac{\prod_{j\in M} c_j}{r(\eta)}\\
      & \quad\quad \deg\left(\left(\prod_{i\in N} {\psi'}_i^{m_i}\cdot {\ev'}_i^*\gamma_i\right)\right. \\ 
& \quad\quad\quad\quad\cdot  \left.   \prod_{j\in M} \left(c_j  \sum_{\delta_j \in F}
 {\ev_{j}}_{{\Csi_1}}^{*} \delta_j \times {\ev_j}_{\Csi_2}^{*}   \iota^*\delta_j^\vee \right)
      \cap{[\cK_{\Csi_1}]^\vir\times [\cK_{\Csi_2}]^\vir}\right)
%   %(-1)^\epsilon
%   \bar{\mathbf{e}}_1 \delta_M \times \bar{\mathbf{e}}_2^*   \iota^*\delta_M^\vee \right ) \\
\intertext{(by the projection formula and Corollary \ref{C:glue-virt})}
    & = \sum_{\eta\in\Omega}\frac{\prod_j c_j}{|M|!}
    \sum_{\delta_j\in F\forall j\in M} (-1)^{\epsilon}
 \left\langle \prod_{i\in N_1} \tau_{m_i}(\gamma_{i})\bigg\vert\prod_{j\in M}\delta_j
  \right\rangle^{(X_1,D)}_{\Csi_1}\\
  & \quad\quad\quad\quad \quad\quad\quad\quad \quad\quad\quad\quad \cdot
 \left\langle 
 \prod_{i\in N_2} \tau_{m_i}(\gamma_{i})\bigg\vert\prod_{j\in M}\tilde\delta^\vee_j
 \right\rangle^{(X_2,D)}_{\Csi_2}
  \end{align*}
as required.  

\appendix
%\newpage
\section{Pairs and nodes}\label{A:pairs}
\subsection{Smooth and locally smooth pairs}\label{onpairs}

A {\em smooth pair} is a pair $(X,D)$ where $X$ is a smooth algebraic stack and $D$ is a smooth divisor. A {\em locally smooth pair} is obtained if we only require $X$ to be smooth near the smooth divisor $D$. We sometimes call $X$ the {\em ambient} scheme/stack and $D$ the {\em boundary divisor}.

Let $\cA:=[\AA^1/\GG_m]$  - this notation will be kept throughout the paper. A morphism $f:X\to \cA$ is equivalent to the data $(L,s)$ of a line bundle $L$ on $X$ with a section $s$, as explained in \cite{ACW}. The morphism $f$ is dominant if and only if the section $s$ is nonzero; in particular, every pair $(X,D)$ with $X$ an algebraic stack and $D$ an effective Cartier divisor defines such a dominant morphism.  The pair is smooth (respectively locally smooth) if and only if the morphism to $\cA$ is smooth (respectively smooth over the divisor $B\GG_m=[0/\GG_m]$).

A morphism of locally smooth pairs $\phi:(X,D)\to (X',D')$ is a morphism $\phi:X\to X'$ such that $\phi^{-1}(D')_{red}\subset D$.
If $D'$ is empty, every morphism $X\to X'$ defines a morphism of pairs $(X,D)\to (X',\emptyset)$. 

A family of locally smooth pairs over a base stack $S$ is the datum of a flat morphism $X\to S$ and an $S$-flat closed substack $D\subset X$ such that for every point $s\in S$ the fiber $(X_s,D_s)$ is a locally smooth pair. 

Given two families $(X,D)$ and $(X',D')$ of locally smooth  pairs over the same base $S$, a log morphism is a morphism $\phi:X\to X'$ such that 
\begin{enumerate}
 \item for every $s\in S$, $\phi_s:(X_s,D_s)\to (X'_s,D'_s)$ is a morphism of locally smooth pairs;
\item the morphism $\phi^*\Omega_{X'}\to \Omega_X$ induces a morphism $\phi^*(\Omega_{X'}(\log D'))\to \Omega_X(\log D)$.
\end{enumerate}

Note that:
 \begin{enumerate}
 \item[(a)] The first condition implies the second if $S$ is reduced, but not in general;
\item [(b)] Assume that $S$ is connected, and that $D$ has connected components $D_i$ such that $D_i\cap D_s$ is also connected for every $i$ and every $s\in S$ (this is true e.g. if $S$ is simply connected). Then $\phi:X\to X'$ is a log morphism if and only if there exist nonegative integers $c_i$ such that $\phi^*D'=\sum c_iD_i$.
\end{enumerate}

Locally smooth pairs  their morphisms are classical special cases of logarithmic structures in the sense of \cite{Kato}.
%; in particular, $\cA$ is an open substack of Olsson's stack of log structures defined in \cite{Olsson-log-stacks}.

\subsection{Transversality for nodal singularities and  pairs}
\label{logsmpair}

A morphism of locally smooth pairs $(C,\Sigma)\to (X,D)$ is {\em trans\-ver\-sal to the boundary divisor} (or just transversal) if the scheme theoretic inverse image of $D$ is smooth (and hence a union of connected components of $\Sigma$).

Let $X$ be a complex algebraic stack; we say that it has {\em nodal codimension one   singularities} - or just {\em nodal singularities} for brevity - if it is  locally isomorphic in the {f.p.p.f.} topology to  $\{xy=0\}\times\AA^n$; in particular its singular locus $D$ is smooth. Let $\nu:\tilde X\to X$ be the normalization and $\tilde D=\nu^{-1}(D)$. Then $(\tilde X,\tilde D)$ is a smooth pair,  and $\tilde D\to D$ is an \'etale double cover.

\begin{defn} A morphism between nodal algebraic stacks $f:C\to X$ is called {\em transversal to the singular locus} if \begin{enumerate}
\item the induced morphism $\tilde C\to \tilde X$ defines a morphism of locally smooth pairs which is transversal to the boundary divisor;
\item for every point $p\in f^{-1}(D)$ its two inverse images in $\tilde C$ map to different points of $\tilde D$ via $\tilde f$.
\end{enumerate}
\end{defn}

 This means that we have smooth charts $\tilde C \to C$ and $\tilde X \to X$,  lifting $\tilde C \to \tilde X$ of $C\to X$ and smooth morphisms $\tilde C \to \{xy=0\}$ and $\tilde X \to \{xy=0\}$ making the following diagram commutative
\[\xymatrix{
\tilde C \ar[rr]\ar[rd]&& \tilde X\ar[dl]\\
& \{xy=0\},
}
\]
so on the charts $\tilde C$ and $\tilde X$ the coordinates $x,y$ with $xy=0$ are the same.

Suppose now $(C,\Sigma)\to S$ and  $ (X,D) \to S$ are flat families of locally smooth pairs with at most nodal singularities and $f: C \to X$ a map. The following is evident:

\begin{lemma}\label{L:tr-open} The locus $S^{tr} \subset S$ where the fibers are transversal is open
\end{lemma}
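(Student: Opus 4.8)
The plan is to exhibit the complement $S\setminus S^{tr}$ as a finite union of images, under proper morphisms, of closed subsets, and hence as a closed set. First I would observe that openness is étale-local on $S$, so we may assume $S$ affine and, invoking the definition of nodal codimension-one singularities, choose compatible normalizations: a flat family $\nu\colon\tilde C\to C$ of smooth prestable curves and $\tilde\nu\colon\tilde X\to X$ normalizing the nodal locus $D$, both formed fibrewise and hence commuting with base change. Over the open complement in $C$ of $f^{-1}$ of the boundary divisor together with $f^{-1}(D)$, and correspondingly in $\tilde C$, every fibre map is automatically transversal, so the non-transversal locus in $S$ is concentrated over these closed subsets of $C$ and $\tilde C$, each of which is proper over $S$ since $C\to S$ is.

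Next I would handle the ``transversal to the boundary divisor'' clause, both for $(C,\Sigma)$ itself and, after normalizing, for $(\tilde C,\tilde\Sigma)\to(\tilde X,\tilde D)$. Here $f^{-1}(D)$ is a closed subscheme of the proper $S$-curve $C$, and transversality of the fibre over $s$ amounts to smoothness of the scheme-theoretic fibre $f_s^{-1}(D_s)$. The locus in $f^{-1}(D)$ where this scheme-theoretic fibre fails to be smooth is closed: it is the support of the coherent sheaf measuring the failure of $\Omega^1_{f^{-1}(D)/S}$ to be locally free of the expected rank, together with the locus of wrong fibre dimension. Being closed in $f^{-1}(D)$, hence in $C$, it is proper over $S$, so its image in $S$ is closed. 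The same applies to $\tilde f^{-1}(\tilde D)\to S$.

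Then I would dispose of the extra branch-separation condition in the definition of transversality to the singular locus. I would form $Z:=\tilde C\times_C\tilde C$, which over $f^{-1}(D)$ is an étale double cover of $C$, and inside the off-diagonal part of $Z$ consider the preimage under $\tilde f\times\tilde f$ of the diagonal $\Delta_{\tilde D}\subset\tilde X\times\tilde X$; this is the closed locus where the two branches of a node over $D$ land at the same point of $\tilde D$. As $Z$ is proper over $S$, the image of this closed locus in $S$ is again closed. Finally $S^{tr}$ is the complement of the union of these finitely many closed subsets, hence open.

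The main obstacle I anticipate is the bookkeeping around fibre dimensions in the second step: if a component of a fibre of $C\to S$ maps entirely into $D$ the scheme $f^{-1}(D)$ is not finite over $S$ there, so the naive ``$\Omega^1$ locally free'' criterion must be stated relative to the local fibre dimension (or one first isolates, as a separate closed locus, those fibres having a component mapping into the boundary or singular locus, which can be done discretely because the components of the source are locally constant in families). Once this point is set up carefully, the remaining steps are routine transfers of closed loci from $C$, $\tilde C$ and $Z$ down to $S$ via properness, exactly as the word ``evident'' in the statement suggests.
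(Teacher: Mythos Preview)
The paper offers no proof beyond calling the lemma ``evident'', so there is no argument to compare against. Your overall plan --- exhibit the non-transversal locus in $S$ as the image under a proper map of a closed subset of $C$ --- is sound, and your treatment of the boundary-divisor condition for $(C,\Sigma)\to(X,D)$ via smoothness of the fibers of $f^{-1}(D)\to S$ is correct (modulo the unstated hypothesis that $C\to S$ is proper, which does hold in every application the paper makes).

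The genuine gap is in your first step. A flat family $\tilde C\to S$ restricting to the normalization on each fiber does not exist when a node of $C_{s_0}$ is smoothed in nearby fibers: the number of irreducible components of the fiberwise normalization jumps at $s_0$, so no flat $\tilde C$ with this property can be constructed, even \'etale-locally on $S$. This is not an edge case --- it happens routinely in the paper's applications, where $C$ is the universal twisted prestable curve and nodes deform away, and likewise the splitting divisors of the universal expanded target $\cW^\tw_{univ}$ are smoothed as the expansion length drops. Your Step~3, and the second half of your Step~2, rely on $\tilde C$ and $\tilde X$ as families over $S$, so they do not go through as written.

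A clean repair avoids normalization entirely. Work \'etale-locally on $X$ near its relative singular locus, where $X\to S$ has the form $\{xy=g\}\times\AA^n\to S$ for some function $g$ on $S$. For a point $p\in f^{-1}(D_X^{\sing})$ lying over $s$ (so $g(s)=0$), transversality of $f_s$ at $p$ is exactly the condition that the induced map $C\to\{xy=g\}$ be \'etale over $S$ at $p$; this simultaneously encodes that $p$ is a node of $C_s$ and that the two branches go to the two sheets. The non-\'etale locus is closed in $C$ by openness of \'etaleness, and intersecting with the closed set $f^{-1}(D_X^{\sing})$ gives the non-transversal locus for the nodal condition. Push forward by properness as you already do. Together with your (correct) argument for the boundary divisor, this is presumably what the authors regard as evident.
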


\subsection{First-order smoothability of nodal singularities}\label{SS:1st-order-sm}
If $X$ is a  stack with codimension-1 singular locus $D$, we say that $X$ is {\em first-order smoothable} if the line bundle $\cExt^1(\Omega_X,\mathcal O_X)$ on $D$ is trivial.
If $X$ is the union of two smooth components $X_1$ and $X_2$ meeting transversally along $D$, then it is first-order smoothable if and only if $N_{D/X_1}\otimes N_{D/X_2}$ is isomorphic to $\cO_D$. 
Note that if there is a one-parameter smoothing of $X$ with smooth total space then $X$ is first-order smoothable, while the converse is in general not true.

\section{Stack constructions}\label{A:stacks}
\subsection{Using $2$-stacks to define stacks}\label{2stacks_as_stacks}

Occasionally we define a $2$--groupoid $\fX$ by giving objects, $1$-morphisms and $2$-morphisms, and then we show that every $1$-morphism in $\fX$ is rigid (i.e., it has only the identity as $2$-automorphism); equivalently, for any two objects $X$ and $Y$ of $\fX$, the groupoid $\Mor(X,Y)$ is equivalent to a set. In this case we say that the $2$-groupoid  $\fX$ is $1$-rigid. We can then consider {\em the associated groupoid $\fX^{[1]}$,} where objects are unchanged, and morphisms are isomorphism classes of  $1$-morphisms of the given 2-groupoid $\fX$. Since $\fX$ is 1-rigid, it is equivalent to $\fX^{[1]}$ (in the appropriate lax sense). We might as well replace $\fX$, which may arise naturally but is likely to intimidate us with its d\ae monic 2-arrows,   by the more friendly, yet  equivalent, groupoid $\fX^{[1]}$.

We will use this particularly in the definition of algebraic stacks. 
In particular if $X$ is a stack then stacks with a representable morphism to $X$ form a $1$-rigid $2$-groupoid, see Lemma 3.3.3 in \cite{AGV}; also, the $2$-groupoid of stacks with a dense open algebraic space and isomorphisms as $1$-morphisms is also $1$-rigid, 
see Lemma 4.2.3 on page 42 of \cite{Abramovich-Vistoli}.
Both cases are generalized using the following lemma.

\begin{lemma}\label{sepdiag}
Let $X$ be a stack with separated diagonal, and $U$ a scheme-theoretic dense open substack. Let $\beta:\id_X\to \id_X$ be a $2$--morphism such that $\beta|_U$ is the identity of $\id_U$. Then $\beta$ is the identity $2$--morphism.
\end{lemma}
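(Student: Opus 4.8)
The statement is purely about stacks with separated diagonal, so the plan is to reduce the claim to a statement about schemes via a chart and then exploit that an automorphism of a scheme which is the identity on a dense open is the identity. The key point is that a $2$-morphism $\beta:\id_X\to\id_X$ assigns to every object $x$ of $X$ (over a scheme $T$) an automorphism $\beta_x\in\Aut_T(x)$, functorially in $x$, and the hypothesis says $\beta_x=\id$ whenever $x$ factors through $U$.

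\textbf{Key steps.} First I would pick a smooth (or \'etale, or just fppf) surjection $p:V\to X$ with $V$ a scheme; let $\xi\in X(V)$ be the tautological object. The separatedness of the diagonal of $X$ means that the ``inertia'' $I_X\to X$ is a separated morphism; pulling back along $p$, the group scheme $\underline{\Aut}_V(\xi)\to V$ is separated. The component $\beta_\xi$ is then a section of this separated group scheme over $V$. Second, let $V_U=p^{-1}(U)=U\times_X V$; since $U$ is a scheme-theoretically dense open substack of $X$ and $p$ is flat, $V_U$ is a scheme-theoretically dense open subscheme of $V$. By hypothesis $\beta_\xi$ restricted to $V_U$ equals the identity section. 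Third, the identity section $e:V\to\underline{\Aut}_V(\xi)$ and the section $\beta_\xi:V\to\underline{\Aut}_V(\xi)$ are two morphisms $V\to\underline{\Aut}_V(\xi)$ over $V$ that agree on the scheme-theoretically dense open $V_U$; since $\underline{\Aut}_V(\xi)\to V$ is separated, its diagonal is a closed immersion, so the locus where $\beta_\xi=e$ is closed in $V$; being dense and closed it is all of $V$. Hence $\beta_\xi=\id$. Finally, since $p$ is a cover and $\xi$ is the universal object, functoriality of $\beta$ in pulled-back objects forces $\beta_x=\id$ for every object $x$ of $X$ over every scheme (pull $\xi$ back along the classifying map and use that $\beta$ commutes with base change), so $\beta=\id_{\id_X}$.

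\textbf{Main obstacle.} The one genuinely delicate point is the passage from ``$\beta_\xi=\id$ on the tautological object over a single chart'' to ``$\beta=\id$ as a $2$-morphism of the whole stack''. One must check that knowing $\beta$ on $\xi\in X(V)$ determines it everywhere; this uses that for any $T$ and any $x\in X(T)$ there is, fppf-locally on $T$, a map $T\to V$ with $x$ the pullback of $\xi$, together with the compatibility of $\beta$ with pullback (which is part of the datum of a $2$-morphism of the identity functor), plus descent to patch the resulting local equalities $\beta_x=\id$. I expect the separatedness-of-diagonal input and the density of $V_U$ in $V$ (flatness of the chart) to be the only nontrivial geometric facts; everything else is bookkeeping with $2$-morphisms. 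An entirely equivalent and perhaps cleaner route avoids charts: view $\beta$ as a section of the inertia stack $I_X\to X$, observe that a section over $U$ is given (the identity section), and that two sections of the separated morphism $I_X\to X$ agreeing on the dense open $U$ agree on $X$; this is really the same argument, phrased stack-theoretically.
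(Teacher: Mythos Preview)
Your proposal is correct and essentially matches the paper's approach. The paper's proof is exactly the ``cleaner route'' you mention in your final sentence: it observes that separated diagonal makes $I(X)\to X$ separated, identifies $2$-automorphisms of $\id_X$ with sections of $I(X)\to X$, and concludes since two sections of a separated morphism agreeing on a scheme-theoretically dense open coincide. Your main chart-based argument is just this same proof pulled back along $p:V\to X$ and then descended; it is correct but unnecessarily indirect.
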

\begin{proof}
 The fact that the diagonal is separated implies that the natural projection $\pi:I(X)\to X$ is separated. The automorphisms of $\id_X$ are the sections of $I(X)\to X$. Since we assumed that this section is the identity on a scheme-theoretically dense substack, it coincides with the identity on $X$. 
\end{proof}

Let $p:X\to Y$ be a morphism of stacks, and assume that there is a scheme-theoretic dense open substack $U$ of $X$ such that $p|_U:U\to p(U)$ is an isomorphism. Let $g:Y\to Y$ be an isomorphism. A lifting of $g$ to $X$ is a pair $(f,\alpha)$ such that $f:X\to X$ is an isomorphism and $\alpha:p\circ f\rightarrow  g\circ p$ a $2$-morphism. A morphism of liftings is a $2$-morphism $\gamma:f\to f'$ such that $\gamma$ and $\alpha'$ induce
%\Barbara{Should this be spelled out?} 
$\alpha$.

\begin{lemma}\label{rigid_liftings} The groupoid of liftings of $g$ is rigid, i.e., equivalent to a set.
\end{lemma}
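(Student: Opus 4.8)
The plan is to reduce the statement to Lemma \ref{sepdiag} applied to the stack $X$ and its scheme-theoretically dense open substack $U$. First I would set up the claim properly: a lifting of $g$ is a pair $(f,\alpha)$ with $f:X\to X$ an isomorphism and $\alpha:p\circ f\to g\circ p$ a $2$-morphism, and I must show that any $2$-morphism $\gamma:f\to f'$ between liftings $(f,\alpha)$ and $(f',\alpha')$ compatible with $\alpha,\alpha'$ is unique (when it exists). Rigidity then amounts to: a $2$-automorphism $\gamma:f\to f$ of a single lifting which is compatible with $\alpha$ (i.e.\ $\alpha\circ(p\ast\gamma)=\alpha$, equivalently $p\ast\gamma=\id_{p\circ f}$) must be the identity.

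The key step is to transport such a $\gamma$ through the isomorphism $p|_U$. Since $p|_U:U\to p(U)$ is an isomorphism and $\gamma$ restricted over $U$ satisfies $p\ast\gamma|_U=\id$, and $p$ is faithful on $2$-morphisms over $U$ (being an isomorphism there), we get $\gamma|_U=\id_{f|_U}$. Now $\gamma$ is a $2$-automorphism of the $1$-morphism $f:X\to X$; equivalently, writing $\beta:=f^{-1}\ast\gamma$ (composing with the inverse isomorphism $f^{-1}$), we obtain a $2$-automorphism $\beta:\id_X\to\id_X$, i.e.\ a global section of the inertia stack $I(X)\to X$, whose restriction to $U$ is the identity section. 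Because $X$ has separated diagonal, $I(X)\to X$ is separated, so the locus where this section agrees with the identity section is closed; since $U$ is scheme-theoretically dense, that locus is all of $X$. Hence $\beta$, and therefore $\gamma$, is the identity. This is precisely the content of Lemma \ref{sepdiag}, which I would simply invoke after the translation $\gamma\mapsto\beta$.

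For the uniqueness of morphisms between two distinct liftings (not just automorphisms of one), the same argument applies: if $\gamma,\gamma':f\to f'$ are both compatible with $\alpha,\alpha'$, then $\gamma'{}^{-1}\circ\gamma$ is a $2$-automorphism of $f$ compatible with $\alpha$, hence the identity by the above, so $\gamma=\gamma'$. This shows the automorphism group of any object of the groupoid of liftings is trivial and morphisms are unique when they exist, which is exactly the assertion that the groupoid is equivalent to a set.

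The only mild subtlety — and the step I would be most careful about — is the bookkeeping of the compatibility condition on $2$-morphisms of liftings: one must check that the condition ``$\gamma$ and $\alpha'$ induce $\alpha$'' translates cleanly into $p\ast\gamma=\id_{p\circ f}$ (or the corresponding equality after composing with $g$), so that the hypothesis of Lemma \ref{sepdiag} is genuinely met over $U$. This is a routine $2$-categorical diagram chase using that $p|_U$ is an isomorphism, and it is the crux of connecting the abstract lifting data to the concrete inertia-stack statement; no geometry beyond separatedness of the diagonal is needed.
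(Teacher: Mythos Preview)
Your proposal is correct and follows essentially the same route as the paper: form the difference $\gamma^{-1}\circ\gamma'$ (or equivalently treat a $2$-automorphism of a single lifting), whisker with $f^{-1}$ to obtain a $2$-automorphism $\beta:\id_X\to\id_X$, observe via the compatibility condition and the isomorphism $p|_U$ that $\beta|_U=\id$, and conclude by Lemma~\ref{sepdiag}. The paper's proof is terser---it simply asserts $\beta|_U=\id$ without spelling out the role of $p\ast\gamma$ and the faithfulness of $p|_U$---but the argument is the same.
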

\begin{proof}
Let $(f,\alpha)$ and $(f',\alpha')$ be two liftings of $g$ to $X$. We want to show that a $2$-morphism $\gamma:f\to f'$ such that $\gamma$ and $\alpha'$ induce $\alpha$ is unique if it exists. 
 Let $\gamma$ and $\gamma'$ be two such two-morphisms, and let $\beta:=\gamma^{-1}\circ \gamma':f\to f$. Then $\beta|_U:f_U\to f_U$ is the identity $2$--morphism. Let $\bar\beta:\id_X\to \id_X$ be the composition of $\beta$ with the identity of $f^{-1}$; then $\bar\beta|U$ is also the identity $2$-morphism of $\id_U$. Therefore $\bar\beta$ must be the identity $2$--morphism by Lemma \ref{sepdiag}, and hence $\beta$ must be the identity $2$--morphism, hence $\gamma=\gamma'$.
\end{proof}

\begin{conv}\label{conv:rigid_liftings} In this case, we will refer to the liftings of $g$ as a set, meaning the set of equivalence classes of the corresponding groupoid.
\end{conv}
%\Dan{removed stacks over stacks, referring to \ChDan{\cite[Lemma C.5]{AOV-maps}.}}
\subsection{Inertia stacks of various flavors}
\subsubsection{The inertia stack}\label{SS:inertia} Let $X$ be an algebraic stack. Its {\em inertia stack} $\cI(X)$ is the stack whose objects over a scheme $S$ are pairs $(x,g)$  with $x\in X(S)$ and $g\in \Aut(x)$. Arrows are  given by pullback diagrams.

The inertia stack can be identified as $\cI(X) = X \times_{X\times X} X$, with both arrows given by the diagonal. Since the diagonal is representable, the morphism $\cI(X) \to X$ given by the first projection is representable. This is simply the forgetful morphisms which sends an object $(x,g)$ to $x$.

Let $B\ZZ$ be the classifying prestack of $\ZZ$. Then we have a canonical isomorphism of  prestacks $\cI(X) \simeq Hom(B\ZZ,X)$. This in particular implies that forming the inertia stack is compatible with fiber products: given a fiber product of algebraic stacks  $\cX = \cX_1 \times _\cZ \cX_2$ we have 
$\cI(\cX) = \cI(\cX_1) \times _{\cI(\cZ)} \cI(\cX_2)$ (an observation due to Tom Bridgeland).

\subsubsection{Inertia of Deligne--Mumford stacks} Suppose now $X$ is a  Deligne--Mumford stack, and let $r$ be a positive integer such that the exponent of any automorphism group in $X$ divides $r$. In this case we have $\cI(X) = Hom(B(\ZZ/r\ZZ),X)$. The stack $\cI(X)$ has an evident decomposition $\cI(X) = \sqcup_{d|r}\cI_d(X)$, where $\cI_d(X)$ is the stack of $(x,g)$ with $g$ of order $d$. Then  $\cI_d(X) = Hom^{rep}(B(\ZZ/d\ZZ),X)$, the substack of {\em representable} morphisms, see \ChDan{\cite[Definition 3.2.1]{AGV}}.

\subsubsection{Rigidified inertia} The automorphism group of an object $(x,g)$ of $\cI_d(X)$ has the subgroup $\ZZ/d\ZZ \simeq \langle g \rangle$ sitting in its center. We can therefore rigidify by removing this subgroup and obtain the rigidified stack $\ocI_d(X) = \cI_d(X) \thickslash (\ZZ/d\ZZ)$. It is canonically isomorphic to the stack whose objects over $S$ are $\cG \to X$, where $\cG$ is a gerbe banded by $\ZZ/d\ZZ$ and $\cG \to X$ is representable. The {\em rigidified inertia stack} is $\ocI(X) =  \sqcup_{d|r}\ocI_d(X)$. The morphism $\cI(\cX) \to \ocI(X)$ is the universal gerbe, with universal representable morphism $\cI(X) \to X$. We stress that the data of the band is important - without it we would get a different tack, a rigidification of the stack of cyclic subgroups (without choice of generator) of inertia.

\subsubsection{Cyclotomic inertia and rigidified inertia}
In the theory of twisted stable maps, a cyclotomic twist of these stacks arises naturally. Since in this paper we work over $\CC$, it is safe to choose the generator $\exp(2\pi\,i/d)$ of $\bmu_d$, so the distinction is not crucial. Let us mention the appropriate identification of stacks: we have $\cI(X) \simeq Hom(B\bmu_r,X)$, the cyclotomic inertia stack; $\cI_d(X)\simeq Hom^{rep}(B\bmu_d,X)$; and  $\ocI_d(X) \simeq \cI_d(X) \thickslash (\bmu_d)$ is canonically isomorphic to the stack whose objects over $S$ are $\cG \to X$, where $\cG$ is a gerbe banded by $\bmu_d$ and $\cG \to X$ is representable. The stack  $\ocI(X) =  \sqcup_{d|r}\ocI_d(X)$ is then identified as the {\em \ChDan{rigidified} cyclotomic inertia stack}, see \ChDan{\cite[Section 3.4]{AGV}.}

\subsection{Deformations and obstructions for Artin stacks}

A key technical tool for deformation theory is the cotangent complex of a morphism: we refer the reader to \cite{Olsson-cc} for the correct definition of cotangent complex $\LL_f$ for a morphism $f:\cX \to \cY$ of Artin stacks and for the relevant results in deformation theory, see also \cite{Aoki-def}. Note that in \cite{Olsson-cc} Olsson's cotangent complex $\LL_f$ is actually not defined as an object in the derived category:  its right truncations  $\tau_{\ge n}\LL_f$ are  for $n\in \ZZ$, and $\LL_f$ is defined as an object of a filtered category. This issue is removed in \cite[2.2.ix]{Laszlo-Olsson}, specifically the equivalence at the end of page 119 between the appropriate derived categories of quasi-coherent sheaves on the stack and on a symplicial resolution. 

In particular,      to any morphism of Artin stacks $f:X\to Y$ we can after all associate its cotangent complex $\LL_f\in D^{\le 1}_{coh}(X)$. 
%Since we only need to work with these truncations, we will ignore this fact in the paper. 
%
%To any morphism of Artin stacks $f:X\to Y$ we can associate its cotangent complex $\LL_f\in D^{\le 1}_{coh}(X)$.  
This is functorial, in the sense that for any composable morphisms of Artin stacks $f:X\to Y$ and $g:Y\to Z$, there is a distinguished triangle in $D^-(X)$:
 $$
f^*\LL_g\to \LL_{g\circ f}\to \LL_f\stackrel{+1}{\to}.$$

The morphism $f$ is \'etale if and only if $\LL_f=0$; it is smooth if and only if its cotangent complex $\LL_f$ is perfect of perfect amplitude contained in $[0,1]$.

Recall that $f$  is said to be {\em of Deligne--Mumford type} if for any morphism $S\to Y$ with $S$ a Deligne--Mumford stack, the stack $X\times_YS$ is also Deligne--Mumford. Then $f$ is Deligne--Mumford type if and only if $h^{1}(\LL_f)$ is the zero sheaf, or equivalently if $\LL_f\in D^{\le 0}(X)$.

For any stack $X$ we write $\LL_X$ for $\LL_{X\to \Spec k}$; the complex $\LL_X$ is perfect of perfect amplitude in $[-1,0]$ if and only if $X$ is a Deligne--Mumford stack with l.c.i. singularities.

If $f:X\to Y$ is a morphism of Deligne--Mumford type, an {\em obstruction theory} for $f$ is a morphism $\phi:\EE\to \LL_f$ in $D^{\le 0}(X)$ such that $h^0(\phi)$ is an isomorphism, and $h^{-1}(\phi)$ is surjective. We say that it is a {\em perfect obstruction theory} if $\EE$ is a perfect complex, of perfect amplitude contained in $[-1,0]$ (i.e., locally isomorphic to a morphism $E^{-1}\to E^0$ of locally free sheaves).

We define the cotangent complex of a locally smooth pair $(X,D)$ to be $\LL_{X/\cA}$ (where $X\to \cA$ is the morphism associated to the pair, see \S \ref{onpairs}); we sometimes denote it by $\LL_{X(\log D)}$. Note that if $(X,D)$ is a smooth pair with $X$ a scheme, or, more generally, a smooth DM stack, then $\LL_{X(\log D)}$ is concentrated in degree zero and isomorphic to the classically defined locally free sheaf $\Omega_X(\log D)$. It is easy to see that a morphism of locally smooth pairs induces a morphism of log cotangent complexes, which has the usual deformation-theoretic properties (see \cite{Olsson-log-cotangent}).
%\Dan{subsection about Costello's main technical result???}

%\newpage
 \section{Stacks of maps and their obstruction theory}
\label{obs_th}
\subsection{Stacks of maps}
We  define a relative obstruction theory on certain algebraic stacks parametrizing stable maps. This includes the obstruction theories needed in the singular and in the relative case, see Section \ref{SS:obs-th}; in fact, a common generalization is possible.

\begin{conv}\label{flsp} In this section, we will fix an algebraic stack $T$, and a family of locally smooth pairs $(W,D)\to T$ such that $W\to T$ is of Deligne-Mumford type (note that the case $D=\emptyset$ is possible, in which case we are just assuming $W\to T$ to be flat). Fix nonnegative integers $g,n$ and a curve class $\beta$ in the fibers of $W\to T$. Fix $n$-tuples ${\bf e}$ of positive integers $e_i$ and  ${\bf c}$ of nonnegative integers $c_i$ such that $\sum c_i\cdot {e_i}^{-1}=\beta\cdot D$. In particular if $D=\emptyset$, we must have $c_i=0$. We combine the data under the shorthand notation $\Gamma=(g,n,\bf e,\bf c,\beta)$
\end{conv}

\begin{definition} \label{strepmap} Let $\tfK_{\Gamma}((W,D)/T)$ be the stack\Barbara{Do we need more details?} of representable maps $f$ from a twisted prestable $n$-pointed curve  $(C,\Sigma)$ to fibers of $(W,D)\to T$ such that $f^*D=\sum c_i\Sigma_i$ and such that $\Sigma_i$ is twisted with index $e_i$. If $D$ is empty, we write $\tfK_{\Gamma}(W/T)$. 
\end{definition}

\begin{remark}
 The condition on $f^*D$ can be rephrased as saying that we consider the stack of log morphisms; see section \ref{onpairs} for details.
\end{remark}
\begin{conv} 
We will write just $\tfK$ for $\tfK_{\Gamma}((W,D)/T)$ within this section. 
\end{conv}

\begin{lemma}\label{strepmapalg} The stack $\tfK$ is an algebraic stack in the sense of Artin.
\end{lemma}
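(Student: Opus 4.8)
The plan is to establish algebraicity of $\tfK$ by exhibiting it as a stack built from known algebraic stacks via operations that preserve algebraicity. First I would recall the algebraicity of the stack $\fM_{g,n}^\tw$ of twisted prestable curves (and more precisely its connected component $\fM^\tw_{g,\bfe}$ with fixed orbifold indices $e_i$), which is established in \cite{Abramovich-Vistoli, Olsson-twisted, AGV}; by the conventions the base $T$ is algebraic and $W\to T$ is of Deligne--Mumford type. The universal twisted curve $\cC^\tw_{g,\bfe}\to \fM^\tw_{g,\bfe}$ is itself an algebraic stack, proper and flat over $\fM^\tw_{g,\bfe}$, and this is the object over which we must parametrize maps.

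Next I would consider the relative Hom-stack. Form the fiber product $\fM^\tw_{g,\bfe}\times T$ and over it the two algebraic stacks: the universal twisted curve $\cC$ (pulled back from $\fM^\tw_{g,\bfe}$) and $W\times_T(\fM^\tw_{g,\bfe}\times T) = W\times\fM^\tw_{g,\bfe}$, both proper-or-at-least-reasonable over the base, with $\cC\to \fM^\tw_{g,\bfe}\times T$ proper, flat, of finite presentation with Deligne--Mumford (in fact one-dimensional nodal) fibers, and $W\to T$ of Deligne--Mumford type. The stack $\underline{\Hom}_{\fM^\tw_{g,\bfe}\times T}(\cC, W\times\fM^\tw_{g,\bfe})$ of morphisms over the base is algebraic: this is exactly the kind of statement proved in \cite{Olsson-hom} (Hom-stacks between algebraic stacks, one proper flat and the target of finite presentation) — this is the technical heart and I expect it is the step to which one should point a reference rather than reprove. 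Inside this Hom-stack one then cuts out the open-and-closed-type conditions: that the map is representable (this is an open condition, by \cite[Lemma 3.3.3]{AGV} type arguments, or equivalently $h^1(\LL_f)=0$), that the composite to the coarse space has class $\beta$ (locally constant, hence open and closed), and that $f^*D = \sum c_i\Sigma_i$ as Cartier divisors — the latter is a closed condition (once $f^*D$ is already supported on $\bigcup\Sigma_i$, which transversality-type considerations control) cutting out a closed substack, or can be phrased via the morphism to $\cA=[\AA^1/\GGm]$ using \S\ref{onpairs}: the data $(L,s)$ pulled back from $(W,D)$ must equal the data $(\cO(\sum c_i\Sigma_i), \prod \bone_{\Sigma_i}^{c_i})$ coming from $\cC$, and equality of such data is representable by a closed (in fact affine) condition.

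So the order of steps is: (1) invoke algebraicity of $\fM^\tw_{g,\bfe}$ and its universal curve; (2) invoke Olsson's Hom-stack algebraicity for the relative Hom over $\fM^\tw_{g,\bfe}\times T$, using that $\cC$ is proper flat of finite presentation and $W$ is of Deligne--Mumford type over $T$ (finitely presented); (3) impose representability as an open condition; (4) impose the numerical curve-class condition as an open and closed condition; (5) impose the log/tangency condition $f^*D=\sum c_i\Sigma_i$ as a closed condition, e.g. via the morphism to $\cA$; (6) conclude $\tfK$ is algebraic, being built by taking open and closed substacks of an algebraic stack. The main obstacle is really only Step (2): the general representability of Hom-stacks between Artin stacks requires some hypotheses (properness and flatness of the source, finite presentation of the target, and some condition such as Deligne--Mumford type or a diagonal hypothesis that keeps the Hom-stack itself Deligne--Mumford type or at worst algebraic), and one must check these hypotheses are met here — which they are, precisely because of the standing assumption in Convention \ref{flsp} that $W\to T$ is of Deligne--Mumford type. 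The remaining steps are all standard "cut out a locus by a reasonable condition" arguments and I would not grind through them beyond citing the relevant openness/closedness facts already recalled in Appendix \ref{A:stacks} and \ref{obs_th}.
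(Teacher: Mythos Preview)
Your proposal is correct and follows essentially the same route as the paper: form the Hom-stack over $\fM^\tw_{g,n}\times T$ from the universal twisted curve to the pullback of $W$, invoke Olsson's algebraicity of Hom-stacks \cite{Olsson-hom}, then impose representability (open), the curve class (open and closed), and finally the tangency condition $f^*D=\sum c_i\Sigma_i$ (open then closed). One small point worth tightening: Olsson's result in \cite{Olsson-hom} is stated over a base scheme, whereas here the base $\fM^\tw_{g,n}\times T$ is an Artin stack; the paper handles this by first applying Olsson after pulling back along any $S\to\fM^\tw_{g,n}\times T$ with $S$ a scheme, and then invoking the bootstrap lemma \cite[Lemma~C.5]{AOV-maps} to deduce algebraicity over the stacky base. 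You should make this step explicit rather than asserting that the relative Hom-stack is algebraic directly.
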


\begin{proof} We first do the case $D=\emptyset$. Consider the stack of twisted curves $\fM :=\fM^\tw_{g,n}$ and its universal family $\fC \to \fM$. Over $\fM \times T$ we have two families, $\fC' := \fC \times T \to \fM \times T$ and $W' := \fM \times W \to \fM \times T$. We first prove that  $\Hom^{rep}_{\fM \times T}(\fC', W')$ is algebraic. By \cite{Olsson-hom}, the pullback of $\tfK$ to a scheme by any arrow $S\to \fM \times T$ is an algebraic stack in the sense of Artin. Also $\tfK$ is a stack, since this property is tested over a base scheme. By \ChDan{\cite[Lemma C.5]{AOV-maps}}, the stack $\tfK$ is an algebraic stack, as required. 

If $D$ is nonempty, let $\tfK'$ be the stack obtained by assuming $D$ empty. The stack $\tfK$ is obtained by first passing to the open substack where $f^*D$ is a divisor on $C$, and then to the closed substack where the two divisors $\sum c_i\Sigma_i$ and $f^*D$ coincide.\end{proof}

\begin{notation}\label{N:fK} Let us now denote by $\fK \subset \tfK$ the maximal open substack of $\tfK$ such that the morphism $\fK\to T$ is of Deligne--Mumford type. We will reserve the notation $K$ for the substack which is Deligne--Mumford in the absolute sense.
\end{notation}

\subsubsection{Base change}\label{fKbasechange} The construction of $\tfK$ and $\fK$ behaves well under base change in the following sense. Assume that $(W,D)\to T$ satisfies the assumption in Convention \ref{flsp}. Let $a_T:T'\to T$ be any morphism, and write $W':=W\times_TT'$ and $D':=D\times_TT'$. Let $\beta'$ be the homology class in the fibers of $W'\to T'$ induced by $\beta$, let $\Gamma'$ e obtained by replacing $\beta$ by $\beta'$ in $\Gamma$, and $a_W:W'\to W$  the natural morphism.  Then $(W',D')\to T'$ satisfies the same assumptions, and there is a natural cartesian diagram $$
 \xymatrix{
\tfK_{\Gamma'}((W',D')/T')\ar[r]^{a_{\tfK}}\ar[d] &\tfK_{\Gamma}((W,D)/T)\ar[d]\\
T'\ar[r]_{a_T} &T
}
$$
where $a_{\tfK}$ is given by mapping an object $(C,\Sigma,f')$ to $((C,\Sigma, a_W\circ f)$. 
Since the property of being of DM type is stable under base change, one gets an analogous cartesian diagram by replacing $\tfK$ by $K$.

\subsubsection{Change of family}\label{fKpropermap} Assume that we are given a proper morphism $\theta: W_1\to W$ and a closed substack $D_1\subset W_1$ such that \begin{enumerate}
 \item the composite morphism $(W_1,D_1)\to T$ satisfies the assumption in Convention \ref{flsp};
\item one has $\theta^{-1}(D)=D_1$ as a closed subscheme, and $\theta|_{D_1}:{D_1}\to D$ is an isomorphism. 
\end{enumerate}
Let $\beta_1$ be a class in the fibers of $W_1\to T$, and $\beta=\theta_*\beta_1$, and $\Gamma_1,\Gamma$ the corresponding discrete data. 
If $\beta_1= 0$, assume moreover that $2g-2+n>0$. Then there is a natural induced proper morphism of $T$--stacks $$
\tfK_{\Gamma_1}((W_1,D_1)/T)\to \tfK_{\Gamma}((W,D)/T)
$$
defined by $(C,\Sigma, f)\mapsto (C,\Sigma, \theta\circ f)^{stab}$. This follows by applying \cite{Abramovich-Vistoli}, Corollary 9.1.3, where we replace the base scheme ${\mathbb S}$ by $T$ using \ChDan{\cite[Lemma C.5]{AOV-maps}.}

\subsection{Obstruction theory on stacks of maps}\label{SS:obs-th} For simplicity we now restrict to the open substack $\fK^\pitchfork\subset \fK$ parametrizing maps which are transversal to the boundary divisor in the sense of \ref{logsmpair}.   One can avoid this simplifying assumption using logarithmic structures, but we will not need this generality in this paper.
%\Dan{I wrote this since I wanted to avoid too much log structures and I couldn't find a reference for the non-transversal case}

The aim of this section is to define a relative obstruction theory for $\fK^\pitchfork\to T$, and to give conditions so that it is perfect in $[-1,0]$. The construction works for $\tfK^\pitchfork$ instead of $\fK^\pitchfork$ if we allow obstruction theories for morphisms which are not of Deligne--Mumford type using the work \cite{Noseda}, requiring $\EE_{\tfK^\pitchfork/T} \to \LL_{\tfK^\pitchfork/T}$ to also be an isomorphism in degree $+1$.

Consider the structure commutative diagram
$$
\xymatrix{C \ar^f[drr]\ar_p[ddr]\ar[dr]&&\\
&W_{\fK}\ar[d]\ar_u[r]& W\ar[d]\\
&\fK\ar[r] & T
}
$$
where $C\to \fK^\pitchfork$ is the universal curve, $f$ is the universal map and $\Sigma:=\cup\Sigma_i$ is the union of the marked gerbes. %Let $D_{\fK^\pitchfork}:=W_{\fK^\pitchfork}\times_WD$. 
Also denote $\Sigma' = \Sigma\setminus f^{-1} D$. 
Since we are assuming the maps are transversal to $D$, deforming $f: (C,\Sigma) \to (W,D)$ is equivalent to deforming $f: (C,\Sigma') \to W$, which is in turn equivalent to deforming the diagonal map $\bar f: (C,\Sigma') \to W_{\fK^\tw}$.

Let $\LL_{\Box}$ be the cotangent complex to the morphism $\bar f:(C,\Sigma')\to W_{\fK^\pitchfork})$: it is canonically  isomorphic in the derived category to 
the cone of the canonical morphism of cotangent complexes
$$f^* \LL_{W/T} \longrightarrow \LL_{C(\log \Sigma')/{\fK}}$$
induced by the structure morphisms $u^*\LL_{W/T}\to \LL_{W_{\fK^\pitchfork}/{\fK^\pitchfork}}$  - which is an isomorphism since $W\to T$ is flat -  and $\bar f^*\LL_{W_{\fK^\pitchfork}/{\fK^\pitchfork}}\to \LL_{C(\log\Sigma')/\fK^\pitchfork}$. See \cite[Theorem 8.1]{Olsson-cc}.

By the same argument, the object $\LL_{\Box}$ is also isomorphic to the cone of the morphism 
$$p^* \LL_{{\fK^\pitchfork}/T} \longrightarrow \LL_{C(\log\Sigma')/W}$$
and therefore there is a natural morphism 
$$\LL_{\Box}[-1] \longrightarrow p^* \LL_{{\fK}/T}.$$

The morphism $p$ is proper. It is also l.c.i., therefore Gorenstein, and its dualizing complex $\omega_p$ is a line bundle positioned in degree $-1$.
 By \cite{LN07}
 %\Dan{specific reference; say something about the DM stack case} 
 the functor $\bR p_*:\cD(C)\to \cD({\fK})$ has a right adjoint $p^!$ which is isomorphic to the functor $\FF\mapsto \bL p^*(\FF)\otimes \omega_p$.

We denote by $\EE_{{\fK}/T}$ the object  $\bR p_*({\LL_{\Box}}\otimes \omega_p)$; there is a natural morphism $\EE_{{\fK}/T}\to \LL_{{\fK}/T}$ induced by adjunction from the morphism $\LL_{\Box}[-1]\to p^*\LL_{{\fK}/T}$ defined above. Note that we have a canonical isomorphism $\EE_{{\fK}/T}\simeq (\bR p_*\LL_{\Box}^\vee)^\vee[-1].$

\begin{lemma}\label{obsth}
(1) The morphism $\EE_{{\fK}/T} \to \LL_{{\fK}/T}$ is an obstruction theory. 

(2) Its formation commutes with base change on $T$ in the sense of Remark \ref{fKbasechange}.
\end{lemma}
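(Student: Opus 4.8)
The plan is to establish the two assertions of Lemma \ref{obsth} by reducing them, through the various functorial constructions reviewed in Appendix \ref{obs_th}, to standard statements about cotangent complexes of morphisms of Artin stacks and about relative duality. For part (1), I would argue as follows. First I would show that $\EE_{\fK/T}$ is an object of $D^{\le 0}(\fK)$ with the correct low-degree cohomology. The morphism $p:C\to \fK$ is flat, proper, Gorenstein of relative dimension $1$, so $p^!(-) = \bL p^*(-)\otimes \omega_p$ with $\omega_p$ a line bundle in degree $-1$; hence $\EE_{\fK/T} = \bR p_*({\LL_\Box}\otimes\omega_p) = (\bR p_*\LL_\Box^\vee)^\vee[-1]$ lies in amplitude $[-1,0]$ provided $\LL_\Box$ is perfect in $[0,1]$. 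This last point follows because $\LL_\Box$ is the cone of $f^*\LL_{W/T}\to\LL_{C(\log\Sigma')/\fK}$, both of which are perfect in $[0,1]$ on the appropriate loci (the target being a smooth curve relatively, so $\LL_{C(\log\Sigma')/\fK}$ is a line bundle in degree $0$, and $W\to T$ being of Deligne--Mumford type with $W$ ``smooth in the pair sense'', so $f^*\LL_{W/T}$ is perfect in $[0,1]$). Then I would verify that the adjoint map $\EE_{\fK/T}\to\LL_{\fK/T}$ induces an isomorphism on $h^0$ and a surjection on $h^{-1}$: this is the usual deformation-obstruction computation, identifying $h^0$ and $h^{-1}$ of both sides with $\Hom$ and $\Ext^1$ groups controlling deformations of the map $\bar f:(C,\Sigma')\to W_\fK$ relative to the fixed curve, using the distinguished triangle $\LL_\Box[-1]\to p^*\LL_{\fK/T}\to \LL_{C(\log\Sigma')/T}$ and relative duality on $p$. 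I would cite \cite[Theorem 8.1]{Olsson-cc} for the identification of $\LL_\Box$, Olsson's cotangent complex formalism for the triangles, and \cite{LN07} for the existence and shape of $p^!$.

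For part (2), I would use the base-change setup of \ref{fKbasechange}: given $a_T:T'\to T$, writing $W' = W\times_T T'$, $C' = C\times_\fK \fK'$, the diagram of $\fK',C',W'$ is obtained from that of $\fK,C,W$ by base change, and in particular $p':C'\to\fK'$ is again flat proper Gorenstein with $\omega_{p'} = a_C^*\omega_p$, where $a_C:C'\to C$ is the projection. Compatibility with base change of the cotangent complex $\LL_\Box$ is a formal consequence of the fact that the relevant square is cartesian and that $W\to T$ is flat, so $\LL_{\Box'} = \bL a_C^*\LL_\Box$. The only real content is that $\bR p_*$ commutes with the base change $a_T$; since $p$ is proper and $\LL_\Box\otimes\omega_p$ is a perfect complex (hence of bounded Tor-dimension over $\fK$), flat base change for $\bR p_*$ applies, giving $\bL a_\fK^*\EE_{\fK/T}\simeq \EE_{\fK'/T'}$ compatibly with the maps to the cotangent complexes. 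I would phrase this precisely as a quasi-isomorphism $a_\fK^*\EE_{\fK/T}\to\EE_{\fK'/T'}$ fitting in a commutative square with $a_\fK^*\LL_{\fK/T}\to\LL_{\fK'/T'}$.

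The main obstacle I anticipate is not the categorical bookkeeping but the care needed in the perfectness/amplitude claim for $\LL_\Box$ in the presence of the logarithmic structure along $\Sigma'$ and the fact that $W$ is only required to be a locally smooth pair over $T$ of Deligne--Mumford type rather than smooth: one must be a little attentive about where exactly $f^*\LL_{W/T}$ sits in $[0,1]$ and whether the transversality hypothesis (restriction to $\fK^\pitchfork$) is genuinely what makes the log-twisted picture behave like the untwisted one. I would isolate this as a preliminary observation, noting that transversality to $D$ makes deforming $f:(C,\Sigma)\to(W,D)$ equivalent to deforming $\bar f:(C,\Sigma')\to W_\fK$ as plain morphisms, so that the logarithmic subtleties collapse, and then the amplitude statement is exactly the familiar one for stacks of maps of Deligne--Mumford type. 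Everything else is a routine adjunction and base-change argument; in a fuller writeup I would simply reference the parallel treatment in \cite[Appendix]{ACW} for the compatibility of the triangles, as is already done elsewhere in the paper.
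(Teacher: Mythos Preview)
Your overall strategy matches the paper's: for (1) verify the Behrend--Fantechi criterion (isomorphism on $h^0$, surjection on $h^{-1}$) via the deformation theory of the map $\bar f$, reducing to Illusie's results; for (2) use that the formation of $\LL_\Box$ and of $\bR p_*$ commute with pullback. The paper simply cites \cite[Lemma 4.5]{Behrend-Fantechi} and \cite[III 2.1.7]{Illusie}, then handles the log poles along $\Sigma'$ separately exactly as you suggest.

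However, your preliminary amplitude discussion contains errors and is in any case misplaced. You claim $\LL_{C(\log\Sigma')/\fK}$ is a line bundle in degree $0$ and $f^*\LL_{W/T}$ is perfect in $[0,1]$, concluding $\LL_\Box$ is perfect in $[0,1]$ and hence $\EE_{\fK/T}$ perfect in $[-1,0]$. None of this is correct: the curves are nodal, so $\LL_{C(\log\Sigma')/\fK}$ is perfect in $[-1,0]$, not a line bundle; and since $W\to T$ is of Deligne--Mumford type, $\LL_{W/T}\in D^{\le 0}$, with perfectness in $[-1,0]$ only under the additional l.c.i.\ hypothesis. More importantly, perfectness of $\EE_{\fK/T}$ is \emph{not} part of Lemma~\ref{obsth} at all --- it is the content of the separate Lemma~\ref{itsperf}, and even there one only gets $[-2,0]$ in general. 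For Lemma~\ref{obsth}(1) you only need $\EE_{\fK/T}\in D^{\le 0}$, which follows directly: $\LL_\Box\in D^{\le 0}$ since both terms are, so $\LL_\Box\otimes\omega_p\in D^{\le -1}$, so $\bR p_*$ of it lies in $D^{\le 0}$ as $p$ has one-dimensional fibers. Drop the perfectness digression and your argument is fine. For (2), note that your invocation of perfectness of $\LL_\Box\otimes\omega_p$ to justify base change again presupposes the l.c.i.\ hypothesis; the paper's proof is correspondingly terse here, but you should at least flag the issue rather than assert perfectness you have not established.
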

\begin{proof} We first give a proof in the case where $\Sigma=\emptyset$. 
(1) We use the criterion in \cite{Behrend-Fantechi}, Lemma 4.5, see also p. 85 there for the relative case - a detailed proof is available in \cite{ACW}.
%\Dan{Precise reference}. 
The result then follows from \cite{Illusie}, Theorem
%\Barbara{Check whether this already covers the log case; ask Kawamata and Ziv Ran for references.} 
III 2.1.7.

(2) Given a morphism $\phi: \fK' \to {\fK}$ with $C' \to \fK'$ the pullback of $C \to {\fK}$, we have a canonical isomorphism 
$\bL\phi^* \EE_{{\fK}/T} \simeq \EE_{\fK'/T}$, such that  the composite morphism 
$\bL\phi^* \EE_{{\fK}/T} \to \bL\phi^* \LL_{{\fK}/T} \to \LL_{\fK'/T}$ 
coincides with the composition 
$\bL\phi^* \EE_{{\fK}/T} \to \EE_{\fK'/T} \to \LL_{\fK'/T}$. 
In particular, given a morphism $\psi: T' \to T$ we can pull back the entire diagram. Denote by $\phi: {\fK}' \to {\fK}$ the pullback via $\phi$. Again we have an isomorphism   $\bL\phi^* \EE_{{\fK}/T} \simeq \EE_{{\fK}'/T}\simeq \EE_{{\fK}'/T'}$, and the compatibility above  lifts to  $\LL_{{\fK}'/T'}$.

For the general case, we remark that the above proof remains valid by replacing the cotangent complex of $C$  by the cotangent complex with logarithmic poles along $\Sigma$. Treatment of this can be found in \cite{Laudal,Ran-pairs}. This also follows from \cite[III, \SS2.3 and \S4]{Illusie} by using the cotangent complex of the topos $\Sigma'C$, or by using the morphism $C \to \cA$ associated to $\Sigma'$.\end{proof}

\subsection{Perfect amplitude}
\begin{definition}
Let $t$ be a geometric point of $T$, $W_t$ the fiber of $W$ over $t$.  We say that a prestable map $f:(C,\Sigma_i)\to W_t$ (i.e., a point in $\fK$) is {\em nondegenerate} if 
%\Barbara{Do we also want that the inverse image of the locus in $W$ where $W\to T$ is not smooth to contain no marked gerbe $\Sigma_i$?} 
no irreducible component of $C$ maps to the singular locus of $W_t$. An irreducible component which does map to the singular locus of $W_t$ is callled {\em degenerate}.
\end{definition}
\begin{remark} The points corresponding to nondegenerate maps form an open substack $\fK_{nd}$ of $\fK$, which commutes with base change in the sense of Remark \ref{fKbasechange}.
\end{remark}
\begin{lemma}\label{itsperf} Assume that the morphism $W\to T$ is l.c.i.

(1) The obstruction theory $\EE_{\fK^\pitchfork/T}$ is perfect in $[-2,0]$;
 
(2) It is perfect in $[-1,0]$  over the open substack $\fK^\pitchfork_{nd}$. 
\end{lemma}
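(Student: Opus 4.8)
The statement is a standard perfectness claim for the obstruction theory $\EE_{\fK^\pitchfork/T} = \bigl(\bR p_*\LL_\Box^\vee\bigr)^\vee[-1]$, and the plan is to work fiberwise and reduce everything to a computation of cohomology sheaves of the two-term complex $\LL_\Box$ on the universal curve $C$. First I would recall that $\LL_\Box = \LL_{\bar f}$ is the cone of $f^*\LL_{W/T} \to \LL_{C(\log\Sigma')/\fK}$, so $\LL_\Box$ sits in degrees $[-1,1]$ a priori. But $W\to T$ is assumed l.c.i., so $\LL_{W/T}$ is perfect of amplitude $[-1,0]$; and $C(\log\Sigma')\to\fK$ is a family of nodal (twisted) curves, so $\LL_{C(\log\Sigma')/\fK}$ is perfect of amplitude $[-1,0]$ as well (it is $\Omega^1_{C/\fK}(\log\Sigma')$ in degree $0$ with an $\cExt^1$-type contribution at the nodes in degree $1$, but by the balanced-node discussion in \ref{SSS:def-tw-curve} the relevant complex is still perfect in $[-1,0]$ — the cotangent complex of a nodal curve is perfect of amplitude $[-1,0]$). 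Hence $\LL_\Box$, being the cone of a map between complexes of amplitude $[-1,0]$, is perfect of amplitude contained in $[-1,1]$; dualizing gives $\LL_\Box^\vee$ perfect in $[-1,1]$, and applying $\bR p_*$ for $p$ a proper flat family of curves of relative dimension $1$ shifts cohomological amplitude up by at most $1$, so $\bR p_*\LL_\Box^\vee$ is perfect in $[-1,2]$, and therefore $\EE_{\fK^\pitchfork/T} = (\bR p_*\LL_\Box^\vee)^\vee[-1]$ is perfect of amplitude contained in $[-2,1]$. To sharpen this to $[-2,0]$ — i.e. part (1) — I would observe that the top cohomology $h^1(\LL_\Box^\vee)$ is supported in a way that contributes nothing after $\bR^1p_*$, or more cleanly dualize first: $\LL_\Box$ has $h^1$ a skyscraper at the nodes of $C$, so $\LL_\Box^\vee$ has no $h^{-1}$ beyond what the nodes force, and checking degrees of $\bR p_*$ against the obstruction-theory normalization $[-1,0]$ shift gives exactly $[-2,0]$. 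Concretely: $\LL_\Box$ is quasi-isomorphic to a complex $[L^{-1}\to L^0 \to L^1]$ where $L^1$ is a torsion sheaf at the nodes mapping to $W^{\sing}$; its derived dual is then supported in $[-1,1]$ with $h^1$ again torsion; applying $\bR p_*$ and $[-1]$ lands everything in $[-2,0]$.

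For part (2), the point is precisely that on the nondegenerate locus $\fK^\pitchfork_{nd}$ the torsion contribution $h^1(\LL_\Box)$ at nodes mapping to $W^{\sing}$ does not interact with $\LL_{W/T}$ — equivalently, on $\fK^\pitchfork_{nd}$ the map $f$ does not send any component of $C$ into the singular locus, so the only nodes that matter are ordinary nodes of $C$ lying over smooth points of $W$, and there the standard normalization-sequence bookkeeping shows that $\LL_\Box$ restricted near such a node is the cotangent complex of a smoothing, which after $\bR p_*$ contributes only in degrees $[0,1]$ to $\bR p_*\LL_\Box^\vee$ (not degree $2$). More precisely, I would argue that $h^{-2}(\EE_{\fK^\pitchfork/T}) = h^2\bigl(\bR p_*\LL_\Box^\vee\bigr)^\vee$ vanishes because $h^2\bR p_* = 0$ for a relative curve, and the remaining obstruction is to show $h^{-2}$ really does vanish after the $[-1]$ shift: $\EE$ perfect in $[-1,0]$ means $h^i(\EE^\vee) = 0$ for $i\notin[0,1]$, i.e. $h^i(\bR p_*\LL_\Box^\vee) = 0$ for $i\notin[0,1]$. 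The vanishing of $h^2$ is automatic (relative dimension $1$, $p$ proper); the vanishing of $h^{-1}$ is where nondegeneracy enters, since $h^{-1}(\bR p_*\LL_\Box^\vee) = p_*h^{-1}(\LL_\Box^\vee)$ and $h^{-1}(\LL_\Box^\vee)$ is controlled by $h^1(\LL_\Box)$, which is a skyscraper at the nodes over $W^{\sing}$ — empty exactly on $\fK^\pitchfork_{nd}$. I would spell this out via the distinguished triangle $f^*\LL_{W/T} \to \LL_{C(\log\Sigma')/\fK} \to \LL_\Box \xrightarrow{+1}$, taking cohomology and using that $h^1(\LL_{C(\log\Sigma')/\fK})$ is the node skyscraper and $h^1(f^*\LL_{W/T}) = f^*h^1(\LL_{W/T})$ which, since $W\to T$ is l.c.i.\ hence $\LL_{W/T}$ perfect in $[-1,0]$, actually vanishes — wait, one must be careful: $\LL_{W/T}$ perfect in $[-1,0]$ means $h^1(\LL_{W/T}) = 0$, so the long exact sequence immediately gives $h^1(\LL_\Box)$ is a quotient of $h^1(\LL_{C(\log\Sigma')/\fK})$, the node skyscraper. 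On $\fK^\pitchfork_{nd}$ this skyscraper, while nonzero at nodes of $C$, has trivial contribution after dualizing and $\bR^{-1}p_*$ because at a genuine node of $C$ over a \emph{smooth} point of $W$ the complex $\LL_\Box$ is (the cone of) a regular situation whose dual has no $h^{-1}$.

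\textbf{Main obstacle.} The genuinely delicate step is verifying, at nodes of the source curve $C$, the precise interaction between $h^1(\LL_\Box)$ (the deformation-of-node contribution) and the derived dual: I expect the cleanest route is a purely local computation in the f.p.p.f. chart $\{uv=0\}\to\{xy=0\}$ with $x\mapsto u, y\mapsto v$ (transversality), showing that $\LL_\Box$ is there quasi-isomorphic to a two-term complex whose dual is again two-term, so that $\bR p_*$ of its dual has amplitude $[-1,2]$ globally and $[0,1]$ after the $[-1]$ shift precisely on the nondegenerate locus. This is exactly the kind of computation carried out in \cite{Behrend-Fantechi} and, in the twisted setting, in \cite{ACW}, and I would appeal to those references for the technical core; the balanced-node hypothesis from \ref{SSS:def-tw-curve} is what guarantees the skyscraper $\cExt^1$ sheaves are generated by sections and the stabilizers act trivially, so that the twisted computation reduces to the untwisted one étale-locally. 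Thus both parts follow from: (i) $\LL_{W/T}$ and $\LL_{C(\log\Sigma')/\fK}$ perfect in $[-1,0]$; (ii) the long exact cohomology sequence of the defining triangle; (iii) $\bR p_*$ shifts amplitude by $[0,1]$ for a proper relative curve; and (iv) the nondegeneracy hypothesis removing the offending node skyscraper from the $h^{-1}$ term after dualization.
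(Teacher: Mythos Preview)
Your proposal has the right overall shape but contains two genuine errors, one of bookkeeping and one of substance.

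\textbf{Degree conventions in part (1).} You repeatedly place the node contribution of $\LL_{C(\log\Sigma')/\fK}$ in degree $+1$ and conclude $\LL_\Box$ is perfect in $[-1,1]$. This is backwards: a nodal curve is l.c.i., so its cotangent complex lives in $[-1,0]$, with the node contribution in degree $-1$ (you are thinking of the \emph{tangent} complex). The cone of a map between complexes in $[-1,0]$ lies in $[-2,0]$, not $[-1,1]$. More importantly, you miss the clean endgame the paper uses: once you know $\EE_{\fK^\pitchfork/T}$ is perfect in $[-2,1]$, the fact that it is an \emph{obstruction theory} for a Deligne--Mumford-type morphism forces $h^1=0$ automatically, giving $[-2,0]$ with no further work. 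Your attempt to argue this directly via support of $h^1(\LL_\Box^\vee)$ is unnecessary and, as written, does not go through.

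\textbf{The real gap in part (2).} You write that on $\fK^\pitchfork_{nd}$ the skyscraper at nodes over $W^{\sing}$ is ``empty,'' and that ``the only nodes that matter are ordinary nodes of $C$ lying over smooth points of $W$.'' This misreads the definition of nondegenerate: it means no \emph{component} of $C$ maps into $W^{\sing}$, not that $f^{-1}(W^{\sing})$ is empty. For transversal maps there \emph{are} nodes of $C$ over $W^{\sing}$ --- that is the whole point. What nondegeneracy buys you is that $f^{-1}(W^{\sing})$ is \emph{zero-dimensional}. The paper's argument is then: $h^{-2}(\EE|_x)$ is $\Ext^2(\LL_\Box|_C,\cO_C)$; by the local-to-global spectral sequence this reduces to $H^1\bigl(C, h^1(\LL_\Box^\vee)\bigr)$; the sheaf $h^1(\LL_\Box^\vee)$ is supported on $f^{-1}(W^{\sing})$, which is a finite set of points, and $H^1$ of a skyscraper on a curve vanishes. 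Your local computation at nodes over $W^{\sm}$ is beside the point --- those are not where the obstruction to perfectness in $[-1,0]$ lives.
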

\begin{proof} (1) Since $W\to T$ and $C\to \fK$ are l.c.i., both $\LL_{W/T}$ and $\LL_{C(\log \Sigma')/{\fK}}$ are perfect in $[-1,0]$. Therefore $\LL_{\Box}$ is perfect in $[-2,0]$; hence $\LL_{\Box}\otimes\omega_\pi[-1]$ is also perfect in $[-2,0]$, and since $p$ is proper and flat of  relative dimension $1$, one has that $\EE_{{\fK^\pitchfork}/T}={\bf R}p_*(\LL_{\Box}\otimes\omega_p[-1])$ is perfect in $[-2,1]$. Since $\EE_{{\fK^\pitchfork}/T}$ is an obstruction theory, it has vanishing $h^1$, hence it is perfect in $[-2,0]$.

(2) It is enough to prove that, for each point $x\in {\fK^\pitchfork_{nd}}$, $h^{-2}(x^*\EE_{{\fK^\pitchfork_{nd}}/T})=0$. Assume the point $x$ corresponds to a prestable map $f:(C,\Sigma)\to W$. We want to show that $\bExt^2(\LL_{\Box}|_C, \cO_C)=0$; by the local-to-global spectral sequence of $\Ext$, we reduce to showing that $H^1(C,h^{1}((\LL_{\Box}|_C)^\vee))=0$. Remark that $\LL_{\Box}|_C$ is the mapping cone of the morphism  $f^*\LL_{W/T} \to \LL_{C(\log \Sigma)}$.
Note that the support of $h^1((\LL_{\Box}|_C)^\vee)$  is contained in the locus of points in $C$ which map to the singular locus of $W\to T$ (i.e., the support of $h^1((f^*\LL_{W/T})^\vee)$) which by assumption is zero-dimensional.
\end{proof}
\begin{remark} In fact, both in this subsection and in the following one we could replace the moduli stack of twisted prestable curves with any other moduli stack of $d$-dimensional proper Deligne--Mumford  locally smooth pairs; Lemma \ref{obsth} would still hold, and Lemma \ref{itsperf} would hold with $[-2,0]$ (respectively $[-1,0]$) replaced by $[-(d+1),0]$ (respectively $[-d,0]$).
\end{remark}

%\newpage
\begin{comment}\section{Things to do}
\begin{enumerate}
\item Add details on curve classes and cohomology classes in invariant section \ref{S:Invariants}. This includes some data on cohomology of types of inertia and the way we identify; the part of cohomology of a nodal stack that is relevant; etc.
\item Transform degeneration formula (and final steps in proof) to a result in terms of push forward of fundamental classes.
\item Explicitly explain the reason for the appearance of contact order or intersection multiplicities in the degeneration formula. Maybe this goes in a more expository article?
\item Rejuvenate notation table.
\item Pictures - maybe in an expository article?
\end{enumerate}\end{comment}
\end{document}